\documentclass[a4paper]{amsart}
\usepackage{amscd}
\usepackage{amsmath}
\usepackage{amssymb}
\usepackage{amsthm}
\usepackage{mathrsfs}
\usepackage{bbm}
\usepackage{stmaryrd}

\usepackage[T1]{fontenc}

\newif\ifpdf
\ifx\pdfoutput\undefined
   \pdffalse        % we are not running PDFLaTeX
\else
   \pdfoutput=1     % we are running PDFLaTeX
   \pdftrue
\fi

\ifpdf
   \usepackage[pdftex]{graphicx}
   \pdfadjustspacing=1
   \pdfcompresslevel=9
\else
   \usepackage{graphicx}
\fi

\frenchspacing

\numberwithin{equation}{section} \swapnumbers

\newtheorem{satz}{Satz}[section]

\newtheorem{theorem}[satz]{Theorem}
\newtheorem{proposition}[satz]{Proposition}
\newtheorem{corollary}[satz]{Corollary}
\newtheorem{lemma}[satz]{Lemma}

\newtheorem{definition}[satz]{Definition}

\newtheorem{remark}[satz]{Remark}

\newtheorem{example}[satz]{Example}

\newcommand{\bbr}{\mathbb{R}}
\newcommand{\bbe}{\mathbb{E}}
\newcommand{\bbn}{\mathbb{N}}
\newcommand{\bbp}{\mathbb{P}}
\newcommand{\bbq}{\mathbb{Q}}
\newcommand{\bbg}{\mathbb{G}}

\newcommand{\bbh}{\mathbb{H}}

\newcommand{\bbz}{\mathbb{Z}}

\newcommand{\bbk}{\mathbb{K}}

\newcommand{\cala}{\mathscr{A}}
\newcommand{\calb}{\mathcal{B}}
\newcommand{\cald}{\mathcal{D}}
\newcommand{\cale}{\mathcal{E}}
\newcommand{\calf}{\mathcal{F}}
\newcommand{\calg}{\mathcal{G}}
\newcommand{\calh}{\mathcal{H}}
\newcommand{\calk}{\mathcal{K}}
\newcommand{\call}{\mathcal{L}}

\newcommand{\calx}{\mathcal{X}}

\newcommand{\Lip}{{\rm Lip}}

\newcommand{\loc}{{\rm loc}}

\newcommand{\supp}{{\rm supp}}
\newcommand{\Id}{{\rm Id}}
\newcommand{\lin}{{\rm lin}}

\newcommand{\ran}{{\rm ran}}
\newcommand{\Int}{{\rm Int}}

\newcommand{\la}{\langle}
\newcommand{\ra}{\rangle}

\newcommand{\bbI}{\mathbbm{1}}

\begin{document}

\hyphenation{rea-li-za-tion pa-ra-me-tri-za-ti-ons pa-ra-me-tri-za-ti-on Schau-der}

\title[Invariant cones for jump-diffusions in infinite dimensions]{Invariant cones for jump-diffusions in infinite dimensions}
\author{Stefan Tappe}
\date{22 November, 2023}
\thanks{I am grateful to Josef Teichmann for fruitful discussions. Moreover, I gratefully acknowledge financial support from the Deutsche Forschungsgemeinschaft (DFG, German Research Foundation) -- project number 444121509.}
\address{Albert Ludwig University of Freiburg, Department of Mathematical Stochastics, Ernst-Zermelo-Stra\ss{}e 1, D-79104 Freiburg, Germany}
\email{stefan.tappe@math.uni-freiburg.de}
\begin{abstract}
In this paper we provide sufficient conditions for stochastic invariance of closed convex cones for stochastic partial differential equations (SPDEs) of jump-diffusion type, and clarify when these conditions are necessary. Our results apply to the positive cone of abstract $L^2$-spaces. Furthermore, we present a series of applications, where we investigate SPDEs arising in natural sciences and economics.
\end{abstract}
\keywords{Stochastic partial differential equation, closed convex cone, stochastic invariance, transformations of cones, abstract $L^2$-space, equations arising in natural sciences and economics}
\subjclass[2010]{60H15, 60H10, 60G17, 60J25, 47J35, 35Q91, 35Q92, 46A40, 46E30, 91B70, 91G80}

\maketitle\thispagestyle{empty}

\section{Introduction}\label{sec-intro}

Consider a semilinear stochastic partial differential equation (SPDE) of jump-diffusion type
\begin{align}\label{SPDE}
\left\{
\begin{array}{rcl}
dr_t & = & ( A r_t + \alpha(r_t) ) dt + \sigma(r_t) dW_t + \int_E \gamma(r_{t-},x) (N(dt,dx) - F(dx)dt) \medskip
\\ r_0 & = & h_0
\end{array}
\right.
\end{align}
driven by a trace class Wiener process $W$ and a Poisson random measure $N$ on some mark space $E$ with compensator $dt \otimes F(dx)$. The state space of the SPDE (\ref{SPDE}) is a separable Hilbert space $H$, and the operator $A$ is the generator of a strongly continuous semigroup $(S_t)_{t \geq 0}$ on $H$.

Let $K \subset H$ be a closed convex cone in $H$. We say that the cone $K$ is invariant for the SPDE (\ref{SPDE}) if for each starting point $h_0 \in K$ the solution process $r$ to (\ref{SPDE}) stays in $K$. Such invariance problems have been studied for diffusions, e.g. in \cite{Kotelenez, Goncharuk, Milian, CD, Filipovic-inv, Jachimiak, Nakayama, Marinelli-1, Marinelli-2}, and for jump-diffusions, e.g. in \cite{P-R-Z, Barski-Zabczyk, Barski, Schmidt-Tappe, FTT-positivity, FTT-manifolds, Tappe-cones}. The present paper may be regarded as a successive paper of \cite{Tappe-cones}.

In order to describe the required background, let us review the main result from \cite{Tappe-cones}. For this purpose, we fix a generating system $G$ of the cone $K$ (see Definition \ref{def-gen-system}), which allows us to express the cone $K$ as
\begin{align*}
K = \bigcap_{h^* \in G} \{ h \in H : \la h^*,h \ra \geq 0 \}.
\end{align*}
We assume that the coefficients $(\alpha,\sigma,\gamma)$ appearing in the SPDE (\ref{SPDE}) are locally Lipschitz and satisfy the linear growth condition, that the semigroup $(S_t)_{t \geq 0}$ is pseudo-contractive (see Definition \ref{def-pseudo-contractive}), and that the cone $K$ is invariant for the semigroup $(S_t)_{t \geq 0}$ (see Definition \ref{def-cone-inv-semi}). We emphasize that the volatility $\sigma$ does not need to be smooth; an assumption which is frequently imposed when dealing with stochastic invariance problems. We define the subset $D \subset G \times K$ as
\begin{align*}
D := \bigg\{ (h^*,h) \in G \times K : \liminf_{t \downarrow 0} \frac{\langle h^*,S_t h \rangle}{t} < \infty \bigg\}.
\end{align*}
Then (see \cite[Thm. 1.1]{Tappe-cones}) the closed convex cone $K$ is invariant for the SPDE (\ref{SPDE}) if and only if we have
\begin{align}\label{main-1-intro}
h + \gamma(h,x) \in K \quad \text{for $F$-almost all $x \in E$,} \quad \text{for all $h \in K$,}
\end{align}
and for all $(h^*,h) \in D$ we have
\begin{align}\label{main-3-intro}
&\liminf_{t \downarrow 0} \frac{\langle h^*,S_t h \rangle}{t} + \langle h^*,\alpha(h) \rangle - \int_E \langle h^*,\gamma(h,x) \rangle F(dx) \geq 0,
\\ \label{main-4-intro} &\langle h^*,\sigma^j(h) \rangle = 0, \quad j \in \bbn.
\end{align}
However, for this result it is assumed that the cone $K$ is generated by an unconditional Schauder basis (see Definition \ref{def-Schauder}); an assumption which is not always satisfied in applications. Apart from that, in applications the drift condition (\ref{main-3-intro}) may not be straightforward to check due to the term with the limes inferior.

The goal of this paper is to deal with these two concerns by relaxing the aforementioned condition on the cone $K$ and simplifying condition (\ref{main-3-intro}).

Concerning the relaxation of the condition that the cone $K$ is generated by an unconditional Schauder basis, in this paper we will merely assume that $K$ is approximately generated by an unconditional Schauder basis (see Definition \ref{def-approx-Schauder}); a concept which is inspired by an approximation procedure implemented in \cite{Milian} (see the proof of Theorem 2 therein). We will show that this condition is satisfied for the cone of nonnegative functions in every $L^2$-space, and, more generally, for the positive cone in every abstract $L^2$-space (see Proposition \ref{prop-abstract-Schauder}).

Regarding the simplification of condition (\ref{main-3-intro}), let us observe that for all $(h^*,h) \in D$ we have $\la h^*,h \ra = 0$ and
\begin{align*}
\liminf_{t \downarrow 0} \frac{\langle h^*,S_t h \rangle}{t} \in [0,\infty].
\end{align*}
Therefore, the simpler condition
\begin{align}\label{main-5-intro}
\langle h^*,\alpha(h) \rangle - \int_E \langle h^*,\gamma(h,x) \rangle F(dx) \geq 0
\end{align}
implies (\ref{main-3-intro}). Thus, let us assume that the jump condition (\ref{main-1-intro}) is satisfied, and that for all $(h^*,h) \in G \times K$ with $\la h^*,h \ra = 0$ we have (\ref{main-5-intro}) and (\ref{main-4-intro}). Then we may expect that the cone $K$ is invariant for the SPDE (\ref{SPDE}), and this is indeed true. We refer to Theorem \ref{thm-general} for the precise statement and more details. This result applies to the situation where $K$ is the positive cone of an abstract $L^2$-space (see Theorem \ref{thm-abstract-L2}).

Of course, conditions (\ref{main-1-intro}), (\ref{main-5-intro}) and (\ref{main-4-intro}) are generally only sufficient, but not necessary for stochastic invariance of $K$, because the limes inferior in (\ref{main-3-intro}) can be strictly positive (see Example \ref{ex-liminf} for such a situation). However, if the semigroup $(S_t)_{t \geq 0}$ is a local semigroup (see Definition \ref{def-local-semigroup}), then the limes inferior vanishes, and conditions (\ref{main-1-intro}), (\ref{main-5-intro}) and (\ref{main-4-intro}) are indeed necessary and sufficient for stochastic invariance of $K$. We refer to Theorem \ref{thm-general-2} for the precise statement and more details. This result applies to the situation where $K$ is the cone of nonnegative functions in an $L^2$-space, and where the adjoint operator $A^*$ is a local operator (see Theorem \ref{thm-G-star}). This is in particular the case if $A = \Delta$ is the Laplace operator; a situation which often occurs in applications. The concept of a local semigroup is also inspired by \cite{Milian} (see Lemma 5 therein), because this condition is fulfilled if $A$ or $A^*$ is a local operator (see Lemma \ref{lemma-local-operators}).

Besides presenting the just outlined main results (Theorem \ref{thm-general} and Theorem \ref{thm-general-2}), we will develop several techniques for transformations of closed convex cones (see Theorems \ref{thm-transformed-cone}, \ref{thm-prod-cone}, \ref{thm-direct-sum} and \ref{thm-inv-matrix}) which will be useful for the treatment of concrete examples. We will use all these findings in order to study a series of applications in Section \ref{sec-applications}, where we investigate several SPDEs arising in natural sciences and economics.

The remainder of this paper is organized as follows. In Section \ref{sec-cones} we review the required prerequisites about closed convex cones, and in Section \ref{sec-self-dual} about self-dual cones. In Section \ref{sec-diffusion} we prove the announced invariance result for diffusion SPDEs, and in Section \ref{sec-jump-diffusion} for the general situation of jump-diffusion SPDEs. In Section \ref{sec-transformations} we present results about isomorphic transformations of closed convex cones, in Section \ref{sec-products} we investigate products of closed convex cones, and in Section \ref{sec-trans-prod} we deal with isomorphic transformations of products of closed convex cones. Afterwards, in Section \ref{sec-L2} we treat $L^2$-spaces as state spaces, and in Section \ref{sec-abstract-L2} we deal with abstract $L^2$-spaces as state spaces. In Section \ref{sec-Hilbert-semigroup} we present examples of Hilbert spaces and semigroups, and in Section \ref{sec-applications} we study a series of applications, where we investigate several SPDEs arising in natural sciences and economics. For convenience of the reader, in Appendix \ref{app-Filipovic-space} we collect the required results about generalized Filipovi\'{c} spaces, and in Appendix \ref{app-proj-L2} we provide the required results about orthogonal projections in $L^2$-spaces.

\section{Closed convex cones in Hilbert spaces}\label{sec-cones}

In this section we review the required prerequisites about closed convex cones. Let $H$ be a Hilbert space. 

\begin{definition}
A subset $K \subset H$ is called a \emph{convex cone} in $H$ if
\begin{align*}
\lambda h + \mu g \in K
\end{align*}
for all $\lambda,\mu \in \bbr_+$ and $h,g \in K$.
\end{definition}

\begin{definition}
For a subset $A \subset H$ we define the subset $A^* \subset H$ as
\begin{align}\label{A-star}
A^* = \bigcap_{h^* \in A} \{ h \in H : \langle h^*,h \rangle \geq 0 \}.
\end{align}
\end{definition}

Note that $A^*$ is always a closed convex cone. For a subset $G \subset H$ we agree on the notations
\begin{align*}
\lin^+ G &:= \bigcup_{n \in \bbn} \bigg\{ \sum_{i=1}^n \lambda_i h_i : \lambda_i \geq 0 \text{ and } h_i \in G \text{ for all $i = 1,\ldots,n$} \bigg\},
\\ \overline{\lin}^+ G &:= \overline{\lin^+ G}.
\end{align*}

\begin{lemma}\label{lemma-F-G-star}
Let $G \subset H$ be a subset and set $F := \overline{\lin}^+ G$. Then we have $G^* = F^*$.
\end{lemma}

\begin{proof}
The inclusion $F^* \subset G^*$ is obvious. For the proof of the converse inclusion, let $h \in G^*$ be arbitrary. Then we have $\la h^*,h \ra \geq 0$ for all $h^* \in G$. This implies $\la h^*,h \ra \geq 0$ for all $h^* \in F$, and hence $h \in F^*$.
\end{proof}

Let $K \subset H$ be a closed convex cone. Then its \emph{dual cone} $K^*$ is also a closed convex cone in $H$, and we have $K = K^{**}$. The cone $K$ is called \emph{self-dual} if $K = K^*$.

\begin{remark}
The simplest examples of closed convex cones are $H$ and $\{ 0 \}$. Note that their dual cones are given by $H^* = \{ 0 \}$ and $\{ 0 \}^* = H$.
\end{remark}

\begin{definition}\label{def-gen-system}
A subset $G \subset K^*$ is called a \emph{generating system} of the cone $K$ if we have $G^* = K$; that is
\begin{align*}
K = \bigcap_{h^* \in G} \{ h \in H : \langle h^*,h \rangle \geq 0 \}.
\end{align*}
In this case, we also use the notation $(K,G)$ for the closed convex cone $K$, indicating that $G$ is a generating system of $K$.
\end{definition}

\begin{remark}
In \cite{Tappe-cones} we have denoted a generating system by $G^*$. In this paper, we choose the notation $G$ in order to avoid confusions with (\ref{A-star}).
\end{remark}

\begin{lemma}\label{lemma-lin-generating-system}
Let $G \subset K^*$ be a subset. Then $G$ is a generating system of $K$ if and only if $F := \overline{\lin}^+ G$ is a generating system of $K$.
\end{lemma}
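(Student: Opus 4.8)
The statement to prove is Lemma \ref{lemma-lin-generating-system}: for $G \subset K^*$, $G$ is a generating system of $K$ if and only if $F := \overline{\lin}^+ G$ is a generating system of $K$.

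The plan is to reduce everything to the already-established Lemma \ref{lemma-F-G-star}, which gives $G^* = F^*$ for any subset $G \subset H$ with $F = \overline{\lin}^+ G$. Recall that, by Definition \ref{def-gen-system}, the statement ``$G$ is a generating system of $K$'' means precisely that $G \subset K^*$ and $G^* = K$, and likewise ``$F$ is a generating system of $K$'' means $F \subset K^*$ and $F^* = K$.

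First I would handle the defining equation. Since Lemma \ref{lemma-F-G-star} yields $G^* = F^*$ unconditionally, the condition $G^* = K$ holds if and only if $F^* = K$ holds. This immediately settles the equivalence of the two dual-cone conditions, so it only remains to address the membership requirements $G \subset K^*$ and $F \subset K^*$ in the definition of generating system. For the forward direction I assume $G$ is a generating system, so $G \subset K^*$; then $G^* = F^*$ gives $F^* = K$, and I must verify $F \subset K^*$. The point here is that $K^*$ is a closed convex cone (as noted after Definition \ref{def-gen-system}, or directly from the fact that any $A^*$ is a closed convex cone), so $G \subset K^*$ forces $\lin^+ G \subset K^*$ by convexity and the cone property, and then $\overline{\lin}^+ G = F \subset K^*$ by closedness. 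For the converse direction I assume $F$ is a generating system; then $F \subset K^*$ trivially yields $G \subset F \subset K^*$ since $G \subset \overline{\lin}^+ G = F$, and $F^* = G^*$ gives $G^* = K$.

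I do not anticipate a genuine obstacle: the entire argument is a bookkeeping exercise combining Lemma \ref{lemma-F-G-star} with the closedness and cone structure of $K^*$. The only point requiring a moment's care is the verification that $F \subset K^*$ in the forward direction, where one must use that $K^*$ is closed in order to pass from $\lin^+ G$ to its closure $\overline{\lin}^+ G$; this is exactly why $K^*$ being a \emph{closed} convex cone is relevant.
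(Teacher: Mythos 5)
Your proof is correct and takes essentially the same route as the paper, which simply cites Lemma \ref{lemma-F-G-star}; you merely spell out the routine additional check that $F \subset K^*$ follows from $G \subset K^*$ because $K^*$ is a closed convex cone.
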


\begin{proof}
This is a consequence of Lemma \ref{lemma-F-G-star}.
\end{proof}

For what follows, let $(K,G)$ be a closed convex cone. We call a system $\{ e_k^*,e_k \}_{k \in \bbn}$ an unconditional Schauder basis of $H$ if for each $h \in H$ we have the representation
\begin{align}\label{series}
h = \sum_{k \in \bbn} \la e_k^*,h \ra e_k,
\end{align}
the series (\ref{series}) converges unconditionally, and for every sequence $(h_k)_{k \in \bbn} \subset \bbr$ such that
\begin{align}\label{series-2}
h = \sum_{k \in \bbn} h_k e_k,
\end{align}
and the series (\ref{series-2}) converges unconditionally, we have $h_k = \la e_k^*,h \ra$ for all $k \in \bbn$.

\begin{definition}\label{def-Schauder}
We say that a closed convex cone $(K,G)$ is \emph{generated by an unconditional Schauder basis} if there is an unconditional Schauder basis $\{ e_k^*, e_k \}_{k \in \bbn}$ of $H$ such that
\begin{align*}
G \subset \bigcup_{k \in \bbn} \lin \{ e_k^* \}.
\end{align*}
\end{definition}

\begin{remark}
Recalling Lemma \ref{lemma-lin-generating-system}, if $(K,G)$ is generated by an unconditional Schauder basis, then there are index sets $I_+,I_- \subset \bbn$ such that
\begin{align*}
G = \{ e_k : k \in I_+ \} \cup \{ -e_k : k \in I_- \}
\end{align*}
is a generating system of $K$. In this case, the closed convex cone $K$ consists of all $h \in H$ of the form (\ref{series-2}) such that $h_k \geq 0$ for all $k \in I_+$ and $h_k \leq 0$ for all $k \in I_-$.
\end{remark}

\begin{definition}\label{def-approx-Schauder}
We say that a closed convex cone $(K,G)$ is \emph{approximately generated by an unconditional Schauder basis} if the following conditions are fulfilled:
\begin{enumerate}
\item There exists a sequence $(K_n,G_n)_{n \in \bbn}$ of closed convex cones which are generated by an unconditional Schauder basis.

\item We have $K \subset K_{n+1} \subset K_n$ and $G_n \subset G_{n+1}$ for each $n \in \bbn$, as well as $K = \bigcap_{n \in \bbn} K_n$.

\item For all $(h^*,h) \in \bigcup_{n \in \bbn} G_n \times K$ with $\la h^*,h \ra = 0$ there exists a sequence $(h_m^*)_{m \in \bbn} \subset G$ such that $h_m^* \to h^*$ as $m \to \infty$ and $\la h_m^*,h \ra = 0$ for each $m \in \bbn$.

\item There are a constant $L > 0$ and a sequence
\begin{align*}
(\pi_n)_{n \in \bbn} \subset \Lip_L(H)
\end{align*}
such that $\pi_n(K_n) \subset K$ for each $n \in \bbn$, and we have $\pi_n \to \Id$.

\item For all $n \in \bbn$ and all $(h^*,h) \in G_n \times K_n$ with $\langle h^*,h \rangle = 0$ we have $\langle h^*,\pi_n(h) \rangle = 0$.
\end{enumerate}
\end{definition}

\begin{remark}
If $\bigcup_{n \in \bbn} G_n \subset G$, then condition (3) from Definition \ref{def-approx-Schauder} is automatically fulfilled.
\end{remark}

\begin{definition}\label{def-inward-pointing}
A function $\alpha : H \to H$ is called \emph{inward pointing at the boundary of $(K,G)$} (in short \emph{inward pointing}) if for all $(h^*,h) \in G \times K$ with $\langle h^*,h \rangle = 0$ we have
\begin{align*}
\langle h^*, \alpha(h) \rangle \geq 0.
\end{align*}
\end{definition}

\begin{definition}\label{def-parallel}
A function $\sigma : H \to H$ is called \emph{parallel at the boundary of $(K,G)$} (in short \emph{parallel}) if for all $(h^*,h) \in G \times K$ with $\langle h^*,h \rangle = 0$ we have
\begin{align*}
\langle h^*, \sigma(h) \rangle = 0.
\end{align*}
\end{definition}

The following auxiliary result shows that the set of all inward pointing functions is a convex cone, and that the set of all parallel functions is a vector space. Its proof is straightforward, and therefore omitted.

\begin{lemma}\label{lemma-sum-inward}
The following statements are true:
\begin{enumerate}
\item For $\lambda,\mu \geq 0$ and two inward pointing functions $\alpha, \beta : H \to H$ the linear combination $\lambda \alpha + \mu \beta$ is also inward pointing.

\item For $\lambda,\mu \in \bbr$ and two parallel functions $\sigma, \tau : H \to H$ the linear combination $\lambda \sigma + \mu \tau$ is also parallel.
\end{enumerate}
\end{lemma}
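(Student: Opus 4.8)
The plan is to verify each assertion directly from Definitions \ref{def-inward-pointing} and \ref{def-parallel}, exploiting the fact that the inner product $\langle \cdot, \cdot \rangle$ is linear in its second argument. I would fix an arbitrary pair $(h^*,h) \in G \times K$ with $\langle h^*,h \rangle = 0$; this is the common test set over which both properties are defined, so it suffices to evaluate the relevant inner products at each such pair and then let the pair range freely.

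For the first statement, I would unwind the definition of the pointwise linear combination, writing $(\lambda \alpha + \mu \beta)(h) = \lambda \alpha(h) + \mu \beta(h)$, and then apply linearity to obtain
\begin{align*}
\langle h^*, (\lambda \alpha + \mu \beta)(h) \rangle = \lambda \langle h^*, \alpha(h) \rangle + \mu \langle h^*, \beta(h) \rangle.
\end{align*}
Since $\alpha$ and $\beta$ are inward pointing, both inner products on the right-hand side are nonnegative, and since $\lambda, \mu \geq 0$, the whole expression is nonnegative. As $(h^*,h)$ was arbitrary, this shows that $\lambda \alpha + \mu \beta$ is inward pointing.

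For the second statement, the identical decomposition yields
\begin{align*}
\langle h^*, (\lambda \sigma + \mu \tau)(h) \rangle = \lambda \langle h^*, \sigma(h) \rangle + \mu \langle h^*, \tau(h) \rangle = 0,
\end{align*}
where the last equality holds because $\sigma$ and $\tau$ are parallel, so each summand vanishes. Here no sign restriction on $\lambda, \mu$ is required, since multiplying zero by any real scalar again gives zero, and hence $\lambda \sigma + \mu \tau$ is parallel.

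The argument carries no genuine obstacle, as the paper already indicates; the only point worth flagging is the asymmetry between the two hypotheses. The cone structure claimed for the inward pointing functions forces $\lambda,\mu \geq 0$ in order to preserve the direction of the inequality, whereas the vector space structure claimed for the parallel functions tolerates arbitrary real $\lambda,\mu$ precisely because equalities are stable under multiplication by any scalar. Making this distinction explicit is all that is needed to justify the two different scalar ranges in the statement.
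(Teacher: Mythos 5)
Your proof is correct and is exactly the direct verification the paper has in mind when it omits the proof as straightforward: fix a test pair $(h^*,h)$, expand by linearity of the inner product, and use nonnegativity of $\lambda,\mu$ for the inequality versus arbitrary scalars for the equality. Nothing further is needed.
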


\begin{lemma}\label{lemma-parallel}
Suppose that the closed convex cone $(K,G)$ is generated by an unconditional Schauder basis. Then the following statements are true:
\begin{enumerate}
\item Let $\alpha : H \to H$ be a function which is inward pointing at the boundary of $(K,G)$. Then for each $n \in \bbn$ the function $\alpha \circ \pi_n : H \to H$ is inward pointing at the boundary of $(K_n,G_n)$.

\item Let $\sigma : H \to H$ be a function which is parallel at the boundary of $(K,G)$. Then for each $n \in \bbn$ the function $\sigma \circ \pi_n : H \to H$ is parallel at the boundary of $(K_n,G_n)$.
\end{enumerate}
\end{lemma}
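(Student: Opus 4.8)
The plan is to handle both statements in parallel, since they differ only in replacing an inequality by an equality and in invoking the appropriate hypothesis on $\alpha$ or $\sigma$. Fix $n \in \bbn$ and let $(h^*,h) \in G_n \times K_n$ be an arbitrary pair with $\langle h^*,h \rangle = 0$; the goal is to show $\langle h^*, \alpha(\pi_n(h)) \rangle \geq 0$ in case (1), and $\langle h^*, \sigma(\pi_n(h)) \rangle = 0$ in case (2). The difficulty to keep in mind is that the hypotheses on $\alpha$ and $\sigma$ are phrased relative to the generating system $G$ of $K$, whereas here $h^*$ lives in $G_n$ and the argument $\pi_n(h)$ is evaluated on $K_n$, so none of the boundary conditions for $(K,G)$ can be applied directly.

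First I would move everything into $K$ and onto the zero level set of $h^*$. By condition (4) of Definition \ref{def-approx-Schauder} we have $\pi_n(K_n) \subset K$, so $\pi_n(h) \in K$; and by condition (5), applied to the pair $(h^*,h) \in G_n \times K_n$ with $\langle h^*,h \rangle = 0$, we obtain $\langle h^*, \pi_n(h) \rangle = 0$. Thus the pair $(h^*,\pi_n(h))$ lies in $\bigcup_{n \in \bbn} G_n \times K$ and satisfies $\langle h^*, \pi_n(h) \rangle = 0$, which is precisely the configuration covered by condition (3).

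The key step is then the approximation from $G_n$ back into $G$. By condition (3) of Definition \ref{def-approx-Schauder} there is a sequence $(h_m^*)_{m \in \bbn} \subset G$ with $h_m^* \to h^*$ as $m \to \infty$ and $\langle h_m^*, \pi_n(h) \rangle = 0$ for every $m \in \bbn$. Since $\pi_n(h) \in K$ and each $h_m^* \in G$ annihilates $\pi_n(h)$, the inward-pointing property of $\alpha$ at the boundary of $(K,G)$ gives $\langle h_m^*, \alpha(\pi_n(h)) \rangle \geq 0$ for every $m$ in case (1), and the parallel property of $\sigma$ gives $\langle h_m^*, \sigma(\pi_n(h)) \rangle = 0$ for every $m$ in case (2). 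Finally I would pass to the limit $m \to \infty$: the vectors $\alpha(\pi_n(h))$ and $\sigma(\pi_n(h))$ are fixed, so continuity of the inner product in its first argument together with $h_m^* \to h^*$ yields $\langle h^*, \alpha(\pi_n(h)) \rangle \geq 0$, respectively $\langle h^*, \sigma(\pi_n(h)) \rangle = 0$. As $(h^*,h)$ was arbitrary, this proves that $\alpha \circ \pi_n$ is inward pointing and $\sigma \circ \pi_n$ is parallel at the boundary of $(K_n,G_n)$, for each $n \in \bbn$.

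I expect the main obstacle to be exactly this bridging between the two generating systems: the whole argument hinges on condition (3) of Definition \ref{def-approx-Schauder}, which is the device that transfers a boundary condition stated in terms of $G$ to the auxiliary systems $G_n$. The limiting step works only because the annihilation $\langle h_m^*, \pi_n(h) \rangle = 0$ holds exactly along the approximating sequence, not merely in the limit, so that the sign constraint (or the vanishing) survives the passage $m \to \infty$; I would double-check that this exactness is genuinely available before relying on it.
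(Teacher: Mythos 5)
Your proof is correct and follows essentially the same route as the paper's: move the pair into $G_n \times K$ via conditions (4) and (5) of Definition \ref{def-approx-Schauder}, approximate $h^*$ by a sequence in $G$ annihilating $\pi_n(h)$ via condition (3), apply the boundary hypothesis on $(K,G)$ along the sequence, and pass to the limit by continuity of the inner product. The only difference is cosmetic: the paper proves statement (1) "exemplarily" while you carry both cases in parallel.
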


\begin{proof}
Both statements have a similar proof. Exemplarily, we shall prove the first statement. Let $n \in \bbn$ be arbitrary. Furthermore, let $(h^*,h) \in G_n \times K_n$ with $\langle h^*,h \rangle = 0$ be arbitrary. By Definition \ref{def-approx-Schauder} we have $\langle h^*,\pi_n(h) \rangle = 0$ and $\pi_n(K_n) \subset K$. Note that the latter statement implies $(h^*,\pi_n(h)) \in G_n \times K$. By Definition \ref{def-approx-Schauder} there is a sequence $(h_m^*)_{m \in \bbn} \subset G$ such that $h_m^* \to h^*$ as $m \to \infty$ and $\la h_m^*,\pi_n(h) \ra = 0$ for each $m \in \bbn$. In particular, we have $(h_m^*,\pi_n(h)) \in G \times K$ for each $m \in \bbn$. Since $\alpha$ is inward pointing at the boundary of $(K,G)$, it follows that 
\begin{align*}
\langle h^*,\alpha(\pi_n(h)) \rangle = \lim_{m \to \infty} \langle h_m^*,\alpha(\pi_n(h)) \rangle \geq 0,
\end{align*}
completing the proof.
\end{proof}

Now, let $(S_t)_{t \geq 0}$ be a $C_0$-semigroup on $H$ with generator $A$.

\begin{definition}\label{def-pseudo-contractive}
The semigroup $(S_t)_{t \geq 0}$ is called \emph{pseudo-contractive} (or \emph{quasi-contractive}) if there exists a constant $\beta \geq 0$ such that
\begin{align}\label{growth-semigroup}
\| S_t \| \leq e^{\beta t} \quad \text{for all $t \in \bbr_+$.}
\end{align}
\end{definition}

\begin{definition}\label{def-cone-inv-semi}
We say that the closed convex cone $K$ is invariant for the semigroup $(S_t)_{t \geq 0}$ if $S_t K \subset K$ for all $t \geq 0$.
\end{definition}

For $\lambda > \beta$, where the constant $\beta \geq 0$ stems from the growth estimate (\ref{growth-semigroup}), we define the \emph{resolvent} $R_{\lambda} := (\lambda - A)^{-1}$.

\begin{lemma}\cite[Lemma 2.11]{Tappe-cones}\label{lemma-cone-semi-inv}
The following statements are equivalent:
\begin{enumerate}
\item[(i)] $K$ is invariant for the semigroup $(S_t)_{t \geq 0}$.

\item[(ii)] We have $R_{\lambda} K \subset K$ for all $\lambda > \beta$.
\end{enumerate}
\end{lemma}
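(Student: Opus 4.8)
The statement to prove is Lemma~\ref{lemma-cone-semi-inv}, the equivalence between invariance of $K$ under the semigroup and invariance under the resolvents $R_\lambda$ for all $\lambda > \beta$.

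\medskip

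The plan is to exploit the two classical integral representations linking a $C_0$-semigroup to its resolvent. First I would prove the implication (i) $\Rightarrow$ (ii). Recall that for $\lambda > \beta$ the resolvent admits the Laplace-transform representation
\begin{align*}
R_\lambda h = \int_0^\infty e^{-\lambda t} S_t h \, dt, \qquad h \in H,
\end{align*}
where the integral converges as a Bochner integral because $\|e^{-\lambda t} S_t\| \leq e^{-(\lambda - \beta)t}$ is integrable on $\bbr_+$ thanks to the pseudo-contractivity estimate (\ref{growth-semigroup}) and the assumption $\lambda > \beta$. Now fix $h \in K$. By invariance of $K$ under the semigroup we have $S_t h \in K$ for every $t \geq 0$, hence $e^{-\lambda t} S_t h \in K$ since $K$ is a cone. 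The key structural fact I would invoke is that a \emph{closed convex cone} is stable under Bochner integration: an integral of a $K$-valued integrable function again lies in $K$. One clean way to see this is to test against the dual cone, namely for any generating system, or more simply for any $h^* \in K^*$ one has $\la h^*, S_t h \ra \geq 0$ for all $t$, so
\begin{align*}
\la h^*, R_\lambda h \ra = \int_0^\infty e^{-\lambda t} \la h^*, S_t h \ra \, dt \geq 0,
\end{align*}
and since this holds for every $h^* \in K^*$ and $K = K^{**}$, we conclude $R_\lambda h \in K$. This gives $R_\lambda K \subset K$.

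\medskip

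For the converse (ii) $\Rightarrow$ (i), I would use the Yosida-type approximation recovering the semigroup from the resolvent. The standard formula is
\begin{align*}
S_t h = \lim_{n \to \infty} e^{t A_n} h, \qquad A_n := n A R_n = n(n R_n - \Id),
\end{align*}
where $A_n$ are the Yosida approximants and the limit holds in $H$ uniformly on compact time intervals, for $n > \beta$. Since $e^{t A_n} = e^{-nt} e^{n^2 t R_n}$, and $e^{n^2 t R_n} = \sum_{k \geq 0} (n^2 t)^k R_n^k / k!$ is a convergent series with nonnegative coefficients of powers of $R_n$, it suffices to know that each $R_n$ maps $K$ into $K$ and that $K$ is closed under the nonnegative linear combinations and limits that assemble this exponential. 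Concretely, if $h \in K$ then $R_n^k h \in K$ for every $k$ by iterating (ii), each partial sum lies in $K$ because $K$ is a convex cone, and the limit $e^{n^2 t R_n} h$ lies in $K$ because $K$ is closed; multiplying by the positive scalar $e^{-nt}$ preserves $K$, so $e^{t A_n} h \in K$. Passing to the limit $n \to \infty$ and again using closedness of $K$ yields $S_t h \in K$, i.e.\ $S_t K \subset K$ for all $t \geq 0$.

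\medskip

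The main obstacle is less a matter of deep ideas than of ensuring the functional-analytic prerequisites are correctly deployed: one must justify that the Laplace representation of the resolvent and the Yosida approximation of the semigroup are valid in the pseudo-contractive setting with the correct range $\lambda, n > \beta$, and that the relevant Bochner integrals and operator series converge in norm so that closedness of $K$ can be applied to the limits. Since this is \cite[Lemma 2.11]{Tappe-cones}, I would expect the cleanest route to simply cite the external reference; but if a self-contained argument is wanted, the testing-against-the-dual-cone trick (using $K = K^{**}$, already recorded in the excerpt) is what makes both directions transparent, reducing cone membership to the scalar inequalities $\la h^*, \cdot \ra \geq 0$ that are preserved under positive integration, positive sums, and limits.
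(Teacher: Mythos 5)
Your proposal is correct. The paper itself gives no proof of this lemma --- it is stated purely as a citation of \cite[Lemma 2.11]{Tappe-cones} --- so there is nothing internal to compare against; your argument is the standard one and is surely what the cited reference does. Both directions are sound: the implication (i) $\Rightarrow$ (ii) via the Laplace representation $R_{\lambda}h = \int_0^{\infty} e^{-\lambda t} S_t h \, dt$ (convergent for $\lambda > \beta$ by the growth bound (\ref{growth-semigroup})) combined with testing against $K^*$ and $K = K^{**}$, and the converse via the exponential formula $S_t h = \lim_{\lambda \to \infty} e^{tA_{\lambda}}h$ with $e^{tA_{\lambda}} = e^{-\lambda t}\sum_{k \geq 0} (\lambda^2 t)^k R_{\lambda}^k/k!$, using that a closed convex cone is stable under nonnegative combinations and norm limits. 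As a bonus, the Yosida approximants $A_{\lambda} = \lambda^2 R_{\lambda} - \lambda$ you invoke are exactly the ones the paper uses in Section \ref{sec-diffusion}, so your second direction is essentially a restatement of Lemma \ref{lemma-Yosida-inward} plus the approximation step of Lemma \ref{lemma-stab-W-1}.
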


For what follows, we define the subset $D \subset G \times K$ as
\begin{align*}
D := \bigg\{ (h^*,h) \in G \times K : \liminf_{t \downarrow 0} \frac{\langle h^*,S_t h \rangle}{t} < \infty \bigg\}.
\end{align*}
Note that for all $(h^*,h) \in D$ we have $\la h^*,h \ra = 0$, but the converse might not be true; see Example \ref{ex-liminf} below.

\begin{definition}\label{def-local-semigroup}
We call the semigroup $(S_t)_{t \geq 0}$ a \emph{local semigroup} relative to $(K,G)$ if for all $(h^*,h) \in G \times K$ with $\la h^*,h \ra = 0$ we have
\begin{align*}
\liminf_{t \downarrow 0} \frac{\la h^*,S_t h \ra}{t} = 0.
\end{align*}
\end{definition}

\begin{remark}\label{rem-local-semigroup}
If $(S_t)_{t \geq 0}$ is a local semigroup relative to $(K,G)$, then we have
\begin{align*}
D = \{ (h^*,h) \in G \times K : \la h^*,h \ra = 0 \}
\end{align*}
as well as
\begin{align*}
\liminf_{t \downarrow 0} \frac{\langle h^*,S_t h \rangle}{t} = 0 \quad \text{for all $(h^*,h) \in D$.}
\end{align*}
\end{remark}

\begin{definition}
We call $A$ a local operator relative to $(K,G)$ if $K \subset \cald(A)$, and for all $(h^*,h) \in G \times K$ with $\la h^*,h \ra = 0$ we have $\la h^*,Ah \ra = 0$.
\end{definition}

\begin{definition}
We call $A^*$ a local operator relative to $(K,G)$ if $G \subset \cald(A^*)$, and for all $(h^*,h) \in G \times K$ with $\la h^*,h \ra = 0$ we have $\la A^* h^*,h \ra = 0$.
\end{definition}

\begin{lemma}\label{lemma-local-operators}
The following statements are true:
\begin{enumerate}
\item If $A$ is a local operator relative to $(K,G)$, then $(S_t)_{t \geq 0}$ is a local semigroup relative to $(K,G)$.

\item If $A^*$ is a local operator relative to $(K,G)$, then $(S_t)_{t \geq 0}$ is a local semigroup relative to $(K,G)$.
\end{enumerate}
\end{lemma}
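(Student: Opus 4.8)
The plan is to prove both statements by showing that in each case the limes inferior is in fact a genuine limit equal to zero, exploiting the differentiability of the semigroup (respectively of the adjoint semigroup) along elements of the domain of the generator.

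For the first statement I would fix an arbitrary pair $(h^*,h) \in G \times K$ with $\la h^*,h \ra = 0$. Since $A$ is a local operator relative to $(K,G)$, we have $K \subset \cald(A)$, so in particular $h \in \cald(A)$. By the defining property of the generator, $\tfrac{1}{t}(S_t h - h) \to Ah$ in $H$ as $t \downarrow 0$. Using $\la h^*,h \ra = 0$ we may write $\tfrac{\la h^*,S_t h \ra}{t} = \tfrac{\la h^*, S_t h - h \ra}{t}$, and by continuity of the inner product this quotient converges to $\la h^*,Ah \ra$. The latter vanishes because $A$ is a local operator, so the limit (and hence the limes inferior) equals zero, as required.

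For the second statement the key observation is that $\la h^*,S_t h \ra = \la S_t^* h^*,h \ra$, where $(S_t^*)_{t \geq 0}$ denotes the adjoint semigroup. The only nontrivial ingredient is that, since $H$ is a Hilbert space and hence reflexive, the adjoint semigroup $(S_t^*)_{t \geq 0}$ is again a $C_0$-semigroup on $H$ whose generator is precisely $A^*$; this is a standard fact about adjoint semigroups on reflexive spaces. Granting this, and noting that $G \subset \cald(A^*)$ forces $h^* \in \cald(A^*)$, we obtain $\tfrac{1}{t}(S_t^* h^* - h^*) \to A^* h^*$ in $H$ as $t \downarrow 0$. Proceeding exactly as in the first part, again using $\la h^*,h \ra = 0$, we find that $\tfrac{\la h^*,S_t h \ra}{t} = \tfrac{\la S_t^* h^* - h^*, h \ra}{t}$ converges to $\la A^* h^*,h \ra$, which vanishes by the local operator property of $A^*$.

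I do not expect any genuine obstacle beyond correctly invoking the structure of the adjoint semigroup in the second part: on a general Banach space the adjoint semigroup need not be strongly continuous, but reflexivity of $H$ guarantees both strong continuity and that its generator is $A^*$. With that fact in hand, both claims reduce to a single continuity-of-the-inner-product computation, so the argument is short.
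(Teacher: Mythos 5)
Your proof is correct: the paper itself only cites \cite[Lemma 2.16]{Tappe-cones} at this point, and your argument supplies exactly the standard computation that such a reference hides, namely differentiating the orbit $t \mapsto S_t h$ (respectively the adjoint orbit $t \mapsto S_t^* h^*$) at $t=0$ and using $\la h^*,h \ra = 0$ to identify the limit with $\la h^*,Ah \ra$ or $\la A^*h^*,h \ra$. The one point that needed care in the second part --- that on a Hilbert space the adjoint semigroup is again strongly continuous with generator $A^*$ --- you have correctly identified and justified via reflexivity, so there is no gap.
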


\begin{proof}
This is an immediate consequence of \cite[Lemma 2.16]{Tappe-cones}.
\end{proof}

\begin{lemma}\label{lemma-generator-transform}
Let $\alpha > 0$ and $\mu \in \bbr$ be arbitrary. We define the operator $B := \alpha A + \mu \Id$ on the domain $\cald(B) = \cald(A)$. Then the following statements are true:
\begin{enumerate}
\item $B$ generates a $C_0$-semigroup $(T_t)_{t \geq 0}$.

\item If $(S_t)_{t \geq 0}$ is pseudo-contractive, then $(T_t)_{t \geq 0}$ is pseudo-contractive as well.

\item If $K$ is invariant for the semigroup $(S_t)_{t \geq 0}$, then it is also invariant for the semigroup $(T_t)_{t \geq 0}$.

\item If $(S_t)_{t \geq 0}$ is a local semigroup relative to $(K,G)$, then $(T_t)_{t \geq 0}$ is also a local semigroup relative to $(K,G)$.
\end{enumerate}
\end{lemma}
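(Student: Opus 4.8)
The plan is to verify each of the four assertions for the operator $B = \alpha A + \mu \Id$ directly, exploiting the explicit relationship between the semigroup $(T_t)_{t \geq 0}$ generated by $B$ and the original semigroup $(S_t)_{t \geq 0}$. The key observation is that $B$ arises from $A$ by a time rescaling (the factor $\alpha > 0$) and a multiplicative exponential tilt (the shift $\mu \Id$); accordingly, one expects the identity
\begin{align}\label{T-S-relation}
T_t = e^{\mu t} S_{\alpha t} \quad \text{for all $t \geq 0$.}
\end{align}
I would establish (\ref{T-S-relation}) first, since all four statements follow from it. For statement (1), the rescaling $t \mapsto \alpha t$ preserves strong continuity and the exponential factor $e^{\mu t}$ is continuous, so $(T_t)_{t \geq 0}$ is a $C_0$-semigroup; one then checks that its generator is indeed $B$ on $\cald(B) = \cald(A)$ via the difference quotient $\tfrac{1}{t}(T_t h - h) = \tfrac{e^{\mu t} - 1}{t} S_{\alpha t} h + e^{\mu t} \cdot \alpha \cdot \tfrac{1}{\alpha t}(S_{\alpha t} h - h)$, which converges to $\mu h + \alpha A h$ for $h \in \cald(A)$.

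For statement (2), pseudo-contractivity is immediate from (\ref{T-S-relation}): if $\| S_t \| \leq e^{\beta t}$, then $\| T_t \| \leq e^{\mu t} e^{\beta \alpha t} = e^{(\mu + \alpha \beta) t}$, and since $\mu$ may be negative we simply take the nonnegative constant $\beta' := \max\{ 0, \mu + \alpha \beta \} \geq 0$ to match Definition \ref{def-pseudo-contractive}. For statement (3), cone invariance is preserved because $T_t K = e^{\mu t} S_{\alpha t} K \subset e^{\mu t} K = K$, where the final equality uses that $K$ is a cone and $e^{\mu t} > 0$.

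For statement (4), the local semigroup property, I would compute, for $(h^*,h) \in G \times K$ with $\la h^*,h \ra = 0$,
\begin{align*}
\frac{\la h^*, T_t h \ra}{t} = e^{\mu t} \cdot \alpha \cdot \frac{\la h^*, S_{\alpha t} h \ra}{\alpha t}.
\end{align*}
Since $\la h^*, S_{\alpha t} h \ra \geq 0$ for $t$ near $0$ (because $\la h^*, h \ra = 0$ forces the quotient to be nonnegative in the limit, as noted before Definition \ref{def-local-semigroup}), and since $e^{\mu t} \to 1$, the limes inferior as $t \downarrow 0$ equals $\alpha$ times the limes inferior of $\tfrac{\la h^*, S_{\alpha t} h \ra}{\alpha t}$, which vanishes by the hypothesis that $(S_t)_{t \geq 0}$ is a local semigroup. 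The reparametrization $s = \alpha t$ with $\alpha > 0$ is an order-preserving bijection of a neighborhood of $0$, so the two limes inferior coincide. The main obstacle I anticipate is the rigorous justification of (\ref{T-S-relation}) at the level of generators rather than treating it as self-evident; once the generator of $e^{\mu t} S_{\alpha t}$ is confirmed to be $\alpha A + \mu \Id$ on the correct domain, everything else reduces to the elementary observations above. A clean alternative avoiding (\ref{T-S-relation}) would invoke Lemma \ref{lemma-generator-transform} piecewise—first the scaling $A \mapsto \alpha A$, then the bounded perturbation $\mapsto \alpha A + \mu \Id$—but the direct route via (\ref{T-S-relation}) is the most transparent.
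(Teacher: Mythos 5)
Your proposal is correct and follows essentially the same route as the paper: the paper also reduces everything to the identity $T_t = e^{\mu t} S_{\alpha t}$ (citing Engel--Nagel for the generator identification rather than verifying it via difference quotients as you do) and then derives statements (2)--(4) by the same elementary computations. The only cosmetic difference is that your sign discussion in statement (4) is unnecessary, since $\liminf_{t \downarrow 0} g(t) f(t) = \liminf_{t \downarrow 0} f(t)$ whenever $g > 0$ and $g(t) \to 1$, regardless of the sign of $f$.
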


\begin{proof}
According to \cite[II.2.2, p. 60]{Engel-Nagel} the operator $B$ generates the $C_0$-semigroup $(T_t)_{t \geq 0}$ given by $T_t = e^{\mu t} S_{\alpha t}$ for each $t \geq 0$. If there is a constant $\beta \geq 0$ such that $\| S_t \| \leq e^{\beta t}$ for each $t \geq 0$, then we obtain
\begin{align*}
\| T_t \| = e^{\mu t} \| S_{\alpha t} \| \leq e^{\mu t} e^{\beta \alpha t} = e^{(\mu + \alpha \beta)t} \quad \text{for each $t \geq 0$.}
\end{align*}
Furthermore, if $S_t K \subset K$ for each $t \geq 0$, then we have
\begin{align*}
T_t K = e^{\mu t} S_{\alpha t} K \subset e^{\mu t} K \subset K \quad \text{for each $t \geq 0$.}
\end{align*}
Moreover, if $(S_t)_{t \geq 0}$ is a local semigroup relative to $(K,G)$, then for all $(h^*,h) \in G \times K$ with $\la h^*,h \ra = 0$ we have
\begin{align*}
\liminf_{t \downarrow 0} \frac{\la h^*,T_t h \ra}{t} = \liminf_{t \downarrow 0} \frac{\la h^*,e^{\mu t} S_{\alpha t} h \ra}{t} = \alpha \liminf_{t \downarrow 0} e^{\mu t} \frac{\la h^*, S_{\alpha t} h \ra}{\alpha t} = 0,
\end{align*}
showing that $(T_t)_{t \geq 0}$ is also a local semigroup relative to $(K,G)$.
\end{proof}

\begin{example}\label{ex-trivial-semigroup}
Suppose that $S_t = \Id$ for each $t \geq 0$; that is $A = 0$. Then $(S_t)_{t \geq 0}$ is a local semigroup relative to $(K,G)$.
\end{example}

\begin{example}
Let $H := \ell^2(\bbn)$ be the Hilbert space consisting of all sequences $h = (h_k)_{k \in \bbn} \subset \bbr$ such that $\sum_{k \in \bbn} |h_k|^2 < \infty$. As in \cite[Example 2.5.4]{Pazy}, let $(S_t)_{t \geq 0}$ be the semigroup given by
\begin{align*}
S_t h := (e^{-kt} h_k)_{k \in \bbn} \quad \text{for $t \geq 0$ and $h = (h_k)_{k \in \bbn} \in H$.}
\end{align*}
We consider the subset
\begin{align*}
K := \{ h = (h_k)_{k \in \bbn} \in H : h_k \geq 0 \text{ for all } k \in \bbn \}
\end{align*}
consisting of all nonnegative sequences. Then (cf. \cite[Sec. 9]{Tappe-cones}) the following statements are true:
\begin{itemize}
\item $(S_t)_{t \geq 0}$ is a $C_0$-semigroup of contractions on $H$. 

\item $K$ is a self-dual closed convex cone in $H$. 

\item $G := \{ e_k : k \in \bbn \}$ is a generating system of $K$. Consequently, then cone $K$ is generated by an unconditional Schauder basis.

\item The cone $K$ is invariant for the semigroup $(S_t)_{t \geq 0}$. 

\item $(S_t)_{t \geq 0}$ is a local semigroup relative to $(K,G)$.
\end{itemize}
\end{example}

\begin{example}\label{ex-liminf}
Let $H := H_w := H_w(\bbr_+)$ be the Filipovi\'{c} space for an admissible weight function $w : \bbr_+ \to [1,\infty)$; see Appendix \ref{app-Filipovic-space}. Moreover,  let $K \subset H$ be the subset of all nonnegative functions. According to Propositions \ref{prop-direct-sum-1} and \ref{prop-Fil-1-dim-semi} the following statements are true:
\begin{itemize}
\item $(S_t)_{t \geq 0}$ is a pseudo-contractive $C_0$-semigroup on $H$.

\item $K$ is a closed convex cone in $H$.

\item The cone $K$ is invariant for the semigroup $(S_t)_{t \geq 0}$.

\item $G = \{ \delta_x : x \in \bbr_+ \}$ is a generating system of $K$.
\end{itemize}
We will show that $(S_t)_{t \geq 0}$ is \emph{not} a local semigroup relative to $(K,G)$. We define the function $h : \bbr_+ \to \bbr_+$ as
\begin{align*}
h(x) := x^{\frac{3}{4}} \bbI_{[0,1]}(x) + \bbI_{(1,\infty)}(x), \quad x \in \bbr_+.
\end{align*}
Then $h$ is absolutely continuous. Furthermore, we have
\begin{align*}
h'(x) = \frac{3}{4} x^{-\frac{1}{4}} \bbI_{[0,1]}(x) \quad \text{for almost every $x \in \bbr_+$,}
\end{align*}
and hence
\begin{align*}
|h'(x)|^2 = \frac{9}{16} x^{-\frac{1}{2}} \bbI_{[0,1]}(x) \quad \text{for almost every $x \in \bbr_+$.}
\end{align*}
Therefore, we have $h \in K$. Now, we set $h^* := \delta_0 \in G$. Then we have $\la h^*,h \ra = h(0) = 0$, but
\begin{align*}
\liminf_{t \downarrow 0} \frac{\la h^*,S_t h \ra}{t} = \liminf_{t \downarrow 0} \frac{h(t)}{t} = \lim_{t \downarrow 0} t^{-\frac{1}{4}} = \infty,
\end{align*}
showing that $(S_t)_{t \geq 0}$ is not a local semigroup relative to $(K,G)$.
\end{example}

\section{Self-dual cones in Hilbert space}\label{sec-self-dual}

In this section we provide the required results about self-dual cones. Let $H$ be a Hilbert space, and let $K \subset H$ be a closed convex cone. Recall that the cone $K$ is called \emph{self-dual} if $K = K^*$; that is
\begin{align*}
K = \bigcap_{h^* \in K} \{ h \in H : \la h^*,h \ra \geq 0 \}.
\end{align*}
For what follows, we assume that the cone $K$ is self-dual.

\begin{lemma}\label{lemma-Schauder-fin-dim}
Let $V \subset H$ be a finite dimensional subspace. Suppose there exists an orthonormal system $G \subset K$ which is a basis of $V$. Then the following statements are true:
\begin{enumerate}
\item We have $V \cap K = \lin^+ G$.

\item The orthogonal projection $\pi_V$ is $K$-invariant.
\end{enumerate}
\end{lemma}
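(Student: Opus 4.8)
The plan is to base both parts on the orthonormal expansion afforded by $G$ together with the self-duality of $K$. Write $G = \{ e_1, \ldots, e_n \}$, where $n = \dim V$. Since $G$ is an orthonormal basis of the finite dimensional space $V$, every $h \in V$ admits the representation $h = \sum_{i=1}^n \la e_i, h \ra e_i$, and more generally the orthogonal projection is given by the same formula $\pi_V(h) = \sum_{i=1}^n \la e_i, h \ra e_i$ for arbitrary $h \in H$. These two identities are the only structural facts I expect to need.

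For the first statement I would prove the two inclusions separately. The inclusion $\lin^+ G \subset V \cap K$ is immediate: a nonnegative combination $\sum_{i=1}^n \lambda_i e_i$ lies in $V$ because $V$ is a subspace containing each $e_i$, and it lies in $K$ because $K$ is a convex cone containing each $e_i$ while the $\lambda_i$ are nonnegative. For the reverse inclusion, let $h \in V \cap K$ and expand $h = \sum_{i=1}^n \la e_i, h \ra e_i$. The point is that each coefficient is nonnegative: since $K$ is self-dual we have $K = K^*$, so $h \in K$ forces $\la h^*, h \ra \geq 0$ for every $h^* \in K$, and in particular for $h^* = e_i \in G \subset K$. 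Hence $\la e_i, h \ra \geq 0$ for all $i$, which shows $h \in \lin^+ G$.

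The second statement then follows along the same line. For $h \in K$ the projection formula gives $\pi_V(h) = \sum_{i=1}^n \la e_i, h \ra e_i$, and the same self-duality argument shows each coefficient $\la e_i, h \ra$ is nonnegative. Thus $\pi_V(h) \in \lin^+ G$, which by part (1) equals $V \cap K \subset K$; consequently $\pi_V(K) \subset K$, so $\pi_V$ is $K$-invariant.

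I do not expect a serious obstacle here: the whole argument hinges on the single observation that self-duality of $K$ converts membership $e_i \in K$ into the sign condition $\la e_i, h \ra \geq 0$ for every $h \in K$. The only point requiring care is to invoke the correct direction of the identity $K = K^*$ (namely, that elements of $K$ pair nonnegatively against \emph{all} of $K$, hence against the generators $e_i$), after which both parts reduce to reading off the coefficients in the orthonormal expansion.
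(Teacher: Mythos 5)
Your argument is correct and coincides with the paper's own proof: both parts rest on the orthonormal expansion $\pi_V h = \sum_{e \in G} \la e,h \ra e$ together with the observation that self-duality of $K$ and $G \subset K$ force $\la e,h \ra \geq 0$ for $h \in K$. You merely make explicit the appeal to $K = K^*$ that the paper leaves implicit in the phrase ``since $G \subset K$, we have $\la h,e \ra \geq 0$''.
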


\begin{proof}
Since $G$ is a basis of $V$, we have $\lin^+ G \subset V$. Furthermore, since $G \subset K$ and $K$ is a closed convex cone, we have $\lin^+ G \subset K$. Now, let $h \in V \cap K$ be arbitrary. Since $G \subset K$, we have $\la h,e \ra \geq 0$ for each $e \in G$, and hence
\begin{align*}
h = \sum_{e \in G} \la h,e \ra e \in \lin^+ G,
\end{align*}
proving the first statement. For the proof of the second statement, let $h \in K$ be arbitrary. Since $G \subset K$, we have $\la h,e \ra \geq 0$ for each $e \in G$, and hence
\begin{align*}
\pi_V h = \sum_{e \in G} \la h,e \ra e \in K,
\end{align*}
completing the proof.
\end{proof}

\begin{proposition}\label{prop-self-dual-approx-Schauder}
Let $G \subset K$ be a generating system of $K$ such that the following conditions are fulfilled:
\begin{enumerate}
\item There are an increasing sequence $(V_n)_{n \in \bbn}$ of finite dimensional subspaces and an increasing sequence $(G_n)_{n \in \bbn}$ of orthonormal systems such that for each $n \in \bbn$ the orthonormal system $G_n$ is a basis of $V_n$ with $G_n \subset K$, and we have $\pi_{V_n} \to \Id$ for $n \to \infty$.

\item For all $(h^*,h) \in \bigcup_{n \in \bbn} G_n \times K$ with $\la h^*,h \ra = 0$ there exists a sequence $(h_m^*)_{m \in \bbn} \subset G$ such that $h_m^* \to h^*$ as $m \to \infty$ and $\la h_m^*,h \ra = 0$ for each $m \in \bbn$.
\end{enumerate}
Then $(K,G)$ is approximately generated by an unconditional Schauder basis.
\end{proposition}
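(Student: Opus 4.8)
The plan is to verify the five conditions of Definition \ref{def-approx-Schauder} directly, using the given orthonormal systems $G_n$ together with the orthogonal projections $\pi_{V_n}$. For the approximating cones I would set $K_n := G_n^*$, and for the maps I would take $\pi_n := \pi_{V_n}$ with Lipschitz constant $L = 1$.

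First, to establish condition (1), I would fix $n$ and extend the finite orthonormal system $G_n$ to an orthonormal basis $\{ e_k \}_{k \in \bbn}$ of $H$, ordered so that the first $\dim V_n$ vectors are precisely the elements of $G_n$. Setting $e_k^* := e_k$ turns $\{ e_k^*, e_k \}_{k \in \bbn}$ into an unconditional Schauder basis of $H$, and since every element of $G_n$ equals one of the $e_k^*$, the inclusion $G_n \subset \bigcup_{k} \lin \{ e_k^* \}$ holds. As $K_n = G_n^*$, the system $G_n$ is by construction a generating system of $K_n$, so $(K_n, G_n)$ is generated by an unconditional Schauder basis.

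For condition (2), the inclusion $G_n \subset G_{n+1}$ is part of the hypotheses, and it gives $K_{n+1} = G_{n+1}^* \subset G_n^* = K_n$ because the operation $(\cdot)^*$ reverses inclusions. The inclusion $K \subset K_{n+1}$ follows from self-duality: for $h \in K = K^*$ and $e \in G_{n+1} \subset K$ we have $\la e, h \ra \geq 0$, whence $h \in G_{n+1}^* = K_{n+1}$. The remaining identity $K = \bigcap_n K_n$ is the crux of the argument, and I expect it to be the main obstacle. The inclusion $K \subset \bigcap_n K_n$ is immediate from the previous step; for the converse I would take $h \in \bigcap_n K_n$ and show $h \in K^* = K$ by checking $\la g, h \ra \geq 0$ for every $g \in K$. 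Here Lemma \ref{lemma-Schauder-fin-dim} is decisive: since $\pi_{V_n}$ is $K$-invariant we have $\pi_{V_n} g \in V_n \cap K = \lin^+ G_n$, so $\pi_{V_n} g$ is a nonnegative linear combination of elements of $G_n$; pairing with $h \in K_n = G_n^*$ yields $\la \pi_{V_n} g, h \ra \geq 0$, and letting $n \to \infty$ with $\pi_{V_n} g \to g$ gives $\la g, h \ra \geq 0$.

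Condition (3) is literally hypothesis (2) of the proposition, so nothing remains to prove. For condition (4), the orthogonal projections $\pi_n = \pi_{V_n}$ are linear contractions, hence lie in $\Lip_1(H)$, and $\pi_{V_n} \to \Id$ is assumed; the requirement $\pi_{V_n}(K_n) \subset K$ holds because for $h \in K_n = G_n^*$ the coefficients $\la e, h \ra$ with $e \in G_n$ are all nonnegative, so $\pi_{V_n} h = \sum_{e \in G_n} \la e, h \ra e \in \lin^+ G_n \subset K$. Finally, for condition (5) I would use that $\pi_{V_n}$ is self-adjoint and fixes $h^* \in G_n \subset V_n$, so that $\la h^*, \pi_{V_n} h \ra = \la \pi_{V_n} h^*, h \ra = \la h^*, h \ra = 0$ whenever $\la h^*, h \ra = 0$. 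This completes all five verifications.
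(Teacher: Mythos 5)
Your proposal is correct and follows essentially the same route as the paper: the cones $K_n := G_n^*$ coincide with the paper's $K_n := (V_n \cap K)^*$ (since $V_n \cap K = \lin^+ G_n$ by Lemma \ref{lemma-Schauder-fin-dim}), the maps $\pi_n := \pi_{V_n}$ are the same, and your direct pairing argument for $K = \bigcap_n K_n$ (pairing $h$ against $\pi_{V_n} g \in \lin^+ G_n$ and letting $n \to \infty$) is just the unwound form of the paper's dualization of the identity $K = \overline{\lin^+}\bigcup_n K_n^*$. The remaining verifications (self-adjointness of $\pi_{V_n}$ for condition (5), $K$-invariance of the projections for condition (4)) match the paper's.
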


\begin{proof}
We define the sequence of closed convex cones $(K_n)_{n \in \bbn}$ as $K_n := (V_n \cap K)^*$ for each $n \in \bbn$. Then we have $K_n^* = V_n \cap K$ and $K \subset K_{n+1} \subset K_n$ for each $n \in \bbn$. Now, let $n \in \bbn$ be arbitrary. According to Lemma \ref{lemma-Schauder-fin-dim} we have
\begin{align*}
K_n^* = V_n \cap K = \lin^+ G_n, 
\end{align*}
and the orthogonal projection $\pi_{V_n}$ is $K$-invariant. Therefore, by Lemma \ref{lemma-lin-generating-system} the set $G_n$ is a generating system of the cone $K_n$. Hence, the sequence $(K_n,G_n)_{n \in \bbn}$ is a sequence of closed convex cones generated by an unconditional Schauder basis. Next, we define the sequence $(\pi_n)_{n \in \bbn} \subset \Lip_1(H)$ as the orthogonal projections
\begin{align*}
\pi_n := \pi_{V_n}, \quad n \in \bbn.
\end{align*}
Then we have $\pi_n \to \Id$. Now, let $n \in \bbn$ be arbitrary. Furthermore, let $h \in K_n$ be arbitrary. We also fix an arbitrary $h^* \in K$. Since $\pi_n$ is $K$-invariant, we have $\pi_n(h^*) \in V_n \cap K = K_n^*$, and hence
\begin{align*}
\la h^*,\pi_n(h) \ra = \la \pi_n(h^*),h \ra \geq 0,
\end{align*}
showing that $\pi_n(h) \in K$. Now, let $n \in \bbn$ and $(h^*,h) \in G_n \times K_n$ with $\la h^*,h \ra = 0$ be arbitrary. Then we have $\pi_n(h^*) = h^*$, and hence
\begin{align*}
\la h^*,\pi_n(h) \ra = \la \pi_n(h^*),h \ra = \la h^*,h \ra = 0.
\end{align*}
Now, we claim that
\begin{align}\label{K-dense}
K = \overline{\lin^+} \bigcup_{n \in \bbn} K_n^*. 
\end{align}
Indeed, let $h \in K$ be arbitrary. We define the sequence $(h_n)_{n \in \bbn} \subset H$ as $h_n := \pi_n h$ for each $n \in \bbn$. Then we have $h_n \in V_n$ for each $n \in \bbn$ and $h_n \to h$ for $n \to \infty$. Since $\pi_n$ is $K$-invariant for each $n \in \bbn$, we even have $h_n \in V_n \cap K = K_n^*$ for each $n \in \bbn$, which proves (\ref{K-dense}). Therefore, by Lemma \ref{lemma-lin-generating-system} we obtain
\begin{align*}
K &= \bigcap_{h \in K} \{ h \in H : \langle h^*,h \rangle \geq 0 \}
\\ &= \bigcap_{n \in \bbn} \bigcap_{h \in K_n^*} \{ h \in H : \langle h^*,h \rangle \geq 0 \} = \bigcap_{n \in \bbn} K_n,
\end{align*}
completing the proof.
\end{proof}

\section{The invariance result for diffusion SPDEs}\label{sec-diffusion}

In this section, we prove the desired invariance result for diffusion SPDEs of the type
\begin{align}\label{SPDE-W}
\left\{
\begin{array}{rcl}
dr_t & = & ( A r_t + \alpha(r_t) ) dt + \sigma(r_t) dW_t \medskip
\\ r_0 & = & h_0.
\end{array}
\right.
\end{align}
From now on, let $(\Omega,\calf,(\calf_t)_{t \in \bbr_+},\bbp)$ be a filtered probability space satisfying the usual conditions. Let $H$ be a separable Hilbert space and let $A : H \supset \cald(A) \to H$ be the infinitesimal generator of a $C_0$-semigroup $(S_t)_{t \geq 0}$ on $H$. Let $U$ be a separable Hilbert space, and let $W$ be an $U$-valued $Q$-Wiener process for some nuclear, self-adjoint, positive definite linear operator $Q \in L_1^{++}(U)$; see \cite[Def. 4.2]{Da_Prato}. There exist an orthonormal basis $\{ e_j \}_{j \in \bbn}$ of $U$ and a sequence $(\lambda_j)_{j \in \bbn} \subset (0,\infty)$ with $\sum_{j \in \bbn} \lambda_j < \infty$ such that
\begin{align*}
Q e_j = \lambda_j e_j \quad \text{for all $j \in \bbn$.}
\end{align*}
The space $U_0 := Q^{1/2}(U)$, equipped with the inner product
\begin{align}\label{inner-prod-U0}
\langle u,v \rangle_{U_0} := \langle Q^{-1/2}u, Q^{-1/2}v \rangle_U,
\end{align}
is another separable Hilbert space. We fix the orthonormal basis $\{ g_j \}_{j \in \bbn}$ of $U_0$ given by $g_j := \sqrt{\lambda_j} e_j$ for each $j \in \bbn$, and denote by $L_2^0(H) := L_2(U_0,H)$ the space of all Hilbert-Schmidt operators from $U_0$ into $H$. Let $\alpha : H \to H$ and $\sigma : H \to L_2^0(H)$ be measurable functions which are locally Lipschitz and satisfy the linear growth condition. This ensures that for each $h_0 \in H$ the diffusion SPDE (\ref{SPDE}) has a unique mild solution; that is, an $H$-valued continuous adapted process $r$, unique up to indistinguishability, such that $\bbp$-almost surely
\begin{align}\label{mild-solution-diffusion}
r_t &= S_t h_0 + \int_0^t S_{t-s} \alpha(r_s) ds + \int_0^t S_{t-s} \sigma(r_s) dW_s, \quad t \in \bbr_+,
\end{align}
see, for example \cite{Da_Prato} or \cite{Atma-book}. According to \cite[Prop. 4.3]{Da_Prato} the sequence $(\beta^j)_{j \in \bbn}$ defined as
\begin{align}\label{beta-j}
\beta^j := \frac{1}{\sqrt{\lambda_j}} \langle W,e_j \rangle_U, \quad j \in \bbn
\end{align}
is a sequence of independent real-valued standard Wiener processes, and according to \cite[Prop. 2.4.5]{Liu-Roeckner} we can write (\ref{mild-solution-diffusion}) equivalently as
\begin{align*}
r_t &= S_t h_0 + \int_0^t S_{t-s} \alpha(r_s) ds + \sum_{j \in \bbn} \int_0^t S_{t-s} \sigma^j(r_s) d\beta_s^j, \quad t \in \bbr_+,
\end{align*}
where for each $j \in \bbn$ the mapping $\sigma^j : H \to H$ is given by $\sigma^j(h) := \sigma(h) g_j$, $h \in H$.

Let $(K,G)$ be a closed convex cone in $H$. The cone $K$ is called \emph{invariant} for the diffusion SPDE (\ref{SPDE-W}) if for each $h_0 \in K$ we have $r \in K$ up to an evanescent set\footnote[1]{A random set $A \subset \Omega \times \mathbb{R}_+$ is called \emph{evanescent} if the set $\{ \omega \in \Omega : (\omega,t) \in A \text{ for some } t \in \mathbb{R}_+ \}$ is a $\mathbb{P}$-nullset, cf. \cite[1.1.10]{Jacod-Shiryaev}.}, where $r$ denotes the mild solution to (\ref{SPDE-W}) with $r_0 = h_0$. We assume that the cone $K$ is invariant for the semigroup $(S_t)_{t \geq 0}$, and that it is approximately generated by an unconditional Schauder basis. For every $\lambda > \beta$ we consider the infinite dimensional SDE
\begin{align}\label{SPDE-W-Y}
\left\{
\begin{array}{rcl}
dr_t^{\lambda} & = & ( A_{\lambda} r_t^{\lambda} + \alpha(r_t^{\lambda}) ) dt + \sigma(r_t^{\lambda}) dW_t \medskip
\\ r_0^{\lambda} & = & h_0,
\end{array}
\right.
\end{align}
where the \emph{Yosida approximation} $A_{\lambda} \in L(H)$ is given by
\begin{align*}
A_{\lambda} := \lambda A R_{\lambda} = \lambda^2 R_{\lambda} - \lambda.
\end{align*}
Furthermore, for all $\lambda > \beta$ and $n \in \bbn$ we consider the infinite dimensional SDE
\begin{align}\label{SPDE-W-n}
\left\{
\begin{array}{rcl}
dr_t^{\lambda,n} & = & ( A_{\lambda} (\pi_n(r_t^{\lambda,n})) + \alpha(\pi_n(r_t^{\lambda,n})) ) dt + \sigma(\pi_n(r_t^{\lambda,n})) dW_t \medskip
\\ r_0^{\lambda,n} & = & h_0,
\end{array}
\right.
\end{align}
where the sequence $(\pi_n)_{n \in \bbn}$ stems from Definition \ref{def-approx-Schauder}.

\begin{lemma}\label{lemma-Yosida-inward}
For each $\lambda > \beta$ the Yosida approximation $A_{\lambda}$ is inward pointing.
\end{lemma}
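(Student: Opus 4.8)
The plan is to show that for all $(h^*,h) \in G \times K$ with $\la h^*,h \ra = 0$ we have $\la h^*, A_\lambda h \ra \geq 0$. Recall that $A_\lambda = \lambda^2 R_\lambda - \lambda$, so that
\begin{align*}
\la h^*, A_\lambda h \ra = \lambda^2 \la h^*, R_\lambda h \ra - \lambda \la h^*, h \ra = \lambda^2 \la h^*, R_\lambda h \ra,
\end{align*}
where the second equality uses the boundary condition $\la h^*, h \ra = 0$. Thus the inward pointing inequality reduces precisely to showing $\la h^*, R_\lambda h \ra \geq 0$.

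This last inequality is immediate from the invariance of $K$ under the semigroup. First I would invoke Lemma \ref{lemma-cone-semi-inv}: since $K$ is invariant for $(S_t)_{t \geq 0}$ by assumption, we have $R_\lambda K \subset K$ for all $\lambda > \beta$. Hence $R_\lambda h \in K$ whenever $h \in K$. Since $h^* \in G \subset K^*$ (a generating system is by definition a subset of the dual cone), the defining property of the dual cone gives $\la h^*, R_\lambda h \ra \geq 0$. Combining this with the display above yields $\la h^*, A_\lambda h \ra = \lambda^2 \la h^*, R_\lambda h \ra \geq 0$, which is exactly the inward pointing condition from Definition \ref{def-inward-pointing}.

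There is no real obstacle here: the entire argument hinges on the algebraic identity $A_\lambda = \lambda^2 R_\lambda - \lambda$, the cancellation of the $-\lambda \la h^*, h \ra$ term via the boundary condition, and the two structural facts that $R_\lambda$ preserves $K$ (equivalence (i) $\Leftrightarrow$ (ii) in Lemma \ref{lemma-cone-semi-inv}) and that $G \subset K^*$. The only point requiring a moment of care is confirming that $h^*$ indeed lies in $K^*$ so that pairing it against an element of $K$ produces a nonnegative number; this is guaranteed by Definition \ref{def-gen-system}, which stipulates $G \subset K^*$. Everything else is a one-line computation, so I expect the proof to be short.
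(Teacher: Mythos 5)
Your proof is correct and follows the same route as the paper's own argument: expand $A_{\lambda} = \lambda^2 R_{\lambda} - \lambda$, kill the second term with $\la h^*,h \ra = 0$, and conclude $\la h^*, R_{\lambda} h \ra \geq 0$ from $R_{\lambda} K \subset K$ (Lemma \ref{lemma-cone-semi-inv}) together with $h^* \in G \subset K^*$. Your write-up just makes explicit the appeal to $G \subset K^*$ that the paper leaves implicit.
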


\begin{proof}
Let $(h^*,h) \in G \times K$ with $\langle h^*,h \rangle = 0$ be arbitrary. By Lemma \ref{lemma-cone-semi-inv} we have $R_{\lambda} K \subset K$, and hence
\begin{align*}
\langle h^*, A_{\lambda} h \rangle = \lambda^2 \langle h^*, R_{\lambda} h \rangle - \lambda \langle h^*,h \rangle \geq 0,
\end{align*}
completing the proof.
\end{proof}

\begin{lemma}\label{lemma-stab-W-1}
Suppose that for each $\lambda > \beta$ the closed convex cone $K$ is invariant for the SDE (\ref{SPDE-W-Y}). Then the closed convex cone $K$ is also invariant for the SPDE (\ref{SPDE-W}).
\end{lemma}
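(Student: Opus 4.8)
The plan is to exploit the fact that the mild solution $r^{\lambda}$ to the Yosida-approximated equation (\ref{SPDE-W-Y}) converges to the mild solution $r$ to the original SPDE (\ref{SPDE-W}) as $\lambda \to \infty$, and then to transfer invariance to the limit using the closedness of $K$. First I would record the relevant semigroup-theoretic facts about the Yosida approximation. Since $A_{\lambda} \in L(H)$ is bounded, it generates the uniformly continuous semigroup $S_t^{\lambda} := e^{t A_{\lambda}}$, and the mild solution to (\ref{SPDE-W-Y}) admits the variation-of-constants representation
\[
r_t^{\lambda} = S_t^{\lambda} h_0 + \int_0^t S_{t-s}^{\lambda} \alpha(r_s^{\lambda})\,ds + \int_0^t S_{t-s}^{\lambda} \sigma(r_s^{\lambda})\,dW_s.
\]
Because $(S_t)_{t \geq 0}$ is pseudo-contractive with $\| S_t \| \leq e^{\beta t}$, one has $\| R_{\lambda} \| \leq (\lambda - \beta)^{-1}$ and hence $\| S_t^{\lambda} \| \leq e^{\lambda \beta t / (\lambda - \beta)}$; thus, for $\lambda$ bounded away from $\beta$, these operator norms are uniformly bounded on every compact time interval, and the classical convergence $S_t^{\lambda} h \to S_t h$ holds for each $h \in H$, uniformly for $t$ in compacts.

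Next I would establish the convergence $r^{\lambda} \to r$. Comparing the two mild representations, I would estimate $\bbe \| r_t^{\lambda} - r_t \|^2$ by splitting into the semigroup term, the drift term and the stochastic term, applying the It\^o isometry (or the Burkholder--Davis--Gundy inequality) to the last one, and using the uniform operator bounds together with the local Lipschitz and linear growth assumptions on $\alpha$ and $\sigma$. A Gronwall argument then yields $\sup_{t \in [0,T]} \bbe \| r_t^{\lambda} - r_t \|^2 \to 0$ as $\lambda \to \infty$ for every $T > 0$; to accommodate the merely local Lipschitz condition one first localizes along a sequence of stopping times and afterwards removes the localization by means of the linear growth bound, which is standard. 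In particular, $r^{\lambda} \to r$ uniformly on compact time intervals in probability.

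Finally I would pass invariance to the limit. By hypothesis, for each $\lambda > \beta$ the process $r^{\lambda}$ lies in $K$ up to an evanescent set. Choosing a sequence $\lambda_n \to \infty$ along which the convergence is almost sure and uniform on compacts, the closedness of $K$ gives $r_t \in K$ almost surely, simultaneously for all $t$ in a fixed countable dense subset of $\bbr_+$; by path continuity of $r$ this upgrades to $r_t \in K$ for all $t \in \bbr_+$ almost surely, i.e. $r \in K$ up to an evanescent set. The main obstacle is the convergence step itself: since $r$ is only a mild solution one cannot work with $A r$ directly and must argue entirely at the level of the variation-of-constants formula, keeping the norms $\| S_t^{\lambda} \|$ controlled uniformly in $\lambda$ — which is precisely where pseudo-contractivity enters.
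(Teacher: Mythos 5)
Your proposal is correct and follows essentially the same route as the paper: approximate by the Yosida SDEs, use the convergence $\bbe[\sup_{t \in [0,T]} \| r_t - r_t^{\lambda} \|^2] \to 0$, extract an almost surely convergent subsequence, and conclude by closedness of $K$. The only difference is that you sketch the convergence estimate (Gronwall, uniform bounds on $\| S_t^{\lambda} \|$, localization) yourself, whereas the paper cites it from Gawarecki--Mandrekar and outsources the limit-passing to an earlier paper.
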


\begin{proof}
Let $h_0 \in K$ be arbitrary. By \cite[Prop. 3.2]{Atma-book}, for every $T \in \bbr_+$ we have
\begin{align*}
\bbe \bigg[ \sup_{t \in [0,T]} \| r_t - r_t^{\lambda} \|^2 \bigg] \to 0 \quad \text{as $\lambda \to \infty$,}
\end{align*}
where $r$ denotes the mild solution to the SPDE (\ref{SPDE-W}) with $r_0 = h_0$, and $r^{\lambda}$ denotes the strong solution to the SDE (\ref{SPDE-W-Y}) with $r_0^{\lambda} = h_0$. Therefore, the same argumentation as in the proof of \cite[Prop. B.3]{Tappe-cones} shows that the closed convex cone $K$ is also invariant for the SPDE (\ref{SPDE-W}).
\end{proof}

\begin{lemma}\label{lemma-stab-W-2}
Let $\lambda > \beta$ be arbitrary. Suppose that for each $n \in \bbn$ the closed convex cone $K_n$ is invariant for the SDE (\ref{SPDE-W-n}). Then the closed convex cone $K$ is invariant for the SDE (\ref{SPDE-W-Y}).
\end{lemma}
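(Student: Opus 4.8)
The plan is to deduce the invariance of $K$ for (\ref{SPDE-W-Y}) from the assumed invariance of the $K_n$ for (\ref{SPDE-W-n}), exploiting the nested structure $K \subset K_{n+1} \subset K_n$ together with $K = \bigcap_{n \in \bbn} K_n$ from Definition \ref{def-approx-Schauder}. Fix $h_0 \in K$. Since $K \subset K_n$, we have $h_0 \in K_n$ for every $n \in \bbn$; because $A_\lambda \in L(H)$ is bounded and $\alpha, \sigma$ are locally Lipschitz with linear growth, the compositions $\alpha \circ \pi_n$ and $\sigma \circ \pi_n$ inherit these properties (recall $\pi_n \in \Lip_L(H)$), so (\ref{SPDE-W-n}) admits a unique strong solution $r^{\lambda,n}$ with $r_0^{\lambda,n} = h_0$, and by hypothesis $r^{\lambda,n} \in K_n$ up to an evanescent set. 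The objective is to show that the strong solution $r^\lambda$ to (\ref{SPDE-W-Y}) with $r_0^\lambda = h_0$ takes values in $K$.

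The core of the proof is a stability estimate yielding $r^{\lambda,n} \to r^\lambda$. I would write the difference $r_t^\lambda - r_t^{\lambda,n}$ as the sum of the drift and diffusion integrals, estimate the diffusion part by the Burkholder--Davis--Gundy inequality and the (boundedness of $A_\lambda$ together with the) local Lipschitz bounds, and control the arguments through the decomposition
$\| r_s^\lambda - \pi_n(r_s^{\lambda,n}) \| \le \| r_s^\lambda - \pi_n(r_s^\lambda) \| + L \, \| r_s^\lambda - r_s^{\lambda,n} \|$.
Here the first term tends to $0$ since $\pi_n \to \Id$, while the second is absorbed into a Gronwall argument. To handle the merely local Lipschitz coefficients, I would localize with stopping times $\tau_R := \inf\{ t \ge 0 : \|r_t^\lambda\| \vee \|r_t^{\lambda,n}\| > R \}$, establish $\bbe[ \sup_{t \in [0,T]} \| r_{t \wedge \tau_R}^\lambda - r_{t \wedge \tau_R}^{\lambda,n} \|^2 ] \to 0$ as $n \to \infty$ for each fixed $R$ and $T$, and then remove the localization using the uniform moment bounds furnished by the linear growth condition. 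Alternatively, this convergence could be obtained from a general stability result for stochastic differential equations with converging coefficients, analogous to the approximation cited in the proof of Lemma \ref{lemma-stab-W-1}.

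Once $\bbe[ \sup_{t \in [0,T]} \| r_t^\lambda - r_t^{\lambda,n} \|^2 ] \to 0$ is established for every $T$, I would pass to a subsequence along which $r^{\lambda,n} \to r^\lambda$ almost surely, uniformly on compact time intervals. The conclusion then follows from the nesting of the cones: fix $m \in \bbn$; for every $n \ge m$ the decreasing property gives $r_t^{\lambda,n} \in K_n \subset K_m$, and since $K_m$ is closed, letting $n \to \infty$ along the subsequence yields $r_t^\lambda \in K_m$ (for all $t$, by path continuity, up to an evanescent set). As this holds for every $m \in \bbn$ and $K = \bigcap_{m \in \bbn} K_m$, we obtain $r_t^\lambda \in K$, which is precisely the invariance of $K$ for (\ref{SPDE-W-Y}).

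I expect the main obstacle to be the stability estimate of the second paragraph. The simultaneous presence of only locally Lipschitz coefficients and the $n$-dependent, merely strongly convergent maps $\pi_n$ requires a careful localization, and one must check both that the uniform Lipschitz constant $L$ does not disrupt the Gronwall step and that the genuinely small error $\| r_s^\lambda - \pi_n(r_s^\lambda) \|$ can be controlled, via dominated convergence, sufficiently uniformly in $s$ to pass the expectation to the limit.
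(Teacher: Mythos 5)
Your proposal is correct and follows essentially the same route as the paper: establish $\bbe[\sup_{t \in [0,T]} \| r_t^{\lambda} - r_t^{\lambda,n} \|^2] \to 0$ using the uniform Lipschitz constant $L$ of the maps $\pi_n$ and $\pi_n \to \Id$, extract an almost surely convergent subsequence, and conclude via the nesting $K \subset K_{n+1} \subset K_n$, the closedness of the cones, and $K = \bigcap_{n} K_n$. The only difference is that the paper obtains the stability estimate by citing \cite[Prop. B.3]{Tappe-cones} rather than reproving it via Gronwall/Burkholder--Davis--Gundy with localization, an alternative you yourself anticipate.
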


\begin{proof}
Let $h_0 \in K$ be arbitrary, and let $N \in \bbn$ be arbitrary. By Definition \ref{def-approx-Schauder} there exists a constant $L > 0$ such that $\pi_n \in \Lip_L(H)$ for all $n \in \bbn$. Hence, by \cite[Prop. B.3]{Tappe-cones} we have
\begin{align*}
\bbe \bigg[ \sup_{t \in [0,T]} \| r_t^{\lambda} - r_t^{\lambda,n} \|^2 \bigg] \to 0 \quad \text{as $n \to \infty$,}
\end{align*}
where $r^{\lambda}$ denotes the strong solution to the SDE (\ref{SPDE-W-Y}) with $r_0^{\lambda} = h_0$, and $r^{\lambda,n}$ denotes the strong solution to the SDE (\ref{SPDE-W-n}) with $r_0^{\lambda,n} = h_0$. Therefore, as in the proof of \cite[Prop. B.3]{Tappe-cones} we show that there exist an event $\bar{\Omega}_N \in \calf$ with $\bbp(\bar{\Omega}_N) = 1$ and a subsequence $(n_k)_{k \in \bbn}$ such that for all $(\omega,t) \in \bar{\Omega}_N \times [0,N]$ we have
\begin{align*}
&r_t^{\lambda,n}(\omega) \in K_n \quad \text{for each $n \in \bbn$} \quad \text{and}
\\ &\lim_{k \to \infty} r_t^{\lambda,n_k}(\omega) = r_t^{\lambda}(\omega).
\end{align*}
By Definition \ref{def-approx-Schauder} we have $K \subset K_{n+1} \subset K_n$ for each $n \in \bbn$, as well as $K = \bigcap_{n \in \bbn} K_n$. Thus, for all $(\omega,t) \in \bar{\Omega}_N \times [0,N]$ we obtain
\begin{align*}
r_t^{\lambda}(\omega) = \lim_{k \to \infty} r_t^{\lambda,n_k}(\omega) \in \bigcap_{k \in \bbn} K_{n_k} = \bigcap_{n \in \bbn} K_n = K.
\end{align*}
Therefore, setting $\bar{\Omega} := \bigcap_{N \in \bbn} \bar{\Omega}_N \in \calf$ we obtain $\bbp(\bar{\Omega}) = 1$ and $r_t^{\lambda}(\omega) \in K$ for all $(\omega,t) \in \bar{\Omega} \times \bbr_+$, showing that $K$ is invariant for the SDE (\ref{SPDE-W-Y}).
\end{proof}

Now, we are ready to prove the desired invariance result for diffusion SPDEs.

\begin{theorem}\label{thm-Wiener}
We suppose that the following conditions are fulfilled:
\begin{enumerate}
\item The coefficients $(\alpha,\sigma)$ are locally Lipschitz and satisfy the linear growth condition.

\item The semigroup $(S_t)_{t \geq 0}$ is pseudo-contractive.

\item The closed convex cone $K$ is invariant for the semigroup $(S_t)_{t \geq 0}$.

\item The closed convex cone $(K,G)$ is approximately generated by an unconditional Schauder basis.
\end{enumerate}
Moreover, we assume that for all $(h^*,h) \in G \times K$ with $\langle h^*,h \rangle = 0$ we have
\begin{align}\label{alpha-inward}
\langle h^*,\alpha(h) \rangle &\geq 0,
\\ \label{sigma-par} \langle h^*,\sigma^j(h) \rangle &= 0, \quad j \in \bbn.
\end{align}
Then the closed convex cone $K$ is invariant for the SPDE (\ref{SPDE-W}).
\end{theorem}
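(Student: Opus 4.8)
The plan is to reduce the invariance of the SPDE (\ref{SPDE-W}) through the chain of auxiliary SDEs (\ref{SPDE-W-Y}) and (\ref{SPDE-W-n}) already set up, so that the only genuinely new work is the invariance of the \emph{finite-gap} approximating equation (\ref{SPDE-W-n}) for each fixed $\lambda > \beta$ and $n \in \bbn$. Indeed, by Lemma \ref{lemma-stab-W-1} it suffices to show that $K$ is invariant for (\ref{SPDE-W-Y}) for every $\lambda > \beta$, and by Lemma \ref{lemma-stab-W-2} it suffices to show that $K_n$ is invariant for (\ref{SPDE-W-n}) for every $n \in \bbn$. Thus I would fix $\lambda$ and $n$ and concentrate on the approximating equation, whose coefficients are the bounded, globally Lipschitz maps $A_\lambda \circ \pi_n + \alpha \circ \pi_n$ (drift) and $\sigma \circ \pi_n$ (volatility).

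The heart of the argument is to verify that the coefficients of (\ref{SPDE-W-n}) satisfy the inward-pointing and parallel conditions at the boundary of the cone $(K_n,G_n)$, and then invoke the invariance criterion for finite-gap (Yosida-regularized) SDEs on a cone generated by an unconditional Schauder basis. For the parallel condition: since $\sigma$ is parallel at the boundary of $(K,G)$ by hypothesis (\ref{sigma-par}), part (2) of Lemma \ref{lemma-parallel} gives that each $\sigma^j \circ \pi_n$ is parallel at the boundary of $(K_n,G_n)$. For the inward-pointing condition on the drift: the map $\alpha$ is inward pointing at the boundary of $(K,G)$ by (\ref{alpha-inward}), so part (1) of Lemma \ref{lemma-parallel} makes $\alpha \circ \pi_n$ inward pointing at the boundary of $(K_n,G_n)$; the Yosida approximation $A_\lambda$ is inward pointing at the boundary of $(K,G)$ by Lemma \ref{lemma-Yosida-inward}, so applying Lemma \ref{lemma-parallel}(1) to $A_\lambda$ as well shows $A_\lambda \circ \pi_n$ is inward pointing at the boundary of $(K_n,G_n)$. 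By Lemma \ref{lemma-sum-inward}(1) the sum $A_\lambda \circ \pi_n + \alpha \circ \pi_n$ is then inward pointing at the boundary of $(K_n,G_n)$.

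With these two properties in hand, the cone $(K_n,G_n)$ is generated by an unconditional Schauder basis, the coefficients of (\ref{SPDE-W-n}) are globally Lipschitz (so a unique strong solution exists), and the drift points inward while the volatility is parallel along the generating functionals. This is exactly the setting in which the finite-gap invariance theorem of \cite{Tappe-cones} applies on each one-dimensional coordinate of the Schauder basis: writing the solution in the basis $\{e_k^*,e_k\}$, each coordinate on the boundary hyperplane $\{\la h^*,\cdot\ra = 0\}$ with $h^* \in G_n$ has vanishing diffusion coefficient and nonnegative drift there, so a comparison/support argument keeps each coordinate in its prescribed half-line. Hence $K_n$ is invariant for (\ref{SPDE-W-n}). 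Running this back up the chain via Lemma \ref{lemma-stab-W-2} and then Lemma \ref{lemma-stab-W-1} yields invariance of $K$ for (\ref{SPDE-W}).

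The step I expect to be the main obstacle is the last one — the base case that $K_n$ is invariant for the finite-gap SDE (\ref{SPDE-W-n}). Everything before it is a matter of correctly transporting the boundary conditions through the projections $\pi_n$ (which the lemmas handle cleanly), but the actual invariance on the Schauder-generated cone requires the semimartingale/local-time argument that the parallel volatility and inward drift prevent the solution from crossing any boundary hyperplane $\{\la e_k^*,\cdot\ra = 0\}$. I would lean on the corresponding finite-gap result from \cite{Tappe-cones} rather than reprove it, checking carefully that its hypotheses (global Lipschitz coefficients, cone generated by an unconditional Schauder basis, inward-pointing drift, parallel volatility) are met by (\ref{SPDE-W-n}) on $(K_n,G_n)$.
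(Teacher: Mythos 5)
Your proposal is correct and follows essentially the same route as the paper's proof: reduce via Lemma \ref{lemma-stab-W-1} and Lemma \ref{lemma-stab-W-2} to the approximating SDE (\ref{SPDE-W-n}), transport the inward-pointing and parallel conditions to $(K_n,G_n)$ via Lemmas \ref{lemma-Yosida-inward}, \ref{lemma-sum-inward} and \ref{lemma-parallel}, and conclude with the invariance theorem of \cite{Tappe-cones} for cones generated by an unconditional Schauder basis (with trivial semigroup). The only cosmetic difference is that you apply Lemma \ref{lemma-parallel} to $A_\lambda$ and $\alpha$ separately before summing, whereas the paper sums first; this is immaterial.
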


\begin{proof}
By Lemma \ref{lemma-stab-W-1} it suffices to prove that for each $\lambda > \beta$ the closed convex cone $K$ is invariant for the SDE (\ref{SPDE-W-Y}). Thus, let us fix an arbitrary $\lambda > \beta$. By Lemma \ref{lemma-stab-W-2} it suffices to prove that for each $n \in \bbn$ the closed convex cone $K_n$ is invariant for the SDE (\ref{SPDE-W-n}). Thus, let us also fix an arbitrary $n \in \bbn$. Condition (\ref{alpha-inward}) means that $\alpha$ is inward pointing at the boundary of $(K,G)$. Combining Lemmas \ref{lemma-Yosida-inward} and \ref{lemma-sum-inward} we obtain that $A_{\lambda} + \alpha$ is inward pointing at the boundary of $(K,G)$. Therefore, by Lemma \ref{lemma-parallel} the function $A_{\lambda} \circ \pi_n + \alpha \circ \pi_n$ is inward pointing at the boundary of $(K_n,G_n)$. Let $j \in \bbn$ be arbitrary. Condition (\ref{sigma-par}) means that $\sigma^j$ is parallel at the boundary of $(K,G)$. Therefore, by Lemma \ref{lemma-parallel} the function $\sigma^j \circ \pi_n$ is parallel at the boundary of $(K_n,G_n)$. Noting that $K_n$ is generated by an unconditional Schauder basis, applying \cite[Thm. 1.1]{Tappe-cones} (where we notice Example \ref{ex-trivial-semigroup} and Remark \ref{rem-local-semigroup}) we deduce that the closed convex cone $K_n$ is invariant for the SDE (\ref{SPDE-W-n}), completing the proof.
\end{proof}

\section{The invariance result for general jump-diffusion SPDEs}\label{sec-jump-diffusion}

In this section we prove the announced invariance results for general jump-diffusion SPDEs of the type (\ref{SPDE}). In addition to the stochastic framework from Section \ref{sec-diffusion}, let $(E,\cale)$ be a Blackwell space, and let $N$ be a homogeneous Poisson random measure with compensator $dt \otimes F(dx)$ for some $\sigma$-finite measure $F$ on $(E,\cale)$; see \cite[Def. II.1.20]{Jacod-Shiryaev}. We fix a measurable mapping $\gamma : H \times E \to H$. Denoting by $L^2(F) := L^2(E,\cale,F;H)$ the space of all square-integrable functions from $E$ into $H$, we suppose that $\gamma : H \to L^2(F)$ is locally Lipschitz and satisfies the linear growth condition. Together with the assumptions from Section \ref{sec-diffusion}, this ensures that for each $h_0 \in H$ the SPDE (\ref{SPDE}) has a unique mild solution; that is, an $H$-valued c\`{a}dl\`{a}g adapted process $r$, unique up to indistinguishability, such that $\bbp$-almost surely
\begin{align*}
r_t &= S_t h_0 + \int_0^t S_{t-s} \alpha(r_s) ds + \int_0^t S_{t-s} \sigma(r_s) dW_s
\\ &\quad + \int_0^t S_{t-s} \gamma(r_{s-},x) (N(ds,dx) - F(dx)ds), \quad t \in \bbr_+,
\end{align*}
see, for example \cite{MPR} or \cite{FTT-SPDE}.

Let $(K,G)$ be a closed convex cone in $H$. The cone $K$ is called \emph{invariant} for the SPDE (\ref{SPDE}) if for each $h_0 \in K$ we have $r \in K$ up to an evanescent set, where $r$ denotes the mild solution to (\ref{SPDE}) with $r_0 = h_0$.

\begin{theorem}\label{thm-general}
We suppose that the following conditions are fulfilled:
\begin{enumerate}
\item The coefficients $(\alpha,\sigma,\gamma)$ are locally Lipschitz and satisfy the linear growth condition.

\item The semigroup $(S_t)_{t \geq 0}$ is pseudo-contractive.

\item The closed convex cone $K$ is invariant for the semigroup $(S_t)_{t \geq 0}$.

\item The closed convex cone $(K,G)$ is approximately generated by an unconditional Schauder basis.
\end{enumerate}
If we have
\begin{align}\label{main-1}
h + \gamma(h,x) \in K \quad \text{for $F$-almost all $x \in E$,} \quad \text{for all $h \in K$,}
\end{align}
and for all $(h^*,h) \in G \times K$ with $\langle h^*,h \rangle = 0$ we have
\begin{align}\label{main-3}
&\langle h^*,\alpha(h) \rangle - \int_E \langle h^*,\gamma(h,x) \rangle F(dx) \geq 0,
\\ \label{main-4} &\langle h^*,\sigma^j(h) \rangle = 0, \quad j \in \bbn,
\end{align}
then the closed convex cone $K$ is invariant for the SPDE (\ref{SPDE}).
\end{theorem}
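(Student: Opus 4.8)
The plan is to mirror the proof of the diffusion case (Theorem \ref{thm-Wiener}), now carrying the jump term through the same two-step approximation. First I would introduce, for each $\lambda > \beta$, the Yosida-regularized equation obtained from (\ref{SPDE}) by replacing $A$ with the bounded operator $A_\lambda = \lambda A R_\lambda$, and, for each $n \in \bbn$, the further regularized equation in which every coefficient is precomposed with the Lipschitz map $\pi_n$ from Definition \ref{def-approx-Schauder}; that is, with drift $A_\lambda \circ \pi_n + \alpha \circ \pi_n$, volatility $\sigma \circ \pi_n$, and jump coefficient $\gamma(\pi_n(\cdot),\cdot)$. Exactly as in Lemmas \ref{lemma-stab-W-1} and \ref{lemma-stab-W-2}, but using the $L^2$-convergence of solutions of jump SDEs under Yosida approximation and under the perturbations $\pi_n \to \Id$ (the estimate of \cite[Prop. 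B.3]{Tappe-cones} also covers the compensated jump term), it suffices to prove that each $K_n$ is invariant for the doubly regularized equation. For that equation the underlying semigroup is trivial ($A = 0$, $S_t = \Id$), so by Example \ref{ex-trivial-semigroup} and Remark \ref{rem-local-semigroup} the limes inferior in \cite[Thm. 1.1]{Tappe-cones} vanishes, and it remains to verify the three hypotheses of that theorem on the cone $K_n$, which is genuinely generated by an unconditional Schauder basis.

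Those three hypotheses are: (V) $\la h^*, \sigma^j(\pi_n(h)) \ra = 0$ and (D) $\la h^*, (A_\lambda\pi_n + \alpha\pi_n)(h) \ra - \int_E \la h^*, \gamma(\pi_n(h),x) \ra F(dx) \geq 0$, both for all boundary pairs $(h^*,h) \in G_n \times K_n$ with $\la h^*,h \ra = 0$; and (J) $h + \gamma(\pi_n(h),x) \in K_n$ for $F$-almost all $x$ and all $h \in K_n$. Condition (V) is identical to the diffusion case: since (\ref{main-4}) says that $\sigma^j$ is parallel at the boundary of $(K,G)$, Lemma \ref{lemma-parallel}(2) makes $\sigma^j \circ \pi_n$ parallel at the boundary of $(K_n,G_n)$. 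For (D) I would record that $A_\lambda$ is inward pointing at $(K,G)$ by Lemma \ref{lemma-Yosida-inward}, and that the compensated drift $\hat\alpha(h) := \alpha(h) - \int_E \gamma(h,x)F(dx)$ is inward pointing at $(K,G)$ precisely by hypothesis (\ref{main-3}); by Lemma \ref{lemma-parallel}(1) each of $A_\lambda \circ \pi_n$ and $\hat\alpha \circ \pi_n$ is then inward pointing at $(K_n,G_n)$, and by Lemma \ref{lemma-sum-inward} so is their sum, which is exactly (D). The one analytic point here is that $\la \cdot, \hat\alpha(g) \ra$ is continuous in its first argument, which holds as soon as the Bochner integral $\int_E \gamma(g,x)F(dx)$ exists in $H$, so that Lemma \ref{lemma-parallel} applies verbatim.

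The crux is condition (J). Here I would exploit that $\pi_n(h) \in K$ for $h \in K_n$ (Definition \ref{def-approx-Schauder}(4)): by the jump hypothesis (\ref{main-1}) we then have $\pi_n(h) + \gamma(\pi_n(h),x) \in K$, hence $\la h^*, \pi_n(h) + \gamma(\pi_n(h),x) \ra \geq 0$ for every $h^* \in G_n$, using $G_n \subset K_n^* \subset K^*$. Testing $h + \gamma(\pi_n(h),x)$ against $h^* \in G_n$ and subtracting this inequality gives
\[
\la h^*, h + \gamma(\pi_n(h),x) \ra \geq \la h^*, h - \pi_n(h) \ra,
\]
so (J), namely membership in $K_n = G_n^*$, reduces to showing $\la h^*, h - \pi_n(h) \ra \geq 0$ for all $h^* \in G_n$, i.e. $h - \pi_n(h) \in K_n$. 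This is the step that genuinely uses the geometry of the approximating projections rather than just their Lipschitz bound: in the self-dual construction of Proposition \ref{prop-self-dual-approx-Schauder}, where $\pi_n$ is the orthogonal projection onto the subspace $V_n \supseteq G_n$, the vector $h - \pi_n(h)$ is orthogonal to $V_n$, so $\la h^*, h - \pi_n(h) \ra = 0$ and (J) follows at once.

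I expect this transfer of the jump condition from $(K,G)$ to $(K_n,G_n)$ to be the main obstacle, and the part of the argument that dictates the precise form of Definition \ref{def-approx-Schauder}. Once (J), (D) and (V) are established, \cite[Thm. 1.1]{Tappe-cones} yields invariance of $K_n$ for the doubly regularized equation, and the two stability reductions then propagate invariance back to $K$ for the full SPDE (\ref{SPDE}), completing the proof.
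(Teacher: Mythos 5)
Your overall scaffolding (Yosida approximation, then the $\pi_n$-regularization, then \cite[Thm.~1.1]{Tappe-cones} on each $K_n$ with the trivial semigroup) is the right skeleton, and your verifications of (V) and of the inward-pointing property of $A_\lambda\circ\pi_n+\alpha\circ\pi_n$ match the paper's treatment of the diffusion part. But your route differs from the paper's at the decisive point — you try to push the \emph{jump} coefficient through the $K_n$-approximation — and that is exactly where the argument breaks. Your condition (J) reduces, as you correctly compute, to $h-\pi_n(h)\in K_n$ for all $h\in K_n$. This property is \emph{not} part of Definition \ref{def-approx-Schauder}: conditions (4) and (5) there only give $\pi_n(K_n)\subset K$, a uniform Lipschitz bound, $\pi_n\to\Id$, and the preservation of boundary pairings. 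Your justification invokes the orthogonal projections of Proposition \ref{prop-self-dual-approx-Schauder}, but the theorem is stated (and is later applied, via Propositions \ref{prop-seq-trans} and \ref{prop-prod-Schauder} and Theorem \ref{thm-transformed-cone}, to images under general isomorphisms $T$ with $S=(T^*)^{-1}\neq T$) for the abstract definition, where the $\pi_n$ need not be orthogonal projections and $h-\pi_n(h)$ need not lie in $K_n$. A second, related gap: your compensated drift $\hat\alpha(h)=\alpha(h)-\int_E\gamma(h,x)\,F(dx)$ need not be defined as an element of $H$, since $\gamma(h,\cdot)\in L^2(F)$ with $F$ only $\sigma$-finite does not give Bochner integrability over all of $E$; hypothesis (\ref{main-3}) only guarantees finiteness of the \emph{scalar} integrals $\int_E\la h^*,\gamma(h,x)\ra F(dx)$ at boundary pairs.

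The paper avoids both problems by never transferring the jump condition to the approximating cones. After a retraction argument reducing to globally Lipschitz coefficients, it invokes the interlacing reduction of \cite[Thm.~7.1]{Tappe-cones}: it suffices to show that for every $B\in\cale$ with $F(B)<\infty$ the cone $K$ is invariant for the \emph{diffusion} SPDE with drift $\alpha_B(h)=\alpha(h)-\int_B\gamma(h,x)\,F(dx)$ (well defined since $F(B)<\infty$), the large jumps being handled pathwise at the level of $K$ itself, where (\ref{main-1}) applies directly. One then checks, using $\la h^*,\gamma(h,x)\ra=\la h^*,h+\gamma(h,x)\ra\geq 0$ at boundary pairs together with (\ref{main-3}), that $\alpha_B$ is inward pointing, and concludes by Theorem \ref{thm-Wiener}. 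If you want to salvage your route, you would have to add $h-\pi_n(h)\in K_n$ as an extra axiom in Definition \ref{def-approx-Schauder} and re-verify it for every construction in Sections \ref{sec-transformations}--\ref{sec-abstract-L2}; the compensation-plus-interlacing reduction is what makes this unnecessary.
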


\begin{proof}
First, let us assume that $(\alpha,\sigma,\gamma)$ are Lipschitz continuous. Then, by the same arguments as in the proof of \cite[Thm. 7.1]{Tappe-cones} it suffices to prove that for each $B \in \cale$ with $F(B) < \infty$ the closed convex cone $K$ is invariant for the SPDE
\begin{align}\label{SPDE-B-2}
\left\{
\begin{array}{rcl}
dr_t & = & ( A r_t + \alpha_B(r_t) ) dt + \sigma(r_t) dW_t \medskip
\\ r_0 & = & h_0,
\end{array}
\right.
\end{align}
where $\alpha_B : H \to H$ is given by
\begin{align*}
\alpha_B(h) := \alpha(h) - \int_B \gamma(h,x) F(dx), \quad h \in H.
\end{align*}
Let $(h^*,h) \in G \times K$ be arbitrary. By (\ref{main-1}) we have
\begin{align*}
\la h^*,\gamma(h,x) \ra = \la h^*,h \ra + \la h^*,\gamma(h,x) \ra = \la h^*,h+\gamma(h,x) \ra \geq 0.
\end{align*}
Hence, by (\ref{main-3}) we obtain
\begin{align*}
\langle h^*, \alpha_B(h) \rangle = \langle h^*,\alpha(h) \rangle - \int_E \langle h^*,\gamma(h,x) \rangle F(dx) + \int_{E \setminus B} \langle h^*,\gamma(h,x) \rangle F(dx) \geq 0.
\end{align*}
Therefore, by Theorem \ref{thm-Wiener} the closed convex cone $K$ is invariant for the SPDE (\ref{SPDE-B-2}), completing the proof in case of Lipschitz continuous coefficients.

Now, consider the general situation where $(\alpha,\sigma,\gamma)$ are locally Lipschitz and satisfy the linear growth condition. By the same arguments as in the proof of \cite[Thm. 8.1]{Tappe-cones} it suffices that for each $n \in \bbn$ the closed convex cone $K$ is invariant for the SPDE
\begin{align}\label{SPDE-retract-n}
\left\{
\begin{array}{rcl}
dr_t^n & = & ( A r_t^n + \alpha_n(r_t^n) ) dt + \sigma_n(r_t^n) dW_t
\\ && + \int_E \gamma_n(r_{t-}^n,x) (N(dt,dx) - F(dx)dt) \medskip
\\ r_0^n & = & h_0
\end{array}
\right.
\end{align}
where $\alpha_n : H \to H$, $\sigma_n : H \to L_2^0(H)$ and $\gamma_n : H \times E \to H$ are given by
\begin{align*}
\alpha_n := \alpha \circ R_n, \quad \sigma_n := \sigma \circ R_n \quad \text{and} \quad \gamma_n := \gamma \circ R_n.
\end{align*}
Here $R_n : H \to H$ denotes the retraction
\begin{align*}
R_n(h) := \lambda_n(h) h, \quad h \in H,
\end{align*}
where the function $\lambda_n : H \to (0,1]$ is given by
\begin{align*}
\lambda_n(h) := \bbI_{\{ \| h \| \leq n \}} + \frac{n}{\| h \|} \bbI_{\{ \| h \| > n \}}, \quad h \in H.
\end{align*}
Let $h \in K$ be arbitrary. Then we have
\begin{align*}
h + \gamma_n(h,x) = h + \gamma(\lambda_n(h) h,x) = \underbrace{(1 - \lambda_n(h)) h}_{\in K} + \underbrace{\lambda_n(h) h + \gamma(\lambda_n(h) h,x)}_{\in K} \in K
\end{align*}
for $F$-almost all $x \in E$. Now, let $(h^*,h) \in G \times K$ be such that $\la h^*,h \ra = 0$. Then we also have $\la h^*,\lambda_n(h) h \ra = 0$, and hence
\begin{align*}
\langle h^*,\sigma_n^j(h) \rangle = \langle h^*,\sigma^j(\lambda_n(h)h) \rangle = 0, \quad j \in \bbn
\end{align*}
as well as
\begin{align*}
&\langle h^*,\alpha_n(h) \rangle - \int_E \langle h^*,\gamma_n(h,x) \rangle F(dx)
\\ &= \langle h^*,\alpha(\lambda_n(h)h) \rangle - \int_E \langle h^*,\gamma(\lambda_n(h)h,x) \rangle F(dx) \geq 0.
\end{align*}
Consequently, by the first part of this proof, the closed convex cone $K$ is invariant for the SPDE (\ref{SPDE-retract-n}), completing the proof.
\end{proof}

\begin{theorem}\label{thm-general-2}
We suppose that the following conditions are fulfilled:
\begin{enumerate}
\item The coefficients $(\alpha,\sigma,\gamma)$ are locally Lipschitz and satisfy the linear growth condition.

\item The semigroup $(S_t)_{t \geq 0}$ is pseudo-contractive.

\item The closed convex cone $K$ is invariant for the semigroup $(S_t)_{t \geq 0}$.

\item The semigroup $(S_t)_{t \geq 0}$ is a local semigroup relative to $G$.
\end{enumerate}
If the closed convex cone $K$ is invariant for the SPDE (\ref{SPDE}), then we have (\ref{main-1}), and for all $(h^*,h) \in G \times K$ with $\langle h^*,h \rangle = 0$ we have (\ref{main-3}) and (\ref{main-4}).
\end{theorem}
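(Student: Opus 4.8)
The plan is to deduce this necessity statement from the necessity half of the characterization \cite[Thm. 1.1]{Tappe-cones}, and then to simplify the resulting conditions using the local semigroup hypothesis (4). Recall that \cite[Thm. 1.1]{Tappe-cones} asserts that, under the standing assumptions (1)--(3), invariance of $K$ for the SPDE (\ref{SPDE}) forces the jump condition (\ref{main-1-intro}) together with (\ref{main-3-intro}) and (\ref{main-4-intro}) for all $(h^*,h) \in D$. The first point I would establish is that the necessity direction of that theorem does \emph{not} rely on the assumption that $K$ is generated by an unconditional Schauder basis: that structural hypothesis enters only in the sufficiency direction (through the Yosida and finite-rank approximations), whereas necessity is obtained by a purely local analysis of the real-valued process $t \mapsto \la h^*,r_t \ra$ near $t = 0$, which stays nonnegative because $h^* \in G \subset K^*$ and $r_t \in K$. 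Hence necessity is available in the present setting, where we only assume (1)--(4).

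Second, I would translate the conclusion of \cite[Thm. 1.1]{Tappe-cones} using hypothesis (4). Since $(S_t)_{t \geq 0}$ is a local semigroup relative to $(K,G)$, Remark \ref{rem-local-semigroup} gives
\begin{align*}
D = \{ (h^*,h) \in G \times K : \la h^*,h \ra = 0 \} \quad \text{and} \quad \liminf_{t \downarrow 0} \frac{\la h^*,S_t h \ra}{t} = 0 \ \text{ for all } (h^*,h) \in D.
\end{align*}
Consequently the $\liminf$ term drops out of (\ref{main-3-intro}), which becomes exactly (\ref{main-3}), while (\ref{main-4-intro}) is (\ref{main-4}), and the index set over which these are required is precisely the boundary set $\{ (h^*,h) \in G \times K : \la h^*,h \ra = 0 \}$. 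The jump condition (\ref{main-1-intro}) is literally (\ref{main-1}). This yields all three claimed conditions.

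The main obstacle I anticipate is the first point, namely making rigorous that the necessity half of \cite[Thm. 1.1]{Tappe-cones} is independent of the Schauder basis assumption. Concretely, I would revisit the three ingredients of that argument and check that none of them invokes a basis: (i) the jump condition (\ref{main-1}) follows from the fact that, started at $h \in K$, the solution must remain in $K$ across each jump, so that $r_{s-} + \gamma(r_{s-},x) \in K$, and letting $s \downarrow 0$ together with the mark intensity governed by $F$ yields $h + \gamma(h,x) \in K$ for $F$-almost all $x$; (ii) the parallel condition (\ref{main-4}) is forced because a nonzero value $\la h^*,\sigma^j(h) \ra$ at a boundary pair would drive $\la h^*,r_t \ra$ strictly negative with positive probability; and (iii) the drift condition then follows from the $\liminf$-characterization of the infinitesimal drift of $\la h^*,r_t \ra$, now reduced by (4). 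Should the necessity proof in \cite{Tappe-cones} turn out to be entangled with the basis, I would instead reprove these three local statements directly for the mild solution, which is the genuinely technical part of the argument.
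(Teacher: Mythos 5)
Your proposal is correct and takes essentially the same route as the paper: the paper deduces the theorem in one line from the standalone necessity result \cite[Thm. 3.1]{Tappe-cones} (which, exactly as you anticipated, is stated without any Schauder basis hypothesis) together with Remark \ref{rem-local-semigroup}, which identifies $D$ with the boundary set $\{ (h^*,h) \in G \times K : \la h^*,h \ra = 0 \}$ and eliminates the limes inferior term. The only difference is bibliographic: you cite the combined characterization \cite[Thm. 1.1]{Tappe-cones} and then argue that its necessity half is independent of the basis assumption, whereas the paper simply invokes the separately stated necessity theorem, so your anticipated ``main obstacle'' does not arise.
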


\begin{proof}
Taking into account Remark \ref{rem-local-semigroup}, this is a consequence of \cite[Thm. 3.1]{Tappe-cones}.
\end{proof}

If the drift $\alpha$ of the SPDE (\ref{SPDE}) vanishes, then condition (\ref{main-3}) simplifies as follows.

\begin{proposition}\label{prop-alpha-zero}
Suppose that $\alpha = 0$, and that condition (\ref{main-1}) is fulfilled. Then the following statements are equivalent:
\begin{enumerate}
\item[(i)] For all $(h^*,h) \in G \times K$ with $\langle h^*,h \rangle = 0$ we have (\ref{main-3}).

\item[(ii)] For all $(h^*,h) \in G \times K$ with $\langle h^*,h \rangle = 0$ we have
\begin{align}\label{main-5}
\la h^*, \gamma(h,x) \ra = 0 \quad \text{for $F$-almost all $x \in E$.}
\end{align}
\end{enumerate}
\end{proposition}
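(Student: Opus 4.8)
The plan is to observe that condition (\ref{main-1}) already forces the integrand in (\ref{main-3}) to be nonnegative, so that (\ref{main-3}) (with $\alpha = 0$) collapses to the statement that a nonnegative integral vanishes. Concretely, I would first fix an arbitrary pair $(h^*,h) \in G \times K$ with $\la h^*,h \ra = 0$. Since $G \subset K^*$ and, by (\ref{main-1}), $h + \gamma(h,x) \in K$ for $F$-almost all $x \in E$, I obtain
\begin{align*}
\la h^*, \gamma(h,x) \ra = \la h^*, h + \gamma(h,x) \ra - \la h^*, h \ra = \la h^*, h + \gamma(h,x) \ra \geq 0
\end{align*}
for $F$-almost all $x \in E$. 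This nonnegativity is the heart of the matter; in particular it guarantees that the integral $\int_E \la h^*, \gamma(h,x) \ra F(dx)$ is well defined as an element of $[0,\infty]$.

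With this established, I would treat the two implications separately. For (i) $\Rightarrow$ (ii): since $\alpha = 0$, condition (\ref{main-3}) reads $\int_E \la h^*,\gamma(h,x) \ra F(dx) \leq 0$; combining this with the nonnegativity of the integrand, the integral of a nonnegative function being at most $0$ forces $\la h^*,\gamma(h,x) \ra = 0$ for $F$-almost all $x \in E$, which is exactly (\ref{main-5}). For (ii) $\Rightarrow$ (i): if (\ref{main-5}) holds, then the integrand vanishes $F$-almost everywhere, so the integral equals $0$ and (\ref{main-3}) holds (with equality, again using $\alpha = 0$).

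I do not expect any genuine obstacle here. The only points requiring mild care are bookkeeping ones: the $F$-null exceptional set in (\ref{main-1}) may depend on $h$, but since $h$ is held fixed in each pair under consideration this is harmless, and the nonnegativity of the integrand sidesteps any separate integrability discussion, as it renders the integral well defined in $[0,\infty]$ regardless of whether $\la h^*,\gamma(h,\cdot) \ra$ a priori lies in $L^1(F)$.
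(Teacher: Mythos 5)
Your proposal is correct and follows essentially the same route as the paper's own proof: both first use (\ref{main-1}) together with $\la h^*,h \ra = 0$ to show $\la h^*,\gamma(h,x) \ra \geq 0$ for $F$-almost all $x$, and then observe that with $\alpha = 0$ condition (\ref{main-3}) becomes $\int_E \la h^*,\gamma(h,x) \ra F(dx) \leq 0$, so the nonnegative integrand must vanish $F$-almost everywhere. Your remarks on the well-definedness of the integral in $[0,\infty]$ are a harmless elaboration of what the paper leaves implicit.
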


\begin{proof}
Let $(h^*,h) \in G \times K$ with $\langle h^*,h \rangle = 0$ be arbitrary. Then for $F$-almost all $x \in E$ we have
\begin{align*}
\la h^*, \gamma(h,x) \ra = \la h^*, h + \gamma(h,x) \ra \geq 0.
\end{align*}
Furthermore, since $\alpha = 0$, condition (\ref{main-3}) can be expressed as
\begin{align*}
\int_E \la h^*,\gamma(h,x) \ra F(dx) \leq 0,
\end{align*}
which provides the stated equivalence (i) $\Leftrightarrow$ (ii).
\end{proof}

\section{Isomorphic transformations of closed convex cones}\label{sec-transformations}

In this section we investigate isomorphic transformations of closed convex cones, and present a related invariance result. Let $H$ and $\bbh$ be separable Hilbert spaces. We start with two auxiliary results about linear isomorphisms.

\begin{lemma}\label{lemma-adj-isom}
Let $T \in L(\bbh,H)$ be an isomorphism. Then the following statements are true:
\begin{enumerate}
\item The adjoint operator $T^* \in L(H,\bbh)$ is also an isomorphism.

\item The operator $(T^{-1})^* \in L(\bbh,H)$ is given by $(T^{-1})^* = (T^*)^{-1}$.
\end{enumerate}
\end{lemma}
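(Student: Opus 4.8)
The plan is to prove both statements together, since they are closely linked. The key fact is that an isomorphism $T \in L(\bbh,H)$ is, by definition, a bounded linear bijection whose inverse $T^{-1} \in L(H,\bbh)$ is also bounded; I will use this bijectivity of $T^{-1}$ as the main lever.

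First I would establish the second statement, as it immediately implies the first. The identity relations $T T^{-1} = \Id_H$ and $T^{-1} T = \Id_{\bbh}$ are available because $T$ is an isomorphism. The natural step is to apply the functorial behaviour of the adjoint to these identities. Taking adjoints in $T T^{-1} = \Id_H$ gives $(T^{-1})^* T^* = \Id_H$ (using $(ST)^* = T^* S^*$ and $(\Id_H)^* = \Id_H$), and taking adjoints in $T^{-1} T = \Id_{\bbh}$ gives $T^* (T^{-1})^* = \Id_{\bbh}$. These two equations together exhibit $(T^{-1})^*$ as a two-sided inverse of $T^*$:
\begin{align*}
(T^{-1})^* T^* = \Id_H \quad \text{and} \quad T^* (T^{-1})^* = \Id_{\bbh}.
\end{align*}
Since $(T^{-1})^* \in L(\bbh,H)$ is bounded (the adjoint of a bounded operator is bounded, with the same norm), this shows $T^*$ is invertible with bounded inverse, and that this inverse is precisely $(T^{-1})^*$; that is $(T^*)^{-1} = (T^{-1})^*$.

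This establishes the second statement, and the first follows as a corollary: the displayed relations show $T^*$ admits a bounded two-sided inverse, so $T^*$ is itself an isomorphism. I do not expect any genuine obstacle here; the only point requiring a little care is keeping track of which identity operator ($\Id_H$ versus $\Id_{\bbh}$) appears after each adjunction and ensuring the composition order is reversed correctly when the adjoint is taken, so that the operators compose on the right spaces. Everything else is a direct and routine application of the standard adjoint calculus for bounded operators between Hilbert spaces.
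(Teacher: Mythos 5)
Your proof is correct, and it takes a genuinely different route from the paper. The paper first establishes that $T^*$ is a bijection by invoking the general duality theorems for adjoints from Rudin (injectivity of $T^*$ and closedness of $\ran(T^*)$ from surjectivity of $T$, then $\ran(T^*)^{\perp} = \{0\}$ to get surjectivity), and only afterwards verifies the formula $(T^{-1})^* = (T^*)^{-1}$ by a direct inner-product computation $\la (T^*)^{-1}h, g\ra_H = \la h, T^{-1}g\ra_{\bbh}$. You instead obtain both statements in one stroke by applying the adjoint to the identities $TT^{-1} = \Id_H$ and $T^{-1}T = \Id_{\bbh}$, using $(ST)^* = T^*S^*$, which exhibits the bounded operator $(T^{-1})^*$ as a two-sided inverse of $T^*$. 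Your bookkeeping of the spaces is right: $(T^{-1})^*T^* = \Id_H$ and $T^*(T^{-1})^* = \Id_{\bbh}$ compose on the correct sides. Your argument is shorter and more elementary, requiring only the functoriality of the adjoint and the fact that adjoints of bounded operators are bounded; the paper's argument uses heavier closed-range machinery but makes the duality between surjectivity of $T$ and injectivity of $T^*$ explicit. Both are complete proofs.
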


\begin{proof}
By \cite[Thm. 4.15]{Rudin} the adjoint operator $T^*$ is one-to-one and $\ran(T^*)$ is closed. Furthermore, by \cite[Thm. 4.12]{Rudin} we have $\ran(T^*)^{\perp} = \{ 0 \}$, showing that $T^*$ is surjective. Moreover, for all $h \in \bbh$ and $g \in H$ we have
\begin{align*}
\la (T^*)^{-1}h,g \ra_H = \la (T^*)^{-1}h,T T^{-1} g \ra_H = \la T^* (T^*)^{-1} h,T^{-1} g \ra_{\bbh} = \la h,T^{-1} g \ra_{\bbh},
\end{align*}
showing that $(T^{-1})^* = (T^*)^{-1}$.
\end{proof}

\begin{lemma}\label{lemma-isom-isom}
Let $T \in L(\bbh,H)$ be an isometric isomorphism. Then the following statements are true:
\begin{enumerate}
\item The adjoint operator $T^* \in L(H,\bbh)$ is also an isometric isomorphism.

\item We have $T^{-1} = T^*$.
\end{enumerate}
\end{lemma}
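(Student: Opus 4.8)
The plan is to reduce both statements to the single identity $T^* = T^{-1}$, and to derive that identity from the fact that an isometric isomorphism of Hilbert spaces preserves inner products. First I would recall that, since $T$ is an isometry, the polarization identity upgrades norm preservation to inner-product preservation, giving $\la Th, Tg \ra_H = \la h, g \ra_{\bbh}$ for all $h, g \in \bbh$. Rewriting the left-hand side by moving $T$ across as its adjoint yields $\la T^* T h, g \ra_{\bbh} = \la h, g \ra_{\bbh}$ for all $h, g \in \bbh$, and since $g$ is arbitrary this forces $T^* T = \Id_{\bbh}$.

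Next I would exploit the invertibility of $T$. Because $T$ is an isomorphism, $T^{-1} \in L(H,\bbh)$ exists, and right-multiplying the identity $T^* T = \Id_{\bbh}$ by $T^{-1}$ gives the chain
\begin{align*}
T^* = T^*(T T^{-1}) = (T^* T) T^{-1} = \Id_{\bbh} T^{-1} = T^{-1},
\end{align*}
which is the second assertion.

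For the first assertion I would invoke Lemma \ref{lemma-adj-isom}: since $T$ is an isomorphism, its adjoint $T^*$ is an isomorphism as well, so it only remains to verify that $T^*$ is isometric. Using $T^* = T^{-1}$ together with the isometry of $T$, for every $h \in H$ we have $\| T^* h \|_{\bbh} = \| T^{-1} h \|_{\bbh} = \| T(T^{-1} h) \|_H = \| h \|_H$, so $T^*$ preserves norms and is therefore an isometric isomorphism.

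The computations are entirely routine, so I do not anticipate a genuine obstacle; the only points that require a moment's care are the passage from norm preservation to inner-product preservation via polarization, and checking that the associativity manipulation respects the correct domains and codomains (noting $T^{-1}\colon H\to\bbh$ and $T^*T\colon\bbh\to\bbh$, so that every intermediate term in the displayed chain is a map $H\to\bbh$ matching $T^*$).
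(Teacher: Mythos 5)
Your proof is correct and follows essentially the same route as the paper: both arguments rest on the fact that the isometry $T$ preserves inner products and then combine this with invertibility to identify $T^*$ with $T^{-1}$, citing Lemma \ref{lemma-adj-isom} for the isomorphism property. The only cosmetic difference is that you first establish $T^*T = \Id_{\bbh}$ and cancel, whereas the paper verifies directly that $T^{-1}$ satisfies the defining relation of the adjoint.
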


\begin{proof}
By Lemma \ref{lemma-adj-isom} the adjoint operator $T^* \in L(H,\bbh)$ is an isomorphism. Furthermore, for all $h \in H$ and $g \in \bbh$ we have
\begin{align*}
\la T^{-1} h,g \ra_{\bbh} = \la T T^{-1} h, Tg \ra_H = \la h,Tg \ra_H,
\end{align*}
showing that $T^{-1} = T^*$, and for all $h,g \in H$ we have
\begin{align*}
\la T^* h, T^* g \ra_{\bbh} = \la h, T T^* g \ra_H = \la h,g \ra_H,
\end{align*}
showing that $T^*$ is an isometry.
\end{proof}

Now, we proceed with isomorphic transformations of closed convex cones. For what follows, let $T \in L(\bbh,H)$ be an isomorphism, and set $S := (T^{-1})^* = (T^*)^{-1}$, where the latter identity is a consequence of Lemma \ref{lemma-adj-isom}. Note that $S^* = T^{-1}$. If $T$ is an isometric isomorphism, then by Lemma \ref{lemma-isom-isom} we have $S = T$.

\begin{lemma}\label{lemma-new-cone}
Let $(\bbk,\bbg)$ be a closed convex cone in $\bbh$. Then $(K,G) := (T \bbk, S \bbg)$ is a closed convex cone in $H$.
\end{lemma}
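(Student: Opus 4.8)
The plan is to show that $(K,G) := (T\bbk, S\bbg)$ is a closed convex cone by verifying each property in turn, relying throughout on the fact that $T$ and $S = (T^{-1})^*$ are isomorphisms, together with the adjoint relation $S^* = T^{-1}$ recorded just before the statement. First I would check that $K = T\bbk$ is a convex cone: since $T$ is linear, for $\lambda,\mu \geq 0$ and $h = Tu$, $g = Tv$ with $u,v \in \bbk$ we have $\lambda h + \mu g = T(\lambda u + \mu v) \in T\bbk = K$, because $\bbk$ is a convex cone. Closedness of $K$ follows because $T$ is an isomorphism, hence a homeomorphism, so it maps the closed set $\bbk$ to a closed set.

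The main content is verifying that $G = S\bbg$ is a generating system of $K$ in the sense of Definition \ref{def-gen-system}; that is, $G \subset K^*$ and $G^* = K$. The natural computation is the duality identity
\begin{align*}
\la S h^*, T h \ra_H = \la h^*, S^* T h \ra_{\bbh} = \la h^*, T^{-1} T h \ra_{\bbh} = \la h^*, h \ra_{\bbh},
\end{align*}
valid for all $h^*, h \in \bbh$, using $S^* = T^{-1}$. This shows that the pairing is preserved under the transformation: $\la S h^*, T h \ra_H \geq 0$ if and only if $\la h^*, h \ra_{\bbh} \geq 0$. From this, for $h^* \in \bbg \subset \bbk^*$ and any $h \in K = T\bbk$, writing $h = Tu$ with $u \in \bbk$ gives $\la Sh^*, h \ra_H = \la h^*, u \ra_{\bbh} \geq 0$, so $G = S\bbg \subset K^*$. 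For the equality $G^* = K$, I would argue both inclusions: if $g \in H$ satisfies $\la Sh^*, g \ra_H \geq 0$ for all $h^* \in \bbg$, then setting $u := T^{-1}g$ the identity gives $\la h^*, u \ra_{\bbh} \geq 0$ for all $h^* \in \bbg$, so $u \in \bbg^* = \bbk$ (since $\bbg$ generates $\bbk$), whence $g = Tu \in T\bbk = K$; the reverse inclusion is the statement $G \subset K^*$ already established, which gives $K \subset G^*$ by double duality, or can be read off directly.

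The step I expect to require the most care is keeping the roles of $T$ and $S$ straight and invoking the generating-system hypothesis on $\bbg$ correctly, since the whole argument hinges on the adjoint identity $S^* = T^{-1}$ transporting the defining inequalities of $\bbk$ to those of $K$. There is no genuine analytic obstacle here: everything reduces to the single pairing-preservation identity together with $T$ being a homeomorphism. I would therefore organize the write-up around that identity, deduce $G \subset K^*$ and $G^* = K$ from it, and note closedness and the cone property separately as the easy parts.
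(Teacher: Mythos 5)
Your proposal is correct and rests on the same key identity as the paper's proof, namely $\la S g^*, h \ra_H = \la g^*, T^{-1} h \ra_{\bbh}$ (equivalently your pairing-preservation formula $\la S h^*, T h \ra_H = \la h^*, h \ra_{\bbh}$); the paper merely packages the two inclusions you verify separately into a single chain of set identities, pushing $T$ through the intersection defining $\bbk$. The only cosmetic difference is that you check $G \subset K^*$ explicitly, whereas in the paper this follows automatically from $G^* = K$ via $G \subset G^{**} = K^*$.
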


\begin{proof}
It is obvious that $K = T \bbk$ is a closed convex cone in $K$. Furthermore, since $\bbg$ is a generating system of $\bbk$, we obtain
\begin{align*}
K = T \bbk &= T \bigg( \bigcap_{g^* \in \bbg} \{ g \in \bbh : \la g^*,g \ra_{\bbh} \geq 0 \} \bigg) = \bigcap_{g^* \in \bbg} T \big( \{ g \in \bbh : \la g^*,g \ra_{\bbh} \geq 0 \} \big)
\\ &= \bigcap_{g^* \in \bbg} \{ h \in H : \la g^*,T^{-1} h \ra_{\bbh} \geq 0 \} = \bigcap_{g^* \in \bbg} \{ h \in H : \la S g^*, h \ra_H \geq 0 \}
\\ &= \bigcap_{h^* \in G} \{ h \in H : \la h^*,h \ra_H \geq 0 \},
\end{align*}
showing that $G$ is a generating system of $K$.
\end{proof}

\begin{lemma}\label{lemma-transform-self-dual}
Let $\bbk$ be closed convex cone in $\bbh$ which is self-dual, and suppose that $T$ is an isometric isomorphism. Then the closed convex cone $K := T \bbk$ is also self-dual.
\end{lemma}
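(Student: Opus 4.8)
The plan is to reduce the whole statement to a single commutation identity, namely $(T\bbk)^* = T(\bbk^*)$, valid for any \emph{isometric} isomorphism $T$, and then to read off the conclusion from the self-duality of $\bbk$.

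First I would establish this identity. Recall from Lemma \ref{lemma-isom-isom} that for an isometric isomorphism we have $T^* = T^{-1}$. Take any $h \in H$. Then $h \in K^* = (T\bbk)^*$ means precisely that $\la h, Tg \ra_H \geq 0$ for all $g \in \bbk$. Using $T^* = T^{-1}$, rewrite $\la h, Tg \ra_H = \la T^* h, g \ra_{\bbh} = \la T^{-1} h, g \ra_{\bbh}$. Hence $h \in (T\bbk)^*$ if and only if $\la T^{-1} h, g \ra_{\bbh} \geq 0$ for all $g \in \bbk$, i.e.\ if and only if $T^{-1} h \in \bbk^*$, i.e.\ if and only if $h \in T(\bbk^*)$. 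This proves $(T\bbk)^* = T(\bbk^*)$.

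Finally, since $\bbk$ is self-dual we have $\bbk^* = \bbk$, and therefore $K^* = (T\bbk)^* = T(\bbk^*) = T\bbk = K$, which is exactly the assertion that $K$ is self-dual.

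I do not expect a genuine obstacle here: the entire argument is the commutation of the dual-cone operation with the isometry, and this rests solely on the relation $T^* = T^{-1}$ furnished by Lemma \ref{lemma-isom-isom}. The one point that deserves care is the use of the \emph{isometric} (rather than merely bijective) hypothesis: for a general isomorphism one would only obtain $S\bbg$ as the image of a generating system, as in Lemma \ref{lemma-new-cone}, and the coincidence $S = T$ for isometric isomorphisms is precisely what makes $T(\bbk^*)$ equal to the dual of $T\bbk$.
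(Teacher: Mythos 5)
Your proof is correct and follows essentially the same route as the paper: the paper invokes Lemma \ref{lemma-new-cone} (whose proof is exactly your computation that the dual cone transforms by $S=(T^*)^{-1}$) together with $S=T$ from Lemma \ref{lemma-isom-isom}, while you carry out that computation inline as the identity $(T\bbk)^*=T(\bbk^*)$. No gap.
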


\begin{proof}
By Lemma \ref{lemma-isom-isom} we have $S = T$. Therefore, by Lemma \ref{lemma-new-cone} the set $K$ is a generating system of the closed convex cone $K$; that is $K^* = K$.
\end{proof}

\begin{lemma}\label{lemma-Schauder-basis}
Let $\{ e_k^*,e_k \}_{k \in \bbn}$ is an unconditional Schauder basis of $\bbh$. Then the system $\{ S e_k^*,T e_k \}_{k \in \bbn}$ is an unconditional Schauder basis of $H$.
\end{lemma}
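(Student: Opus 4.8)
The plan is to transport the expansion in $\bbh$ to $H$ by means of the isomorphism $T$, exploiting the identity $S^* = T^{-1}$ recorded before the statement. First I would establish the bridge between the new coordinate functionals and the old coefficients: since $S \in L(\bbh,H)$ with $S^* = T^{-1}$, for every $h \in H$ and every $k \in \bbn$ we have
\begin{align*}
\la S e_k^*, h \ra_H = \la e_k^*, S^* h \ra_{\bbh} = \la e_k^*, T^{-1} h \ra_{\bbh}.
\end{align*}
This turns the coordinate functionals of the basis $\{ e_k^*,e_k \}$ in $\bbh$ into the candidate coordinate functionals $\{ S e_k^* \}$ in $H$.

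Next I would prove the representation. Fix $h \in H$ and apply the Schauder expansion of $\bbh$ to the vector $T^{-1}h$, obtaining the unconditionally convergent series $T^{-1}h = \sum_{k \in \bbn} \la e_k^*, T^{-1}h \ra_{\bbh} e_k$. Applying the bounded linear operator $T$ and using its continuity to interchange $T$ with the sum yields
\begin{align*}
h = T(T^{-1}h) = \sum_{k \in \bbn} \la e_k^*, T^{-1}h \ra_{\bbh} \, T e_k = \sum_{k \in \bbn} \la S e_k^*, h \ra_H \, T e_k,
\end{align*}
where the last equality uses the bridge identity. This is precisely the desired representation of $h$ with respect to the system $\{ S e_k^*, T e_k \}_{k \in \bbn}$.

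For the unconditional convergence I would invoke the classical fact that a bounded linear operator maps an unconditionally convergent series to an unconditionally convergent one: every rearrangement of $\sum_k \la e_k^*, T^{-1}h \ra_{\bbh} e_k$ converges in $\bbh$ to $T^{-1}h$, and since $T$ is continuous, the image of each rearrangement converges in $H$ to $T(T^{-1}h) = h$; hence the series $\sum_k \la S e_k^*, h \ra_H \, T e_k$ converges unconditionally. Verifying that unconditional convergence is preserved under $T$ is the only point requiring a little care, and this is the main (though classical) obstacle.

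Finally, for the uniqueness of the coefficients, suppose $(h_k)_{k \in \bbn} \subset \bbr$ is such that $h = \sum_{k \in \bbn} h_k \, T e_k$ converges unconditionally in $H$. Applying the bounded operator $T^{-1}$ and again interchanging it with the sum, I obtain the unconditionally convergent expansion $T^{-1}h = \sum_{k \in \bbn} h_k e_k$ in $\bbh$. Since $\{ e_k^*, e_k \}_{k \in \bbn}$ is an unconditional Schauder basis of $\bbh$, its uniqueness clause forces $h_k = \la e_k^*, T^{-1}h \ra_{\bbh} = \la S e_k^*, h \ra_H$ for every $k \in \bbn$, by the bridge identity. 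This verifies all requirements in the definition and completes the proof.
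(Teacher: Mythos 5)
Your proposal is correct and follows essentially the same route as the paper: transport the expansion of $T^{-1}h$ through the bounded operator $T$, use $S^* = T^{-1}$ to identify the coefficients, note that continuity of $T$ preserves convergence of every rearrangement, and deduce uniqueness from the uniqueness clause in $\bbh$ via the isomorphism. The paper phrases the unconditionality step by fixing an arbitrary numeration of $\bbn$ and the uniqueness step by reducing to the case $\sum_k h_k T e_k = 0$, but these are only cosmetic differences from your argument.
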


\begin{proof}
Let $h \in H$ be arbitrary, and set $g := T^{-1} h \in \bbh$. Furthermore, let $(k_i)_{i \in \bbn}$ be an arbitrary numeration of $\bbn$. Then we have
\begin{align*}
g = \sum_{i=1}^{\infty} \la e_{k_i}^*,g \ra_{\bbh} \, e_{k_i},
\end{align*}
and hence
\begin{align*}
h = Tg = \sum_{i=1}^{\infty} \la e_{k_i}^*,g \ra_{\bbh} \, T e_{k_i} = \sum_{i=1}^{\infty} \la e_{k_i}^*,T^{-1} h \ra_{\bbh} \, T e_{k_i} = \sum_{i=1}^{\infty} \la S e_{k_i}^*,h \ra_H \, T e_{k_i},
\end{align*}
proving the representation
\begin{align*}
h = \sum_{k \in \bbn} \la S e_k^*, h \ra_H T e_k.
\end{align*}
In order to prove uniqueness of the series representation, let $(h_k)_{k \in \bbn} \subset \bbr$ be an arbitrary sequence such that
\begin{align*}
\sum_{k=1}^{\infty} h_k T e_k = 0.
\end{align*}
Then we have
\begin{align*}
T \bigg( \sum_{k=1}^{\infty} h_k e_k \bigg) = \sum_{k=1}^{\infty} h_k T e_k = 0.
\end{align*}
Since $T$ is an isomorphism, we obtain
\begin{align*}
\sum_{k=1}^{\infty} h_k e_k = 0.
\end{align*}
Therefore, and since $\{ e_k^*,e_k \}_{k \in \bbn}$ is an unconditional Schauder basis of $H$, we deduce that $h_k = 0$ for all $k \in \bbn$, which finishes the proof.
\end{proof}

\begin{lemma}\label{lemma-Schauder-cone}
Let $(\bbk,\bbg)$ be a closed convex cone in $\bbh$, which is generated by an unconditional Schauder basis. Then the closed convex cone $(K,G) := (T \bbk, S \bbg)$ is also generated by an unconditional Schauder basis.
\end{lemma}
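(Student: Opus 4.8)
The plan is to combine Lemma \ref{lemma-new-cone} and Lemma \ref{lemma-Schauder-basis}, after which the statement reduces to a short verification against Definition \ref{def-Schauder}.

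First, by Lemma \ref{lemma-new-cone} I already know that $(K,G) = (T\bbk, S\bbg)$ is a closed convex cone in $H$ with $G = S\bbg$ a generating system. It therefore remains only to exhibit an unconditional Schauder basis $\{ f_k^*, f_k \}_{k \in \bbn}$ of $H$ such that $G \subset \bigcup_{k \in \bbn} \lin\{ f_k^* \}$, which is exactly what Definition \ref{def-Schauder} demands. Since $(\bbk,\bbg)$ is generated by an unconditional Schauder basis, I fix an unconditional Schauder basis $\{ e_k^*, e_k \}_{k \in \bbn}$ of $\bbh$ with $\bbg \subset \bigcup_{k \in \bbn} \lin\{ e_k^* \}$. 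By Lemma \ref{lemma-Schauder-basis} the transported system $\{ S e_k^*, T e_k \}_{k \in \bbn}$ is an unconditional Schauder basis of $H$, and this is the candidate basis, with coefficient functionals $f_k^* = S e_k^*$ and basis vectors $f_k = T e_k$.

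Finally I verify the inclusion $G \subset \bigcup_{k \in \bbn} \lin\{ S e_k^* \}$. Let $h^* \in G = S\bbg$ be arbitrary and write $h^* = S g^*$ with $g^* \in \bbg$. By the choice of the Schauder basis of $\bbh$ there are an index $k \in \bbn$ and a scalar $\mu \in \bbr$ with $g^* = \mu e_k^*$, so that by linearity of $S$ we obtain $h^* = S g^* = \mu S e_k^* \in \lin\{ S e_k^* \}$. Hence $G \subset \bigcup_{k \in \bbn} \lin\{ S e_k^* \}$, and $(K,G)$ is generated by the unconditional Schauder basis $\{ S e_k^*, T e_k \}_{k \in \bbn}$.

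The argument is essentially bookkeeping, and I expect no genuine analytic obstacle, because the substantive work — that the image system is again an unconditional Schauder basis of $H$ — has already been carried out in Lemma \ref{lemma-Schauder-basis}. The only point requiring mild care is to match the two families in the correct roles: the generating system $G$ must lie in the span of the \emph{coefficient functionals} $S e_k^*$ rather than in the span of the basis vectors $T e_k$, and this is precisely the pairing delivered by Lemma \ref{lemma-Schauder-basis} together with the identity $G = S\bbg$ fixed in Lemma \ref{lemma-new-cone}.
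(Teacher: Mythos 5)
Your proof is correct and follows essentially the same route as the paper: fix an unconditional Schauder basis $\{ e_k^*, e_k \}_{k \in \bbn}$ of $\bbh$ witnessing the hypothesis, invoke Lemma \ref{lemma-Schauder-basis} to transport it to $\{ S e_k^*, T e_k \}_{k \in \bbn}$, and observe $G = S\bbg \subset \bigcup_{k \in \bbn} \lin\{ S e_k^* \}$. The paper's proof is just a terser version of the same argument, so there is nothing to add.
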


\begin{proof}
There exists an unconditional Schauder basis $\{ e_k^*,e_k \}_{k \in \bbn}$ of $\bbh$ such that
\begin{align*}
\bbg \subset \bigcup_{k \in \bbn} \lin \{ e_k^* \}.
\end{align*}
Therefore, we have
\begin{align*}
G = S \bbg \subset \bigcup_{k \in \bbn} \lin \{ S e_k^* \}.
\end{align*}
Furthermore, by Lemma \ref{lemma-Schauder-basis} the system $\{ S e_k^*,T e_k \}_{k \in \bbn}$ is an unconditional Schauder basis of $H$. This completes the proof.
\end{proof}

\begin{proposition}\label{prop-seq-trans}
Let $(\bbk,\bbg)$ be a closed convex cone in $\bbh$, which is approximately generated by an unconditional Schauder basis. Then the closed convex cone $(K,G) := (T \bbk, S \bbg)$ is also approximately generated by an unconditional Schauder basis.
\end{proposition}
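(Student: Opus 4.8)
The plan is to transport the entire approximating structure for $(\bbk,\bbg)$ across the isomorphisms $T$ and $S$, and then to verify the five conditions of Definition \ref{def-approx-Schauder} one at a time. Since $(\bbk,\bbg)$ is approximately generated by an unconditional Schauder basis, there exist a sequence of closed convex cones $(\bbk_n,\bbg_n)_{n \in \bbn}$, a constant $\bar{L}>0$ and a sequence $(\bar{\pi}_n)_{n \in \bbn} \subset \Lip_{\bar{L}}(\bbh)$ satisfying conditions (1)--(5) for $(\bbk,\bbg)$. I would set
\begin{align*}
K_n := T \bbk_n, \qquad G_n := S \bbg_n, \qquad \pi_n := T \bar{\pi}_n T^{-1}, \qquad n \in \bbn.
\end{align*}
The single computational tool used throughout is the adjoint identity
\begin{align*}
\la S g^*, T g \ra_H = \la g^*, g \ra_{\bbh} \quad \text{for all } g^*,g \in \bbh,
\end{align*}
which is immediate from $S^* = T^{-1}$. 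In particular, writing $h^* = S g^*$ and $h = T g$, the boundary condition $\la h^*,h \ra_H = 0$ is equivalent to $\la g^*,g \ra_{\bbh} = 0$; this dictionary is what lets me pull each assertion about $(K,G)$ back to the corresponding assertion about $(\bbk,\bbg)$.

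The conditions of a geometric or topological nature are routine. Condition (1) holds because each $(K_n,G_n) = (T\bbk_n, S\bbg_n)$ is generated by an unconditional Schauder basis by Lemma \ref{lemma-Schauder-cone}. Condition (2) follows by applying the bijection $T$ to the chain $\bbk \subset \bbk_{n+1} \subset \bbk_n$ and to $\bbk = \bigcap_n \bbk_n$ (using $T(\bigcap_n \bbk_n) = \bigcap_n T\bbk_n$), and by applying $S$ to $\bbg_n \subset \bbg_{n+1}$. For condition (4), I would take $L := \|T\|\,\bar{L}\,\|T^{-1}\|$, so that $\pi_n \in \Lip_L(H)$; the inclusion $\pi_n(K_n) = T\bar{\pi}_n T^{-1}(T\bbk_n) = T\bar{\pi}_n(\bbk_n) \subset T\bbk = K$ holds since $\bar{\pi}_n(\bbk_n) \subset \bbk$, and $\pi_n \to \Id$ follows pointwise from $\bar{\pi}_n \to \Id$ and continuity of $T, T^{-1}$.

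The two conditions where the adjoint identity does the work are (3) and (5), and these are where I would concentrate the verification. For condition (5), given $(h^*,h) = (Sg^*, Tg) \in G_n \times K_n$ with $\la h^*,h\ra_H = 0$, the identity gives $\la g^*,g\ra_{\bbh}=0$, whence $\la g^*,\bar{\pi}_n(g)\ra_{\bbh}=0$ by condition (5) for $(\bbk,\bbg)$, and then $\la h^*,\pi_n(h)\ra_H = \la Sg^*, T\bar{\pi}_n(g)\ra_H = \la g^*,\bar{\pi}_n(g)\ra_{\bbh} = 0$. For condition (3), given $(h^*,h) \in \bigcup_n G_n \times K$ with $\la h^*,h\ra_H = 0$, I write $h^* = Sg^*$ with $g^* \in \bbg_n$ and $h = Tg$ with $g \in \bbk$, obtain $\la g^*,g\ra_{\bbh}=0$, and apply condition (3) for $(\bbk,\bbg)$ to get $(g_m^*)_{m \in \bbn} \subset \bbg$ with $g_m^* \to g^*$ and $\la g_m^*,g\ra_{\bbh}=0$; then $h_m^* := Sg_m^* \in S\bbg = G$ satisfies $h_m^* \to h^*$ (by continuity of $S$) and $\la h_m^*,h\ra_H = \la g_m^*,g\ra_{\bbh} = 0$. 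I do not anticipate a genuine obstacle here: the whole argument is bookkeeping organized around the single identity $\la Sg^*,Tg\ra_H = \la g^*,g\ra_{\bbh}$, and the only point requiring a little care is keeping the roles of $T$ (acting on cone elements) and $S$ (acting on generating-system elements) straight, together with checking that the pullback preserves the defining inner-product relations exactly.
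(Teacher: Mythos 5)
Your proposal is correct and follows essentially the same route as the paper's own proof: the same transported data $K_n = T\bbk_n$, $G_n = S\bbg_n$, $\pi_n = T\bar{\pi}_n T^{-1}$, the same Lipschitz constant $\|T\|\bar{L}\|T^{-1}\|$, and the same key identity $\la S g^*, T g\ra_H = \la g^*,g\ra_{\bbh}$ (i.e.\ $S^* = T^{-1}$) to pull conditions (3) and (5) back to $(\bbk,\bbg)$. No gaps.
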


\begin{proof}
Since $(\bbk,\bbg)$ is approximately generated by an unconditional Schauder basis, with analogous notation the conditions from Definition \ref{def-approx-Schauder} are fulfilled. We define $(K_n,G_n) := (T \bbk_n,S \bbg_n)$ for each $n \in \bbn$. By Lemma \ref{lemma-Schauder-cone} the sequence $(K_n,G_n)_{n \in \bbn}$ is a sequence of closed convex cones, which are generated by an unconditional Schauder basis. Furthermore, for each $n \in \bbn$ we have $\bbk \subset \bbk_{n+1} \subset \bbk_n$ and $\bbg_n \subset \bbg_{n+1}$, which implies $K \subset K_{n+1} \subset K_n$ and $G_n \subset G_{n+1}$. Moreover, we have
\begin{align*}
K = T \bbk = T \bigg( \bigcap_{n \in \bbn} \bbk_n \bigg) = \bigcap_{n \in \bbn} T \bbk_n = \bigcap_{n \in \bbn} K_n.
\end{align*}
Let $(h^*,h) \in \bigcup_{n \in \bbn} G_n \times K$ with $\la h^*,h \ra_H = 0$ be arbitrary. We set $(g^*,g) := (S^{-1} h^*, T^{-1} h) \in \bigcup_{n \in \bbn} \bbg_n \times \bbk$. There exists a sequence $(g_m^*)_{m \in \bbn} \subset \bbg$ such that $g_m^* \to g^*$ as $m \to \infty$ and $\la g_m^*,g \ra_{\bbh} = 0$ for each $m \in \bbn$. We define the sequence $(h_m^*)_{m \in \bbn} \subset G$ as $h_m^* := S g_m^*$ for each $m \in \bbn$. Then we have $h_m^* \to h^*$ as $m \to \infty$ and
\begin{align*}
\la h_m^*,h \ra_H = \la S g_m^*, T g \ra_H = \la g_m^*,g \ra_{\bbh} = 0 \quad \text{for each $m \in \bbn$.}
\end{align*}
Next, we define the constant $M := \| T \| L \| T^{-1} \|$ and the sequence
\begin{align*}
(\Pi_n)_{n \in \bbn} \subset \Lip_M(H)
\end{align*}
as $\Pi_n := T \pi_n T^{-1}$ for each $n \in \bbn$. Then we have $\Pi_n \to \Id$ and
\begin{align*}
\Pi_n (K_n) = T \pi_n T^{-1} T \bbk_n = T \pi_n (\bbk_n) \subset T \bbk = K, \quad n \in \bbn. 
\end{align*}
Now, let $n \in \bbn$ and $(h^*,h) \in G_n \times K_n$ with $\la h^*,h \ra_H = 0$ be arbitrary. We set $(g^*,g) := (S^{-1} h^*,T^{-1} h) \in \bbg_n \times \bbk_n$. Then we have
\begin{align*}
\la g^*,g \ra_{\bbh} = \la S^{-1} h^*, T^{-1} h \ra_{\bbh} = \la S^{-1} h^*, S^* h \ra_{\bbh} = \la h^*,h \ra_H = 0.
\end{align*}
Therefore, we obtain
\begin{align*}
\la h^*,\Pi_n(h) \ra_H = \la S g^*, T \pi_n T^{-1} T g \ra_H = \la (T^*)^{-1} g^*, T \pi_n(g) \ra_H = \la g^*,\pi_n(g) \ra_{\bbh} = 0,
\end{align*}
completing the proof.
\end{proof}

Now, we are ready to prove the following invariance result. Consider the stochastic framework from Section \ref{sec-jump-diffusion}, and let $(K,G)$ be a closed convex cone in $H$.

\begin{theorem}\label{thm-transformed-cone}
Suppose that the following conditions are fulfilled:
\begin{enumerate}
\item The coefficients $(\alpha,\sigma,\gamma)$ are locally Lipschitz and satisfy the linear growth condition.

\item The semigroup $(S_t)_{t \geq 0}$ is pseudo-contractive.

\item The closed convex cone $K$ is invariant for the semigroup $(S_t)_{t \geq 0}$.

\item There are another separable Hilbert space $\bbh$, an isomorphism $T \in L(\bbh,H)$, and a closed convex cone $(\bbk,\bbg)$ in $\bbh$, which is approximately generated by an unconditional Schauder basis,  such that $(K,G) = (T \bbk, S \bbg)$, where $S := (T^{-1})^* = (T^*)^{-1}$.
\end{enumerate}
If we have (\ref{main-1}), and for all $(h^*,h) \in G \times K$ with $\langle h^*,h \rangle = 0$ we have (\ref{main-3}) and (\ref{main-4}), then the closed convex cone $K$ is invariant for the SPDE (\ref{SPDE}).
\end{theorem}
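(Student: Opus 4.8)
The plan is to recognize that this theorem is, once the preceding machinery is in place, essentially an immediate corollary of Theorem \ref{thm-general}. The strategy is to verify that all four hypotheses of Theorem \ref{thm-general} hold under the assumptions here, and then invoke it directly together with the assumed conditions (\ref{main-1}), (\ref{main-3}) and (\ref{main-4}).

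First I would observe that conditions (1), (2) and (3) of the present theorem are word-for-word conditions (1), (2) and (3) of Theorem \ref{thm-general}, so nothing needs to be done for those. The only gap is condition (4) of Theorem \ref{thm-general}, namely that the cone $(K,G)$ be approximately generated by an unconditional Schauder basis. This is precisely where the transformation hypothesis (4) of the present theorem enters. Since $(\bbk,\bbg)$ is approximately generated by an unconditional Schauder basis and $(K,G) = (T\bbk, S\bbg)$ with $S := (T^{-1})^* = (T^*)^{-1}$, Proposition \ref{prop-seq-trans} applies verbatim and yields that $(K,G)$ is itself approximately generated by an unconditional Schauder basis. This supplies the missing hypothesis.

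Having assembled all four hypotheses of Theorem \ref{thm-general}, and since the drift/jump/volatility conditions (\ref{main-1}), (\ref{main-3}) and (\ref{main-4}) are assumed to hold for all $(h^*,h) \in G \times K$ with $\langle h^*,h \rangle = 0$, I would then simply apply Theorem \ref{thm-general} to conclude that the closed convex cone $K$ is invariant for the SPDE (\ref{SPDE}). No genuinely new estimate or construction is required at the level of this theorem.

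The one point that carries all the actual work — and is therefore the only conceivable obstacle — is the stability of the property \emph{approximately generated by an unconditional Schauder basis} under the isomorphic transformation $T$. That obstacle has, however, already been cleared in the preceding development: Proposition \ref{prop-seq-trans} checks each of the five conditions of Definition \ref{def-approx-Schauder} for the transformed data, relying in turn on Lemmas \ref{lemma-adj-isom}, \ref{lemma-isom-isom}, \ref{lemma-Schauder-basis} and \ref{lemma-Schauder-cone} (adjoints of isomorphisms, the transformed Schauder basis, and preservation of the generating structure). Consequently, the proof of the present theorem reduces to citing Proposition \ref{prop-seq-trans} and Theorem \ref{thm-general}, with no further computation.
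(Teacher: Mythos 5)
Your proposal is correct and follows exactly the paper's own argument: the paper's proof likewise consists of citing Proposition \ref{prop-seq-trans} to transfer the approximate generation by an unconditional Schauder basis from $(\bbk,\bbg)$ to $(K,G)=(T\bbk,S\bbg)$, and then applying Theorem \ref{thm-general}. Nothing further is required.
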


\begin{proof}
Taking into account Proposition \ref{prop-seq-trans}, this is a consequence of Theorem \ref{thm-general}.
\end{proof}

\section{Products of closed convex cones}\label{sec-products}

In this section we investigate products of closed convex cones, and present related invariance results. Let $H_1,\ldots,H_m$ be separable Hilbert spaces. We define the new state space $\calh := H_1 \times \ldots \times H_m$, which, equipped with the inner product
\begin{align*}
\la h,g \ra_{\calh} := \sum_{i=1}^m \la h_i,g_i \ra_{H_i}, \quad h,g \in \calh
\end{align*}
is another separable Hilbert space. For each $i=1,\ldots,m$ we introduce the linear operator $\delta_i \in L(H_i,\calh)$ as
\begin{align}\label{delta-i}
(\delta_i h)_j := 
\begin{cases}
h, & \text{if $j=i$,}
\\ 0, & \text{otherwise,}
\end{cases}
\quad \text{for each $j=1,\ldots,m$.}
\end{align}
Note that for each $i=1,\ldots,m$ the adjoint operator $\delta_i^* \in L(\calh,H_i)$ is given by $\delta_i^* h = h_i$. 

\begin{lemma}\label{lemma-gen-product}
For each $i=1,\ldots,m$ let $(K_i,G_i)$ be a closed convex cone in $H_i$. Then
\begin{align}\label{prod-cone-def}
(\calk,\calg) := \bigg( K_1 \times \ldots \times K_m, \bigcup_{i=1}^m \delta_i(G_i) \bigg)
\end{align}
is a closed convex cone in $\calh$.
\end{lemma}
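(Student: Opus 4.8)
The plan is to verify the two properties encoded in the notation $(\calk,\calg)$: first that $\calk$ is a closed convex cone in $\calh$, and second, according to Definition \ref{def-gen-system}, that $\calg \subset \calk^*$ and $\calg^* = \calk$.

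First I would check that $\calk = K_1 \times \ldots \times K_m$ is a closed convex cone in $\calh$. Convexity and the cone property follow componentwise: for $\lambda,\mu \in \bbr_+$ and $h,g \in \calk$, each component $\lambda h_i + \mu g_i$ lies in $K_i$, since every $K_i$ is a convex cone, so $\lambda h + \mu g \in \calk$. Closedness follows because the topology on $\calh$ induced by $\la \cdot,\cdot \ra_{\calh}$ coincides with the product topology, so a convergent sequence in $\calk$ converges componentwise, and each $K_i$ is closed.

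The key computation is the adjoint identity $\la \delta_i(g^*),h \ra_{\calh} = \la g^*,\delta_i^* h \ra_{H_i} = \la g^*,h_i \ra_{H_i}$ for $g^* \in H_i$ and $h \in \calh$, which uses the stated formula $\delta_i^* h = h_i$ and decouples the inner product on $\calh$ into the componentwise inner products on the $H_i$. With this, for $h^* = \delta_i(g_i^*) \in \calg$ with $g_i^* \in G_i \subset K_i^*$ and for $h \in \calk$, we obtain $\la h^*,h \ra_{\calh} = \la g_i^*,h_i \ra_{H_i} \geq 0$, since $h_i \in K_i$; this shows $\calg \subset \calk^*$.

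It then remains to establish $\calg^* = \calk$. Unwinding the definition of the dual cone and applying the adjoint identity, one sees that $h \in \calg^*$ holds if and only if $\la g_i^*,h_i \ra_{H_i} \geq 0$ for every $i=1,\ldots,m$ and every $g_i^* \in G_i$; that is, if and only if $h_i \in G_i^* = K_i$ for each $i$, using that $G_i$ is a generating system of $K_i$. This is exactly the statement $h \in K_1 \times \ldots \times K_m = \calk$, which finishes the argument. I do not expect any genuine obstacle here; the only point requiring care is keeping track of $\delta_i^*$ correctly so that the dual condition splits cleanly across the factors.
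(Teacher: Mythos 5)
Your proposal is correct and follows essentially the same route as the paper: the paper also verifies that $\calk$ is a closed convex cone (dismissed as clear) and then rewrites $K_1 \times \ldots \times K_m$ as an intersection, using the adjoint identity $\la \delta_i h^*, g \ra_{\calh} = \la h^*, \delta_i^* g \ra_{H_i}$ to identify this intersection with $\calg^*$. You additionally spell out the componentwise closedness/convexity argument and the inclusion $\calg \subset \calk^*$, which the paper leaves implicit, but the core computation is identical.
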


\begin{proof}
It is clear that $\calk$ is a closed convex cone in $\calh$. Furthermore, we have
\begin{align*}
\calk &= \times_{i=1}^d \bigg( \bigcap_{h^* \in G_i} \{ h_i \in H_i : \la h^*,h_i \ra_{H_i} \geq 0 \} \bigg)
\\ &= \bigcap_{i=1}^d \bigcap_{h^* \in G_i} \{ g \in \calh : \la h^*,\delta_i^* g \ra_{H_i} \geq 0 \}
\\ &= \bigcap_{i=1}^d \bigcap_{h^* \in G_i} \{ g \in \calh : \la \delta_i h^*,g \ra_{\calh} \geq 0 \}
\\ &= \bigcap_{g^* \in \calg} \{ g \in \calh : \la g^*,g \ra_{\calh} \geq 0 \},
\end{align*}
showing that $\calg$ is a generating system of $\calk$.
\end{proof}

For the next auxiliary result, we agree on the notation $\bbn_m := \{ 1,\ldots,m \}$.

\begin{lemma}\label{lemma-Schauder-product}
For each $i=1,\ldots,m$ let $\{ e_{ik}^*,e_{ik} \}_{k \in \bbn}$ be an unconditional Schauder basis of $H_i$. Then $\{ \delta_i e_{ik}^*, \delta_i e_{ik} \}_{(i,k) \in \bbn_m \times \bbn}$ is an unconditional Schauder basis of $\calh$. 
\end{lemma}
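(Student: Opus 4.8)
The plan is to verify the two defining properties of an unconditional Schauder basis directly for the system $\{ \delta_i e_{ik}^*, \delta_i e_{ik} \}_{(i,k) \in \bbn_m \times \bbn}$, exploiting the fact that $\calh$ is a finite product and that the $\delta_i$ are isometric embeddings with $\delta_i^* \delta_j = \bbI_{\{i=j\}}$ and $\sum_{i=1}^m \delta_i \delta_i^* = \Id_{\calh}$. First I would observe that any $h \in \calh$ decomposes canonically as $h = \sum_{i=1}^m \delta_i h_i$, where $h_i = \delta_i^* h \in H_i$. Applying the given Schauder expansion in each factor $H_i$, namely $h_i = \sum_{k \in \bbn} \la e_{ik}^*, h_i \ra_{H_i} e_{ik}$, and then pushing forward through the \emph{bounded} linear operator $\delta_i$, yields
\begin{align*}
\delta_i h_i = \sum_{k \in \bbn} \la e_{ik}^*, \delta_i^* h \ra_{H_i} \delta_i e_{ik} = \sum_{k \in \bbn} \la \delta_i e_{ik}^*, h \ra_{\calh} \, \delta_i e_{ik},
\end{align*}
where the last equality uses the adjoint relation $\la e_{ik}^*, \delta_i^* h \ra_{H_i} = \la \delta_i e_{ik}^*, h \ra_{\calh}$. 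Summing over the finitely many $i$ gives the representation $h = \sum_{(i,k)} \la \delta_i e_{ik}^*, h \ra_{\calh}\, \delta_i e_{ik}$.

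Next I would address unconditional convergence. Since each inner-factor series converges unconditionally in $H_i$ and $\delta_i$ is continuous, each pushed-forward series $\sum_k \la \delta_i e_{ik}^*, h\ra_{\calh}\, \delta_i e_{ik}$ converges unconditionally in $\calh$. The total series over the index set $\bbn_m \times \bbn$ is a finite sum ($m$ terms) of unconditionally convergent series, and a finite sum of unconditionally convergent series is again unconditionally convergent; this is the mild technical point, and I would justify it by noting that for any enumeration of $\bbn_m \times \bbn$ the partial sums can be regrouped by the first coordinate, with only finitely many groups, so Cauchy-ness over arbitrary finite subsets follows from the same property in each factor.

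Finally, for uniqueness of coefficients, suppose $(h_{ik})_{(i,k)} \subset \bbr$ satisfies $\sum_{(i,k)} h_{ik}\, \delta_i e_{ik} = 0$ with unconditional convergence. I would apply the bounded operator $\delta_j^*$ to the whole series (legitimate by continuity and unconditional convergence), using $\delta_j^* \delta_i e_{ik} = \bbI_{\{i=j\}} e_{ik}$, to obtain $\sum_{k \in \bbn} h_{jk}\, e_{jk} = 0$ in $H_j$ for each fixed $j$. Since $\{ e_{jk}^*, e_{jk}\}_{k \in \bbn}$ is an unconditional Schauder basis of $H_j$, the uniqueness property there forces $h_{jk} = 0$ for all $k$, and as $j$ ranges over $\bbn_m$ all coefficients vanish. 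The main obstacle I anticipate is not any single hard estimate but rather the bookkeeping around unconditional convergence of a doubly-indexed series: I must be careful that the representation and the uniqueness statement are formulated with respect to arbitrary enumerations of $\bbn_m \times \bbn$, and that regrouping by the factor index $i$ is valid precisely because there are only finitely many factors. An alternative, cleaner route would be to invoke Lemma \ref{lemma-Schauder-basis} iteratively, realizing $\calh$ as a two-factor product and inducting on $m$; but the direct argument above seems most transparent.
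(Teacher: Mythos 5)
Your argument is correct, and the representation step coincides with the paper's: both decompose $h = \sum_{i=1}^m \delta_i h_i$, expand each $h_i$ in its factor, push forward through the bounded operator $\delta_i$, and identify the coefficients via $\la e_{ik}^*,\delta_i^* h\ra_{H_i} = \la \delta_i e_{ik}^*,h\ra_{\calh}$. Where you genuinely diverge is in establishing unconditionality. The paper invokes the uniform sign-flip characterization of unconditional bases (\cite[Prop. 6.31]{Fabian}): it takes the common unconditional basis constant $K$ of the factors and verifies, via the Pythagorean identity $\| \sum_i \delta_i g_i \|_{\calh}^2 = \sum_i \| g_i \|_{H_i}^2$, that the same constant $K$ works for the product system; this yields unconditionality of the whole basis in one stroke. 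You instead argue pointwise for each $h$: each pushed-forward sub-series converges unconditionally because $\delta_i$ is bounded, and a union of finitely many unconditionally convergent series over disjoint index sets is unconditionally convergent, which you correctly justify by the Cauchy criterion over finite subsets (take the union of the $m$ finite exceptional sets and use $\epsilon/m$). Your route is more elementary, avoiding the basis-constant machinery, at the cost of the mild bookkeeping you flag; the paper's route is shorter once the criterion is available and additionally produces a quantitative statement (the product basis has the same unconditional constant). You are also more explicit than the paper on the uniqueness of coefficients, which you correctly reduce to the factors by applying $\delta_j^*$ to the vanishing series; this step is left implicit in the paper but is needed for its stated definition of an unconditional Schauder basis, so making it explicit is a small improvement rather than a deviation.
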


\begin{proof}
Let $h \in \calh$ be arbitrary. Then for each $i=1,\ldots,m$ there exist unique sequences $(c_{ik})_{k \in \bbn} \subset \bbr$ such that
\begin{align}\label{series-multi}
h_i = \sum_{k \in \bbn} c_{ik} e_{ik},
\end{align}
the series (\ref{series-multi}) converges unconditionally, and we have
\begin{align*}
c_{ik} = \la e_{ik}^*,h_i \ra_{H_i}, \quad k \in \bbn.
\end{align*}
Therefore, we obtain
\begin{align*}
h = \sum_{i=1}^m \delta_i h_i = \sum_{i=1}^m \delta_i \sum_{k=1}^{\infty} c_{ik} e_{ik} = \sum_{k=1}^{\infty} \sum_{i=1}^m c_{ik} \delta_i e_{ik},
\end{align*}
and for all $i=1,\ldots,m$ and $k \in \bbn$ we have
\begin{align*}
c_{ik} = \la e_{ik}^*,h_i \ra_{H_i} = \la e_{ik}^*, \delta_i^* h \ra_{H_i} = \la \delta_i e_{ik}^*, h \ra_{\calh}.
\end{align*}
Therefore $\{ \delta_i e_{ik}^*, \delta_i e_{ik} \}_{(i,k) \in \bbn_m \times \bbn}$ is a Schauder basis of $\calh$, and it remains to show that it is unconditional. According to \cite[Prop. 6.31]{Fabian} there is a constant $K > 0$ such that for all $n \in \bbn$, all $a_1,\ldots,a_n \in \bbr$ and all $\epsilon_1,\ldots,\epsilon_n \in \{ -1,1 \}$ we have
\begin{align*}
\bigg\| \sum_{k=1}^n \epsilon_k a_k e_{ik} \bigg\|_{H_i} \leq K \bigg\| \sum_{k=1}^n a_k e_{ik} \bigg\|_{H_i}, \quad i=1,\ldots,m.
\end{align*}
Therefore, for all $n \in \bbn$, all $a_{ik} \in \bbr$, $i=1,\ldots,m$, $k=1,\ldots,n$ and all $\epsilon_{ik} \in \{ -1,1 \}$, $i=1,\ldots,m$, $k=1,\ldots,n$ we obtain
\begin{align*}
&\bigg\| \sum_{i=1}^m \sum_{k=1}^n \epsilon_{ik} a_{ik} \delta_i e_{ik} \bigg\|_{\calh} = \Bigg( \sum_{i=1}^m \bigg\| \sum_{k=1}^n \epsilon_{ik} a_{ik} e_{ik} \bigg\|_{H_i}^2 \Bigg)^{1/2}
\\ &\leq K \Bigg( \sum_{i=1}^m \bigg\| \sum_{k=1}^n a_{ik} e_{ik} \bigg\|_{H_i}^2 \Bigg)^{1/2} = K \bigg\| \sum_{i=1}^m \sum_{k=1}^n a_{ik} \delta_i e_{ik} \bigg\|_{\calh}.
\end{align*}
Therefore, according to \cite[Prop. 6.31]{Fabian} the system $\{ \delta_i e_{ik}^*, \delta_i e_{ik} \}_{(i,k) \in \bbn_m \times \bbn}$ is an unconditional Schauder basis of $\calh$.
\end{proof}

\begin{lemma}\label{lemma-prod-Schauder-pre}
For each $i=1,\ldots,m$ let $(K_i,G_i)$ be a closed convex cone in $H_i$ which is is generated by an unconditional Schauder basis. Then the closed convex cone $(\calk,\calg)$ given by (\ref{prod-cone-def}) is generated by an unconditional Schauder basis.
\end{lemma}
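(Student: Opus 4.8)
The plan is to combine the definition of ``generated by an unconditional Schauder basis'' (Definition \ref{def-Schauder}) with the basis-assembly already carried out in Lemma \ref{lemma-Schauder-product}, so that the only remaining content is a bookkeeping check that the generating system $\calg$ lands in the linear spans of the dual basis vectors. Concretely, I would first unpack the hypothesis: for each $i=1,\ldots,m$, since $(K_i,G_i)$ is generated by an unconditional Schauder basis, Definition \ref{def-Schauder} supplies an unconditional Schauder basis $\{ e_{ik}^*,e_{ik} \}_{k \in \bbn}$ of $H_i$ with
\begin{align*}
G_i \subset \bigcup_{k \in \bbn} \lin \{ e_{ik}^* \}.
\end{align*}

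Next I would feed these $m$ bases into Lemma \ref{lemma-Schauder-product}, which yields that $\{ \delta_i e_{ik}^*, \delta_i e_{ik} \}_{(i,k) \in \bbn_m \times \bbn}$ is an unconditional Schauder basis of $\calh$. This is the natural candidate basis witnessing the claim, and the delicate part --- verifying unconditionality via the uniform bound on signed partial sums --- has already been discharged there, so I may invoke it wholesale.

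Finally, I would verify the span-containment required by Definition \ref{def-Schauder}. Because each $\delta_i \in L(H_i,\calh)$ is linear, it maps $\lin \{ e_{ik}^* \}$ into $\lin \{ \delta_i e_{ik}^* \}$, and hence the inclusion for $G_i$ above transports to $\delta_i(G_i) \subset \bigcup_{k \in \bbn} \lin \{ \delta_i e_{ik}^* \}$. Taking the union over $i=1,\ldots,m$ and recalling from (\ref{prod-cone-def}) that $\calg = \bigcup_{i=1}^m \delta_i(G_i)$, I obtain
\begin{align*}
\calg \subset \bigcup_{(i,k) \in \bbn_m \times \bbn} \lin \{ \delta_i e_{ik}^* \}.
\end{align*}
Together with the previous step, Definition \ref{def-Schauder} is satisfied for $(\calk,\calg)$, completing the argument.

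I do not anticipate a genuine obstacle here: the result is essentially a corollary of Lemma \ref{lemma-Schauder-product} together with the linearity of the inclusion operators $\delta_i$. The one point deserving a sentence of care is that $\calg$ is a \emph{union} over the $m$ factors rather than a product, so I want to make explicit that each piece $\delta_i(G_i)$ is controlled by the $i$-th block of dual basis vectors, after which the union over $i$ closes the proof immediately.
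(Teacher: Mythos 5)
Your argument is correct and coincides with the paper's own proof: both extract the bases $\{ e_{ik}^*, e_{ik} \}_{k \in \bbn}$ from Definition \ref{def-Schauder}, invoke Lemma \ref{lemma-Schauder-product} for the assembled unconditional Schauder basis of $\calh$, and conclude via the inclusion $\calg = \bigcup_{i=1}^m \delta_i(G_i) \subset \bigcup_{i,k} \lin \{ \delta_i e_{ik}^* \}$. No gaps.
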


\begin{proof}
For each $i=1,\ldots,m$ there is an unconditional Schauder basis $\{ e_{ik}^*,e_{ik} \}_{k \in \bbn}$ of $H_i$ such that
\begin{align*}
G_i \subset \bigcup_{k \in \bbn} \lin \{ e_{ik}^* \}.
\end{align*}
Therefore, we have
\begin{align*}
\calg = \bigcup_{i=1}^m \delta_i(G_i) \subset \bigcup_{i=1}^m \bigcup_{k \in \bbn} \lin \{ \delta_i e_{ik}^* \}.
\end{align*}
Furthermore, by Lemma \ref{lemma-Schauder-product} the system $\{ \delta_i e_{ik}^*, \delta_i e_{ik} \}_{(i,k) \in \bbn_m \times \bbn}$ is an unconditional Schauder basis of $\calh$. This completes the proof.
\end{proof}

\begin{proposition}\label{prop-prod-Schauder}
For each $i=1,\ldots,m$ let $(K_i,G_i)$ be a closed convex cone in $H_i$ which is is approximately generated by an unconditional Schauder basis. Then the closed convex cone $(\calk,\calg)$ given by (\ref{prod-cone-def}) is approximately generated by an unconditional Schauder basis.
\end{proposition}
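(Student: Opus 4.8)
The plan is to build the approximating data for the product cone coordinatewise from that of the factors. For each $i=1,\ldots,m$, since $(K_i,G_i)$ is approximately generated by an unconditional Schauder basis, I fix the objects from Definition \ref{def-approx-Schauder}: a sequence $(K_{i,n},G_{i,n})_{n \in \bbn}$ of cones generated by unconditional Schauder bases, a constant $L_i > 0$, and maps $(\pi_{i,n})_{n \in \bbn} \subset \Lip_{L_i}(H_i)$. I then set
\begin{align*}
(\calk_n,\calg_n) := \bigg( K_{1,n} \times \ldots \times K_{m,n}, \bigcup_{i=1}^m \delta_i(G_{i,n}) \bigg), \quad n \in \bbn,
\end{align*}
and define $\Pi_n : \calh \to \calh$ by
\begin{align*}
\Pi_n := \sum_{i=1}^m \delta_i \circ \pi_{i,n} \circ \delta_i^*, \quad n \in \bbn,
\end{align*}
so that $\Pi_n$ acts diagonally as $(\Pi_n h)_i = \pi_{i,n}(h_i)$. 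The goal is to verify the five conditions of Definition \ref{def-approx-Schauder} for $(\calk,\calg)$ with these choices.

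Conditions (1) and (2) should be immediate: Lemma \ref{lemma-prod-Schauder-pre} gives that each $(\calk_n,\calg_n)$ is generated by an unconditional Schauder basis, while the nesting $\calk \subset \calk_{n+1} \subset \calk_n$, the monotonicity $\calg_n \subset \calg_{n+1}$, and the identity $\calk = \bigcap_n \calk_n$ all follow coordinatewise from the corresponding properties of the factors (using that a product of intersections equals the intersection of products). For condition (3), given $(h^*,h) \in \bigcup_n \calg_n \times \calk$ with $\la h^*,h \ra_{\calh} = 0$, I would write $h^* = \delta_i g^*$ with $g^* \in G_{i,n}$ for some $i$ and $n$; then $\la g^*,h_i \ra_{H_i} = \la \delta_i g^*,h \ra_{\calh} = 0$, so condition (3) for the $i$-th factor supplies a sequence $(g_l^*)_{l \in \bbn} \subset G_i$ with $g_l^* \to g^*$ and $\la g_l^*,h_i \ra_{H_i} = 0$. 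Setting $h_l^* := \delta_i g_l^* \in \calg$ gives the required sequence, since $\delta_i$ is continuous and $\la h_l^*,h \ra_{\calh} = \la g_l^*,h_i \ra_{H_i} = 0$.

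The remaining two conditions concern $\Pi_n$. For condition (4), taking $L := \max_{1 \leq i \leq m} L_i$ and using $\| \Pi_n h - \Pi_n g \|_{\calh}^2 = \sum_{i=1}^m \| \pi_{i,n}(h_i) - \pi_{i,n}(g_i) \|_{H_i}^2$ yields $\Pi_n \in \Lip_L(\calh)$; the inclusion $\Pi_n(\calk_n) \subset \calk$ holds because $\pi_{i,n}(K_{i,n}) \subset K_i$ coordinatewise, and $\Pi_n \to \Id$ follows from $\pi_{i,n} \to \Id$ in each factor. For condition (5), given $(h^*,h) \in \calg_n \times \calk_n$ with $\la h^*,h \ra_{\calh} = 0$, I write again $h^* = \delta_i g^*$ with $g^* \in G_{i,n}$; then $h_i \in K_{i,n}$ and $\la g^*,h_i \ra_{H_i} = 0$, so condition (5) for the $i$-th factor gives $\la g^*,\pi_{i,n}(h_i) \ra_{H_i} = 0$, whence
\begin{align*}
\la h^*,\Pi_n(h) \ra_{\calh} = \la g^*,\delta_i^* \Pi_n(h) \ra_{H_i} = \la g^*,\pi_{i,n}(h_i) \ra_{H_i} = 0.
\end{align*}

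The whole argument is essentially coordinatewise bookkeeping, so I do not anticipate a genuine obstacle. The one point needing care is the diagonal definition of $\Pi_n$ together with the identity $\delta_i^* \circ \Pi_n = \pi_{i,n} \circ \delta_i^*$, which is exactly what lets both the inclusion in condition (4) and the orthogonality in condition (5) reduce cleanly to the factorwise statements; and the choice $L = \max_i L_i$ is the only place where finiteness of the product ($m < \infty$) is used.
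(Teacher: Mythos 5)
Your proposal is correct and follows essentially the same route as the paper: the same coordinatewise construction of $(\calk_n,\calg_n)$ via $\delta_i$, the same diagonal projections $\Pi_n = (\pi_{i,n} \circ \delta_i^*)_{i=1,\ldots,m}$, and the same factorwise verification of all five conditions of Definition \ref{def-approx-Schauder} (the paper likewise invokes Lemma \ref{lemma-prod-Schauder-pre} for condition (1) and takes a uniform Lipschitz constant over the finitely many factors). No gaps.
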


\begin{proof}
Since for each $i=1,\ldots,m$ the cone $(K_i,G_i)$ is approximately generated by an unconditional Schauder basis, with analogous notation the conditions from Definition \ref{def-approx-Schauder} are fulfilled. For each $n \in \bbn$ we define
\begin{align*}
(\calk_n,\calg_n) := \bigg( K_{n1} \times \ldots \times K_{nm}, \bigcup_{i=1}^m \delta_i (G_{ni}) \bigg).
\end{align*}
By Lemma \ref{lemma-prod-Schauder-pre} the sequence $(\calk_n,\calg_n)_{n \in \bbn}$ is a sequence of closed convex cones, which are generated by an unconditional Schauder basis. Furthermore, for each $n \in \bbn$ we have $K_i \subset K_{n+1,i} \subset K_{ni}$, $i=1,\ldots,m$ and $G_{ni} \subset G_{n+1,i}$, $i=1,\ldots,m$, which implies $\calk \subset \calk_{n+1} \subset \calk_n$ and $\calg_n \subset \calg_{n+1}$. Moreover, we have
\begin{align*}
\calk = \times_{i=1}^d K_i = \times_{i=1}^d \bigcap_{n \in \bbn} K_{ni} = \bigcap_{n \in \bbn} \big( \times_{i=1}^d K_{ni} \big) = \bigcap_{n \in \bbn} \calk_n.
\end{align*}
Let $(h^*,h) \in \bigcup_{n \in \bbn} \calg_n \times \calk$ with $\la h^*,h \ra_{\calh}= 0$ be arbitrary. There exist $n \in \bbn$, $i \in \{ 1,\ldots,m \}$ and $g^* \in G_{ni}$ such that $h^* = \delta_i g^*$. Hence, we have
\begin{align*}
\la g^*,h_i \ra_{H_i} = \la g^*, \delta_i^* h \ra_{H_i} = \la \delta_i g^*, h \ra_{\calh} = \la h^*,h \ra_{\calh} = 0.
\end{align*}
There exists a sequence $(h_{im}^*)_{m \in \bbn} \subset G_i$ such that $h_{im}^* \to g^*$ as $m \to \infty$ and $\la h_{im}^*,h_i \ra_{H_i} = 0$ for each $m \in \bbn$. Now, we define the sequence $(h_m^*)_{m \in \bbn} \subset \calg$ as $h_m^* := \delta_i h_{im}^*$ for each $m \in \bbn$. Then we have and $h_m^* = \delta_i h_{im}^* \to \delta_i g^* = h^*$ as $m \to \infty$, and
\begin{align*}
\la h_m^*, h \ra_{\calh} = \la \delta_i h_{im}^*, h \ra_{\calh} = \la h_{im}^*,\delta_i^* h \ra_{H_i} = \la h_{im}^*,h_i \ra_{H_i} = 0 \quad \text{for each $m \in \bbn$.}
\end{align*}
Next, we define the sequence $(\Pi_n)_{n \in \bbn}$ as
\begin{align*}
\Pi_n := ( \pi_{ni} \circ \delta_i^* )_{i=1,\ldots,m}.
\end{align*}
There is a constant $L > 0$ such that $(\pi_{ni})_{n \in \bbn} \in \Lip_L(H_i)$ for each $i=1,\ldots,m$. This implies $(\Pi_n)_{n \in \bbn} \in \Lip_L(\calh)$.
Furthermore, for each $n \in \bbn$ we have
\begin{align*}
\Pi_n(\calk_n) = \pi_{n1}(K_{n1}) \times \ldots \times \pi_{nm}(K_{nm}) \subset K_1 \times \ldots \times K_m = \calk,
\end{align*}
and we have $\Pi_n \to \Id_{\calh}$. Now, let $n \in \bbn$ and $(g^*,g) \in \calg_n \times \calk_n$ be such that $\la g^*,g \ra_{\calh} = 0$. Then there exist $i \in \{ 1,\ldots,m \}$ and $h^* \in G_{ni}$ such that $g^* = \delta_i h^*$. Setting $h := \delta_i^* g = g_i \in K_{ni}$, we have
\begin{align*}
\la h^*,h \ra_{H_i} = \la h^*,\delta_i^* g \ra_{H_i} = \la \delta_i h^*,g \ra_{\calh} = \la g^*,g \ra_{\calh} = 0.
\end{align*}
Therefore, we obtain
\begin{align*}
\la g^*, \Pi_n(g) \ra_{\calh} = \la \delta_i h^*,\Pi_n(g) \ra_{\calh} = \la h^*, \delta_i^* \Pi_n(g) \ra_{H_i} = \la h^*,\pi_{in}(h) \ra_{H_i} = 0,
\end{align*}
completing the proof.
\end{proof}

\begin{lemma}\label{lemma-prod-inner-prod}
For each $i=1,\ldots,m$ let $(K_i,G_i)$ be a closed convex cone in $H_i$, and let $(\calk,\calg)$ be the closed convex cone in $\calh$ given by (\ref{prod-cone-def}). Then for every $h^* \in \calh$ the following statements are equivalent:
\begin{enumerate}
\item[(i)] We have $h^* \in \calg$.

\item[(ii)] There are $i \in \{ 1,\ldots,m \}$ and $g^* \in G_i$ such that $h^* = \delta_i g^*$
\end{enumerate}
If any of the previous two conditions is fulfilled, then we have $\la h^*,h \ra_{\calh} = \la g^*,h_i \ra_{H_i}$ for each $h \in \calk$.
\end{lemma}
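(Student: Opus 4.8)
The plan is to observe that both assertions are essentially immediate unpackings: the equivalence is nothing more than the definition of the generating system $\calg$, and the inner-product formula follows from the adjoint identity for the embedding operators $\delta_i$.

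For the equivalence (i) $\Leftrightarrow$ (ii), I would recall that by (\ref{prod-cone-def}) we have $\calg = \bigcup_{i=1}^m \delta_i(G_i)$. Hence $h^* \in \calg$ holds if and only if $h^*$ lies in $\delta_i(G_i)$ for some $i \in \{ 1,\ldots,m \}$, which by definition of the image $\delta_i(G_i)$ means precisely that $h^* = \delta_i g^*$ for some $g^* \in G_i$. This is exactly condition (ii), so the two statements are restatements of one another and no further argument is required.

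For the inner-product formula, I would fix a representation $h^* = \delta_i g^*$ with $g^* \in G_i$ as in (ii), take an arbitrary $h \in \calk$, and compute by moving $\delta_i$ across the inner product to its adjoint. Recalling from the remark following (\ref{delta-i}) that $\delta_i^* h = h_i$, this yields
\begin{align*}
\la h^*, h \ra_{\calh} = \la \delta_i g^*, h \ra_{\calh} = \la g^*, \delta_i^* h \ra_{H_i} = \la g^*, h_i \ra_{H_i},
\end{align*}
which is the claimed identity.

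Since every step is a direct consequence of a definition or of the standard adjoint relation, there is no genuine obstacle here. The only minor point worth flagging is that the representation in (ii) need not be unique (for example if $0 \in G_i$ for several indices $i$), but since the formula is stated relative to a chosen pair $(i,g^*)$, this ambiguity is harmless: the displayed computation is valid for whichever representation one fixes.
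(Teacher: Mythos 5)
Your proposal is correct and matches the paper's own proof essentially verbatim: the equivalence is read off from the definition of $\calg$ in (\ref{prod-cone-def}) (the paper cites Lemma \ref{lemma-gen-product} for this), and the inner-product identity is the same three-step computation via the adjoint relation $\delta_i^* h = h_i$. Nothing further is needed.
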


\begin{proof}
The equivalence (i) $\Leftrightarrow$ (ii) is an immediate consequence of Lemma \ref{lemma-gen-product}. If $h^* = \delta_i g^*$ for some $i \in \{ 1,\ldots,m \}$ and $g^* \in G_i$, then for all $h \in \calk$ we have
\begin{align*}
\la h^*,h \ra_{\calh} = \la \delta_i g^*,h \ra_{\calh} = \la g^*, \delta_i^* h \ra_{H_i} = \la g^*,h_i \ra_{H_i},
\end{align*}
proving the additional statement.
\end{proof}

Now, we consider the stochastic framework from Section \ref{sec-jump-diffusion} on the state space $\calh = H_1 \times \ldots \times H_m$. For each $i=1,\ldots,m$ let $(K_i,G_i)$ be a closed convex cone in $H_i$. We consider the closed convex cone $\calk = K_1 \times \ldots \times K_m$.

\begin{theorem}\label{thm-prod-cone}
We suppose that the following conditions are fulfilled:
\begin{enumerate}
\item The coefficients $(\alpha,\sigma,\gamma)$ are locally Lipschitz and satisfy the linear growth condition.

\item The semigroup $(S_t)_{t \geq 0}$ is pseudo-contractive.

\item The closed convex cone $\calk$ is invariant for the semigroup $(S_t)_{t \geq 0}$.

\item For each $i=1,\ldots,m$ the closed convex cone $(K_i,G_i)$ is approximately generated by an unconditional Schauder basis.
\end{enumerate}
If we have
\begin{align}\label{main-1-prod}
h + \gamma(h,x) \in \calk \quad \text{for $F$-almost all $x \in E$,} \quad \text{for all $h \in \calk$,}
\end{align}
and for all $i=1,\ldots,m$ and $(h^*,h) \in G_i \times \calk$ with $\langle h^*,h_i \rangle_{H_i} = 0$ we have
\begin{align}\label{main-3-prod}
&\langle h^*,\alpha_i(h) \rangle_{H_i} - \int_E \langle h^*,\gamma_i(h,x) \rangle_{H_i} F(dx) \geq 0,
\\ \label{main-4-prod} &\langle h^*,\sigma_i^j(h) \rangle_{H_i} = 0, \quad j \in \bbn,
\end{align}
then the closed convex cone $\calk$ is invariant for the SPDE (\ref{SPDE}).
\end{theorem}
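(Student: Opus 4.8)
The plan is to reduce the statement to Theorem \ref{thm-general} applied to the closed convex cone $(\calk,\calg)$ on the product space $\calh$, where $\calg = \bigcup_{i=1}^m \delta_i(G_i)$ is the generating system provided by Lemma \ref{lemma-gen-product}. Conditions (1)--(3) of Theorem \ref{thm-general} are literally conditions (1)--(3) of the present statement, and condition (4)---that $(\calk,\calg)$ is approximately generated by an unconditional Schauder basis---is precisely the content of Proposition \ref{prop-prod-Schauder}. Thus it only remains to verify the jump condition (\ref{main-1}) and the boundary conditions (\ref{main-3}), (\ref{main-4}) for $(\calk,\calg)$.

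The jump condition (\ref{main-1}) for $(\calk,\calg)$ is exactly the hypothesis (\ref{main-1-prod}), so nothing is needed there. For the boundary conditions, I would fix an arbitrary $(h^*,h) \in \calg \times \calk$ with $\la h^*,h \ra_{\calh} = 0$. By Lemma \ref{lemma-prod-inner-prod} there exist an index $i \in \{ 1,\ldots,m \}$ and an element $g^* \in G_i$ such that $h^* = \delta_i g^*$, and moreover $\la h^*,h \ra_{\calh} = \la g^*,h_i \ra_{H_i}$; hence $\la g^*,h_i \ra_{H_i} = 0$, so that the pair $(g^*,h) \in G_i \times \calk$ satisfies the hypothesis under which (\ref{main-3-prod}) and (\ref{main-4-prod}) are assumed to hold.

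The key (and entirely routine) step is to translate the inner products on $\calh$ into their $i$-th components by means of the adjoint relation $\la \delta_i g^*, \cdot \ra_{\calh} = \la g^*, \delta_i^*(\cdot) \ra_{H_i}$. Writing $\alpha_i := \delta_i^* \circ \alpha$, $\gamma_i := \delta_i^* \circ \gamma$ and $\sigma_i^j := \delta_i^* \circ \sigma^j$ for the components of the coefficients, this gives the identities $\la h^*,\alpha(h) \ra_{\calh} = \la g^*,\alpha_i(h) \ra_{H_i}$, $\la h^*,\gamma(h,x) \ra_{\calh} = \la g^*,\gamma_i(h,x) \ra_{H_i}$ and $\la h^*,\sigma^j(h) \ra_{\calh} = \la g^*,\sigma_i^j(h) \ra_{H_i}$. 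Substituting these, condition (\ref{main-3}) for $(\calk,\calg)$ becomes exactly (\ref{main-3-prod}) and condition (\ref{main-4}) becomes exactly (\ref{main-4-prod}), both of which hold by hypothesis. Therefore all assumptions of Theorem \ref{thm-general} are met, and the asserted invariance of $\calk$ follows. I expect no serious obstacle: the substantive work has been front-loaded into Proposition \ref{prop-prod-Schauder} and Lemma \ref{lemma-prod-inner-prod}, and what remains is the componentwise bookkeeping just described.
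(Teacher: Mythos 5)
Your proposal is correct and follows exactly the paper's own argument: the paper's proof is a one-line reduction to Theorem \ref{thm-general} via Proposition \ref{prop-prod-Schauder} (for the approximate Schauder generation of $(\calk,\calg)$) and Lemma \ref{lemma-prod-inner-prod} (for the componentwise translation of the boundary conditions). Your write-up simply spells out the bookkeeping that the paper leaves implicit.
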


\begin{proof}
Taking into account Proposition \ref{prop-prod-Schauder} and Lemma \ref{lemma-prod-inner-prod}, the statement is a consequence of Theorem \ref{thm-general}.
\end{proof}

For our next result, let us consider the SPDE (\ref{SPDE}) on the original state space $H$. We assume that the Hilbert space admits a direct sum decomposition $H = H_1 \oplus \ldots \oplus H_m$, where $H_1,\ldots,H_m$ are closed subspaces of $H$ such that $H_i \perp H_j$ for all $i,j = 1,\ldots,m$ with $i \neq j$. For each $i=1,\ldots,m$ let $(K_i,G_i)$ be a closed convex cone in $H_i$. We consider the closed convex cone $K = K_1 \oplus \ldots \oplus K_m$.

\begin{theorem}\label{thm-direct-sum}
We suppose that the following conditions are fulfilled:
\begin{enumerate}
\item The coefficients $(\alpha,\sigma,\gamma)$ are locally Lipschitz and satisfy the linear growth condition.

\item The semigroup $(S_t)_{t \geq 0}$ is pseudo-contractive.

\item The closed convex cone $K$ is invariant for the semigroup $(S_t)_{t \geq 0}$.

\item For each $i=1,\ldots,m$ there exist another separable Hilbert space $\bbh_i$, an isometric isomorphism $T_i : \bbh_i \to H_i$ and a closed convex cone $(\bbk_i,\bbg_i)$ in $\bbh_i$, which is approximately generated by an unconditional Schauder basis, such that $(K_i,G_i) = (T_i \bbk_i,T_i \bbg_i)$.
\end{enumerate}
If we have (\ref{main-1}), and for all $(h^*,h) \in \bigcup_{i=1}^m G_i \times K$ with $\la h^*,h \ra = 0$ we have (\ref{main-3}) and (\ref{main-4}), then the closed convex cone $K$ is invariant for the SPDE (\ref{SPDE}).
\end{theorem}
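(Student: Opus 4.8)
The plan is to identify the orthogonal direct sum $H = H_1 \oplus \ldots \oplus H_m$ with the product space $\calh := H_1 \times \ldots \times H_m$ via a natural isometric isomorphism, and then to invoke Theorem \ref{thm-transformed-cone}.

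First I would introduce the map $\Phi : \calh \to H$ defined by $\Phi(h_1,\ldots,h_m) := h_1 + \ldots + h_m$, where each component $h_i$ is viewed as an element of the closed subspace $H_i \subset H$. Since the subspaces $H_1,\ldots,H_m$ are mutually orthogonal and span $H$, the map $\Phi$ is an isometric isomorphism, and hence $S := (\Phi^{-1})^* = \Phi$ by Lemma \ref{lemma-isom-isom}. Let $(\calk,\calg)$ denote the product cone in $\calh$ as in (\ref{prod-cone-def}). Using that $\Phi \circ \delta_i$ is the canonical inclusion $H_i \hookrightarrow H$, I would check that $\Phi \calk = K_1 \oplus \ldots \oplus K_m = K$ and $\Phi \calg = \bigcup_{i=1}^m G_i = G$, so that $(K,G) = (\Phi \calk, S \calg)$.

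Next I would verify that $(\calk,\calg)$ is approximately generated by an unconditional Schauder basis, as required by condition (4) of Theorem \ref{thm-transformed-cone}. For each $i$ the isometric isomorphism $T_i : \bbh_i \to H_i$ satisfies $S_i = T_i$ by Lemma \ref{lemma-isom-isom}, so the hypothesis $(K_i,G_i) = (T_i \bbk_i, T_i \bbg_i)$ together with Proposition \ref{prop-seq-trans} shows that each $(K_i,G_i)$ is approximately generated by an unconditional Schauder basis. Proposition \ref{prop-prod-Schauder} then yields the same property for the product cone $(\calk,\calg)$.

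It remains to assemble these pieces into Theorem \ref{thm-transformed-cone}, applied with $\bbh := \calh$, $T := \Phi$ and $(\bbk,\bbg) := (\calk,\calg)$. Conditions (1)--(3) of that theorem coincide with the present hypotheses, condition (4) has just been established, and since $G = \bigcup_{i=1}^m G_i$ the requirements (\ref{main-1}), (\ref{main-3}) and (\ref{main-4}) imposed here for $(h^*,h) \in \bigcup_{i=1}^m G_i \times K$ are precisely those needed for $(h^*,h) \in G \times K$. The conclusion that $K$ is invariant for the SPDE (\ref{SPDE}) then follows. The only step demanding real care is the bookkeeping that $\Phi$ transports the product generating system $\calg = \bigcup_{i=1}^m \delta_i(G_i) \subset \calh$ onto the direct-sum generating system $\bigcup_{i=1}^m G_i \subset H$; once this identification is settled, the proof is a direct application of the cited results.
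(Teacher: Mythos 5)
Your proof is correct and follows essentially the same route as the paper's: both reduce the statement to Theorem \ref{thm-transformed-cone} by combining Proposition \ref{prop-seq-trans} (stability under isometric transformations) with Proposition \ref{prop-prod-Schauder} (stability under products). The only difference is the order of these two steps --- the paper forms the product cone in $\bbh_1 \times \ldots \times \bbh_m$ and transports it by the single isometry $h \mapsto T_1 h_1 + \ldots + T_m h_m$, whereas you first transport each $(\bbk_i,\bbg_i)$ into $H_i$ and then identify $H_1 \times \ldots \times H_m$ with $H$ via the summation map; these are two factorizations of the same composite isomorphism and are interchangeable.
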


\begin{proof}
We introduce the product space $\bbh := \bbh_1 \times \ldots \times \bbh_m$. According to Proposition \ref{prop-prod-Schauder} the cone
\begin{align*}
(\bbk,\bbg) := \bigg( \bbk_1 \times \ldots \times \bbk_m, \bigcup_{i=1}^m \delta_i(\bbg_i) \bigg)
\end{align*}
is a closed convex cone in $\bbh$, which is approximately generated by an unconditional Schauder basis. Moreover, the linear mapping $T : \bbh \to H$ given by
\begin{align*}
Th = T_1 h_1 + \ldots + T_m h_m, \quad h \in \bbh
\end{align*}
is an isometric isomorphism, and we have $K = T \bbk$. Hence, by Lemma \ref{lemma-new-cone} the cone $K$ has the generating system
\begin{align*}
G = T \bbg = \bigcup_{i=1}^m T \delta_i \bbg_i = \bigcup_{i=1}^m T_i \bbg_i = \bigcup_{i=1}^m G_i.
\end{align*}
Consequently, the result follows from Theorem \ref{thm-transformed-cone}.
\end{proof}

\section{Isometric transformations of products of closed convex cones}\label{sec-trans-prod}

In this section we consider isometric transformations of products of closed convex cones, and present related invariance results. We consider the state space $\calh := H^m$ for some $m \in \bbn$, where $H$ is a separable Hilbert space. Let $(K_1,G_1),\ldots,(K_m,G_m)$ be closed convex cones in $H$, and let $M \in \bbr^{m \times m}$ be an invertible matrix. Then the subset
\begin{align}\label{cone-matrix}
\calk := \bigcap_{i=1}^m \bigg\{ h \in \calh : \sum_{k=1}^m M_{ik} h_k \in K_i \bigg\}
\end{align}
is a closed convex cone in $\calh$. We recall that for each $i = 1,\ldots,m$ the linear operator $\delta_i \in L(\calh,H)$ is given by (\ref{delta-i}).

\begin{lemma}\label{lemma-inv-matrix-1}
The cone
\begin{align*}
(\bbk,\bbg) := \bigg( K_1 \times \ldots \times K_m, \bigcup_{i=1}^m \delta_i(G_i) \bigg)
\end{align*}
is a closed convex cone in $\calh$, which is approximately generated by an unconditional Schauder basis.
\end{lemma}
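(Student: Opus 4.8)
The plan is to recognize Lemma~\ref{lemma-inv-matrix-1} as the specialization of the product results of Section~\ref{sec-products} to the case in which all factor spaces coincide with $H$. Indeed, the state space $\calh = H^m$ is nothing but the product $H_1 \times \ldots \times H_m$ of that section with $H_i = H$ for every $i = 1,\ldots,m$, and the operators $\delta_i \in L(H,\calh)$ introduced through (\ref{delta-i}) are precisely the canonical embeddings used there. Hence the entire machinery developed for products of cones applies verbatim. Note also that the matrix $M$ enters only the definition (\ref{cone-matrix}) of the transformed cone $\calk$, not that of the reference product cone $\bbk$, so it plays no role whatsoever in this statement.

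First I would settle that $(\bbk,\bbg)$ is a closed convex cone in $\calh$. This is immediate from Lemma~\ref{lemma-gen-product}: applying it with $H_i = H$ shows that
\begin{align*}
(\bbk,\bbg) = \bigg( K_1 \times \ldots \times K_m, \bigcup_{i=1}^m \delta_i(G_i) \bigg)
\end{align*}
is a closed convex cone in $\calh$ with $\bbg$ as a generating system.

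Second, to obtain that $(\bbk,\bbg)$ is approximately generated by an unconditional Schauder basis, I would invoke Proposition~\ref{prop-prod-Schauder}, again specialized to $H_i = H$. The only hypothesis this proposition requires is that each factor $(K_i,G_i)$ be approximately generated by an unconditional Schauder basis, which is the standing assumption on the cones $(K_1,G_1),\ldots,(K_m,G_m)$ in this section; granting this, the conclusion follows directly.

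Consequently I do not expect any genuine obstacle: the lemma is a pure specialization, and its entire content is the bookkeeping observation that the constructions $\delta_i$, $\bbk$ and $\bbg$ of the present section coincide with those of Section~\ref{sec-products}. The one point that deserves care — and the closest thing to a main step — is verifying that the factor cones satisfy the approximation hypothesis, so that Proposition~\ref{prop-prod-Schauder} is indeed applicable; once this is confirmed, combining it with Lemma~\ref{lemma-gen-product} completes the proof.
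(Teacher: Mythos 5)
Your proposal is correct and coincides with the paper's own proof, which likewise obtains the lemma as an immediate consequence of Proposition \ref{prop-prod-Schauder} specialized to $H_i = H$ (the closed-convex-cone part via Lemma \ref{lemma-gen-product} being already contained in that proposition). Your explicit remark that the factor cones must satisfy the approximation hypothesis is the right point of care; this is exactly the standing assumption recorded in Theorem \ref{thm-inv-matrix}.
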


\begin{proof}
This is an immediate consequence of Proposition \ref{prop-prod-Schauder}.
\end{proof}

Now, let $R \in L(\calh)$ be the linear isomorphism given by
\begin{align*}
Rh := \bigg( \sum_{k=1}^m M_{ik} h_k \bigg)_{i=1,\ldots,m}, \quad h \in \calh,
\end{align*}
and set $T := R^{-1}$ as well as $S := (T^{-1})^* = (T^*)^{-1}$.

\begin{lemma}\label{lemma-inv-matrix-2}
The set $\calg := \bigcup_{i=1}^m S \delta_i G_i$ is a generating system of the closed convex cone $\calk$, and we have $(\calk,\calg) = (T \bbk,S \bbg)$.
\end{lemma}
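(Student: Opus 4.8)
The plan is to reduce the statement to Lemma \ref{lemma-new-cone} by identifying the cone $\calk$ from (\ref{cone-matrix}) with the image $T\bbk$. Once this identification is in place, both assertions follow formally, so the argument is essentially a bookkeeping exercise built on the transformation machinery already developed in Section \ref{sec-transformations}.

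First I would show that $\calk = T\bbk$. By the definition of $R$, the $i$-th component of $Rh$ is precisely $\sum_{k=1}^m M_{ik} h_k$ for each $h \in \calh$. Hence the defining condition in (\ref{cone-matrix}), namely $\sum_{k=1}^m M_{ik} h_k \in K_i$ for all $i = 1,\ldots,m$, is equivalent to $Rh \in K_1 \times \ldots \times K_m = \bbk$. Consequently,
\begin{align*}
\calk = \{ h \in \calh : Rh \in \bbk \} = R^{-1}(\bbk) = T\bbk,
\end{align*}
where the last equality uses $T = R^{-1}$ together with the bijectivity of $R$.

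Next I would note that $\calg = S\bbg$ holds directly from the definitions: since $\bbg = \bigcup_{i=1}^m \delta_i(G_i)$ and $S$ is linear, one has $S\bbg = \bigcup_{i=1}^m S\delta_i(G_i) = \calg$. Combined with the preceding step, this already yields the asserted identity $(\calk,\calg) = (T\bbk,S\bbg)$. To finish, I would invoke Lemma \ref{lemma-new-cone}: by Lemma \ref{lemma-inv-matrix-1} the pair $(\bbk,\bbg)$ is a closed convex cone in $\calh$, so $\bbg$ is a generating system of $\bbk$; applying Lemma \ref{lemma-new-cone} to the isomorphism $T$, with both Hilbert spaces taken to be $\calh$, shows that $S\bbg$ is a generating system of $T\bbk$. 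In view of the identifications $\calk = T\bbk$ and $\calg = S\bbg$ established above, $\calg$ is therefore a generating system of $\calk$.

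The only step that is not purely mechanical is the identification $\calk = T\bbk$, and even this amounts to reading off the definition of $R$ componentwise; I do not anticipate any genuine obstacle, since the substance of the argument has already been packaged into Lemma \ref{lemma-new-cone}.
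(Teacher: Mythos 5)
Your proposal is correct and follows essentially the same route as the paper: identify $\calk = \{h \in \calh : Rh \in \bbk\} = T\bbk$ directly from (\ref{cone-matrix}), observe $\calg = S\bbg$ by definition, and conclude via Lemmas \ref{lemma-inv-matrix-1} and \ref{lemma-new-cone}. The only (cosmetic) difference is that you establish the identification $\calk = T\bbk$ before invoking Lemma \ref{lemma-new-cone}, which is arguably the cleaner logical order.
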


\begin{proof}
By Lemmas \ref{lemma-new-cone} and \ref{lemma-inv-matrix-1} the set $S \bbg = \calg$ is a generating system of $\calk$. Moreover, by (\ref{cone-matrix}) we have
\begin{align*}
\calk = \bigcap_{i=1}^m \{ h \in \calh : \delta_i^* Rh \in K_i \} = \{ h \in \calh : Rh \in \bbk \} = T \bbk,
\end{align*}
completing the proof.
\end{proof}

\begin{lemma}\label{lemma-inv-matrix-3}
For all $h^* \in \calh$ the following statements are equivalent:
\begin{enumerate}
\item[(i)] We have $h^* \in \calg$.

\item[(ii)] There are $i \in \{ 1,\ldots,m \}$ and $g^* \in G_i$ such that $h^* = S \delta_i g^*$.
\end{enumerate}
If any of the previous two conditions is fulfilled, then we have 
\begin{align*}
\la h^*,h \ra_{\calh} = \la g^*, \delta_i^* R h \ra_H \quad \text{for each $h \in \calk$.}
\end{align*}
\end{lemma}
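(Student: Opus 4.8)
The plan is to treat this statement as the isometrically transformed counterpart of Lemma \ref{lemma-prod-inner-prod}, which already handles the untransformed product cone $(\bbk,\bbg)$, and to transport that result through the isomorphism $S$ introduced just before Lemma \ref{lemma-inv-matrix-2}.

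First I would establish the equivalence (i) $\Leftrightarrow$ (ii). By Lemma \ref{lemma-inv-matrix-2} the generating system is $\calg = \bigcup_{i=1}^m S \delta_i G_i$, so by definition an element $h^* \in \calh$ belongs to $\calg$ if and only if $h^* \in S\delta_i(G_i)$ for some $i \in \{ 1,\ldots,m \}$, that is, if and only if $h^* = S \delta_i g^*$ for some index $i$ and some $g^* \in G_i$. Thus the equivalence amounts to unravelling the definition of $\calg$; alternatively it follows from Lemma \ref{lemma-prod-inner-prod} applied to $(\bbk,\bbg)$ together with the fact that $S$ is a bijection.

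For the additional inner product identity, the key observation is that $S = R^*$. Indeed, since $T = R^{-1}$ we have $S = (T^{-1})^* = R^*$, and consequently $S^* = R$. Assuming (ii), so that $h^* = S \delta_i g^*$, I would then compute, for any $h \in \calk$,
\begin{align*}
\la h^*,h \ra_{\calh} = \la S \delta_i g^*, h \ra_{\calh} = \la \delta_i g^*, R h \ra_{\calh} = \la g^*, \delta_i^* R h \ra_H,
\end{align*}
where the second equality moves $S = R^*$ onto the second argument using $S^* = R$, and the third equality uses that the adjoint of $\delta_i \in L(H,\calh)$ is $\delta_i^*$.

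I do not expect a genuine obstacle here: the statement is a routine transfer of Lemma \ref{lemma-prod-inner-prod} along the linear isomorphism. The only points requiring care are correctly identifying $S = R^*$ (hence $S^* = R$) from the definitions $T = R^{-1}$ and $S = (T^{-1})^* = (T^*)^{-1}$, and keeping track of which operator is adjointed when passing between the inner products on $\calh$ and on $H$.
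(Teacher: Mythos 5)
Your proposal is correct and follows essentially the same route as the paper: the equivalence is read off from the description of $\calg$ in Lemma \ref{lemma-inv-matrix-2}, and the inner product identity is the same three-step adjoint computation, using $S^* = T^{-1} = R$ (equivalently, your $S = R^*$) to move the operator onto the second argument and then pulling out $\delta_i^*$.
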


\begin{proof}
The equivalence (i) $\Leftrightarrow$ (ii) is an immediate consequence of Lemma \ref{lemma-inv-matrix-2}. Furthermore, note that $S^* = T^{-1} = R$. Hence, if $h^* = S \delta_i g^*$ for some $i \in \{ 1,\ldots,m \}$ and $g^* \in G_i$, then for all $h \in \calk$ we obtain
\begin{align*}
\la h^*,h \ra_{\calh} = \la S \delta_i g^*,h \ra_{\calh} = \la \delta_i g^*,S^* h \ra_{\calh} = \la g^*, \delta_i^* S^* h \ra_{H} = \la g^*, \delta_i^* R h \ra_{H},
\end{align*}
completing the proof.
\end{proof}

Now, we consider the stochastic framework from Section \ref{sec-jump-diffusion} on the state space $\calh = H^m$, and the closed convex cone $\calk$ given by (\ref{cone-matrix}).

\begin{theorem}\label{thm-inv-matrix}
Suppose that the following conditions are fulfilled:
\begin{enumerate}
\item The coefficients $(\alpha,\sigma,\gamma)$ are locally Lipschitz and satisfy the linear growth condition.

\item The semigroup $(S_t)_{t \geq 0}$ is pseudo-contractive.

\item The closed convex cone $\calk$ is invariant for the semigroup $(S_t)_{t \geq 0}$.

\item The closed convex cones $(K_i,G_i)$, $i=1,\ldots,m$ are approximately generated by an unconditional Schauder basis.
\end{enumerate}
If we have (\ref{main-1-prod}), and for all $i=1,\ldots,m$ and $(h^*,h) \in G_i \times \calk$ with
\begin{align*}
\bigg\langle h^*, \sum_{k=1}^m M_{ik} h_k \bigg\rangle = 0
\end{align*}
we have
\begin{align}
&\bigg\langle h^*, \sum_{k=1}^m M_{ik} \alpha_k(h) \bigg\rangle - \int_E \bigg\langle h^*, \sum_{k=1}^m M_{ik} \gamma_k(h,x) \bigg\rangle F(dx) \geq 0,
\\ &\bigg\langle h^*, \sum_{k=1}^m M_{ik} \sigma_k^j(h) \bigg\rangle = 0, \quad j \in \bbn,
\end{align}
then the closed convex cone $\calk$ is invariant for the SPDE (\ref{SPDE}).
\end{theorem}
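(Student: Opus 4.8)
The plan is to deduce the statement directly from Theorem \ref{thm-transformed-cone}, applied on the state space $\calh = H^m$ with the isomorphism $T = R^{-1} \in L(\calh)$ and the transformed cone $(\calk,\calg) = (T\bbk,S\bbg)$, where $(\bbk,\bbg)$ is the product cone of Lemma \ref{lemma-inv-matrix-1}. First I would check the four hypotheses of Theorem \ref{thm-transformed-cone}. Its conditions (1)--(3) are literally conditions (1)--(3) of the present statement. For its condition (4) I take the auxiliary Hilbert space to be $\calh$ itself and the isomorphism $T = R^{-1}$: Lemma \ref{lemma-inv-matrix-1} asserts that $(\bbk,\bbg)$ is approximately generated by an unconditional Schauder basis, and Lemma \ref{lemma-inv-matrix-2} yields exactly $(\calk,\calg) = (T\bbk,S\bbg)$ with $S = (T^{-1})^* = (T^*)^{-1}$. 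Hence hypothesis (4) holds verbatim, and it only remains to translate the jump, drift and volatility conditions for $(\calk,\calg)$ into the assumptions of the present theorem.

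The jump condition (\ref{main-1}) for the cone $\calk$ coincides with (\ref{main-1-prod}), which is assumed. For the boundary conditions (\ref{main-3})--(\ref{main-4}), I fix a pair $(\tilde h^*,h) \in \calg \times \calk$ with $\la \tilde h^*,h \ra_{\calh} = 0$. By Lemma \ref{lemma-inv-matrix-3} there are $i \in \{1,\ldots,m\}$ and $g^* \in G_i$ with $\tilde h^* = S\delta_i g^*$, and the orthogonality condition becomes $\la g^*,\delta_i^* R h \ra_H = \la g^*,\sum_{k=1}^m M_{ik} h_k \ra_H = 0$, which is precisely the standing assumption of the theorem applied to the pair $(g^*,h) \in G_i \times \calk$ (so that $g^*$ plays the role of ``$h^*$'' in the statement). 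The crux is then to rewrite the three inner products $\la \tilde h^*,\alpha(h) \ra_{\calh}$, $\la \tilde h^*,\gamma(h,x) \ra_{\calh}$ and $\la \tilde h^*,\sigma^j(h) \ra_{\calh}$ in terms of $g^*$ and the matrix $M$.

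Here lies the one point requiring care: the inner-product formula in Lemma \ref{lemma-inv-matrix-3} is stated only for arguments in $\calk$, whereas $\alpha(h)$, $\gamma(h,x)$ and $\sigma^j(h)$ need not lie in $\calk$. However, the underlying identity $\la S\delta_i g^*,v \ra_{\calh} = \la g^*,\delta_i^* S^* v \ra_H = \la g^*,\delta_i^* R v \ra_H$ uses only $S^* = T^{-1} = R$ and is valid for every $v \in \calh$. Applying it to $v = \alpha(h)$, $v = \gamma(h,x)$ and $v = \sigma^j(h)$, and noting $\delta_i^* R v = \sum_{k=1}^m M_{ik} v_k$ with $v_k$ equal to $\alpha_k(h)$, $\gamma_k(h,x)$ and $\sigma_k^j(h)$ respectively, converts (\ref{main-3}) and (\ref{main-4}) for $(\calk,\calg)$ into exactly the drift and volatility inequalities assumed in the theorem; the integral over $E$ is unaffected since the transformation is linear and independent of $x$. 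With all hypotheses of Theorem \ref{thm-transformed-cone} verified, the invariance of $\calk$ for the SPDE (\ref{SPDE}) follows. I expect no genuine analytic obstacle: the whole argument is careful bookkeeping through Lemmas \ref{lemma-inv-matrix-1}, \ref{lemma-inv-matrix-2} and \ref{lemma-inv-matrix-3}, the only subtlety being the extension of the inner-product identity beyond $\calk$ noted above.
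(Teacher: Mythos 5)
Your proposal is correct and follows exactly the route the paper takes: the paper's proof is a one-line reduction to Theorem \ref{thm-transformed-cone} via Lemmas \ref{lemma-inv-matrix-1}--\ref{lemma-inv-matrix-3}, and you have simply spelled out the bookkeeping, including the (valid) observation that the identity $\la S\delta_i g^*, v\ra_{\calh} = \la g^*, \delta_i^* R v\ra_H$ holds for all $v \in \calh$ and not just for $v \in \calk$.
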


\begin{proof}
Taking into account Lemmas \ref{lemma-inv-matrix-1}--\ref{lemma-inv-matrix-3}, this is a consequence of Theorem \ref{thm-transformed-cone}.
\end{proof}

Now, we fix a closed convex cone $(K,G)$ in $H$. For two elements $h,g \in H$ we agree to write $g \leq_K h$ if $h-g \in K$. The set
\begin{align*}
\calk := \{ h \in \calh : 0 \leq_K h_1 \leq_K \ldots \leq_K h_m \}
\end{align*}
is a closed convex cone in $\calh$.

\begin{corollary}\label{cor-mon-1}
If we have (\ref{main-1-prod}), for $(h^*,h) \in G \times \calk$ with $\la h^*,h_1 \ra = 0$ we have
\begin{align*}
&\langle h^*,\alpha_1(h) \rangle - \int_E \langle h^*,\gamma_1(h,x)\rangle F(dx) \geq 0,
\\ &\langle h^*,\sigma_1^j(h) \rangle = 0, \quad j \in \bbn,
\end{align*}
and for all $i=2,\ldots,m$ and $(h^*,h) \in G_* \times \calk$ with $\langle h^*, h_i-h_{i-1} \rangle = 0$ we have
\begin{align}\label{cond-mon-1}
&\langle h^*,\alpha_i(h)-\alpha_{i-1}(h) \rangle - \int_E \langle h^*,\gamma_i(h,x) - \gamma_{i-1}(h,x) \rangle F(dx) \geq 0,
\\ \label{cond-mon-2} &\langle h^*,\sigma_i^j(h)-\sigma_{i-1}^j(h) \rangle = 0, \quad j \in \bbn,
\end{align}
then the closed convex cone $\calk$ is invariant for the SPDE (\ref{SPDE}).
\end{corollary}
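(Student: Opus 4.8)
The plan is to recognize the ordered cone $\calk$ as a special instance of the matrix cone (\ref{cone-matrix}) and then to invoke Theorem \ref{thm-inv-matrix}. First I would set $(K_i,G_i) := (K,G)$ for each $i = 1,\ldots,m$ and introduce the lower bidiagonal difference matrix $M \in \bbr^{m \times m}$ with entries $M_{11} = 1$, and $M_{ii} = 1$, $M_{i,i-1} = -1$ for $i = 2,\ldots,m$, all remaining entries being zero. Then $\sum_{k=1}^m M_{1k} h_k = h_1$ and $\sum_{k=1}^m M_{ik} h_k = h_i - h_{i-1}$ for $i = 2,\ldots,m$, so that the defining chain $0 \leq_K h_1 \leq_K \cdots \leq_K h_m$ is equivalent to $\sum_{k=1}^m M_{ik} h_k \in K$ for every $i = 1,\ldots,m$; that is, $\calk$ coincides exactly with the cone (\ref{cone-matrix}) for this choice of $M$ and of the $(K_i,G_i)$.

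Second, I would check that $M$ is invertible, which is immediate since $M$ is lower triangular with all diagonal entries equal to $1$, whence $\det M = 1$. (Its inverse is the lower triangular matrix of all ones, corresponding to the cumulative-sum map $g \mapsto (g_1,\, g_1+g_2,\, \ldots,\, g_1+\cdots+g_m)$.) This is precisely the invertibility hypothesis required in the framework preceding Theorem \ref{thm-inv-matrix}.

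Third, I would verify that hypotheses (1)--(4) of Theorem \ref{thm-inv-matrix} are satisfied: conditions (1)--(3) are assumed verbatim, and condition (4) holds because each $(K_i,G_i) = (K,G)$ is approximately generated by an unconditional Schauder basis by assumption. It then remains to match the invariance conditions. For $i=1$ the constraint $\langle h^*, \sum_k M_{1k} h_k \rangle = \langle h^*, h_1 \rangle = 0$ together with the stated conditions on $\alpha_1,\gamma_1,\sigma_1^j$ reproduces the $i=1$ case of Theorem \ref{thm-inv-matrix}. For $i \geq 2$ the constraint reads $\langle h^*, h_i - h_{i-1} \rangle = 0$, and since $\sum_k M_{ik} \alpha_k(h) = \alpha_i(h) - \alpha_{i-1}(h)$ (and likewise for $\gamma$ and each $\sigma^j$), conditions (\ref{cond-mon-1}) and (\ref{cond-mon-2}) are exactly the drift and volatility hypotheses of Theorem \ref{thm-inv-matrix} for this $M$; the jump condition (\ref{main-1-prod}) is shared.

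Finally I would conclude by applying Theorem \ref{thm-inv-matrix}, obtaining invariance of $\calk$ for the SPDE (\ref{SPDE}). I do not anticipate any substantive obstacle: the whole content is the correct identification of the difference matrix $M$ and the routine bookkeeping showing that the monotonicity conditions of the corollary are the specialization of the matrix conditions. The only point requiring a moment's care is confirming that it is $M$ itself (and not its inverse) that appears in (\ref{cone-matrix}), together with its invertibility.
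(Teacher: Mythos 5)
Your proposal is correct and follows exactly the paper's own argument: the paper likewise writes $\calk$ in the form (\ref{cone-matrix}) with the bidiagonal difference matrix $M_{ii}=1$, $M_{i,i-1}=-1$ and $K_i = K$ for all $i$, and then invokes Theorem \ref{thm-inv-matrix}. Your additional remarks on the invertibility of $M$ and the bookkeeping that matches (\ref{cond-mon-1})--(\ref{cond-mon-2}) to the hypotheses of that theorem are exactly the details the paper leaves implicit.
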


\begin{proof}
Note that the cone $\calk$ has the representation
\begin{align*}
\calk := \{ h \in \calh : h_1 \in K \} \cap \bigcap_{i=2}^m \{ h \in \calh : h_i - h_{i-1} \in K \}
\end{align*}
Hence, it is of the form (\ref{cone-matrix}) with the matrix $M \in \bbr^{m \times m}$ given by
\begin{align}\label{matrix-mon}
M_{ik} =
\begin{cases}
1, & \text{if $k=i$,}
\\ -1, & \text{if $k=i-1$,}
\\ 0, & \text{otherwise,}
\end{cases}
\end{align}
and $K_i = K$ for all $i=1,\ldots,m$. Therefore, the result is a consequence of Theorem \ref{thm-inv-matrix}.
\end{proof}

Now, we consider the set
\begin{align*}
\calk := \{ h \in \calh : h_1 \leq_K \ldots \leq_K h_m \},
\end{align*}
which is also a closed convex cone in $\calh$.

\begin{corollary}\label{cor-mon-2}
If we have (\ref{main-1-prod}), and for all $i=2,\ldots,m$ and $(h^*,h) \in G \times \calk$ with $\langle h^*, h_i-h_{i-1} \rangle = 0$ we have (\ref{cond-mon-1}) and (\ref{cond-mon-2}), then the closed convex cone $\calk$ is invariant for the SPDE (\ref{SPDE}).
\end{corollary}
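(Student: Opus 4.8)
The plan is to realize $\calk$ as a particular instance of the matrix-transformed cone (\ref{cone-matrix}) and then invoke Theorem \ref{thm-inv-matrix}, following the pattern of the proof of Corollary \ref{cor-mon-1}. The essential difference from Corollary \ref{cor-mon-1} is that the lower bound $0 \leq_K h_1$ is now absent, so the first coordinate is unconstrained; the key point will be to encode this missing constraint inside the invertible-matrix framework rather than to prove anything genuinely new.

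First I would note the representation
\begin{align*}
\calk = \bigcap_{i=2}^m \{ h \in \calh : h_i - h_{i-1} \in K \},
\end{align*}
which comprises only $m-1$ constraints. To force this into the shape (\ref{cone-matrix}), which demands an invertible matrix $M \in \bbr^{m \times m}$ together with $m$ component cones, I would retain the same lower-bidiagonal matrix $M$ given in (\ref{matrix-mon}) (so that $\det M = 1$ and $M$ is invertible), but now choose $K_1 := H$ and $K_i := K$ for $i = 2,\ldots,m$. With this choice the first row produces the vacuous constraint $h_1 \in H$, while for $i \geq 2$ the $i$-th row produces $\sum_{k=1}^m M_{ik} h_k = h_i - h_{i-1} \in K$, so that $\calk$ indeed has the form (\ref{cone-matrix}).

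Next I would check hypothesis (4) of Theorem \ref{thm-inv-matrix}, namely that each $(K_i,G_i)$ is approximately generated by an unconditional Schauder basis. For $i \geq 2$ this is the standing assumption on $(K,G)$. For $i = 1$ I would observe that the trivial cone $H$ is generated by an unconditional Schauder basis with the empty generating system $G_1 := \emptyset$ --- any orthonormal basis of the separable Hilbert space $H$ serves, with no positivity indices imposed --- and that any cone generated by an unconditional Schauder basis is automatically approximately generated by one, as seen by taking the constant sequence $K_n := H$, $G_n := \emptyset$, $\pi_n := \Id$ in Definition \ref{def-approx-Schauder}. It then remains to translate the boundary conditions: since $G_1 = \emptyset$, the conditions of Theorem \ref{thm-inv-matrix} for $i=1$ are vacuous, while for $i \geq 2$ we have $G_i = G$ and $\sum_{k=1}^m M_{ik} h_k = h_i - h_{i-1}$, so that the constraint $\la h^*, \sum_{k=1}^m M_{ik} h_k \ra = 0$ reads $\la h^*, h_i - h_{i-1} \ra = 0$ and the required drift and volatility relations collapse exactly to (\ref{cond-mon-1}) and (\ref{cond-mon-2}). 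Together with (\ref{main-1-prod}), all hypotheses of Theorem \ref{thm-inv-matrix} are in force, and the invariance of $\calk$ follows.

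The only non-routine step is the recognition that the absent lower bound should be modeled by the trivial component cone $K_1 = H$ equipped with the empty generating system, together with the verification that $H$ qualifies as approximately generated by an unconditional Schauder basis so that hypothesis (4) of Theorem \ref{thm-inv-matrix} is not violated; once this is in place, the remainder is a direct specialization of the matrix framework and a mechanical rewriting of the boundary inequalities.
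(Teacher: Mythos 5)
Your proposal is correct and takes essentially the same route as the paper: the paper's proof likewise writes $\calk = \bigcap_{i=2}^m \{ h \in \calh : h_i - h_{i-1} \in K \}$, realizes it in the form (\ref{cone-matrix}) with the matrix (\ref{matrix-mon}), $K_1 = H$ and $K_i = K$ for $i = 2,\ldots,m$, and invokes Theorem \ref{thm-inv-matrix}. Your explicit check that $(H,\emptyset)$ qualifies as approximately generated by an unconditional Schauder basis (so that hypothesis (4) of Theorem \ref{thm-inv-matrix} holds and the $i=1$ boundary conditions are vacuous) is a detail the paper leaves implicit, and it is verified correctly.
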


\begin{proof}
Note that the cone $\calk$ has the representation
\begin{align*}
\calk := \bigcap_{i=2}^m \{ h \in \calh : h_i - h_{i-1} \in K \}
\end{align*}
Hence, it is of the form (\ref{cone-matrix}) with the matrix $M \in \bbr^{m \times m}$ given by (\ref{matrix-mon}) as well as $K_1 = H$ and $K_i = K$ for all $i=2,\ldots,m$. Therefore, the result is a consequence of Theorem \ref{thm-inv-matrix}.
\end{proof}

\begin{remark}\label{rem-multi-curve}
When modeling multiple yield curves (see, for example \cite{Multi-Curve}), it is desirable to have spreads which are ordered with respect to different tenors. This situation is covered by Corollaries \ref{cor-mon-1} and \ref{cor-mon-2}; see also Remark \ref{rem-multi-curve-2}.
\end{remark}

\section{$L^2$-spaces}\label{sec-L2}

In this section we investigate stochastic invariance of the cone of nonnegative functions in $L^2$-spaces. Let $(X,\calx,\mu)$ be a measure space. We define the Hilbert space $H := L^2(X,\calx,\mu)$ and the closed convex cone $K := H_+$ consisting of all nonnegative functions. Note that the cone $K$ is self-dual. We call the measure space $(X,\calx,\mu)$ \emph{separable} if the Hilbert space $H$ is separable.

\begin{lemma}\label{lemma-metric-separable}
The following statements are true:
\begin{enumerate}
\item If $X$ is a separable metric space equipped with its Borel $\sigma$-algebra $\calx = \calb(X)$, then the measure space $(X,\calx,\mu)$ is separable.

\item In particular, if $X \subset \bbr^d$ is a subset and $\calx = \calb(X)$, then the measure space $(X,\calx,\mu)$ is separable.
\end{enumerate}
\end{lemma}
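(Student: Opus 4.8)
The plan is to prove statement (1) by exhibiting a countable dense subset of $H = L^2(X,\calb(X),\mu)$; statement (2) then follows immediately, since any subset $X \subset \bbr^d$ equipped with the subspace metric is again a separable metric space (second countability is hereditary) whose intrinsic Borel $\sigma$-algebra is $\calb(X)$, so that (1) applies verbatim.

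First I would exploit the separability of $X$ to generate $\calb(X)$ countably. Fixing a countable dense set $\{ x_n \}_{n \in \bbn} \subset X$, the family
\begin{align*}
\mathcal{U} := \{ B(x_n,q) : n \in \bbn, \ q \in \bbq,\ q > 0 \}
\end{align*}
of open balls is a countable base of the topology of $X$, whence $\calb(X) = \sigma(\mathcal{U})$ is countably generated. Assuming, as is necessary for the conclusion, that $\mu$ is $\sigma$-finite, I fix a sequence $X_k \uparrow X$ with $\mu(X_k) < \infty$ and let $\mathcal{A}$ be the Boolean field of sets generated by $\mathcal{U} \cup \{ X_k : k \in \bbn \}$; since $\mathcal{U}$ is countable, $\mathcal{A}$ is a countable field.

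Next I would introduce the countable set
\begin{align*}
\mathcal{L} := \bigg\{ \sum_{i=1}^n q_i \bbI_{A_i} : n \in \bbn,\ q_i \in \bbq,\ A_i \in \mathcal{A},\ \mu(A_i) < \infty \bigg\}
\end{align*}
and show it is dense in $H$. Given $f \in H$ and $\epsilon > 0$, I would approximate in three steps. (i) Replace $f$ by a measurable simple function $s = \sum_{j} c_j \bbI_{B_j}$ with $B_j \in \calb(X)$ of finite measure and $\| f - s \|_{L^2} < \epsilon$; this is automatic because $f \in H$ forces $\mu(\{ |f| \geq \delta \}) < \infty$ for every $\delta > 0$, so the truncations $f \, \bbI_{\{ 1/N \leq |f| \leq N \}}$ converge to $f$ in $L^2$ and have bounded, finite-measure support. (ii) Replace each $B_j$ by a set $A_j \in \mathcal{A}$ with $\mu(B_j \triangle A_j)$ small, using $\| \bbI_{B_j} - \bbI_{A_j} \|_{L^2}^2 = \mu(B_j \triangle A_j)$. (iii) Replace the coefficients $c_j$ by rationals. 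Summing the three error contributions produces an element of $\mathcal{L}$ within $\epsilon$ of $f$, giving density of the countable set $\mathcal{L}$ and hence separability of $H$.

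The substantive point, and the step I expect to be the main obstacle, is (ii): that a Borel set $B$ of finite measure can be approximated in symmetric difference by a member of the countable field $\mathcal{A}$. I would prove this by the good-sets principle. For a fixed finite-measure piece $X_k$, the collection $\mathcal{G}_k$ of Borel $B \subset X_k$ admitting such approximations contains the generating family, is closed under complements within $X_k$, and is a monotone class; here the finiteness of $\mu(X_k)$ is what allows the errors in a countable union to be summed and controlled, so that $\mathcal{G}_k$ exhausts the trace $\sigma$-algebra on $X_k$. A general finite-measure set $B$ is then handled by writing $B = \bigcup_k (B \cap X_k)$ and using $\mu(B) < \infty$ to make the tail $\mu(B \setminus X_k)$ negligible before approximating $B \cap X_k$. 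This is precisely where $\sigma$-finiteness is indispensable and cannot be dropped: for the counting measure on the separable metric space $X = [0,1]$ one has $H = \ell^2([0,1])$, which is not separable.
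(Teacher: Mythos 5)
Your proof is correct, but it takes a genuinely different route from the paper: the paper disposes of the lemma in one line by citing Brezis, Theorem 4.13, whereas you carry out the underlying argument in full --- countable generation of $\calb(X)$ from rational balls around a countable dense set, approximation of finite-measure Borel sets in symmetric difference by a countable field via the monotone class theorem on each finite-measure piece, and the three-step truncation/approximation of a general $f \in L^2$. What your version buys is transparency about the hypotheses: the cited theorem of Brezis is formulated for ($\sigma$-finite) measures, and your counting-measure example on $[0,1]$, where $L^2$ becomes the non-separable space $\ell^2([0,1])$, shows that the lemma as literally stated (with an arbitrary measure $\mu$) is false, so the $\sigma$-finiteness you insert is not cosmetic but necessary --- a point the paper glosses over, and one that is harmless in context since every measure used later (Lebesgue-type weights, probability measures) is $\sigma$-finite. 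Two small polish points: in the good-sets step it is cleanest to say that the exactly-approximable collection contains the trace \emph{field} $\{A \cap X_k : A \in \mathcal{A}\}$ (not merely the generating balls) before invoking the monotone class theorem, and in step (ii) one should note that $\mu(B_j \triangle A_j) < \infty$ together with $\mu(B_j) < \infty$ forces $\mu(A_j) < \infty$, so the approximants really do lie in your countable set $\mathcal{L}$; both are trivial to supply.
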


\begin{proof}
This is a consequence of \cite[Thm. 4.13]{Brezis}.
\end{proof}

A filtration $(\calg_n)_{n \in \bbn}$ on the measure space $(X,\calx,\mu)$ is called \emph{finite} if for each $n \in \bbn$ the $\sigma$-algebra $\calg_n$ only consists of finitely many elements.

\begin{proposition}\label{prop-prob-space-separable}
If $(X,\calx,\mu)$ is a probability space, then the following statements are equivalent:
\begin{enumerate}
\item[(i)] The probability space $(X,\calx,\mu)$ is separable.

\item[(ii)] There exists a finite filtration $(\calg_n)_{n \in \bbn}$ such that for each $h \in H$ we have
\begin{align}\label{conv-proj}
\pi_{L^2(\calg_n)} h \to h \quad \text{in $H$.}
\end{align}
\end{enumerate}
\end{proposition}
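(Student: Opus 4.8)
The plan is to establish the two implications separately, since the measure-theoretic content differs in each direction. For the implication (ii) $\Rightarrow$ (i), I would argue that the convergence (\ref{conv-proj}) forces $H$ to be separable. The key observation is that each space $L^2(\calg_n)$ is finite-dimensional: since $\calg_n$ consists of finitely many sets, it is generated by a finite measurable partition of $X$, and $L^2(\calg_n)$ is spanned by the indicator functions of the atoms of this partition. Hence $\bigcup_{n \in \bbn} L^2(\calg_n)$ is a countable union of finite-dimensional spaces, and its closure is separable. Condition (\ref{conv-proj}) says precisely that every $h \in H$ lies in this closure, so $H = \overline{\bigcup_{n} L^2(\calg_n)}$ is separable, which is (i).

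For the harder implication (i) $\Rightarrow$ (ii), I would start from a countable dense subset $\{ f_k \}_{k \in \bbn}$ of $H$ and build the finite filtration explicitly. First I would approximate each $f_k$ by a simple function taking finitely many rational values, so that I obtain a countable family of simple functions whose linear span is still dense in $H$. Collecting the (finitely many) level sets arising from the first $n$ of these simple functions, I would let $\calg_n$ be the $\sigma$-algebra generated by these finitely many measurable sets; this is automatically a finite $\sigma$-algebra, and by construction $(\calg_n)_{n \in \bbn}$ is increasing, hence a finite filtration. The design ensures that every element of the chosen dense family eventually becomes $\calg_n$-measurable, so that $\bigcup_n L^2(\calg_n)$ contains a dense subset of $H$ and is therefore itself dense.

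To conclude (\ref{conv-proj}), I would invoke the standard fact that on a probability space the conditional-expectation operators $\pi_{L^2(\calg_n)} = \bbe[\,\cdot \mid \calg_n]$ are orthogonal projections in $H = L^2(X,\calx,\mu)$, and that for an increasing filtration with $\bigcup_n L^2(\calg_n)$ dense in $H$ one has $\pi_{L^2(\calg_n)} h \to h$ for every $h \in H$. This is a consequence of the $L^2$ martingale convergence theorem, or alternatively of the elementary Hilbert-space principle that orthogonal projections onto an increasing sequence of closed subspaces converge strongly to the projection onto the closure of their union; here that closure is all of $H$. I expect the main obstacle to be the careful bookkeeping in the construction of the filtration: one must ensure both that each $\calg_n$ remains \emph{finite} (which is why approximation by finitely-valued simple functions with a controlled, countable value set is essential) and that the union of the resulting subspaces is genuinely dense, so that the convergence (\ref{conv-proj}) holds for \emph{every} $h \in H$ and not merely on the dense family. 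Restricting to a probability space is what guarantees that conditional expectations are well-defined contractive projections on $L^2$, and this is where the hypothesis that $(X,\calx,\mu)$ be a probability space (rather than a general measure space) enters decisively.
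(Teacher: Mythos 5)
Your proof is correct, and for the implication (i) $\Rightarrow$ (ii) it takes a genuinely different route from the paper. The paper proceeds by citation: it first passes to $L^1(X,\calx,\mu)$, whose separability follows from that of $H$ (Bogachev), then invokes a result of Malliavin producing a finite filtration $(\calg_n)_{n \in \bbn}$ with $\bbe[h \mid \calg_n] \to h$ in $L^1$, and finally upgrades to $L^2$-convergence via the martingale convergence theorem together with uniqueness of limits and the identification $\bbe[\,\cdot \mid \calg_n] = \pi_{L^2(\calg_n)}$ (Lemma \ref{lemma-MT-prob-space}). You instead construct the filtration explicitly from a countable dense family of finitely-valued simple functions and conclude by the elementary Hilbert-space fact that orthogonal projections onto an increasing sequence of closed subspaces with dense union converge strongly to the identity; note that since each $L^2(\calg_n)$ is finite-dimensional and the subspaces are nested, this last step needs no martingale theory at all. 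Your argument is self-contained and stays entirely in $L^2$, at the cost of some bookkeeping in the construction; the paper's is shorter but leans on two external references and a detour through $L^1$. Your treatment of (ii) $\Rightarrow$ (i) coincides with the paper's one-line argument. One minor point worth making explicit in your write-up: the subspaces $L^2(\calg_n)$ are closed because a $\sigma$-algebra generated by finitely many sets is finite, so each $L^2(\calg_n)$ is spanned by the finitely many indicators of its $\mu$-atoms.
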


\begin{proof}
(i) $\Rightarrow$ (ii): By \cite[Exercise 4.7.63]{Bogachev-vol-1} the Banach space $L^1(X,\calx,\mu)$ is also separable. Hence, by the Proposition in Section IV.6.1 (page 219) from \cite{Malliavin} there exists a finite filtration $(\calg_n)_{n \in \bbn}$ such that for each $h \in H$ we have
\begin{align*}
\bbe[h | \calg_n] \to h \quad \text{in $L^1$.}
\end{align*}
Now, let $h \in H$ be arbitrary. By the martingale convergence theorem (see \cite[Thm. 11.10]{Klenke}) we have
\begin{align*}
\bbe [ h | \calg_n ] \to g \quad \text{in $H$}
\end{align*}
for some $g \in H$. Since convergence in $H = L^2$ implies convergence in $L^1$, we have $g=h$. Furthermore, by Lemma \ref{lemma-MT-prob-space} we have $\bbe [ h | \calg_n ] = \pi_{L^2(\calg_n)} h$ for each $n \in \bbn$, which shows (\ref{conv-proj}).

\noindent(ii) $\Rightarrow$ (i): By assumption there is a countable set $A \subset H$ such that $H = \overline{\lin} A$. Hence $H$ is separable.
\end{proof}

Recall that a set $A \in \calx$ is called a \emph{$\mu$-atom} if $\mu(A) > 0$ and for each $B \in \calx$ with $B \subset A$ we have $\mu(B) = 0$ or $\mu(A \setminus B) = 0$. For a sub-$\sigma$-algebra $\calg \subset \calx$ we denote by $\cala_{\mu}(\calg)$ the set of all $\mu$-atoms from $\calg$.

\begin{proposition}\label{prop-L2-Schauder}
Let $G \subset K$ be a generating system of $K$ such that the following conditions are fulfilled:
\begin{enumerate}
\item There exists a finite filtration $(\calg_n)_{n \in \bbn}$ such that for each $h \in H$ we have
\begin{align*}
\pi_{L^2(\calg_n)} h \to h \quad \text{in $H$.}
\end{align*}

\item For all $n \in \bbn$, all $A \in \cala_{\mu}(\calg_n)$ and all $h \in K$ with $\la \bbI_A,h \ra = 0$ there is a sequence $(h_m^*)_{m \in \bbn} \subset G$ such that $h_m^* \to \bbI_A$ as $m \to \infty$ and $\la h_m^*,h \ra = 0$ for each $m \in \bbn$.
\end{enumerate}
Then $(K,G)$ is approximately generated by an unconditional Schauder basis.
\end{proposition}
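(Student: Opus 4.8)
The plan is to invoke Proposition \ref{prop-self-dual-approx-Schauder}, which is available precisely because $K = H_+$ is self-dual. To apply it I must produce an increasing sequence $(V_n)$ of finite dimensional subspaces together with orthonormal systems $G_n \subset K$ that are bases of the $V_n$ and satisfy $\pi_{V_n} \to \Id$, and I must verify the approximation property in condition (2) of that proposition. The two hypotheses of the present statement are tailored to deliver exactly these ingredients: condition (1) will give the convergence of the projections, and condition (2) will give the approximation property.

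First I would set $V_n := L^2(\calg_n)$. Since each $\calg_n$ is finite, $V_n$ is finite dimensional, and since $(\calg_n)$ is a filtration the subspaces increase; condition (1) yields $\pi_{V_n} = \pi_{L^2(\calg_n)} \to \Id$. The $\mu$-atoms $A \in \cala_{\mu}(\calg_n)$ partition $X$ up to a $\mu$-nullset, so the normalized indicators $e_A := \bbI_A / \sqrt{\mu(A)}$ are pairwise orthogonal, of norm one, and span $V_n$. Setting $G_n := \{ e_A : A \in \cala_{\mu}(\calg_n) \}$ I obtain an orthonormal basis of $V_n$, and $G_n \subset K$ because each indicator is nonnegative. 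This establishes condition (1) of Proposition \ref{prop-self-dual-approx-Schauder}.

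To verify condition (2) of that proposition, I would take $h^* \in \bigcup_n G_n$ and $h \in K$ with $\la h^*, h \ra = 0$. Then $h^* = e_A$ for some $\mu$-atom $A$, and $\la \bbI_A, h \ra = \sqrt{\mu(A)}\,\la e_A, h \ra = 0$. The present condition (2) supplies a sequence $(g_m) \subset G$ with $g_m \to \bbI_A$ and $\la g_m, h \ra = 0$ for every $m$. Replacing $G$ by $\overline{\lin}^+ G$, which remains a generating system of $K$ by Lemma \ref{lemma-lin-generating-system}, I may rescale and put $h_m^* := g_m / \sqrt{\mu(A)} \in G$; then $h_m^* \to e_A = h^*$ and $\la h_m^*, h \ra = 0$. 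Proposition \ref{prop-self-dual-approx-Schauder} then yields that $(K,G)$ is approximately generated by an unconditional Schauder basis.

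The hard part will be the requirement in Proposition \ref{prop-self-dual-approx-Schauder} that the orthonormal systems $(G_n)$ be \emph{increasing}: the natural systems of normalized atom indicators are not nested, since passing from $\calg_n$ to $\calg_{n+1}$ refines the partition and splits an atom $A$ into children whose indicators replace $\bbI_A$. I expect this to be the main obstacle, and I would resolve it by noting that only the increasing subspaces $V_n$ and the convergence $\pi_{V_n} \to \Id$ actually enter the construction of the cones $K_n := (V_n \cap K)^*$ and the projections $\pi_n := \pi_{V_n}$. The effective generating cones $\overline{\lin}^+ G_n = V_n \cap K$ do increase with $n$, and by Lemma \ref{lemma-lin-generating-system} this nested form is the substantive content of the monotonicity of the generating systems; so the splitting of atoms keeps $\overline{\lin}^+ G_n$ monotone even though the sets $G_n$ themselves are not.
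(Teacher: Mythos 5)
Your proposal follows exactly the route of the paper: the paper's entire proof consists of setting $V_n := L^2(\calg_n)$ and $G_n := \{ \bbI_A/\mu(A) : A \in \cala_{\mu}(\calg_n) \}$ and invoking Proposition \ref{prop-self-dual-approx-Schauder}, so your verification is a fleshed-out version of the same argument (with the correct normalization $\bbI_A/\sqrt{\mu(A)}$, where the paper's $\bbI_A/\mu(A)$ is evidently a typo). The two wrinkles you flag -- atom-splitting destroys literal nestedness of the orthonormal systems $G_n$, and the rescaled approximants $g_m/\sqrt{\mu(A)}$ may leave $G$ -- are genuinely present but are equally unaddressed in the paper's one-line proof; for the second, note that in every application in the paper (e.g.\ $G = K$ or $G = K \cap C_c^{\infty}(X)$) the set $G$ is itself a convex cone, so the rescaled elements already lie in $G$ and one need not pass to $\overline{\lin}^+ G$, which would otherwise change the generating system appearing in the conclusion.
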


\begin{proof}
Setting $V_n := L^2(\calg_n)$ and
\begin{align*}
G_n := \{ \bbI_A / \mu(A) : A \in \cala_{\mu}(\calg_n) \}
\end{align*}
for each $n \in \bbn$, this is an immediate consequence of Proposition \ref{prop-self-dual-approx-Schauder}.
\end{proof}

\begin{corollary}\label{cor-prob-space-Schauder}
If $(X,\calx,\mu)$ is a separable probability space, then $(K,K)$ is approximately generated by an unconditional Schauder basis.
\end{corollary}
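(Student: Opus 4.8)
The plan is to apply Proposition \ref{prop-L2-Schauder} with the generating system chosen to be $G := K$. First I would recall that, as noted at the beginning of Section \ref{sec-L2}, the cone $K = H_+$ is self-dual, so that $K^* = K$; in particular $K \subset K^*$ and $K^* = K$, which means that $G := K$ is a legitimate generating system of $K$ in the sense of Definition \ref{def-gen-system}. Hence $(K,K)$ is a well-defined closed convex cone, and it remains only to verify the two hypotheses of Proposition \ref{prop-L2-Schauder}.

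For the first hypothesis I would invoke Proposition \ref{prop-prob-space-separable}: since $(X,\calx,\mu)$ is a separable probability space, statement (i) there holds, and therefore so does statement (ii), which produces a finite filtration $(\calg_n)_{n \in \bbn}$ with $\pi_{L^2(\calg_n)} h \to h$ in $H$ for every $h \in H$. This is exactly condition (1) of Proposition \ref{prop-L2-Schauder}.

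The second hypothesis is where the choice $G = K$ does the work, and I expect it to be immediate rather than the main difficulty. Fix $n \in \bbn$, an atom $A \in \cala_{\mu}(\calg_n)$, and $h \in K$ with $\la \bbI_A, h \ra = 0$. Since $\mu$ is a probability measure we have $\mu(A) \leq 1 < \infty$, so $\bbI_A \in L^2(X,\calx,\mu) = H$; as $\bbI_A \geq 0$, this gives $\bbI_A \in K = G$. Consequently I may take the constant sequence $h_m^* := \bbI_A \in G$ for all $m \in \bbn$: trivially $h_m^* \to \bbI_A$ as $m \to \infty$, and $\la h_m^*, h \ra = \la \bbI_A, h \ra = 0$ for each $m$, which verifies condition (2).

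With both conditions established, Proposition \ref{prop-L2-Schauder} yields that $(K,K)$ is approximately generated by an unconditional Schauder basis, which is the claim. The only points requiring any care are the self-duality of $K$, ensuring that $K$ itself is an admissible generating system, and the finiteness of $\mu$, ensuring that each indicator $\bbI_A$ actually lies in $G$; once these are noted there is no genuine obstacle, the result being a direct appeal to the two preceding propositions.
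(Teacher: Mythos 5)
Your proposal is correct and follows exactly the route the paper takes: the paper's proof is a one-line appeal to Propositions \ref{prop-prob-space-separable} and \ref{prop-L2-Schauder}, and you have simply filled in the (easy) verifications — that $K$ is a generating system by self-duality, and that each $\bbI_A$ lies in $K$ so the constant sequence settles condition (2). No issues.
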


\begin{proof}
This is an immediate consequence of Propositions \ref{prop-prob-space-separable} and \ref{prop-L2-Schauder}.
\end{proof}

\begin{lemma}\label{lemma-approx-C-c-infty}
Suppose that $X \subset \bbr^d$ is an open subset equipped with its Borel $\sigma$-algebra $\calx = \calb(X)$. Let $B \in \calx$ be a Borel set such that $\mu(\partial B) = 0$. Then there exists a sequence $(\varphi_n)_{n \in \bbn} \subset K \cap C_c^{\infty}(X)$ with $\supp(\varphi_n) \subset \Int \, B$ for each $n \in \bbn$ such that $\varphi_n \to \bbI_B$ in $H$.
\end{lemma}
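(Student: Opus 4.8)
The plan is to reduce the problem to approximating the indicator of the open set $U := \Int \, B$, and then to build the smooth approximations from inside $U$ via the smooth Urysohn lemma; the generality of $\mu$ forces us to avoid any convergence of mollifications.

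First I would pass from $B$ to $U := \Int \, B$. Since $\Int \, B \subset B$ and $B \setminus \Int \, B \subset \partial B$, the hypothesis $\mu(\partial B) = 0$ yields $\mu(B \setminus U) = 0$, while $U \setminus B = \emptyset$; hence $\bbI_B = \bbI_U$ holds $\mu$-almost everywhere, so $\| \bbI_B - \bbI_U \|_H = 0$. (For the convergence in $H$ to be meaningful we need $\bbI_B \in H$, i.e. $\mu(B) < \infty$, which we use below.) It therefore suffices to construct a sequence $(\varphi_n)_{n \in \bbn} \subset K \cap C_c^{\infty}(X)$ with $\supp(\varphi_n) \subset U$ and $\varphi_n \to \bbI_U$ in $H$.

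Next I would exhaust $U$ from inside by open sets with compact closure. Since $X \subset \bbr^d$ is open and $U \subset X$ is open, I set
\[
U_n := \{ x \in U : |x| < n \text{ and } \operatorname{dist}(x, \bbr^d \setminus U) > 1/n \}, \quad n \in \bbn.
\]
Each $U_n$ is open and bounded, its closure $\overline{U_n}$ is a compact subset of $U$ (the condition $\operatorname{dist}(\cdot, \bbr^d \setminus U) \geq 1/n$ is closed and forces membership in $U$), the sequence $(U_n)_{n \in \bbn}$ is increasing, and $\bigcup_{n \in \bbn} U_n = U$ because every $x \in U$ satisfies $\operatorname{dist}(x, \bbr^d \setminus U) > 0$. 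For each $n$ I would then invoke the smooth Urysohn lemma: given the compact set $\overline{U_n}$ contained in the open set $U$, there exists $\varphi_n \in C_c^{\infty}(\bbr^d)$ with $0 \leq \varphi_n \leq 1$, $\varphi_n \equiv 1$ on $\overline{U_n}$, and $\supp(\varphi_n) \subset U$. Restricting to $X$ and noting that $\supp(\varphi_n)$ is a compact subset of $U \subset X$, we obtain $\varphi_n \in K \cap C_c^{\infty}(X)$ with $\supp(\varphi_n) \subset U = \Int \, B$. The error is then purely measure-theoretic: since $\varphi_n = 1 = \bbI_U$ on $U_n$, since $\varphi_n = 0 = \bbI_U$ off $U$, and since $0 \leq \varphi_n \leq 1$, we have
\[
\| \varphi_n - \bbI_U \|_H^2 = \int_{U \setminus U_n} | \varphi_n - 1 |^2 \, d\mu \leq \mu(U \setminus U_n).
\]
Because $\mu(U) = \mu(B) < \infty$ and $U_n \uparrow U$, continuity from below gives $\mu(U \setminus U_n) \to 0$, and hence $\varphi_n \to \bbI_U = \bbI_B$ in $H$.

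The main obstacle to watch for is that $\mu$ is an arbitrary measure on $(X, \calx)$, so one cannot simply mollify $\bbI_B$ and appeal to $L^2$-convergence of mollifications, which holds only for Lebesgue measure. The construction circumvents this by arranging each $\varphi_n$ to agree \emph{exactly} with the indicator on the interior exhaustion $U_n$ and to vanish outside $U$; the whole approximation error is then concentrated on $U \setminus U_n$ and is controlled by $\mu(U \setminus U_n)$, which tends to $0$ irrespective of the structure of $\mu$. The smooth Urysohn lemma is itself established via Lebesgue mollification, but that enters only through the support and sandwich properties of $\varphi_n$, not through any $L^2(\mu)$-convergence.
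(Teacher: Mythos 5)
Your proof is correct, and it takes a genuinely different route from the paper's. The paper also reduces to $U := \Int\,B$ and exhausts $U$ by the open sets $\{ x : d_{U^c}(x) > 1/n \}$, but it then produces the smooth functions by mollifying $\bbI_{U_n}$ and appeals to the $L^2$-convergence of mollifications (Brezis, Thm.\ 4.22) to control $\| \bbI_{U_n} - \rho_m \star \bbI_{U_n} \|_{L^2(\mu)}$. You instead take a compact exhaustion $\overline{U_n} \subset U$ (your extra condition $|x| < n$ ensures boundedness, hence compactness of the closure and compact support of $\varphi_n$) and apply the smooth Urysohn lemma, so that $\varphi_n$ agrees exactly with $\bbI_U$ on $U_n$ and off $U$, and the entire error is the purely measure-theoretic quantity $\mu(U \setminus U_n) \to 0$. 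What your approach buys is exactly what you point out: it makes no use of $L^2(\mu)$-convergence of convolutions, which is a statement about Lebesgue measure and requires some care to transfer to a general $\mu$ (in the paper this step is ultimately applied under the additional standing hypothesis $\mu \ll \lambda$ of Proposition \ref{prop-L2-O-Schauder}); your argument works verbatim for any measure with $\mu(B) < \infty$. Your explicit remark that $\mu(B) < \infty$ is needed for $\bbI_B \in H$ is also apt --- both proofs use this implicitly (it is satisfied in the paper's application, where $B$ is an atom of a $\sigma$-algebra generated by sets of finite measure), and it is good that you flagged it.
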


\begin{proof}
We define the open subset $U := \Int \, B$. By assumption we have $\mu(U) = \mu(B)$, and hence $\bbI_U = \bbI_B$ in $H$. Furthermore, we define the sequence of open subsets $(U_n)_{n \in \bbn}$ as
\begin{align}\label{U-n-def}
U_n := \bigg\{ x \in \bbr^d :d_{U^c}(x) > \frac{1}{n} \bigg\}, \quad n \in \bbn.
\end{align}
Then we have $U_n \uparrow U$, and hence $\bbI_{U_n} \to \bbI_U$ in $H$. Let $(\rho_m)_{m \in \bbn}$ be a sequence of mollifiers; that is, for each $m \in \bbn$ we have 
\begin{align*}
\rho_m \geq 0, \quad \rho_m \in C_c^{\infty}(\bbr^d), \quad \supp(\rho_m) \subset \overline{B(0,1/m)}\quad \text{and} \quad \int_{\bbr^d} \rho_m(x) dx = 1. 
\end{align*}
Now, let $\epsilon > 0$ be arbitrary. There exists $n \in \bbn$ such that
\begin{align*}
\| \bbI_U - \bbI_{U_n} \|_H < \frac{\epsilon}{2}. 
\end{align*}
Furthermore, by \cite[Thm. 4.22]{Brezis} there exists $m \geq n$ such that
\begin{align*}
\| \bbI_{U_n} - \rho_m \star \bbI_{U_n} \|_{L^2(\bbr^d,\mu)} < \frac{\epsilon}{2},
\end{align*}
where $\rho_m \star \bbI_{U_n} \geq 0$ denotes the convolution product of $\rho_m$ and $\bbI_{U_n}$. By \cite[Prop. 4.20]{Brezis} we have $\rho_m \star \bbI_{U_n} \in C^{\infty}(\bbr^d)$. Moreover, recalling the definition (\ref{U-n-def}), by \cite[Prop. 4.18]{Brezis} we have
\begin{align*}
\supp (\rho_m \star \bbI_{U_n}) \subset \overline{\supp (\rho_m) + \supp (\bbI_{U_n})} \subset \overline{B(0,1/m) + \supp (\bbI_{U_n})} \subset U.
\end{align*}
Therefore, by identification we have $\varphi := \rho_m \star \bbI_{U_n} \in K \cap C_c^{\infty}(X)$, and since $\bbI_U = \bbI_B$ in $H$, we obtain
\begin{align*}
\| \bbI_B - \varphi \|_H < \epsilon, 
\end{align*}
completing the proof.
\end{proof}

For what follows, we denote by $\lambda$ the Lebesgue measure.

\begin{proposition}\label{prop-L2-O-Schauder}
Suppose that $X \subset \bbr^d$ is an open subset equipped with its Borel $\sigma$-algebra $\calx = \calb(X)$, and that the measure $\mu$ is $\sigma$-finite such that $\mu \ll \lambda$ with $\frac{d \mu}{d \lambda} \in L_{\loc}^1(X,\lambda)$. Then the cone $(K,G)$ is approximately generated by an unconditional Schauder basis, where $G := K \cap C_c^{\infty}(X)$.
\end{proposition}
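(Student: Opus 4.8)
The plan is to verify the hypotheses of Proposition \ref{prop-L2-Schauder} for the generating system $G := K \cap C_c^{\infty}(X)$. First I would need to confirm that $G$ is indeed a generating system of the self-dual cone $K = H_+$; this follows because $C_c^{\infty}(X)$ is dense in $H = L^2(X,\calx,\mu)$, so the nonnegative smooth functions with compact support approximate every nonnegative $L^2$-function, whence $G^* = K^* = K$ by self-duality. The core of the argument is then to produce a finite filtration $(\calg_n)_{n \in \bbn}$ satisfying condition (1) of Proposition \ref{prop-L2-Schauder}, and to verify the approximation condition (2) using the mollification result of Lemma \ref{lemma-approx-C-c-infty}.

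For condition (1), I would build the finite filtration by dyadic decomposition. Since $\mu$ is $\sigma$-finite with $\frac{d\mu}{d\lambda} \in L_{\loc}^1(X,\lambda)$, I would exhaust $X$ by an increasing sequence of bounded open sets and partition each by a dyadic grid of cubes of side-length $2^{-n}$ intersected with $X$, letting $\calg_n$ be the finite $\sigma$-algebra generated by these finitely many cells (together with a remainder set). A standard martingale/Lebesgue differentiation argument then gives $\pi_{L^2(\calg_n)} h \to h$ in $H$ for every $h \in H$; here one uses that the conditional expectation onto $\calg_n$ equals the orthogonal projection $\pi_{L^2(\calg_n)}$ and that dyadic averages converge. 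The $\mu$-atoms of $\calg_n$ are precisely the dyadic cells of positive $\mu$-measure.

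For condition (2), fix $n \in \bbn$, an atom $A \in \cala_\mu(\calg_n)$, and $h \in K$ with $\la \bbI_A, h \ra = 0$. The atom $A$ is (up to a $\mu$-null set) a dyadic cube intersected with $X$, so its topological boundary $\partial A$ is contained in finitely many coordinate hyperplanes and thus has Lebesgue measure zero; since $\mu \ll \lambda$, we get $\mu(\partial A) = 0$. Lemma \ref{lemma-approx-C-c-infty} then furnishes a sequence $(\varphi_m)_{m \in \bbn} \subset K \cap C_c^{\infty}(X) = G$ with $\supp(\varphi_m) \subset \Int\,A$ and $\varphi_m \to \bbI_A$ in $H$. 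The condition $\la \bbI_A, h \ra = 0$ together with $h \geq 0$ forces $h = 0$ $\mu$-almost everywhere on $A$, hence $\mu$-almost everywhere on $\Int\,A$; since each $\varphi_m$ is supported in $\Int\,A$, we obtain $\la \varphi_m, h \ra = 0$ for every $m$. Setting $h_m^* := \varphi_m$ thus verifies condition (2), and Proposition \ref{prop-L2-Schauder} yields the claim.

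The main obstacle I anticipate is the careful handling of the open subset $X \subset \bbr^d$ in the dyadic construction: the dyadic cells must be intersected with $X$ and one must ensure that the resulting atoms $A$ have $\mu(\partial A) = 0$ relative to the ambient space so that Lemma \ref{lemma-approx-C-c-infty} applies, and that $\Int\,A$ is nonempty and compactly contained in $X$ so that $C_c^{\infty}$-approximants supported in $\Int\,A$ actually exist. The local integrability assumption $\frac{d\mu}{d\lambda} \in L_{\loc}^1(X,\lambda)$ is what guarantees each bounded cell has finite $\mu$-measure and that the differentiation/martingale convergence goes through; keeping track of these compatibility conditions between the Lebesgue structure and the measure $\mu$ is the delicate part, whereas the mollification step itself is already packaged in Lemma \ref{lemma-approx-C-c-infty}.
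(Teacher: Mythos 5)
Your proposal follows essentially the same route as the paper: a dyadic finite filtration plus martingale convergence for condition (1) of Proposition \ref{prop-L2-Schauder}, and Lemma \ref{lemma-approx-C-c-infty} combined with the observation that $h \geq 0$ and $\la \bbI_A,h\ra = 0$ force $h = 0$ a.e.\ on $A$ for condition (2). One step, however, is wrong as written: for an atom of the form $A = Q \cap X$ with $Q$ a dyadic cube, it is \emph{not} true that $\partial A$ is contained in finitely many coordinate hyperplanes, since $\partial A$ may pick up a portion of $\partial X$, and an open set $X \subset \bbr^d$ can have $\lambda(\partial X) > 0$; so $\mu(\partial A) = 0$ does not follow from $\mu \ll \lambda$ for such cells, and Lemma \ref{lemma-approx-C-c-infty} would not apply. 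You flag this region of the argument as the delicate point, but the repair is worth making explicit, and it is exactly what the paper does: include in the $n$-th grid only those dyadic cubes $A_{n,i}$ with $A_{n,i} \subset X$ (no intersecting with $X$, no remainder cell carrying positive finite measure), so that every atom used is a genuine half-open cube whose boundary lies in finitely many hyperplanes and is therefore Lebesgue-null; the local integrability of $\tfrac{d\mu}{d\lambda}$ still gives $\mu(B_n) < \infty$ for the union $B_n$ of these cubes, $B_n \uparrow X$ since $X$ is open, and Proposition \ref{prop-conv-proj-sigma-fin} yields the projection convergence. With that modification your argument coincides with the paper's proof.
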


\begin{proof}
Let $n \in \bbn$ be arbitrary. For each $i \in \bbz^d$ we set
\begin{align*}
A_{n,i} := [2^{-n} i_1 , 2^{-n} (i_1 + 1) ) \times \ldots \times [ 2^{-n} i_d , 2^{-n} (i_d + 1) ),
\end{align*}
and we define the finite index set
\begin{align*}
I_n := \{ i \in \{ -n,\ldots,n-1 \}^d : A_{n,i} \subset X \}.
\end{align*}
Furthermore, we define the finite filtration $(\calg_n)_{n \in \bbn}$ as
\begin{align*}
\calg_n := \sigma(A_{n,i} : i \in I_n), \quad n \in \bbn.
\end{align*}
Then we have $\calb(X) = \sigma(\bigcup_{n \in \bbn} \calg_n)$. Now, we define the sequence $(B_n)_{n \in \bbn}$ as
\begin{align*}
B_n := \bigcup_{i \in I_n} A_{n,i} \in \calg_n \quad \text{for each $n \in \bbn$.}
\end{align*}
Then we have $B_n \uparrow X$. Moreover, since $\mu \ll \lambda$ with $\frac{d \mu}{d \lambda} \in L_{\loc}^1(X,\lambda)$, we have $\mu(B_n) < \infty$ for each $n \in \bbn$. Thus, by Proposition \ref{prop-conv-proj-sigma-fin}, for each $h \in H$ we have
\begin{align*}
\pi_{L^2(\calg_n)} h \to h \quad \text{in $H$.}
\end{align*}
Now, let $n \in \bbn$, $A \in \cala_{\mu}(\calg_n)$ and $h \in K$ with $\la \bbI_A,h \ra = 0$ be arbitrary. Since $\mu \ll \lambda$, we have $\mu(\partial A) = 0$. Hence, by Lemma \ref{lemma-approx-C-c-infty} there exists a sequence $(\varphi_{m})_{m \in \bbn} \subset G$ with $\supp (\varphi_{m}) \subset \Int \, A$ for each $m \in \bbn$ such that $\varphi_{m} \to \bbI_{A}$ in $H$ for $m \to \infty$. Furthermore, since $h \geq 0$ with $\la \bbI_A,h \ra = 0$, we have $\la \varphi_m,h \ra = 0$ for each $m \in \bbn$. Consequently, by Proposition \ref{prop-L2-Schauder} the cone $(K,G)$ is approximately generated by an unconditional Schauder basis.
\end{proof}

Now, we are ready to prove the following invariance result. Consider the stochastic framework from Section \ref{sec-jump-diffusion}.

\begin{theorem}\label{thm-G-star}
Suppose that $X \subset \bbr^d$ is an open subset equipped with its Borel $\sigma$-algebra $\calx = \calb(X)$, and that $\mu$ is a $\sigma$-finite measure such that $\mu \ll \lambda$ with $\frac{d \mu}{d \lambda} \in L_{\loc}^1(X,\lambda)$. We suppose that the following conditions are fulfilled:
\begin{enumerate}
\item The coefficients $(\alpha,\sigma,\gamma)$ are locally Lipschitz and satisfy the linear growth condition.

\item The semigroup $(S_t)_{t \geq 0}$ is pseudo-contractive.

\item The closed convex cone $K$ is invariant for the semigroup $(S_t)_{t \geq 0}$.

\item $A^*$ is a local operator relative to $(K,G)$, where $G := K \cap C_c^{\infty}(X)$.
\end{enumerate}
Then the following statements are equivalent:
\begin{enumerate}
\item[(i)] The closed convex cone $K$ is invariant for the SPDE (\ref{SPDE}).

\item[(ii)] We have (\ref{main-1}), and for all $(h^*,h) \in G \times K$ with $\langle h^*,h \rangle = 0$ we have (\ref{main-3}) and (\ref{main-4}).
\end{enumerate}
\end{theorem}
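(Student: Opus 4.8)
The plan is to establish the equivalence by proving the two implications separately, in each case reducing to one of the two master theorems of the jump-diffusion section. The single preparatory fact that drives both directions is that, under the stated hypotheses on $X$ and $\mu$, the cone $(K,G)$ with $G = K \cap C_c^{\infty}(X)$ is approximately generated by an unconditional Schauder basis. This is precisely the content of Proposition \ref{prop-L2-O-Schauder}, whose hypotheses (openness of $X$, $\sigma$-finiteness of $\mu$, and $\mu \ll \lambda$ with locally integrable density) coincide with the standing assumptions of the present statement. I would therefore begin by invoking Proposition \ref{prop-L2-O-Schauder} to secure this structural property once and for all.

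For the implication (ii) $\Rightarrow$ (i), I would verify that all four hypotheses of Theorem \ref{thm-general} are in force. Conditions (1)--(3) of that theorem are assumed verbatim here, and condition (4)---that $(K,G)$ be approximately generated by an unconditional Schauder basis---is exactly what Proposition \ref{prop-L2-O-Schauder} provides. Since (ii) asserts precisely (\ref{main-1}) together with (\ref{main-3}) and (\ref{main-4}) for all boundary pairs $(h^*,h) \in G \times K$ with $\la h^*,h \ra = 0$, Theorem \ref{thm-general} yields invariance of $K$ for the SPDE (\ref{SPDE}), which is statement (i).

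For the converse implication (i) $\Rightarrow$ (ii), I would appeal to Theorem \ref{thm-general-2}. Its first three hypotheses are again assumed directly. The only hypothesis requiring a short argument is the fourth, namely that $(S_t)_{t \geq 0}$ be a local semigroup relative to $(K,G)$. This I would deduce from the assumption that $A^*$ is a local operator relative to $(K,G)$ by invoking Lemma \ref{lemma-local-operators}(2). With all four hypotheses of Theorem \ref{thm-general-2} verified, invariance of $K$ (statement (i)) forces (\ref{main-1}), (\ref{main-3}) and (\ref{main-4}), which is statement (ii).

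The proof therefore carries no genuine analytical obstacle; it is a synthesis of the sufficiency result (Theorem \ref{thm-general}) and the necessity result (Theorem \ref{thm-general-2}), glued together by the two structural inputs established earlier. The only points demanding attention are bookkeeping ones: matching the explicit measure-theoretic assumptions on $(X,\calx,\mu)$ to the hypotheses of Proposition \ref{prop-L2-O-Schauder}, and translating the local-operator hypothesis on $A^*$ into the required local-semigroup property via Lemma \ref{lemma-local-operators}. The conceptual weight of the statement is thus entirely carried by those earlier results.
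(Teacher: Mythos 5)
Your proposal is correct and follows exactly the paper's own argument: Proposition \ref{prop-L2-O-Schauder} supplies the approximate generation by an unconditional Schauder basis, Theorem \ref{thm-general} gives (ii) $\Rightarrow$ (i), and Lemma \ref{lemma-local-operators} combined with Theorem \ref{thm-general-2} gives (i) $\Rightarrow$ (ii). Nothing is missing.
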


\begin{proof}
By Lemma \ref{lemma-local-operators} the semigroup $(S_t)_{t \geq 0}$ is a local semigroup relative to $(K,G)$. Therefore, taking into account Proposition \ref{prop-L2-O-Schauder}, the result follows from Theorems \ref{thm-general} and \ref{thm-general-2}.
\end{proof}

\section{Abstract $L^2$-spaces}\label{sec-abstract-L2}

In this section we investigate stochastic invariance of the positive cone in abstract $L^2$-spaces. First, we provide the required background about topological vector lattices. For further details, we refer, for example, to \cite[Chap. V]{Schaefer}.

Let $V$ be an $\bbr$-vector space. Furthermore, let $\leq$ be a binary relation over $V$ which is reflexive, anti-symmetric and transitive; more precisely:
\begin{itemize}
\item We have $x \leq x$ for all $x \in V$.

\item If $x \leq y$ and $y \leq x$, then we have $x = y$.

\item If $x \leq y$ and $y \leq z$, then we have $x \leq z$.
\end{itemize}
Then $(V,\leq)$ is called an \emph{ordered vector space} if the following axioms are satisfied:
\begin{enumerate}
\item If $x \leq y$, then we have $x+z \leq y+z$ for all $x,y,z \in V$.

\item If $x \leq y$, then we have $\alpha x \leq \alpha y$ for all $x,y \in V$ and $\alpha > 0$.
\end{enumerate}
Let $V$ be a topological vector space such that $(V,\leq)$ is an ordered vector space. Then we call $(V,\leq)$ an \emph{ordered topological vector space} if the positive cone
\begin{align*}
V_+ := \{ x \in V : x \geq 0 \} 
\end{align*}
is closed in $V$. In this case $V_+$ is a closed convex cone. A \emph{vector lattice} (or a \emph{Riesz space}) is an ordered vector space $(V,\leq)$ such that the supremum $x \vee y$ and the infimum $x \wedge y$ exist for all $x,y \in V$. We introduce further lattice operations. Namely, for $x \in V$ we define the positive part $x^+ := x \vee 0$, the negative part $x^- := -x \vee 0$, and the absolute value $|x| := x \vee (-x)$.

A vector lattice $V$ equipped with a norm $\| \cdot \|$ such that $(V,\| \cdot \|)$ is a Banach space and for all $x,y \in V$ with $|x| \leq |y|$ we have $\| x \| \leq \| y \|$ is called a \emph{Banach lattice}. Note that every Banach lattice is a topological vector lattice. A Banach lattice $V$ is called \emph{separable} if the Banach space $(V,\| \cdot \|)$ is separable. 

A Banach lattice $H$ is called a \emph{Hilbert lattice} if the Banach space $(H,\| \cdot \|)$ is a Hilbert space. For two Hilbert lattices $\bbh$ and $H$ a linear isomorphism $T \in L(\bbh,H)$ is called a \emph{lattice isomorphism} if $T \bbk = K$, where $\bbk := \bbh_+$ and $K := H_+$; cf. \cite[Thm. 9.17]{Aliprantis-Border}.

\begin{lemma}\label{lemma-abstract-L2}
For a Hilbert lattice $H$ the following statements are equivalent:
\begin{enumerate}
\item[(i)] We have $\| h + g \|^2 = \| h \|^2 + \| g \|^2$ for all $h,g \in H_+$ with $h \wedge g = 0$.

\item[(ii)] For all $h,g \in H_+$ we have $h \perp g$ if and only if $h \wedge g = 0$.

\item[(iii)] The closed convex cone $H_+$ is self-dual.
\end{enumerate}
\end{lemma}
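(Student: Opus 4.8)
The plan is to isolate one auxiliary fact that drives everything: in any Hilbert lattice one has $\langle h,g\rangle \geq 0$ for all $h,g \in H_+$, i.e. $H_+ \subseteq H_+^*$. This follows from the Banach-lattice norm alone. Since $h,g \geq 0$, the lattice triangle inequality gives $|h-g| \leq |h|+|g| = h+g = |h+g|$, so monotonicity of the lattice norm yields $\|h-g\| \leq \|h+g\|$; squaring and using $\|h \pm g\|^2 = \|h\|^2 \pm 2\langle h,g\rangle + \|g\|^2$ gives $\langle h,g\rangle \geq 0$. The same computation supplies a strict monotonicity I will reuse: if $0 \leq h \leq a$, write $a = h+d$ with $d \geq 0$; then $\|a\|^2 = \|h\|^2 + 2\langle h,d\rangle + \|d\|^2 \geq \|h\|^2 + \|d\|^2$, so $\|a\| = \|h\|$ forces $d=0$, i.e. $a = h$.

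With this in hand I would prove (i)$\Leftrightarrow$(ii) and (i)$\Leftrightarrow$(iii), which gives the full equivalence. The implication (ii)$\Rightarrow$(i) is immediate from $\|h+g\|^2 = \|h\|^2 + \|g\|^2 + 2\langle h,g\rangle$. For (i)$\Rightarrow$(ii) only the reverse implication in (ii) needs an argument, and it already holds from $H_+ \subseteq H_+^*$ without using (i): if $h,g \in H_+$ with $\langle h,g\rangle = 0$, set $u := h \wedge g \geq 0$ and expand $0 = \langle h,g\rangle = \langle (h-u)+u,\,(g-u)+u\rangle$; since $h-u,\,g-u,\,u \geq 0$, each of the four resulting inner products is of two elements of $H_+$, hence nonnegative, so $\|u\|^2 \leq 0$ and $h \wedge g = u = 0$. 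For (i)$\Rightarrow$(iii) I combine $H_+ \subseteq H_+^*$ with (i): given $h \in H_+^*$, decompose $h = h^+ - h^-$ with $h^+ \wedge h^- = 0$; then (i) gives $\langle h^+,h^-\rangle = 0$, and since $h^- \in H_+$ we get $0 \leq \langle h,h^-\rangle = -\|h^-\|^2$, forcing $h^- = 0$ and $h \in H_+$. Thus $H_+^* \subseteq H_+$, and together with $H_+ \subseteq H_+^*$ this is self-duality.

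The heart of the matter, and the step I expect to be the main obstacle, is (iii)$\Rightarrow$(i). Here I would invoke the Moreau decomposition for the self-dual cone $K = H_+$ (whose polar cone is $-H_+$): for $x := h-g$ with $h,g \in H_+$ and $h \wedge g = 0$, one has $x = a - b$ with $a := P_{H_+}(x)$ and $b := P_{H_+}(-x)$ in $H_+$ and $\langle a,b\rangle = 0$, while the lattice decomposition is $x^+ = h$, $x^- = g$. The projection characterization gives $a - x \in H_+$ and $a \in H_+$, hence $a \geq x \vee 0 = h$, and symmetrically $b \geq g$. On the other hand the projection is residual-minimizing: comparing $a = P_{H_+}(x)$, whose residual is $\|x-a\| = \|b\|$, with the competitor $x^+ = h \in H_+$ gives $\|b\| \leq \|x-h\| = \|g\|$, and symmetrically $\|a\| \leq \|h\|$. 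Combining $h \leq a$ with $\|a\| \leq \|h\|$ and the strict monotonicity from the first paragraph yields $a = h$, and likewise $b = g$; therefore $\langle h,g\rangle = \langle a,b\rangle = 0$, which is exactly (i).

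The delicate points in this last step are getting the two competitor norm inequalities the right way round, and noting that $a \geq x$ together with $a \geq 0$ forces the order comparison $a \geq x^+ = h$, so that $0 \leq h \leq a$ can be fed into the strict monotonicity. Everything else is bookkeeping with the identities $h \vee g = h+g$, $(h-u)\wedge(g-u)=0$, and $(h-g)^{\pm}$ for disjoint $h,g$, all of which are standard in vector lattices.
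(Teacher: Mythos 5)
Your proof is correct, but it takes a genuinely different and more self-contained route than the paper. The paper disposes of the lemma almost entirely by citation: (iii) $\Rightarrow$ (ii) is delegated to Penney's Lemma II.3 on self-dual cones, and (i) $\Rightarrow$ (iii) is obtained by invoking Marti's representation theorem (every Hilbert lattice satisfying (i) is an abstract $L^2$-space, hence isometrically lattice-isomorphic to a concrete $L^2(X,\mathcal{X},\mu)$, whose positive cone is self-dual; self-duality is then transported back via the paper's Lemma \ref{lemma-transform-self-dual}). You instead build everything from the single observation $H_+ \subseteq H_+^*$ (derived from $|h-g| \le |h+g|$ and monotonicity of the lattice norm), prove the unconditional implication $h \perp g \Rightarrow h \wedge g = 0$ by the four-term expansion around $u = h \wedge g$, get (i) $\Rightarrow$ (iii) from the decomposition $h = h^+ - h^-$, and handle the hardest direction (iii) $\Rightarrow$ (i) with the Moreau decomposition for the self-dual cone, pinning down $P_{H_+}(h-g) = h$ by combining the order bound $a \ge (h-g)\vee 0 = h$ with the competitor estimate $\|a\| \le \|h\|$ and strict monotonicity. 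All of these steps check out (in particular $(h-g)^+ = h$ when $h \wedge g = 0$, and $K^\circ = -K$ for a self-dual cone, which is what makes $a - x \in H_+$). What your approach buys is a complete, elementary proof requiring no representation theory of abstract $L^p$-spaces; what the paper's approach buys is brevity and a direct link to the structure theory it uses anyway in Proposition \ref{prop-abstract-Schauder}.
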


\begin{proof}
(i) $\Leftrightarrow$ (ii): This equivalence is obvious.

\noindent(iii) $\Rightarrow$ (ii): This is a consequence of \cite[Lemma II.3]{Penney}.

\noindent(i) $\Rightarrow$ (iii): Note that $H$ is an abstract $L_2$-space in the sense of \cite[Def. 1]{Marti}, because every Hilbert space is weakly sequentially complete. Hence, according to \cite[Thm. 11]{Marti} there exist a measure space $(X,\calx,\mu)$ and an isometric lattice isomorphism $R \in L(H,\bbh)$, where $\bbh := L^2(X,\calx,\mu)$. Note that $\bbk = R K$, where $K := H_+$ and $\bbk := \bbh_+$, and that the closed convex cone $\bbk$ is self-dual. Hence, by Lemma \ref{lemma-transform-self-dual} the cone $K$ is also self-dual. 
\end{proof}

\begin{definition}
A Hilbert lattice $H$ is called an \emph{abstract $L^2$-space} if any of the equivalent conditions from Lemma \ref{lemma-abstract-L2} is fulfilled.
\end{definition}

\begin{example}
For every measure space $(X,\calx,\mu)$ the associated Hilbert space $H := L^2(X,\calx,\mu)$ is an abstract $L^2$-space.
\end{example}

\begin{lemma}\label{lemma-lattice-isom}
Let $\bbh$ and $H$ be two abstract $L^2$-spaces, and let $T \in L(\bbh,H)$ be a lattice isomorphism. Then the adjoint operator $T^* \in L(H,\bbh)$ is also a lattice isomorphism.
\end{lemma}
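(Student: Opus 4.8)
The plan is to verify the two defining properties of a lattice isomorphism for $T^*$: that it is a linear isomorphism, and that it carries the positive cone of $H$ onto the positive cone of $\bbh$. The first property is immediate: since $T \in L(\bbh,H)$ is a linear isomorphism, Lemma \ref{lemma-adj-isom} shows that $T^* \in L(H,\bbh)$ is a linear isomorphism as well. Thus everything reduces to proving the cone identity $T^* K = \bbk$, where $K := H_+$ and $\bbk := \bbh_+$.

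The key structural input is that both $H$ and $\bbh$ are abstract $L^2$-spaces, so by Lemma \ref{lemma-abstract-L2} their positive cones are self-dual: $K = K^*$ and $\bbk = \bbk^*$. I would first isolate the following general observation: if $S \in L(\bbh,H)$ is any lattice isomorphism between abstract $L^2$-spaces, then $S^* K \subset \bbk$. Indeed, for $g \in K$ and arbitrary $c \in \bbk$ we have $\la S^* g, c \ra_{\bbh} = \la g, S c \ra_H$; since $S$ is a lattice isomorphism we have $S c \in S \bbk = K$, and since $g \in K = K^*$ this pairing is nonnegative. As $c \in \bbk$ was arbitrary, this yields $S^* g \in \bbk^* = \bbk$.

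Applying this observation to $S := T$ gives the inclusion $T^* K \subset \bbk$. For the reverse inclusion I would exploit symmetry. The inverse $T^{-1} \in L(H,\bbh)$ is again a lattice isomorphism, because $T \bbk = K$ forces $T^{-1} K = \bbk$; moreover $H$ and $\bbh$ are both abstract $L^2$-spaces, so the observation applies equally well with the roles of the two spaces interchanged. By Lemma \ref{lemma-adj-isom} we have $(T^{-1})^* = (T^*)^{-1}$, and the observation applied to $T^{-1}$ gives $(T^*)^{-1} \bbk \subset K$, that is, $\bbk \subset T^* K$. Combining the two inclusions yields $T^* K = \bbk$, so that $T^*$ is a lattice isomorphism.

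The argument is short, and there is no genuine analytic difficulty; the only point that requires care is the reverse inclusion, which is not established by the direct pairing computation alone but instead follows by applying the same computation to the inverse lattice isomorphism $T^{-1}$ and invoking $(T^{-1})^* = (T^*)^{-1}$. Keeping track of which positive cone lives in which space, and using self-duality in both $H$ and $\bbh$, is the main thing to get right.
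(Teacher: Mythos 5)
Your proof is correct and follows essentially the same route as the paper: both establish $T^*H_+ \subset \bbh_+$ by the pairing $\la T^*h,g\ra_{\bbh} = \la h,Tg\ra_H \geq 0$ together with self-duality, and both obtain the reverse inclusion by running the identical computation for $T^{-1}$ and invoking $(T^{-1})^* = (T^*)^{-1}$ from Lemma \ref{lemma-adj-isom}. Your packaging of the pairing argument as a reusable observation is just a cleaner presentation of the same two steps.
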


\begin{proof}
Let $h \in H_+$ be arbitrary. Furthermore, let $g \in \bbh_+$ be arbitrary. Then we have $Tg \in H_+$, and hence
\begin{align*}
\la T^* h,g \ra_{\bbh} = \la h,Tg \ra_H \geq 0,
\end{align*}
showing that $T^* h \in \bbh_+$. Now, let $g \in \bbh_+$ be arbitrary. Furthermore, let $h \in H_+$ be arbitrary. Then we have $T^{-1} h \in \bbh_+$, and hence, by Lemma \ref{lemma-adj-isom} we obtain
\begin{align*}
\la (T^*)^{-1} g,h \ra_H = \la (T^{-1})^* g,h \ra_H = \la g,T^{-1} h \ra_{\bbh} \geq 0,
\end{align*}
showing that $(T^*)^{-1} g \in H_+$.
\end{proof}

For what follows, let $H$ be a separable abstract $L^2$-space, and define the self-dual cone $K := H_+$.

\begin{proposition}\label{prop-abstract-Schauder}
The self-dual cone $(K,K)$ is approximately generated by an unconditional Schauder basis.
\end{proposition}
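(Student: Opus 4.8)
The plan is to reduce the assertion to the already–settled case of a separable probability space (Corollary \ref{cor-prob-space-Schauder}) and then to transport the conclusion back to $H$ by means of the transformation result for approximately generated cones (Proposition \ref{prop-seq-trans}). The bridge between the abstract and the concrete setting is Marti's representation theorem, which was already used in the proof of Lemma \ref{lemma-abstract-L2}: since every Hilbert space is weakly sequentially complete, $H$ is an abstract $L_2$-space in the sense of \cite[Def. 1]{Marti}, and hence by \cite[Thm. 11]{Marti} there exist a measure space $(X,\calx,\mu)$ and an isometric lattice isomorphism $R \in L(H,L^2(X,\calx,\mu))$.

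First I would record that $\bbh := L^2(X,\calx,\mu)$ is separable, because $R$ is in particular an isometric isomorphism of Hilbert spaces and $H$ is separable. Next I would replace $\mu$ by an equivalent probability measure. Since $\bbh$ is separable, a countable dense subset has a $\sigma$-finite union of supports $X_0 \in \calx$, and every element of $\bbh$ vanishes $\mu$-almost everywhere outside $X_0$; hence $\bbh = L^2(X_0,\calx|_{X_0},\mu|_{X_0})$ with $\mu|_{X_0}$ $\sigma$-finite, so we may assume that $\mu$ is $\sigma$-finite. Choosing a strictly positive density $f$ with $\int_X f\,d\mu = 1$ and setting $\nu := f\mu$, the multiplication operator $\Phi : L^2(X,\calx,\nu) \to \bbh$, $\Phi(g) := \sqrt{f}\,g$, is an isometric lattice isomorphism, since $\sqrt{f} > 0$ preserves the order and $\| \sqrt{f}\,g \|_{L^2(\mu)} = \| g \|_{L^2(\nu)}$. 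Thus $(X,\calx,\nu)$ is a separable probability space and, writing $\bbk := L^2(X,\calx,\nu)_+$, the composition $T := R^{-1} \circ \Phi$ is an isometric lattice isomorphism from $L^2(X,\calx,\nu)$ onto $H$ with $T\bbk = K$.

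Finally I would assemble the pieces. By Corollary \ref{cor-prob-space-Schauder} the self-dual cone $(\bbk,\bbk)$ is approximately generated by an unconditional Schauder basis. Since $T$ is an isometric isomorphism, Lemma \ref{lemma-isom-isom} yields $S := (T^{-1})^* = (T^*)^{-1} = T$, so that $S\bbk = T\bbk = K$; in other words $(K,K) = (T\bbk, S\bbk)$. Proposition \ref{prop-seq-trans} then shows that $(K,K)$ is approximately generated by an unconditional Schauder basis, which is the claim. I expect the only genuinely delicate point to be the measure-theoretic reduction carried out in the second paragraph: one must ensure that the passage to a $\sigma$-finite support and then to an equivalent probability measure is realized through maps that are simultaneously Hilbert-space isometries and lattice isomorphisms, so that self-duality of the positive cone and the identity $S = T$ are preserved and Corollary \ref{cor-prob-space-Schauder} becomes applicable.
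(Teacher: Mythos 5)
Your proof is correct and follows the same overall architecture as the paper's: represent $H$ as a concrete $L^2$-space over a separable probability space, invoke Corollary \ref{cor-prob-space-Schauder} there, and transport the conclusion back through Proposition \ref{prop-seq-trans}. The difference lies in which representation theorem you start from and where the work then falls. The paper quotes \cite[Cor. II.4]{Penney}, which directly produces a \emph{finite} measure space but only a (not necessarily isometric) lattice isomorphism $R$; normalizing to a probability measure is then immediate via Lemma \ref{lemma-proj-fin-measure-space}, but one must additionally invoke Lemma \ref{lemma-lattice-isom} to see that $S=(T^*)^{-1}$ is again a lattice isomorphism, so that $S\bbk=K$ and $(K,K)=(T\bbk,S\bbk)$. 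You instead quote Marti's theorem, which yields an \emph{isometric} lattice isomorphism over an a priori arbitrary measure space; this forces you to carry out the measure-theoretic reduction by hand ($\sigma$-finiteness of the essential support of a countable dense set, then the change of density $\nu=f\mu$ implemented by the isometric lattice isomorphism $g\mapsto\sqrt{f}\,g$), but in exchange Lemma \ref{lemma-isom-isom} gives $S=T$ for free and Lemma \ref{lemma-lattice-isom} is not needed. Both reductions are sound, and your density argument is correct as written. The only omission is the degenerate case $H=\{0\}$, which the paper also sets aside with a ``without loss of generality'': if $\mu=0$ there is no strictly positive density of total mass one, so you should dispose of that case separately before normalizing.
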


\begin{proof}
Without loss of generality, we may assume that $H \neq \{ 0 \}$. According to \cite[Cor. II.4]{Penney} there exist a finite measure space $(X,\calx,\mu)$ and a lattice isomorphism $R \in L(H,\bbh)$, where $\bbh := L^2(X,\calx,\mu)$. Due to Lemma \ref{lemma-proj-fin-measure-space} we may assume that $(X,\calx,\mu)$ is a probability space. Note that the Hilbert space $\bbh$ is separable, because $H$ is separable. Setting $\bbk := \bbh_+$, by Corollary \ref{cor-prob-space-Schauder} the cone $(\bbk,\bbk)$ is approximately generated by an unconditional Schauder basis. Now, we set $T := R^{-1}$ and $S := (T^{-1})^* = (T^*)^{-1}$. Taking into account Lemma \ref{lemma-lattice-isom}, the operators $T$ and $S$ are also lattice isomorphisms. Hence, by Proposition \ref{prop-seq-trans} it follows that $(K,K) = (T \bbk, S \bbk)$ is approximately generated by an unconditional Schauder basis.
\end{proof}

Now, we are ready to prove the following invariance result. Consider the stochastic framework from Section \ref{sec-jump-diffusion}.

\begin{theorem}\label{thm-abstract-L2}
We suppose that the following conditions are fulfilled:
\begin{enumerate}
\item The coefficients $(\alpha,\sigma,\gamma)$ are locally Lipschitz and satisfy the linear growth condition.

\item The semigroup $(S_t)_{t \geq 0}$ is pseudo-contractive.

\item The closed convex cone $K$ is invariant for the semigroup $(S_t)_{t \geq 0}$.
\end{enumerate}
If we have (\ref{main-1}), and for all $(h^*,h) \in K \times K$ with $\langle h^*,h \rangle = 0$ we have (\ref{main-3}) and (\ref{main-4}), then the closed convex cone $K$ is invariant for the SPDE (\ref{SPDE}).
\end{theorem}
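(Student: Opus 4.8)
The plan is to recognize this theorem as a direct specialization of the general sufficiency result, Theorem \ref{thm-general}, to the case where the cone is the positive cone of a separable abstract $L^2$-space and the generating system is taken to be $G := K$ itself. The strategy is therefore to verify that all four hypotheses of Theorem \ref{thm-general} hold for the pair $(K,G) = (K,K)$, and then to observe that the structural conditions assumed in the present statement are literally the boundary conditions demanded there.

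First I would invoke the standing setup of this section: since $H$ is an abstract $L^2$-space, the cone $K = H_+$ is self-dual, so it is a closed convex cone and $G := K$ is a generating system of $K$ in the sense of Definition \ref{def-gen-system}. The crucial input is Proposition \ref{prop-abstract-Schauder}, which asserts precisely that for a separable abstract $L^2$-space the self-dual cone $(K,K)$ is approximately generated by an unconditional Schauder basis. This furnishes hypothesis (4) of Theorem \ref{thm-general} for the choice $G = K$.

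Next I would check the remaining hypotheses of Theorem \ref{thm-general}. Hypotheses (1)--(3) there — local Lipschitz continuity and linear growth of $(\alpha,\sigma,\gamma)$, pseudo-contractivity of $(S_t)_{t \geq 0}$, and invariance of $K$ for the semigroup — are assumed verbatim in the present statement. With $G = K$, the boundary requirements of Theorem \ref{thm-general}, namely (\ref{main-1}) together with (\ref{main-3}) and (\ref{main-4}) for all $(h^*,h) \in G \times K$ with $\langle h^*,h \rangle = 0$, coincide exactly with the hypotheses stated here for $(h^*,h) \in K \times K$. Applying Theorem \ref{thm-general} then yields invariance of $K$ for the SPDE (\ref{SPDE}).

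Since every ingredient is already in place, I do not expect a genuine obstacle: the entire difficulty has been front-loaded into Proposition \ref{prop-abstract-Schauder}, and upstream into Proposition \ref{prop-self-dual-approx-Schauder} and Corollary \ref{cor-prob-space-Schauder}, which together establish the approximation-by-Schauder-basis property for self-dual cones of abstract $L^2$-spaces via a reduction to the probability-space case and a lattice-isomorphic transfer. The only point warranting a moment of care is confirming that taking $G = K$ is legitimate and that the index set of boundary pairs $K \times K$ matches $G \times K$; both are immediate from self-duality.
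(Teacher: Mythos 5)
Your proposal is correct and follows exactly the paper's argument: the paper's proof is the one-line reduction "In view of Proposition \ref{prop-abstract-Schauder}, this is a consequence of Theorem \ref{thm-general}," and your verification that $G = K$ is a legitimate generating system by self-duality and that the remaining hypotheses match verbatim is precisely the implicit content of that reduction.
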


\begin{proof}
In view of Proposition \ref{prop-abstract-Schauder}, this is a consequence of Theorem \ref{thm-general}.
\end{proof}

\section{Examples of Hilbert spaces and semigroups}\label{sec-Hilbert-semigroup}

In this section we present several examples of separable Hilbert spaces and semigroups thereon.

For any subset $X \subset \bbr^d$ and any nonnegative measurable function $w : X \to \bbr_+$ we define the Hilbert space $L_w^2(X) := L^2(X,\calb(X),w(x)dx)$. According to Lemma \ref{lemma-metric-separable} the Hilbert space $L_w^2(X)$ is separable. If $w \equiv 1$, then we simply write $L^2(X)$.

Now, let $v \in \bbr^d$ be such that $x + t v \in X$ for all $x \in X$ and all $t \geq 0$. We call a continuous function $w : X \to (0,\infty)$ an \emph{admissible weight function} if for some $\beta \in \bbr$ we have
\begin{align*}
w(x) \leq e^{\beta t} w(x+tv) \quad \text{for all $x \in X$ and all $t \geq 0$.}
\end{align*}
In most of the situations we will have $X = \bbr_+$ and $v=1$. We define the separable Hilbert space $H := L_w^2(X)$ and the self-dual closed convex cone $K := H_+$. Let $(S_t)_{t \geq 0}$ be the translation semigroup, which is defined as
\begin{align}\label{trans-def}
S_t h(x) = h(x+tv), \quad t \geq 0
\end{align}
for each $h \in H$. 

\begin{lemma}\label{lemma-L2-admissible-w}
Suppose that $w$ is an admissible weight function. Then the following statements are true:
\begin{enumerate}
\item $(S_t)_{t \geq 0}$ is a pseudo-contractive $C_0$-semigroup on $H$.

\item $K$ is invariant for the semigroup $(S_t)_{t \geq 0}$.
\end{enumerate}
\end{lemma}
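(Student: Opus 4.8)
The plan is to establish the three ingredients of the claim separately—the algebraic semigroup property, the growth bound, and strong continuity—after which invariance of $K$ will be immediate.

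First I would record that $(S_t)_{t \geq 0}$ is a semigroup of bounded operators. The relations $S_0 = \Id$ and $S_s S_t h(x) = h(x + (s+t)v) = S_{s+t} h(x)$ follow at once from the definition (\ref{trans-def}), the compositions being well defined because $x + tv \in X$ whenever $x \in X$ and $t \geq 0$. For the growth bound I would insert the admissibility inequality $w(x) \leq e^{\beta t} w(x+tv)$ directly into the norm:
\begin{align*}
\| S_t h \|_H^2 = \int_X |h(x+tv)|^2 w(x)\, dx \leq e^{\beta t} \int_X |h(x+tv)|^2 w(x+tv)\, dx.
\end{align*}
The substitution $y = x + tv$ preserves Lebesgue measure and maps $X$ into $X + tv \subset X$, so the right-hand side is at most $e^{\beta t} \int_X |h(y)|^2 w(y)\, dy = e^{\beta t} \| h \|_H^2$. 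This yields $\| S_t \| \leq e^{(\beta/2) t}$. Since the admissibility inequality continues to hold when $\beta$ is replaced by any larger value, I may assume $\beta \geq 0$, giving a nonnegative exponent and hence pseudo-contractivity.

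The main work, and the step I expect to be the real obstacle, is strong continuity. Having $\sup_{t \in [0,1]} \| S_t \| < \infty$ from the growth bound, it suffices by the standard density argument for $C_0$-semigroups to verify $S_t h \to h$ in $H$ as $t \downarrow 0$ for $h$ ranging over a dense subset of $H$. I would take this subset to be the continuous functions with compact support in $X$ (or, to avoid any local-compactness issue, simple functions supported on bounded Borel sets): for such $h$ the convergence $h(\cdot + tv) \to h$ holds pointwise and, for small $t$, the supports remain inside a fixed compact set on which $w$ is bounded, so dominated convergence closes the argument. The delicate points are the density of this class in $L_w^2(X)$ and the construction of an integrable dominating function; both rest on $w$ being continuous and strictly positive, so that $w\,dx$ is a Radon measure with the chosen class dense and with $w$ locally bounded on the relevant compact sets.

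Finally, invariance of $K = H_+$ is elementary. If $h \geq 0$ Lebesgue-almost everywhere on $X$, then $S_t h(x) = h(x+tv)$ is negative only for $x$ lying in the $(-tv)$-translate of the nullset $\{ h < 0 \}$, which is again a Lebesgue nullset by translation invariance; hence $S_t h \geq 0$ almost everywhere, so $S_t h \in K$ and therefore $S_t K \subset K$ for every $t \geq 0$.
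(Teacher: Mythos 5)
Your proposal is correct, and it supplies exactly the details that the paper delegates to a citation: the paper's proof of part (1) consists of the remark that it is "straightforward to check (cf. Desch--Schappacher--Webb, Lemma 4.6)", and part (2) is the same one-line observation you make at the end. The computation of the growth bound via the admissibility inequality and the substitution $y=x+tv$ (using $X+tv\subset X$), and the density argument for strong continuity, are the standard route and match what the cited reference does. One small point of care in the strong-continuity step: when $X$ is not closed, the set $\bigcup_{t\in[0,1]}\big((\supp h - tv)\cap X\big)$ is bounded but need not be a \emph{compact subset of} $X$, so boundedness of $w$ on it does not follow from continuity alone; it does, however, follow directly from the admissibility inequality, since $w(c-tv)\le e^{\beta t}w(c)\le e^{|\beta|}\max_{\supp h}w$ whenever $c\in\supp h$ and $c-tv\in X$. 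With that justification the dominating function exists and your argument closes.
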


\begin{proof}
The first statement is straightforward to check (cf. \cite[Lemma 4.6]{Desch} and its proof for the cases $I = \bbr$ and $I = \bbr_+$). Taking into account the definition (\ref{trans-def}), we see that for each $h \geq 0$ we have $S_t h \geq 0$, $t \geq 0$, proving the second statement.
\end{proof}

We will also consider translation semigroups on generalized Filipovi\'{c} spaces; see Appendix \ref{app-Filipovic-space} for further details.

Now, we consider the separable Hilbert space $H := L^2(\bbr^d)$, and the self-dual closed convex cone $K := H_+$. Let $(S_t)_{t \geq 0}$ be the heat semigroup, which is defined as
\begin{align}\label{heat-def-1}
S_0 h &:= h,
\\ \label{heat-def-2} S_t h &:= h \star p_t, \quad t > 0,
\end{align}
for each $h \in H$. Here, for each $t > 0$ we denote by $p_t : \bbr^d \to (0,\infty)$ the density of the centered multi-dimensional Normal distribution with covariance matrix $2t \Id$, which is given by
\begin{align*}
p_t(x) = \frac{1}{(4 \pi t)^{d/2}} \exp \bigg( - \frac{|x|^2}{4t} \bigg), \quad x \in \bbr^d,
\end{align*}
and we denote by $h \star p_t$ the convolution product of $h$ and $p_t$.

\begin{lemma}\label{lemma-Laplace-Rd}
The following statements are true:
\begin{enumerate}
\item $(S_t)_{t \geq 0}$ is a $C_0$-semigroup of contractions on $H$.

\item Its generator is given by $A = \Delta$ on the domain $\cald(A) = W^2(\bbr^d)$.

\item $K$ is invariant for the semigroup $(S_t)_{t \geq 0}$.

\item $A^*$ is a local operator relative to $(K,G)$, where $G := K \cap C_c^{\infty}(\bbr^d)$.
\end{enumerate}
\end{lemma}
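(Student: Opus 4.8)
The plan is to treat the four claims in turn, handling the first three by Fourier analysis and reserving the real work for the locality statement (4).

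For (1)--(3) I would pass to the Fourier transform. Since $p_t$ is the density of the centered Gaussian with covariance $2t\,\Id$, Plancherel's theorem turns $S_t$ into multiplication by $\hat p_t(\xi) = e^{-t|\xi|^2}$, i.e. $\widehat{S_t h}(\xi) = e^{-t|\xi|^2}\hat h(\xi)$. The semigroup law then follows from $e^{-s|\xi|^2}e^{-t|\xi|^2} = e^{-(s+t)|\xi|^2}$ (equivalently $p_s \star p_t = p_{s+t}$), contractivity from $|e^{-t|\xi|^2}| \le 1$, and strong continuity from dominated convergence applied to $\|S_t h - h\|^2 = \int |e^{-t|\xi|^2}-1|^2|\hat h(\xi)|^2\,d\xi$, which gives (1). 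Differentiating the multiplier at $t=0$ produces the symbol $-|\xi|^2$, so $\widehat{Ah}(\xi) = -|\xi|^2\hat h(\xi)$; the maximal domain on which this lies in $L^2$ is exactly $\{ h : |\xi|^2\hat h \in L^2 \} = W^2(\bbr^d)$, which identifies $A = \Delta$ with $\cald(A) = W^2(\bbr^d)$ and yields (2). Claim (3) is immediate on the real side: $p_t > 0$ pointwise, so $h \ge 0$ forces $h \star p_t \ge 0$, whence $S_t K \subset K$.

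For (4) I would first note that $p_t$ is symmetric, so $(S_t)_{t \geq 0}$ is self-adjoint and therefore $A^* = A = \Delta$ with $\cald(A^*) = W^2(\bbr^d)$; since $G = K \cap C_c^\infty(\bbr^d) \subset C_c^\infty(\bbr^d) \subset W^2(\bbr^d)$, the inclusion $G \subset \cald(A^*)$ holds. It then remains to fix $(h^*,h) \in G \times K$ with $\la h^*,h \ra = 0$ and show $\la \Delta h^*, h \ra = 0$. Because $h^*, h \ge 0$, the vanishing of $\la h^*,h \ra$ forces $h = 0$ almost everywhere on the open set $\{ h^* > 0 \}$, so $h$ is supported (up to a null set) on the closed zero set $Z := \{ h^* = 0 \}$. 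Hence $\la \Delta h^*, h \ra = \int_Z \Delta h^*(x)\, h(x)\, dx$, and the whole claim reduces to the purely analytic fact that $\Delta h^* = 0$ almost everywhere on $Z$.

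The hard part is precisely this last fact, and I expect it to be the only genuinely delicate step. It cannot be argued by a naive ``the topological boundary of $Z$ is a null set'' reduction, since the zero set of a smooth nonnegative function can have positive-measure boundary. Instead I would use a Lebesgue density argument. As $h^* \ge 0$ is smooth, every point of $Z$ is a global minimum, so $\nabla h^* \equiv 0$ on $Z$; that is, the $C^\infty$ vector field $\nabla h^*$ vanishes on $Z$. By the standard lemma that a differentiable map vanishing on a measurable set has vanishing derivative at every Lebesgue density point of that set, the derivative $D(\nabla h^*) = D^2 h^*$ vanishes almost everywhere on $Z$, and therefore $\Delta h^* = {\rm tr}\, D^2 h^* = 0$ almost everywhere on $Z$. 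Combining this with the support property of $h$ gives $\la \Delta h^*, h \ra = 0$, which completes (4); everything apart from the density-point lemma is routine bookkeeping.
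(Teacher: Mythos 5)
Your proof is correct. For parts (1)--(3) you and the paper are essentially aligned: the paper declares (1) and (2) well-known and proves (3) exactly as you do, from pointwise positivity of the Gaussian kernel; your Fourier-multiplier computation is a standard way of filling in (1) and (2). The genuine divergence is in part (4). The paper's argument there is a single line: it notes $\supp(\Delta h^*) \subset \supp(h^*)$ and concludes $\la \Delta h^*,h \ra = 0$ directly from $h^*,h \geq 0$ and $\la h^*,h \ra = 0$. As you correctly observe, this is not immediate: orthogonality of two nonnegative functions only forces $h = 0$ a.e.\ on the open set $\{ h^* > 0 \}$, while $\Delta h^*$ is only guaranteed to vanish off $\supp(h^*)$, and the leftover set $\supp(h^*) \cap \{ h^* = 0 \}$ can have positive Lebesgue measure for a smooth nonnegative $h^*$ (note that on the zero set one only gets $\Delta h^* \geq 0$ from the second-order minimum condition, which is not enough for the required equality). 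Your Lebesgue density argument --- $\nabla h^*$ vanishes identically on $Z := \{ h^* = 0 \}$ because every point of $Z$ is a global minimum, hence $D^2 h^*$ and in particular $\Delta h^*$ vanish at every density point of $Z$, i.e.\ almost everywhere on $Z$ --- is precisely the missing justification. So your route to (4) is genuinely different from, and more careful than, the paper's; its only cost is the standard density-point lemma, and what it buys is an argument that actually covers the case where the boundary of the zero set has positive measure.
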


\begin{proof}
The first two statements are well-known. Taking into account the definitions (\ref{heat-def-1}) and (\ref{heat-def-2}), we see that for each $h \geq 0$ we have $S_t h \geq 0$, $t \geq 0$, proving the third statement. The Laplace operator $\Delta$ is self-adjoint; that is $A^* = \Delta$. Therefore, we have $G \subset \cald(A^*)$. Let $(h^*,h) \in G \times K$ be such that $\la h^*,h \ra = 0$. Then we have $\supp(\Delta h^*) \subset \supp(h^*)$, and hence, noting that $h^*,h \geq 0$ we obtain
\begin{align*}
\la A^* h^*,h \ra = \la \Delta h^*,h \ra = 0,
\end{align*}
proving the last statement.
\end{proof}

Now, let $O \subset \bbr^d$ be an open, bounded subset with smooth boundary. We consider the separable Hilbert space $H := L^2(O)$ and the closed convex cone $K := H_+$. Let $A = \Delta$ be the Laplace operator defined on the domain
\begin{align*}
\cald(A) = W^2(O) \cap H_0^1(O).
\end{align*}
Hence, we consider Dirichlet boundary conditions.

\begin{lemma}\label{lemma-Laplace-domain}
The following statements are true:
\begin{enumerate}
\item $A$ generates a pseudo-contractive $C_0$-semigroup $(S_t)_{t \geq 0}$ on $H$.

\item $K$ is invariant for the semigroup $(S_t)_{t \geq 0}$.

\item $A^*$ is a local operator relative to $(K,G)$, where $G := K \cap C_c^{\infty}(O)$.
\end{enumerate}
\end{lemma}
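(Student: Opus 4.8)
The plan is to treat the three assertions separately, relying on classical spectral and potential-theoretic facts about the Dirichlet Laplacian for the first two, and reducing the third to the local computation already carried out in Lemma~\ref{lemma-Laplace-Rd}. For statement (1) I would recall that on a bounded domain with smooth boundary the operator $A = \Delta$ with domain $\cald(A) = W^2(O) \cap H_0^1(O)$ is self-adjoint, and that integration by parts gives
\begin{align*}
\la A h, h \ra = -\int_O |\nabla h|^2 \, dx \leq 0 \quad \text{for all $h \in \cald(A)$.}
\end{align*}
Hence $A$ is a negative self-adjoint operator, so by the spectral theorem (or by the Lumer--Phillips theorem) it generates a $C_0$-semigroup of contractions $(S_t)_{t \geq 0}$; in particular $(S_t)_{t \geq 0}$ is pseudo-contractive with $\beta = 0$.

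For statement (2) I would use that the semigroup is positivity preserving. This can be obtained either from the parabolic maximum principle for the heat equation with Dirichlet boundary conditions, or from the first Beurling--Deny criterion: the symmetric form $(u,v) \mapsto \int_O \nabla u \cdot \nabla v \, dx$ associated with $A$ is a Dirichlet form, whence $h \geq 0$ implies $S_t h \geq 0$ for all $t \geq 0$, which is precisely $S_t K \subset K$.

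For statement (3) I would first note that, since $A$ is self-adjoint, $A^* = A = \Delta$; and since every $h^* \in G = K \cap C_c^{\infty}(O)$ has compact support contained in $O$, we have $h^* \in W^2(O) \cap H_0^1(O) = \cald(A) = \cald(A^*)$, so $G \subset \cald(A^*)$. The locality condition then follows by the same computation as in Lemma~\ref{lemma-Laplace-Rd}, since $\Delta$ acts locally and every $h^* \in G$ has compact support in $O$: given $(h^*,h) \in G \times K$ with $\la h^*,h \ra = 0$, one has $\supp(\Delta h^*) \subset \supp(h^*)$, and upon splitting $\int_O (\Delta h^*) h \, dx$ one uses that $h = 0$ almost everywhere on $\{ h^* > 0 \}$ (because $h^*, h \geq 0$ and $\la h^*, h \ra = 0$) together with $\Delta h^* = 0$ almost everywhere on $\{ h^* = 0 \}$ to conclude $\la A^* h^*, h \ra = \la \Delta h^*, h \ra = 0$.

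I expect the only genuine subtlety to lie in statement (3), specifically in justifying $\la \Delta h^*, h \ra = 0$: the crucial input is the vanishing of $\Delta h^*$ almost everywhere on the zero set $\{ h^* = 0 \}$ of the smooth nonnegative function $h^*$, which may be derived by applying the standard fact that the gradient of a $C^1$ function vanishes almost everywhere on each of its level sets to each first-order partial derivative of $h^*$. The first two statements are classical and require only standard references from semigroup theory and the theory of Dirichlet forms.
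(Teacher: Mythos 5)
Your proof is correct, but it departs from the paper's argument in parts (1) and (2). For (1), you exploit that the Dirichlet Laplacian is self-adjoint and nonpositive (via $\la \Delta h,h\ra = -\int_O|\nabla h|^2\,dx$ for $h\in W^2(O)\cap H_0^1(O)$) and invoke the spectral theorem or Lumer--Phillips to get a genuine contraction semigroup with $\beta=0$; the paper instead cites strong ellipticity and \cite[Thm.~7.2.5]{Pazy} to see that $\Delta-\Id$ generates a contraction semigroup and then shifts back via Lemma~\ref{lemma-generator-transform}, obtaining only pseudo-contractivity. Your route is more elementary and gives a sharper constant, but it leans on self-adjointness and hence would not extend to non-symmetric elliptic operators, where the paper's ellipticity argument still applies. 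For (2), the difference is more substantial: you derive positivity preservation of $(S_t)_{t\geq 0}$ directly from the first Beurling--Deny criterion (or the parabolic maximum principle), whereas the paper passes through the resolvent criterion of Lemma~\ref{lemma-cone-semi-inv} and establishes $R_\lambda K\subset K$ by a Feynman--Kac type stochastic representation of the solution to the Dirichlet problem $\Delta u-\lambda u=-h$. Both are valid; the Dirichlet-form argument is cleaner and purely analytic, while the paper's probabilistic representation is self-contained within the toolkit it has already set up (and again is not tied to symmetry). For (3) you follow the same reduction as the paper's Lemma~\ref{lemma-Laplace-Rd}, and in fact you supply the detail the paper elides: the inclusion $\supp(\Delta h^*)\subset\supp(h^*)$ alone does not immediately kill the contribution from $\partial\{h^*>0\}$, and your observation that $\Delta h^*=0$ almost everywhere on the level set $\{h^*=0\}$ (obtained by applying the a.e.\ vanishing of gradients on level sets to each $\partial_i h^*$) is exactly what makes the computation $\la\Delta h^*,h\ra=0$ airtight.
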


\begin{proof}
By \cite[page 209]{Pazy} the operator $-\Delta$ is strongly elliptic, and we have
\begin{align*}
\langle -\Delta h,h \rangle_{L^2} = \| h \|_{W^2}^2 - \| h \|_{L^2}^2
\end{align*}
for all $h \in W^2(O) \cap H_0^1(O)$. Hence, according to \cite[Thm. 7.2.5]{Pazy} the operator $\Delta - \Id$ generates a $C_0$-semigroup $(T_t)_{t \geq 0}$ of contractions on $H$. Therefore, by Lemma \ref{lemma-generator-transform} it follows that $\Delta$ generates a pseudo-contractive $C_0$-semigroup $(S_t)_{t \geq 0}$; that is, there is a constant $\beta \geq 0$ such that
\begin{align*}
\| S_t \| \leq e^{\beta t} \quad \text{for all $t \in \bbr_+$.}
\end{align*}
This proves the first statement. For the proof of the second statement, we will show that $R_{\lambda} K \subset K$ for all $\lambda > \beta$. For this purpose, let $h \in K$ and $\lambda > \beta$ be arbitrary. We define the resolvent $u := R_{\lambda}h = (\lambda - \Delta)^{-1}h$. Since $u \in W^2(O) \cap H_0^1(O)$, it is the solution to the Dirichlet problem
\begin{align*}
\left\{
\begin{array}{rcll}
\Delta u - \lambda u & = & -h & \text{in $O$,}
\\ u & = & 0 & \text{on $\partial O$.}
\end{array}
\right.
\end{align*}
Let $\xi \in O$ be arbitrary, and let $W$ be an $\bbr^d$-valued standard Wiener process with $W_0 = \xi$ on some stochastic basis $(\Omega,\calf,(\calf_t)_{t \in \bbr_+},\bbp^{\xi})$. Then the first exit time
\begin{align*}
T_O := \inf \{ t \in \bbr_+ : W_t \notin O \}
\end{align*}
is a stopping time. Furthermore, since $O$ is open and bounded, by \cite[Lemma 5.7.4]{Karatzas-Shreve} we have $\bbe^{\xi}[T_O] < \infty$, and hence, by \cite[Prop. 5.7.2]{Karatzas-Shreve} we obtain the stochastic representation
\begin{align*}
u(\xi) = \frac{1}{2} \bbe^{\xi} \bigg[ \int_0^{T_O} \exp \bigg( -\frac{\lambda}{2} \int_0^t W_s ds \bigg) h(W_t) dt \bigg] \geq 0.
\end{align*}
Therefore, by Lemma \ref{lemma-cone-semi-inv} the cone $K$ is invariant for the semigroup $(S_t)_{t \geq 0}$, proving the second statement. The proof of the last statement is analogous to that of Lemma \ref{lemma-Laplace-Rd}.
\end{proof}

\section{Applications}\label{sec-applications}

In this section we present several examples of SPDEs arising in natural sciences and economics. In all of the following examples, it is tacitly assumed that the coefficients $(\alpha,\sigma,\gamma)$ of the SPDE (\ref{SPDE}) are locally Lipschitz and satisfy the linear growth condition.

Our first example is the stochastic cable equation (cf. \cite[Sec. 0.8]{Da_Prato})
\begin{align}\label{SPDE-cable}
\left\{
\begin{array}{rcl}
dv_t & = & \frac{1}{\tau} \big( \lambda^2 \frac{d^2}{d \xi^2} v_t - v_t \big) dt + \sigma(v_t) dW_t + \int_E \gamma(v_{t-},x) (N(dt,dx) - F(dx)dt) \medskip
\\ v_0 & = & h_0,
\end{array}
\right.
\end{align}
which describes the voltage of an electric cable over time. The constants $\lambda,\tau > 0$ are physical constants of the electric cable; $\lambda$ is the length constant and $\tau$ is the time constant. We will model the electric cable on the unit interval $[0,1]$ with Dirichlet boundary conditions, which means that there is no voltage at the end points of the cable. As the voltage should be a nonnegative quantity, we are interested in conditions which ensure that the voltage is nonnegative. Now, we choose the state space $H := L^2((0,1))$, the self-dual closed convex cone $K := H_+$ of nonnegative functions, and the generating system $G := K \cap C_c^{\infty}((0,1))$.

\begin{proposition}
The following statements are equivalent:
\begin{enumerate}
\item[(i)] The closed convex cone $K$ is invariant for the stochastic cable equation (\ref{SPDE-cable}).

\item[(ii)] We have (\ref{main-1}), and for all $(h^*,h) \in G \times K$ with $\langle h^*,h \rangle = 0$ we have (\ref{main-3}) and (\ref{main-4}).
\end{enumerate}
\end{proposition}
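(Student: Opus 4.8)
The plan is to recognize the stochastic cable equation (\ref{SPDE-cable}) as a special case of the SPDE (\ref{SPDE}) on the state space $H = L^2((0,1))$ with generator $A = \frac{\lambda^2}{\tau} \Delta - \frac{1}{\tau} \Id$, where $\Delta$ denotes the Dirichlet Laplacian on the bounded open interval $O := (0,1) \subset \bbr$, and then to invoke Theorem \ref{thm-G-star} with $X := O$ and Lebesgue measure. The structural hypotheses of that theorem on $X$ and the measure are immediate: $O$ is open, and Lebesgue measure is $\sigma$-finite with Radon--Nikodym density identically $1 \in L_{\loc}^1(O)$. The coefficients $(\alpha,\sigma,\gamma)$ are locally Lipschitz and satisfy the linear growth condition by the standing assumption of this section.

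Next I would verify the three semigroup-theoretic hypotheses of Theorem \ref{thm-G-star} by transferring them from the bare Dirichlet Laplacian. By Lemma \ref{lemma-Laplace-domain}, the operator $\Delta$ generates a pseudo-contractive $C_0$-semigroup, the cone $K = H_+$ is invariant for it, and $\Delta = \Delta^*$ is a local operator relative to $(K,G)$ with $G := K \cap C_c^\infty(O)$. Applying Lemma \ref{lemma-generator-transform} with scaling constant $\lambda^2/\tau > 0$ and shift constant $-1/\tau$, the generator $A = \frac{\lambda^2}{\tau}\Delta - \frac{1}{\tau}\Id$ of the cable equation again generates a pseudo-contractive $C_0$-semigroup for which $K$ is invariant.

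The only point requiring a short argument is hypothesis (4) of Theorem \ref{thm-G-star}, that $A^*$ is a local operator relative to $(K,G)$. Since $\Delta$ is self-adjoint we have $A^* = \frac{\lambda^2}{\tau}\Delta - \frac{1}{\tau}\Id$, and $G \subset \cald(\Delta) = \cald(A^*)$ holds because $C_c^\infty(O) \subset \cald(\Delta)$. For any $(h^*,h) \in G \times K$ with $\la h^*,h \ra = 0$, linearity and the locality of $\Delta$ yield
\begin{align*}
\la A^* h^*,h \ra = \frac{\lambda^2}{\tau} \la \Delta h^*,h \ra - \frac{1}{\tau} \la h^*,h \ra = 0.
\end{align*}
Thus $A^*$ is local, every hypothesis of Theorem \ref{thm-G-star} is met, and the equivalence of (i) and (ii) follows at once. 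The main obstacle is in truth only this bookkeeping, namely confirming that the affine transformation $\frac{\lambda^2}{\tau}\Delta - \frac{1}{\tau}\Id$ inherits all the invariance and locality properties of $\Delta$; Lemmas \ref{lemma-generator-transform} and \ref{lemma-Laplace-domain} are precisely the tools that make this routine.
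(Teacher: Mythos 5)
Your proposal is correct and follows exactly the paper's own route: recognize the generator as $\frac{\lambda^2}{\tau}\Delta - \frac{1}{\tau}\mathrm{Id}$ with Dirichlet boundary conditions, import the needed properties from Lemma \ref{lemma-Laplace-domain}, transfer them through Lemma \ref{lemma-generator-transform}, and conclude via Theorem \ref{thm-G-star}. Your explicit check that the locality of $A^*$ survives the affine transformation is a detail the paper leaves implicit, but it is the right (and only) point needing verification.
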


\begin{proof}
Taking into account Lemma \ref{lemma-Laplace-domain} and Lemma \ref{lemma-generator-transform}, this is a consequence of Theorem \ref{thm-G-star}.
\end{proof}

Next, we consider the stochastic heat equation
\begin{align}\label{SPDE-heat}
\left\{
\begin{array}{rcl}
du_t & = & ( a \Delta u_t + \alpha(u_t) ) dt + \sigma(u_t) dW_t + \int_E \gamma(u_{t-},x) (N(dt,dx) - F(dx)dt) \medskip
\\ u_0 & = & h_0,
\end{array}
\right.
\end{align}
which describes the heat of a medium in a region $O \subset \bbr^d$ over time, where $a > 0$ is the heat conductivity. We are interested in conditions which ensure that the temperature is nonnegative in the region. This means that the temperature does not fall below the freezing point. The region can be $O = \bbr^d$, or an open, bounded subset $O \subset \bbr^d$ with smooth boundary, and we consider Dirichlet boundary conditions. Hence, we choose the state space $H := L^2(O)$, the self-dual closed convex cone $K := H_+$ of nonnegative functions, and the generating system $G := K \cap C_c^{\infty}(O)$.

\begin{proposition}\label{prop-heat}
The following statements are equivalent:
\begin{enumerate}
\item[(i)] The closed convex cone $K$ is invariant for the stochastic heat equation (\ref{SPDE-heat}).

\item[(ii)] We have (\ref{main-1}), and for all $(h^*,h) \in G \times K$ with $\langle h^*,h \rangle = 0$ we have (\ref{main-3}) and (\ref{main-4}).
\end{enumerate}
\end{proposition}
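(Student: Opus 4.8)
The plan is to mirror the proof of the preceding proposition for the stochastic cable equation and reduce the assertion to Theorem \ref{thm-G-star}. Since the state space is $H = L^2(O)$ with $O \subset \bbr^d$ open and the underlying measure is the Lebesgue measure $\lambda$, we have $\mu = \lambda \ll \lambda$ with density identically equal to $1 \in L_{\loc}^1(O,\lambda)$, and $\lambda$ is $\sigma$-finite on $O$; hence the standing measure-theoretic hypotheses of Theorem \ref{thm-G-star} are met. It then remains to verify the four numbered conditions of that theorem with the generator $A = a \Delta$ and the generating system $G := K \cap C_c^{\infty}(O)$, after which the equivalence (i) $\Leftrightarrow$ (ii) follows at once.

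First I would treat the two admissible geometries separately. For $O = \bbr^d$ I would invoke Lemma \ref{lemma-Laplace-Rd}, which provides that the heat semigroup generated by $\Delta$ is a contraction semigroup (hence pseudo-contractive), that $K$ is invariant for it, and that $A^* = \Delta$ is a local operator relative to $(K,G)$. For a bounded open $O$ with smooth boundary under Dirichlet conditions I would instead invoke Lemma \ref{lemma-Laplace-domain}, which supplies the analogous three facts -- pseudo-contractivity, invariance of the cone, and locality of $A^*$ -- for the Dirichlet Laplacian.

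The one point requiring attention is that the drift operator in (\ref{SPDE-heat}) is $a \Delta$ rather than $\Delta$, so the properties just recalled must be transferred from $\Delta$ to $a \Delta$. This is exactly what Lemma \ref{lemma-generator-transform} accomplishes with the choice $\alpha = a > 0$ and $\mu = 0$, yielding $B = a \Delta$: part (1) gives that $a \Delta$ generates a $C_0$-semigroup, while parts (2) and (3) preserve pseudo-contractivity and cone invariance under this rescaling. For the locality of the adjoint I would argue directly: since $\Delta$ is self-adjoint we have $(a \Delta)^* = a \Delta$, and for any $(h^*,h) \in G \times K$ with $\langle h^*,h \rangle = 0$ the identity $\langle (a \Delta)^* h^*, h \rangle = a \langle \Delta h^*, h \rangle = 0$ shows that $A^*$ is a local operator relative to $(K,G)$, the final equality being furnished by the locality of $\Delta^*$ established in the relevant lemma.

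Having verified all four hypotheses in each of the two cases, the claimed equivalence follows immediately from Theorem \ref{thm-G-star}. I do not anticipate any genuine obstacle: the mathematical substance is already contained in Theorem \ref{thm-G-star} and in Lemmas \ref{lemma-Laplace-Rd} and \ref{lemma-Laplace-domain}, so the argument is essentially a bookkeeping verification. The only mild care needed is to confirm that the harmless rescaling by the heat conductivity $a$ disturbs none of the required properties, and Lemma \ref{lemma-generator-transform} guarantees precisely this.
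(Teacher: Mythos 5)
Your proposal is correct and follows essentially the same route as the paper, which likewise deduces the equivalence from Theorem \ref{thm-G-star} by combining Lemmas \ref{lemma-Laplace-Rd} and \ref{lemma-Laplace-domain} with Lemma \ref{lemma-generator-transform}. Your explicit verification that $(a\Delta)^* = a\Delta$ remains a local operator under the rescaling is a worthwhile detail that the paper leaves implicit, but it does not change the argument.
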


\begin{proof}
Taking into account Lemmas \ref{lemma-Laplace-Rd}, \ref{lemma-Laplace-domain} and Lemma \ref{lemma-generator-transform}, this is a consequence of Theorem \ref{thm-G-star}.
\end{proof}

\begin{remark}[Parabolic Anderson model]
Consider the particular situation, where $\alpha = 0$, the volatility $\sigma$ is given by $\sigma^j(h) = \lambda_j h$ for $j \in \bbn$ and $h \in H$ with a sequence $\lambda = (\lambda_j)_{j \in \bbn} \in \ell^2(\bbn)$, and $\gamma = 0$. This may be seen as a version of the parabolic Anderson model; see, for example \cite{Anderson}. According to Proposition \ref{prop-heat}, in this case the closed convex cone $K$ is invariant for the stochastic heat equation (\ref{SPDE-heat}).
\end{remark}

Now, we consider the Heath-Jarrow-Morton-Musiela (HJMM) equation from mathematical finance. This SPDE models the term structure of interest rates in a market of zero coupon bonds. We recall that a zero coupon bond with maturity $T$ is a financial asset that pays to the holder one monetary unit at time $T$. Its price at $t \leq T$ can be written as the continuous discounting of one unit of the domestic currency
\begin{align*}
P(t,T) = \exp \bigg( -\int_t^T f(t,s)ds \bigg),
\end{align*}
where $f(t,T)$ denote the so-called forward rates. After transforming the original Heath-Jarrow-Morton (HJM) dynamics of the forward rates (see \cite{HJM}) by means of the Musiela parametrization $r_t(\xi) = f(t,t+\xi)$ (see \cite{Musiela} or \cite{Brace-Musiela}), we arrive at the HJMM equation
\begin{align}\label{SPDE-HJMM}
\left\{
\begin{array}{rcl}
dr_t & = & \big( \frac{d}{d\xi} r_t + \alpha(r_t) \big) dt + \sigma(r_t) dW_t + \int_E \gamma(r_{t-},x) (N(dt,dx) - F(dx)dt) \medskip
\\ r_0 & = & h_0.
\end{array}
\right.
\end{align}
We choose the state space $H := L_w^2(\bbr_+)$ with an admissible weight function $w : \bbr_+ \to (0,\infty)$. Although nowadays interest rates can also be slightly negative, in principle it is a desirable feature to have nonnegative interest rate curves. Thus, we introduce the self-dual closed convex cone $K := H_+$ of nonnegative functions.

\begin{proposition}
If we have (\ref{main-1}), and for all $(h^*,h) \in K \times K$ with $\langle h^*,h \rangle = 0$ we have (\ref{main-3}) and (\ref{main-4}), then the closed convex cone $K$ is invariant for the HJMM equation (\ref{SPDE-HJMM}).
\end{proposition}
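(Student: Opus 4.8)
The plan is to recognize the statement as a direct application of Theorem \ref{thm-abstract-L2}, whose hypotheses match the present setting once the state space and the semigroup are correctly identified. The conditions on the coefficients $(\alpha,\sigma,\gamma)$ — local Lipschitz continuity and the linear growth condition — are standing assumptions throughout this section, so only the geometric and semigroup-theoretic hypotheses of Theorem \ref{thm-abstract-L2} remain to be verified.

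First I would observe that the state space $H = L_w^2(\bbr_+) = L^2(\bbr_+,\calb(\bbr_+),w(x)dx)$ is, by the Example following the definition of an abstract $L^2$-space, itself an abstract $L^2$-space, and that it is separable by Lemma \ref{lemma-metric-separable}. Consequently $K = H_+$ is the self-dual positive cone, and the standing assumptions preceding Theorem \ref{thm-abstract-L2} are met. In particular, since $K$ is self-dual, it serves as its own generating system, so the condition imposed on pairs $(h^*,h) \in K \times K$ with $\langle h^*,h \rangle = 0$ is exactly of the form required by the theorem.

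Next I would identify the first-order operator $A = \frac{d}{d\xi}$ appearing in the HJMM equation (\ref{SPDE-HJMM}) as the generator of the translation semigroup (\ref{trans-def}) with $v = 1$. Since $w$ is assumed to be an admissible weight function, Lemma \ref{lemma-L2-admissible-w} supplies at once that $(S_t)_{t \geq 0}$ is a pseudo-contractive $C_0$-semigroup on $H$ and that $K$ is invariant for $(S_t)_{t \geq 0}$; this establishes conditions (2) and (3) of Theorem \ref{thm-abstract-L2}. With the jump condition (\ref{main-1}) and the drift and volatility conditions (\ref{main-3}) and (\ref{main-4}) assumed in the hypothesis, Theorem \ref{thm-abstract-L2} then yields invariance of $K$ for (\ref{SPDE-HJMM}).

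I do not anticipate a genuine obstacle here, as the argument is essentially an assembly of already-established facts. The only two points deserving care are confirming that $L_w^2(\bbr_+)$ falls under the abstract $L^2$-framework — which is immediate from the fact that it is the $L^2$-space of the measure $w(x)\,dx$ — and that admissibility of $w$ simultaneously delivers both pseudo-contractivity and cone invariance of the translation semigroup, both of which are provided by Lemma \ref{lemma-L2-admissible-w}. Thus the proof reduces to citing Lemma \ref{lemma-L2-admissible-w} together with Lemma \ref{lemma-metric-separable} and invoking Theorem \ref{thm-abstract-L2}.
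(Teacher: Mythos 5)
Your proposal is correct and follows exactly the paper's own route: the paper also deduces the result from Theorem \ref{thm-abstract-L2} after invoking Lemma \ref{lemma-L2-admissible-w} for pseudo-contractivity and semigroup invariance of $K$. Your additional remarks on separability and self-duality of $K = H_+$ merely make explicit the standing hypotheses the paper leaves implicit.
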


\begin{proof}
Taking into account Lemma \ref{lemma-L2-admissible-w}, this is a consequence of Theorem \ref{thm-abstract-L2}.
\end{proof}

Typically, the drift $\alpha : H \to H$ in the HJMM equation (\ref{SPDE-HJMM}) cannot be chosen freely. This is due to no-arbitrage considerations in the bond market. In the spirit of the Benchmark Approach \cite{Platen}, we model the forward rate dynamics under the real-word probability measure. Then, due to aforementioned no-arbitrage constraints, the drift is given by
\begin{align}\label{drift-HJM}
\alpha(h) = \sum_{j=1}^{\infty} \sigma^j(h) ( \theta^j(h) + \Sigma^j(h) ) - \int_E \gamma(h,x) \big( \exp( -\phi(h,x) - \Gamma(h,x) ) - 1 \big) F(dx),
\end{align}
see, for example \cite{Platen-Christensen} or \cite{B-N-Platen}. Here $\theta : H \to L_0^2(\bbr)$ and $\phi : H \times E \to \bbr$ are further mappings, which represent the \emph{market prices of risks}, and we agree on the notation
\begin{align*}
\Sigma^j(h) := \int_0^{\bullet} \sigma^j(h)(\eta) d\eta \quad \text{and} \quad \Gamma(h,x) &:= \int_0^{\bullet} \gamma(h,x)(\eta) d\eta.
\end{align*}
This framework includes the well-known HJMM equation, considered under a risk-neutral probability measure; see, for example \cite{fillnm, FTT-positivity}. 

\begin{proposition}
Suppose we have (\ref{main-1}), and that for all $(h^*,h) \in K \times K$ with $\langle h^*,h \rangle = 0$ we have (\ref{main-4}). Then the following statements are equivalent:
\begin{enumerate}
\item[(i)] For all $(h^*,h) \in K \times K$ with $\langle h^*,h \rangle = 0$ we have (\ref{main-3}).

\item[(ii)] For all $(h^*,h) \in K \times K$ with $\langle h^*,h \rangle = 0$ we have (\ref{main-5}).
\end{enumerate}
\end{proposition}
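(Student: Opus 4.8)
The plan is to substitute the explicit no-arbitrage drift (\ref{drift-HJM}) into the left-hand side of (\ref{main-3}) and collapse it to a single jump integral, after which the claimed equivalence reduces to a positivity argument in the spirit of Proposition \ref{prop-alpha-zero}. Throughout I work with $H = L_w^2(\bbr_+)$ and $K = H_+$, and fix $(h^*,h) \in K \times K$ with $\langle h^*,h \rangle = 0$. The first thing to record is that, since $h^*, h \geq 0$, the orthogonality $\langle h^*,h \rangle = 0$ means precisely that $h^* h = 0$ almost everywhere, i.e. $h^*$ is supported on the zero set $\{ h = 0 \}$.

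Next I would exploit the parallel condition (\ref{main-4}). Applying it not only to our fixed $h^*$ but to every nonnegative function supported on $\{ h = 0 \}$ (all of which lie in $K$ and are orthogonal to $h$) forces $\sigma^j(h) = 0$ almost everywhere on $\{ h = 0 \}$ for each $j \in \bbn$. Since $h^*$ is supported on $\{ h = 0 \}$, this yields $\langle h^*, \sigma^j(h)(\theta^j(h) + \Sigma^j(h)) \rangle = 0$ for every $j$; passing to the limit via continuity of $\langle h^*, \cdot \rangle$ then annihilates the whole first sum in (\ref{drift-HJM}), leaving
\begin{align*}
\langle h^*,\alpha(h) \rangle = -\int_E \langle h^*, \gamma(h,x)(\exp(-\phi(h,x) - \Gamma(h,x)) - 1) \rangle F(dx).
\end{align*}

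Substituting this into (\ref{main-3}) and telescoping the constant $-1$ against the explicit term $-\int_E \langle h^*,\gamma(h,x) \rangle F(dx)$, I would reduce (\ref{main-3}) to the single inequality
\begin{align*}
\int_E \langle h^*, \gamma(h,x) \exp(-\phi(h,x) - \Gamma(h,x)) \rangle F(dx) \leq 0.
\end{align*}
For the positivity step, note that (\ref{main-1}) gives $\gamma(h,x) \geq -h$, hence $\gamma(h,x) \geq 0$ almost everywhere on $\{ h = 0 \}$ for $F$-almost all $x$; since $h^* \geq 0$ is supported there and the exponential factor is strictly positive, the integrand is nonnegative for $F$-almost all $x$. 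An integral of a nonnegative quantity being $\leq 0$ forces that integrand to vanish $F$-almost everywhere, and strict positivity of the exponential makes this equivalent to $\langle h^*,\gamma(h,x) \rangle = 0$, which is (\ref{main-5}). Reading this chain forwards and backwards gives (i) $\Leftrightarrow$ (ii).

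The step I expect to be the main obstacle is upgrading the integral identity (\ref{main-4}) to the pointwise vanishing $\sigma^j(h) = 0$ on $\{ h = 0 \}$ and thence to the vanishing of the weighted pairings $\langle h^*, \sigma^j(h)(\theta^j(h) + \Sigma^j(h)) \rangle$: this relies on characterizing the elements of $K$ orthogonal to $h$ as exactly the nonnegative functions on $\{ h = 0 \}$, and on the convergence of the series defining the volatility drift in $H$ so that the pairing may be exchanged with the sum. Once this reduction is secured, the telescoping and the final positivity argument are routine and parallel Proposition \ref{prop-alpha-zero}.
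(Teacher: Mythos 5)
Your proposal is correct and follows essentially the same route as the paper's proof: use (\ref{main-1}) to get $\la h^*,\gamma(h,x)\ra \geq 0$, use (\ref{main-4}) to annihilate the volatility part of the drift (\ref{drift-HJM}), telescope the remaining jump terms to reduce the left-hand side of (\ref{main-3}) to $-\int_E \la h^*,\gamma(h,x)\exp(-\phi(h,x)-\Gamma(h,x))\ra F(dx)$, and conclude by pointwise nonnegativity of the integrand. The only difference is that you spell out explicitly (via the characterization of nonnegative elements orthogonal to $h$ as those supported on $\{h=0\}$, and the resulting vanishing of $\sigma^j(h)$ there) why (\ref{main-4}) kills the weighted pairings $\la h^*,\sigma^j(h)(\theta^j(h)+\Sigma^j(h))\ra$ — a step the paper leaves implicit.
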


\begin{proof}
Let $(h^*,h) \in K \times K$ with $\langle h^*,h \rangle = 0$ be arbitrary. Using (\ref{main-1}), for all $(h^*,h) \in K \times K$ with $\langle h^*,h \rangle = 0$ we have
\begin{align*}
\la h^*,\gamma(h,x) \ra = \la h^*,h + \gamma(h,x) \ra \geq 0 \quad \text{for $F$-almost all $x \in E$.}
\end{align*}
Furthermore, by the structure (\ref{drift-HJM}) of the drift $\alpha$, and by (\ref{main-4}), we obtain
\begin{align*}
&\la h^*,\alpha(h) \ra - \int_E \la h^*,\gamma(h,x) \ra F(dx)
\\ &= \sum_{j=1}^{\infty} \big\la h^*, \sigma^j(h) ( \theta^j(h) + \Sigma^j(h) ) \big\ra - \int_E \big\la h^*, \gamma(h,x) \exp( -\phi(h,x) - \Gamma(h,x) ) \big\ra F(dx)
\\ &= - \int_E \big\la h^*, \gamma(h,x) \exp( -\phi(h,x) - \Gamma(h,x) ) \big\ra F(dx),
\end{align*}
which provides the stated equivalence.
\end{proof}

Our next example is in the field of actuarial mathematics. As in \cite{Tappe-Weber} we consider survival probabilities
\begin{align*}
G(t,T,x) = \bbp(\tau^x > T | \calf_t),
\end{align*}
where $\tau^x$ is the time of death of an individual born at date $-x$. Apparently, by the definition of $G(t,T,x)$ we must have
\begin{align}\label{mortality-cond}
G(t,S,x) &\geq G(t,T,x), \quad \text{if $S \leq T$.}
\end{align}
As in \cite{Tappe-Weber}, we define the survival probabilities as
\begin{align*}
G(t,T,x) := G(0,0,x) \exp \bigg( -\int_{-x}^t \mu(s,s,x) ds \bigg) \exp \bigg( -\int_{-x \vee t}^T \mu(t,u,x) du \bigg),
\end{align*}
where $\mu(t,T,x)$ denote the mortality rates. Performing the Musiela type transformation $m_t(s,y) = \mu(t,s+t,y-t)$, we obtain
\begin{equation}\label{mortality-G}
\begin{aligned}
G(t,T,x) &= G(0,0,x) \exp \bigg( -\int_{-x}^t m_s(0,x+s) ds \bigg)
\\ &\quad \times \exp \bigg( -\int_{-x \vee t}^T m_t(u-t,x+t) du \bigg),
\end{aligned}
\end{equation}
and we arrive at an SPDE of the type
\begin{align}\label{SPDE-mortality}
\left\{
\begin{array}{rcl}
dm_t & = & \big( (\partial_s - \partial_y) m_t + \alpha(m_t) \big) dt + \sigma(m_t) dW_t
\\ & & + \int_E \gamma(m_{t-},x) (N(dt,dx) - F(dx)dt) \medskip
\\ m_0 & = & h_0.
\end{array}
\right.
\end{align}
We choose the space $H := L_w^2(\Xi)$ for an admissible weight function $w : \Xi \to (0,\infty)$ as state space, where the domain $\Xi \subset \bbr^2$ is defined as
\begin{align*}
\Xi := \{ (s,y) \in \bbr_+ \times \bbr : -y \leq s \}. 
\end{align*}
According to Lemma \ref{lemma-L2-admissible-w}, the translation semigroup $(S_t)_{t \geq 0}$ given by
\begin{align*}
S_t h(s,y) := h(s+t,y-t), \quad t \geq 0
\end{align*}
for each $h \in H$, is a pseudo-contractive $C_0$-semigroup on $H$, and the self-dual closed convex cone $K := H_+$ is invariant for the semigroup $(S_t)_{t \geq 0}$. If $K$ is invariant for the mortality rate equation (\ref{SPDE-mortality}), then the mortality rates are nonnegative, and by (\ref{mortality-G}) condition (\ref{mortality-cond}) is fulfilled.

\begin{proposition}
If we have (\ref{main-1}), and for all $(h^*,h) \in K \times K$ with $\langle h^*,h \rangle = 0$ we have (\ref{main-3}) and (\ref{main-4}), then the closed convex cone $K$ is invariant for the mortality rate equation (\ref{SPDE-mortality}).
\end{proposition}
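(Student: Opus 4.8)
The plan is to read this proposition as a direct instance of the abstract invariance result Theorem \ref{thm-abstract-L2}, in complete analogy with the HJMM proposition established just above; the substance of the argument therefore lies in verifying that every structural hypothesis of that theorem holds in the present mortality-rate setting, after which the analytic conditions (\ref{main-1}), (\ref{main-3}) and (\ref{main-4}) supplied by hypothesis close the argument at once.

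First I would record that the state space $H = L_w^2(\Xi)$ is a separable $L^2$-space over the measure space $(\Xi, \calb(\Xi), w(s,y)\,ds\,dy)$, hence a separable abstract $L^2$-space with self-dual positive cone $K = H_+$, so that Theorem \ref{thm-abstract-L2} is applicable in principle. Condition (1) of that theorem, the local Lipschitz property and linear growth of $(\alpha,\sigma,\gamma)$, is guaranteed by the standing assumption of Section \ref{sec-applications}. For conditions (2) and (3) I would invoke Lemma \ref{lemma-L2-admissible-w}, which for the admissible weight $w$ yields both pseudo-contractivity of the translation semigroup $S_t h(s,y) = h(s+t,y-t)$ and invariance of $K$ for that semigroup; this is precisely what the paragraph preceding the proposition asserts.

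The one point demanding a moment's care, and the nearest thing to an obstacle, is that Lemma \ref{lemma-L2-admissible-w} presupposes a subset $X \subset \bbr^d$ together with a direction $v$ satisfying $x + tv \in X$ for all $x \in X$ and $t \geq 0$, so I would confirm this geometric compatibility for $\Xi$ and the transport direction $v = (1,-1)$. Writing $\Xi = \{ (s,y) : s \geq 0,\ s+y \geq 0 \}$, any $(s,y) \in \Xi$ and $t \geq 0$ give the translated point $(s+t,\,y-t)$ with $s+t \geq 0$ and $(s+t)+(y-t) = s+y \geq 0$, so that $(s+t,y-t) \in \Xi$; this invariance is exactly what makes the translation semigroup well defined and Lemma \ref{lemma-L2-admissible-w} applicable. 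With all three structural hypotheses in place and (\ref{main-1}), (\ref{main-3}), (\ref{main-4}) available by assumption, the conclusion is immediate from Theorem \ref{thm-abstract-L2}.
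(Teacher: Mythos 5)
Your proof is correct and follows exactly the route the paper intends: the paper leaves this proposition without an explicit proof because, as with the HJMM proposition immediately before it, it is a direct consequence of Theorem \ref{thm-abstract-L2} combined with Lemma \ref{lemma-L2-admissible-w}, whose hypotheses the paper verifies in the surrounding text. Your additional explicit check that $\Xi$ is invariant under translation in the direction $v=(1,-1)$ is a welcome piece of diligence that the paper only asserts implicitly.
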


For insurance companies it may be interesting to consider joint models, say for zero coupon bonds and mortality rates. This leads to an SPDE of the type
\begin{align}\label{SPDE-hybrid}
\left\{
\begin{array}{rcl}
dR_t & = & \big( A R_t + \alpha(R_t) \big) dt + \sigma(R_t) dW_t + \int_E \gamma(R_{t-},x) (N(dt,dx) - F(dx)dt) \medskip
\\ R_0 & = & h_0.
\end{array}
\right.
\end{align}
for the process $R = (r,m)$ on the product space $\calh := H_1 \times H_2$, where $H_1 := L_w^2(\bbr_+)$ and $H_2 := L_v^2(\Xi)$ with admissible weight functions $w : \bbr_+ \to (0,\infty)$ and $v : \Xi \to (0,\infty)$, and where the generator is given by $A := (d/d\xi, \partial_s - \partial_y)$. According to our previous discussion, we consider the closed convex cone $\calk := K_1 \times K_2$, where $K_1 := (H_1)_+$ and $K_2 := (H_2)_+$.

\begin{proposition}
Suppose we have (\ref{main-1-prod}), for all $i=1,2$ and $(h^*,h) \in K_i \times \calk$ with $\la h^*,h_i \ra_{H_i} = 0$ we have (\ref{main-3-prod}) and (\ref{main-4-prod}). Then the closed convex cone $\calk$ is invariant for the hybrid SPDE (\ref{SPDE-hybrid}).
\end{proposition}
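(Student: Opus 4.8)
The plan is to deduce the statement from Theorem \ref{thm-prod-cone}, applied with $m = 2$ to the product space $\calh = H_1 \times H_2$ and the product cone $\calk = K_1 \times K_2$, taking the generating systems $G_i := K_i$ for $i = 1,2$. Once the four hypotheses of that theorem are verified, the remaining assumptions of the present proposition are exactly conditions (\ref{main-1-prod}), (\ref{main-3-prod}) and (\ref{main-4-prod}) of Theorem \ref{thm-prod-cone} (with $G_i = K_i$), so the conclusion follows immediately.

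First I would dispose of conditions (1) and (4). Condition (1) is the blanket assumption in force throughout Section \ref{sec-applications}. For condition (4), I would note that each $H_i$ is an $L^2$-space, hence an abstract $L^2$-space, so that Proposition \ref{prop-abstract-Schauder} shows the self-dual cone $(K_i, K_i)$ to be approximately generated by an unconditional Schauder basis; in particular $G_i = K_i$ is a legitimate generating system of $K_i$.

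The remaining point is to verify conditions (2) and (3), which concern the semigroup $(S_t)_{t \geq 0}$ on $\calh$ generated by $A = (d/d\xi, \partial_s - \partial_y)$. The key observation is that this semigroup acts diagonally: writing $(S_t^1)_{t \geq 0}$ and $(S_t^2)_{t \geq 0}$ for the translation semigroups on $H_1 = L_w^2(\bbr_+)$ and $H_2 = L_v^2(\Xi)$ from Lemma \ref{lemma-L2-admissible-w}, we have $S_t(h_1,h_2) = (S_t^1 h_1, S_t^2 h_2)$. By Lemma \ref{lemma-L2-admissible-w} each $(S_t^i)_{t \geq 0}$ is pseudo-contractive, say $\| S_t^i \| \leq e^{\beta_i t}$, and leaves $K_i$ invariant. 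Putting $\beta := \max \{ \beta_1, \beta_2 \}$ and using the identity $\| (h_1,h_2) \|_{\calh}^2 = \| h_1 \|_{H_1}^2 + \| h_2 \|_{H_2}^2$ gives $\| S_t \| \leq e^{\beta t}$, establishing condition (2); and $S_t \calk = S_t^1 K_1 \times S_t^2 K_2 \subset K_1 \times K_2 = \calk$ establishes condition (3).

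With all four hypotheses in place, Theorem \ref{thm-prod-cone} yields the invariance of $\calk$ for the hybrid SPDE (\ref{SPDE-hybrid}). The hard part here is essentially bookkeeping rather than mathematics: the entire argument is a routine specialization of Theorem \ref{thm-prod-cone}, and the only genuine verification is the diagonal structure of the product semigroup together with the elementary pseudo-contractivity estimate above.
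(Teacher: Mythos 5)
Your proposal is correct and follows the same route as the paper: the paper's proof likewise invokes Proposition \ref{prop-abstract-Schauder} to see that each $(K_i,K_i)$ is approximately generated by an unconditional Schauder basis and then concludes by Theorem \ref{thm-prod-cone}, with the semigroup hypotheses supplied by Lemma \ref{lemma-L2-admissible-w} as discussed in the surrounding text. Your explicit verification of the diagonal structure and the pseudo-contractivity estimate for the product semigroup is just a spelled-out version of what the paper leaves implicit.
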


\begin{proof}
In view of Proposition \ref{prop-abstract-Schauder}, this is a consequence of Theorem \ref{thm-prod-cone}.
\end{proof}

Next, we consider an equation concerning the stochastic modeling of energy markets. For $0 \leq t \leq \tau < \infty$ we denote by $f(t,\tau)$ the futures price at time $t$ of electricity to be delivered constantly over a fixed period starting at $\tau$. These future prices are often modelled as $f(t,\tau) = \bbe_{\bbq}[S_{\tau} | \calf_t]$, where $\bbq \approx \bbp$ denotes a risk-neutral measure, and where $S$ is the spot price process, which is the price of electricity for immediate delivery; see, for example \cite{Benth}. After performing the Musiela type transformation $r_t(\xi) = f(t,t+\xi)$, we have an SPDE of the type
\begin{align}\label{SPDE-energy}
\left\{
\begin{array}{rcl}
dr_t & = & \frac{d}{d\xi} r_t dt + \sigma(r_t) dW_t + \int_E \gamma(r_{t-},x) (N(dt,dx) - F(dx)dt) \medskip
\\ r_0 & = & h_0.
\end{array}
\right.
\end{align}
Here we choose the state space $H := L_w^2(\bbr_+)$ with an admissible weight function $w : \bbr_+ \to (0,\infty)$. Of course, futures prices should be nonnegative. Hence, we consider the self-dual closed convex cone $K := H_+$ of nonnegative functions.

\begin{proposition}\label{prop-energy}
If we have (\ref{main-1}), and for all $(h^*,h) \in K \times K$ with $\langle h^*,h \rangle = 0$ we have (\ref{main-4}) and (\ref{main-5}), then the closed convex cone $K$ is invariant for the energy market equation (\ref{SPDE-energy}).
\end{proposition}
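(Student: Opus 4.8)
The plan is to recognize that the energy market equation (\ref{SPDE-energy}) is a special instance of the general SPDE (\ref{SPDE}) with vanishing drift $\alpha = 0$, generator $A = d/d\xi$ of the translation semigroup, and state space the separable abstract $L^2$-space $H = L_w^2(\bbr_+)$. The strategy is therefore to reduce the claim to Theorem \ref{thm-abstract-L2}, after converting the assumed jump condition (\ref{main-5}) into the drift condition (\ref{main-3}) via Proposition \ref{prop-alpha-zero}.

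First I would verify the structural hypotheses of Theorem \ref{thm-abstract-L2}. Local Lipschitz continuity and the linear growth condition for $(\alpha,\sigma,\gamma)$ are tacitly assumed throughout this section. By Lemma \ref{lemma-L2-admissible-w}, the translation semigroup $(S_t)_{t \geq 0}$ generated by $d/d\xi$ on $H = L_w^2(\bbr_+)$ is pseudo-contractive and the self-dual cone $K = H_+$ is invariant for it. Since $H$ is an $L^2$-space, it is in particular a separable abstract $L^2$-space, so that the relevant generating system in Theorem \ref{thm-abstract-L2} is $G = K$, matching the pairs $(h^*,h) \in K \times K$ appearing in the statement.

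The key step is to transfer the assumed hypothesis into the form required by Theorem \ref{thm-abstract-L2}. Since $\alpha = 0$ and condition (\ref{main-1}) holds by assumption, Proposition \ref{prop-alpha-zero} guarantees that, for all $(h^*,h) \in K \times K$ with $\langle h^*,h \rangle = 0$, the assumed condition (\ref{main-5}) is equivalent to the drift condition (\ref{main-3}). Combining this with the assumed parallelism condition (\ref{main-4}) and the jump condition (\ref{main-1}), I obtain exactly the hypotheses (\ref{main-1}), (\ref{main-3}), (\ref{main-4}) demanded by Theorem \ref{thm-abstract-L2}. An application of that theorem then yields invariance of $K$ for (\ref{SPDE-energy}).

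I do not expect a genuine obstacle: the only nontrivial move is the passage from (\ref{main-5}) to (\ref{main-3}), which is already isolated in Proposition \ref{prop-alpha-zero} and rests on the sign of $\langle h^*,\gamma(h,x) \rangle$ furnished by (\ref{main-1}). Everything else is a routine check of the hypotheses of Theorem \ref{thm-abstract-L2} via Lemma \ref{lemma-L2-admissible-w}.
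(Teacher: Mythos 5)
Your proposal is correct and follows exactly the paper's own route: verifying the semigroup hypotheses via Lemma \ref{lemma-L2-admissible-w}, converting (\ref{main-5}) into (\ref{main-3}) through Proposition \ref{prop-alpha-zero} using $\alpha = 0$ and (\ref{main-1}), and concluding with Theorem \ref{thm-abstract-L2}. No issues.
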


\begin{proof}
Taking into account Lemma \ref{lemma-L2-admissible-w}, this is a consequence of Theorem \ref{thm-abstract-L2} and Proposition \ref{prop-alpha-zero}.
\end{proof}

\begin{remark}
If $\sigma^j(h) = \lambda_j h$ for $j \in \bbn$ and $h \in H$ with a sequence $\lambda = (\lambda_j)_{j \in \bbn} \in \ell^2(\bbn)$, and $\gamma \equiv 0$, then the conditions from Proposition \ref{prop-energy} are fulfilled. Indeed, in this case the corresponding futures prices $f(t,\tau)$ are simply given by a stochastic exponential; a model, which is often used for energy markets.
\end{remark}

\begin{remark}
Let us just mention that we can derive similar invariance results when modelling commodity markets; see, for example \cite{Benth-Kruehner}.
\end{remark}

Now, we consider variance swap models; see, for example \cite{Buehler}. Here the primary security accounts are the quadratic variation of the logarithm of a stock; more precisely $V(t,T) = \bbe[ \la X,X \ra_T | \calf_t ]$ with a stock $S$ of the form $S = S_0 \cale(X)$, where $X$ is a semimartingale with $\Delta X > -1$, and where $\cale(X)$ denotes the stochastic exponential of $X$. After performing the Musiela type transformation $U(t,\xi) = V(t,t+\xi)$, we arrive at an SPDE of the form
\begin{align}\label{SPDE-variance}
\left\{
\begin{array}{rcl}
dU_t & = & \frac{d}{d\xi} U_t dt + \sigma(U_t) dW_t + \int_E \gamma(U_{t-},x) (N(dt,dx) - F(dx)dt) \medskip
\\ U_0 & = & h_0.
\end{array}
\right.
\end{align}
Here we choose the Filipovi\'{c} space $H := H_w := H_w(\bbr_+)$ with an admissible weight function $w : \bbr_+ \to [1,\infty)$ as state space; see Appendix \ref{app-Filipovic-space}. According to the specification above, the variance swaps should be nonnegative and increasing. Therefore, we consider the closed convex cone
\begin{align*}
K := \{ h \in H : h(0) \geq 0 \text{ and } h' \geq 0 \}.
\end{align*}
Due to Proposition \ref{prop-Fil-1-dim-semi}, the translation semigroup $(S_t)_{t \geq 0}$ is a pseudo-contractive $C_0$-semigroup on $H$. Furthermore, it is straightforward to check that $K$ is invariant for the semigroup $(S_t)_{t \geq 0}$. In the sequel, we will identify $\bbr$ with all constant functions and use the notation
\begin{align*}
H_w^0 := \{ h \in H_w : h(0) = 0 \}.
\end{align*}

\begin{proposition}\label{prop-variance-curve}
Suppose we have (\ref{main-1}), and that for all $(h^*,h) \in (\bbr \cup H_w^0) \cap K \times K$ with $\la h^*,h \ra = 0$ we have (\ref{main-4}) and (\ref{main-5}). Then the closed convex cone $K$ is invariant for the variance swap equation (\ref{SPDE-variance}).
\end{proposition}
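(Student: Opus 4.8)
The plan is to realize $K$ as an orthogonal direct sum of two cones, each arising from the positive cone of an abstract $L^2$-space through an isometric isomorphism, and then to invoke Theorem \ref{thm-direct-sum}. The Filipovi\'{c} space carries the inner product $\la h,g \ra = h(0) g(0) + \int_0^{\infty} h'(\xi) g'(\xi) w(\xi) \, d\xi$, so the constant functions $H_1 := \bbr \bbI$ and the subspace $H_2 := H_w^0$ are mutually orthogonal closed subspaces with $H_w = H_1 \oplus H_2$. Writing $K_1 := \bbr_+ \bbI$ (the nonnegative constants) and $K_2 := H_w^0 \cap K = \{ h \in H_w^0 : h' \geq 0 \}$, one reads off directly from the defining conditions $h(0) \geq 0$ and $h' \geq 0$ that $K = K_1 \oplus K_2$, after splitting $g \in K$ as $g = g(0)\bbI + (g - g(0)\bbI)$.

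Next I would exhibit the two isometric isomorphisms demanded by condition (4) of Theorem \ref{thm-direct-sum}. For the first factor, $\bbh_1 := \bbr$ is a one-dimensional separable abstract $L^2$-space with self-dual positive cone $\bbr_+$, and $c \mapsto c \bbI$ is an isometric isomorphism onto $H_1$ carrying $\bbr_+$ to $K_1$. For the second factor, the derivative map $h \mapsto h'$ is an isometric isomorphism of $H_w^0$ onto $L_w^2(\bbr_+)$, with inverse $g \mapsto \int_0^{\bullet} g(\eta) \, d\eta$, since $\| h \|_{H_w}^2 = \int_0^{\infty} |h'|^2 w \, d\xi$ for $h \in H_w^0$; under it $K_2$ corresponds exactly to the positive cone $\big( L_w^2(\bbr_+) \big)_+$. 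As $\bbr$ and $L_w^2(\bbr_+)$ are separable abstract $L^2$-spaces, Proposition \ref{prop-abstract-Schauder} shows that each positive cone, taken as its own generating system, is approximately generated by an unconditional Schauder basis. Because the isomorphisms are isometric we have $S = T$ (Lemma \ref{lemma-isom-isom}), so the transported generating systems are $G_1 = K_1$ and $G_2 = K_2$, and the combined system of Theorem \ref{thm-direct-sum} is $G_1 \cup G_2 = (\bbr \cap K) \cup (H_w^0 \cap K) = (\bbr \cup H_w^0) \cap K$, precisely the set in the statement.

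It then remains to match the analytic hypotheses. Conditions (1)--(3) of Theorem \ref{thm-direct-sum} are already available: the coefficients are locally Lipschitz with linear growth by the standing assumption of this section, the translation semigroup is pseudo-contractive by Proposition \ref{prop-Fil-1-dim-semi}, and $K$ is invariant for it as noted before the statement. Since the variance swap equation (\ref{SPDE-variance}) has vanishing drift $\alpha = 0$ and (\ref{main-1}) is assumed, Proposition \ref{prop-alpha-zero} shows that, on the pairs $(h^*,h) \in G \times K$ with $\la h^*,h \ra = 0$, condition (\ref{main-5}) is equivalent to condition (\ref{main-3}). Thus the assumed (\ref{main-4}) and (\ref{main-5}) deliver exactly (\ref{main-3}) and (\ref{main-4}), and Theorem \ref{thm-direct-sum} gives the invariance of $K$. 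The only genuinely non-formal step is the identification of $(H_w^0, K_2)$ with $\big( L_w^2(\bbr_+), \big( L_w^2(\bbr_+) \big)_+ \big)$ via the derivative isomorphism; once this is secured, the rest is bookkeeping and assembling the cited machinery.
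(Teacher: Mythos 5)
Your proposal is correct and follows essentially the same route as the paper: the orthogonal decomposition $K = (\bbr \cap K) \oplus (H_w^0 \cap K)$, isometric identification of each factor with the positive cone of an abstract $L^2$-space, and then Proposition \ref{prop-abstract-Schauder}, Theorem \ref{thm-direct-sum} and Proposition \ref{prop-alpha-zero}. The only (immaterial) difference is that you map $H_w^0$ onto the weighted space $L_w^2(\bbr_+)$ via $h \mapsto h'$, whereas the paper uses the unweighted $L^2(\bbr_+)$ via $f \mapsto \int_0^{\bullet} f(x) w^{-\frac{1}{2}}(x)\,dx$; these are conjugate by the isometry $f \mapsto f w^{-\frac{1}{2}}$, which preserves positive cones.
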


\begin{proof}
According to Proposition \ref{prop-direct-sum-1} the subspaces $\bbr$ and $H_w^0$ are orthogonal closed subspaces, and we have the direct sum decomposition $H = \bbr \oplus H_w^0$. Furthermore, the cone $K$ admits the direct sum decomposition
\begin{align*}
K = \bbr_+ \oplus (H_w^0 \cap K) = (\bbr \cap K) \oplus (H_w^0 \cap K).
\end{align*}
Note that the linear mappings $T_1 \in L(\bbr)$ and $T_2 \in L(L^2(\bbr_+),H_w^0)$ given by
\begin{align*}
T_1 a = a \quad \text{and} \quad Tf = \int_0^{\bullet} f(x) w^{-\frac{1}{2}}(x) dx
\end{align*}
are isomorphic isomorphisms. By Proposition \ref{prop-abstract-Schauder} the self-dual closed convex cone $L_+^2(\bbr_+)$ is approximately generated by an unconditional Schauder basis. Furthermore, we have
\begin{align*}
T_1(\bbr_+) = \bbr \cap K \quad \text{and} \quad T_2(L_+^2(\bbr_+)) = H_w^0 \cap K.
\end{align*}
Therefore, the result is a consequence of Theorem \ref{thm-direct-sum} and Proposition \ref{prop-alpha-zero}.
\end{proof}

\begin{remark}
Suppose that the conditions from Proposition \ref{prop-variance-curve} are fulfilled. According to the specification above, it is reasonable to start the variance swap model with an initial curve $h_0 \in K$ such that $h_0(0) = 0$; that is $h_0 \in K \cap H_w^0$.
\end{remark}

\begin{remark}
In the aforementioned article \cite{Buehler} an SPDE for the forward variance curves $v(t,T) := \partial_T V(t,T)$ has been derived. Here we directly consider the SPDE (\ref{SPDE-variance}) for the variance swaps. 
\end{remark}

Now, we focus on financial models with credit risk. A defaultable $(T,x)$-bond with maturity $T$ and credit rating $x \in I \subset [0,1]$ is a financial contract which pays to the holder one monetary unit at time $T$, unless bankruptcy has happened until then. In this context, one also speaks about collateralized debt obligations (CDOs). The bonds are often modeled as
\begin{align*}
P(t,T,x) = \bbI_{\{ L_t \leq x \}} \exp \bigg( -\int_t^T f(t,u,x) du \bigg),
\end{align*}
where $f(t,T,x)$ denote the corresponding forward rates, and where $L$ denotes the loss process; see, for example \cite{FOS} for further details. Now, let us consider a CDO model with a finite set $I := \{ 0 \leq x_1 < x_2 < \ldots < x_m = 1 \}$ of credit ratings. After performing the Musiela type transformation $r_t(\xi,\eta) = f(t,t+\xi,\eta)$, we obtain an SPDE of the form
\begin{align}\label{SPDE-CDO}
\left\{
\begin{array}{rcl}
dr_t & = & \big( \frac{d}{d\xi} r_t + \alpha(r_t) \big) dt + \sigma(r_t) dW_t + \int_E \gamma(r_{t-},x) (N(dt,dx) - F(dx)dt) \medskip
\\ r_0 & = & h_0.
\end{array}
\right.
\end{align}
We choose the state space $\calh := H^m$, where $H := L_w^2(\bbr_+)$ for some admissible weight function $w : \bbr_+ \to (0,\infty)$. Similar models have been considered in \cite{Barski} and \cite{Schmidt-Tappe}. Since the positivity and the monotonicity of the forward rates is a desirable feature, we consider the closed convex cone
\begin{align*}
\calk := \{ h \in \calh : h_1 \geq \ldots \geq h_m \geq 0 \}.
\end{align*}
According to Lemma \ref{lemma-L2-admissible-w} the translation semigroup $(S_t)_{t \geq 0}$ is a pseudo-contractive $C_0$-semigroup on $\calh$. Furthermore, it is straightforward to check that the cone $\calk$ is invariant for the semigroup $(S_t)_{t \geq 0}$.

\begin{proposition}
If we have (\ref{main-1-prod}), for $(h^*,h) \in K \times \calk$ with $\la h^*,h_m \ra = 0$ we have
\begin{align*}
&\langle h^*,\alpha_m(h) \rangle - \int_E \langle h^*,\gamma_m(h,x)\rangle F(dx) \geq 0,
\\ &\langle h^*,\sigma_m^j(h) \rangle = 0, \quad j \in \bbn,
\end{align*}
and for all $i=2,\ldots,m$ and $(h^*,h) \in K \times \calk$ with $\langle h^*, h_i-h_{i-1} \rangle = 0$ we have
\begin{align*}
&\langle h^*,\alpha_{i-1}(h)-\alpha_i(h) \rangle - \int_E \langle h^*,\gamma_{i-1}(h,x) - \gamma_i(h,x) \rangle F(dx) \geq 0,
\\ &\langle h^*,\sigma_{i-1}^j(h)-\sigma_i^j(h) \rangle = 0, \quad j \in \bbn,
\end{align*}
then the closed convex cone $\calk$ is invariant for the CDO equation (\ref{SPDE-CDO}).
\end{proposition}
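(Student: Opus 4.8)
The plan is to identify $\calk$ as an instance of the matrix cone (\ref{cone-matrix}) and then invoke Theorem \ref{thm-inv-matrix}, exactly as in the proofs of Corollaries \ref{cor-mon-1} and \ref{cor-mon-2}. First I would observe that
\begin{align*}
\calk = \{ h \in \calh : h_i - h_{i+1} \in K \text{ for } i = 1,\ldots,m-1, \text{ and } h_m \in K \},
\end{align*}
which is precisely the cone (\ref{cone-matrix}) associated with $K_i := K$ for all $i=1,\ldots,m$ and the invertible upper bidiagonal matrix $M \in \bbr^{m \times m}$ given by
\begin{align*}
M_{ik} =
\begin{cases}
1, & \text{if $k = i$,}
\\ -1, & \text{if $k = i+1$,}
\\ 0, & \text{otherwise.}
\end{cases}
\end{align*}
Indeed, $\sum_{k=1}^m M_{ik} h_k = h_i - h_{i+1}$ for $i < m$ and $\sum_{k=1}^m M_{mk} h_k = h_m$, so the constraints reproduce the decreasing chain $h_1 \geq \ldots \geq h_m \geq 0$. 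Since $M$ is upper triangular with unit diagonal, it is invertible.

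Next I would verify the four hypotheses of Theorem \ref{thm-inv-matrix}. Condition (1) is the standing assumption on the coefficients; condition (2) holds because the translation semigroup $(S_t)_{t \geq 0}$ is pseudo-contractive by Lemma \ref{lemma-L2-admissible-w}; condition (3) is the invariance of $\calk$ under $(S_t)_{t \geq 0}$ already noted above; and condition (4) follows from Proposition \ref{prop-abstract-Schauder}, because each $K_i = K = H_+$ with $H = L_w^2(\bbr_+)$ an abstract $L^2$-space, and is therefore approximately generated by an unconditional Schauder basis.

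It then remains to match the boundary conditions. For the row $i = m$ the orthogonality constraint $\langle h^*, \sum_{k=1}^m M_{mk} h_k \rangle = 0$ reads $\langle h^*, h_m \rangle = 0$, and the drift and parallel conditions of Theorem \ref{thm-inv-matrix} reduce precisely to the hypotheses imposed on $\alpha_m$, $\gamma_m$ and $\sigma_m^j$. For $i \in \{ 1,\ldots,m-1 \}$ the constraint reads $\langle h^*, h_i - h_{i+1} \rangle = 0$ with $\sum_{k=1}^m M_{ik} \alpha_k(h) = \alpha_i(h) - \alpha_{i+1}(h)$, and similarly for $\gamma$ and $\sigma^j$; after the reindexing $i \mapsto i-1$, so that the running index ranges over $2,\ldots,m$, these become exactly the remaining displayed hypotheses. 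With all assumptions in place, Theorem \ref{thm-inv-matrix} delivers the invariance of $\calk$ for the CDO equation (\ref{SPDE-CDO}).

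I expect the only genuine work to be this index-and-sign bookkeeping. The cone here is the order-reversed analogue of the one treated in Corollary \ref{cor-mon-1}, so one must ensure that the decreasing chain corresponds to the matrix $M$ above, with the $-1$ placed on the superdiagonal rather than on the subdiagonal as in (\ref{matrix-mon}), and that the reindexing correctly aligns $\alpha_{i-1} - \alpha_i$ with $\sum_{k} M_{ik} \alpha_k$. No new estimate is required beyond what Theorem \ref{thm-inv-matrix} already supplies.
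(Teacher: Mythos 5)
Your proposal is correct and follows essentially the same route as the paper: represent $\calk$ as the matrix cone (\ref{cone-matrix}) with $K_i = K$ and the upper bidiagonal matrix with $-1$ on the superdiagonal, then apply Proposition \ref{prop-abstract-Schauder} and Theorem \ref{thm-inv-matrix}. The index bookkeeping you carry out is exactly what the paper leaves implicit.
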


\begin{proof}
Note that the cone $\calk$ has the representation
\begin{align*}
\calk := \bigcap_{i=1}^{m-1} \{ h \in \calh : h_i - h_{i+1} \in K \} \cap \{ h \in \calh : h_m \in K \},
\end{align*}
where $K := H_+$. Hence, it is of the form (\ref{cone-matrix}) with the matrix $M \in \bbr^{m \times m}$ given by
\begin{align*}
M_{ik} =
\begin{cases}
1, & \text{if $i=k$,}
\\ -1, & \text{if $i=k-1$,}
\\ 0, & \text{otherwise,}
\end{cases}
\end{align*}
and $K_i = K$ for all $i=1,\ldots,m$. Therefore, the result is a consequence of Proposition \ref{prop-abstract-Schauder} and Theorem \ref{thm-inv-matrix}.
\end{proof}

\begin{remark}
Similar results can be found in \cite{Barski} and \cite{Schmidt-Tappe}.
\end{remark}

\begin{remark}\label{rem-multi-curve-2}
A similar situation arises when modeling multiple yield curves (see, for example \cite{Multi-Curve}). Then it is desirable to have spreads which are ordered with respect to different tenors. In this situation, we can also choose the state space $\calh := H^m$ and the closed convex cone
\begin{align*}
\calk := \{ h \in \calh : h_1 \leq \ldots \leq h_m \},
\end{align*}
or, if the spreads should additionally be nonnegative, the closed convex cone 
\begin{align*}
\calk := \{ h \in \calh : 0 \leq h_1 \leq \ldots \leq h_m \}.
\end{align*}
As already mentioned in Remark \ref{rem-multi-curve}, this situation is covered by Corollaries \ref{cor-mon-1} and \ref{cor-mon-2}.
\end{remark}

Now, we consider an alternative approach to CDO modeling; the so-called SPA model from \cite{SPA}. Here the state variable is the portfolio loss distribution. More precisely, let us introduce $p(t,T,x) := \bbp(\ell(T) \leq x | \calf_t)$ for $x \in I := [0,1]$, where $\ell$ denotes the loss process, which is an increasing process with values in $[0,1]$. As already noted in \cite{SPA}, by the definition of $p(t,T,x)$ we must have
\begin{align}\label{SPA-1}
p(0,0,x) &= 1,
\\ \label{SPA-2} p(t,T,1) &= 1,
\\ \label{SPA-3} p(t,S,x) &\geq p(t,T,x), \quad \text{if $S \leq T$,}
\\ \label{SPA-4} p(t,T,x) &\leq p(t,T,y), \quad \text{if $x \leq y$.}
\end{align}
As in \cite{SPA}, we define the loss distributions as
\begin{align*}
p(t,T,x) := \exp \bigg( - \int_0^t f(u,u,x) du - \int_t^T f(t,u,x) du \bigg),
\end{align*}
where $f(t,T,x)$ denote the corresponding forward rates. Then condition (\ref{SPA-1}) is satisfied. Performing the Musiela type transformation $r_t(\xi,\eta) := f(t,t+\xi,\eta)$, we obtain
\begin{align}\label{p-SPA}
p(t,T,x) = \exp \bigg( - \int_0^t r_u(0,x) du - \int_0^{T-t} r_t(\eta,x) d\eta \bigg),
\end{align}
and we arrive at an SPDE of the type
\begin{align}\label{SPDE-SPA}
\left\{
\begin{array}{rcl}
dr_t & = & \big( \frac{d}{d\xi} r_t + \alpha(r_t) \big) dt + \sigma(r_t) dW_t + \int_E \gamma(r_{t-},x) (N(dt,dx) - F(dx)dt) \medskip
\\ r_0 & = & h_0.
\end{array}
\right.
\end{align}
We choose the two-dimensional Filipovi\'{c} space $H := H_{w,1}(\bbr_+ \times [0,1])$, equipped with the norm $\| \cdot \|_{(w,1),(0,1)}$, where $w : \bbr_+ \to [1,\infty)$ denotes an admissible weight function, as state space; see Appendix \ref{app-Filipovic-space} for further details. According to Proposition \ref{prop-Fil-2-dim-semi} the translation semigroup $(S_t)_{t \geq 0}$ given by
\begin{align*}
S_t h(\xi,\eta) := h(\xi+t,\eta), \quad t \geq 0
\end{align*}
for each $h \in H$ is a pseudo-contractive $C_0$-semigroup on $H$. Furthermore, according to Proposition \ref{prop-Fil-2-dim-decomp} we have the direct sum decomposition
\begin{align}\label{decomp-SPA}
H = \bbr \oplus H_w^0(\bbr_+) \oplus H_1^1([0,1]) \oplus H_{w,1}^{(0,1)}(\bbr_+ \times [0,1]),
\end{align}
and the closed subspaces appearing in (\ref{decomp-SPA}) are mutual orthogonal. Note that the closed subspace
\begin{align*}
H_0 := H_1^1([0,1]) \oplus H_{w,1}^{(0,1)}(\bbr_+ \times [0,1]).
\end{align*}
is invariant for the semigroup $(S_t)_{t \geq 0}$. Therefore, we may and will consider the SPA equation (\ref{SPDE-SPA}) on the state space $H_0$ with coefficients $\alpha : H_0 \to H_0$, $\sigma : H_0 \to L_2^0(H_0)$ and $\gamma : H_0 \times E \to H_0$. According to Proposition \ref{prop-Fil-2-dim-semi-2} the semigroup $(S_t)_{t \geq 0}$ is even a $C_0$-semigroup of contractions on $H_0$. Note that $h(\cdot,1) = 0$ for each $h \in H_0$. Therefore, by (\ref{p-SPA}) condition (\ref{SPA-2}) is fulfilled. Now we consider the closed convex cone
\begin{align*}
K := \{ h \in H_0 : h_{\eta} \leq 0, h_{\xi \eta} \leq 0 \}.
\end{align*}
Then for each $h \in K$ we have $h \geq 0$ and
\begin{align*}
h(\xi,\eta_1) \geq h(\xi,\eta_2), \quad \text{if $\eta_1 \leq \eta_2$.}
\end{align*}
Therefore, if $K$ is invariant for the SPA equation (\ref{SPDE-SPA}), then by (\ref{p-SPA}) conditions (\ref{SPA-3}) and (\ref{SPA-4}) are fulfilled.

\begin{proposition}
Suppose we have (\ref{main-1}), and for all
\begin{align*}
(h^*,h) \in \big( H_1^1([0,1]) \cup H_{w,1}^{(0,1)}(\bbr_+ \times [0,1]) \big) \cap K \times K 
\end{align*}
with $\la h^*,h \ra = 0$ we have (\ref{main-3}) and (\ref{main-4}). Then the closed convex cone $K$ is invariant for the SPA equation (\ref{SPDE-SPA}).
\end{proposition}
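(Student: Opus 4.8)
The plan is to realize $K$ as an orthogonal direct sum of two cones, each isometrically isomorphic to the positive cone of an $L^2$-space, and then to invoke Theorem \ref{thm-direct-sum}. Denote by $H_3 := H_1^1([0,1])$ and $H_4 := H_{w,1}^{(0,1)}(\bbr_+ \times [0,1])$ the third and fourth summands of the decomposition (\ref{decomp-SPA}), so that $H_0 = H_3 \oplus H_4$ with $H_3 \perp H_4$ closed. By Proposition \ref{prop-Fil-2-dim-semi-2} the semigroup $(S_t)_{t \geq 0}$ is a $C_0$-semigroup of contractions on $H_0$, hence pseudo-contractive; and since $S_t h(\xi,\eta) = h(\xi+t,\eta)$ preserves the signs of both $h_\eta$ and $h_{\xi\eta}$, the cone $K$ is invariant for $(S_t)_{t \geq 0}$. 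Thus conditions (1)--(3) of Theorem \ref{thm-direct-sum} are in force, and it remains to supply the direct-sum structure required by condition (4).

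The main step is to prove $K = K_3 \oplus K_4$, where $K_3 := K \cap H_3$ and $K_4 := K \cap H_4$. First observe that every $h_4 \in K_4$ satisfies $(h_4)_\eta(\xi,\eta) = \int_0^\xi (h_4)_{\xi\eta}(s,\eta)\,ds \le 0$, using that $h_4$ vanishes on $\{\xi = 0\}$ together with $(h_4)_{\xi\eta} \le 0$; and every $h_3 \in H_3$ depends only on $\eta$, so $(h_3)_{\xi\eta} = 0$. The inclusion $K_3 \oplus K_4 \subset K$ is then immediate, since for $h_3 \in K_3$ and $h_4 \in K_4$ one has $(h_3+h_4)_{\xi\eta} = (h_4)_{\xi\eta} \le 0$ and $(h_3+h_4)_\eta = (h_3)_\eta + (h_4)_\eta \le 0$. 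The converse is the delicate point, because the condition $h_\eta \le 0$ couples the two components. Here the idea is to exploit the vanishing of $H_4$ on $\{\xi=0\}$: for $h \in K$ with $h = h_3 + h_4$ one gets $h_{\xi\eta} = (h_4)_{\xi\eta} \le 0$, so $h_4 \in K_4$ and $(h_4)_\eta(0,\eta) = 0$; evaluating $h_\eta \le 0$ at $\xi = 0$ then yields $(h_3)_\eta(\eta) = h_\eta(0,\eta) \le 0$, whence $h_3 \in K_3$. This boundary-evaluation argument is the crux and relies on the precise vanishing conditions built into the spaces in Appendix \ref{app-Filipovic-space}.

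Finally, I would exhibit isometric isomorphisms carrying positive cones onto $K_3$ and $K_4$, along the lines of the proof of Proposition \ref{prop-variance-curve}. Concretely, set $T_3 : L^2([0,1]) \to H_3$, $T_3 f := \int_{\bullet}^1 f(t)\,dt$, and $T_4 : L^2(\bbr_+ \times [0,1]) \to H_4$, $(T_4 f)(\xi,\eta) := \int_0^{\xi} w^{-1/2}(s) \int_{\eta}^1 f(s,t)\,dt\,ds$; a direct computation gives $(T_3 f)_\eta = -f$ and $(T_4 f)_{\xi\eta} = -f\, w^{-1/2}$, so both maps are isometric isomorphisms with $T_3(L_+^2([0,1])) = K_3$ and $T_4(L_+^2(\bbr_+ \times [0,1])) = K_4$. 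By Proposition \ref{prop-abstract-Schauder} the self-dual cones $L_+^2([0,1])$ and $L_+^2(\bbr_+\times[0,1])$ are approximately generated by an unconditional Schauder basis, so condition (4) of Theorem \ref{thm-direct-sum} holds with $(K_3,G_3) = (T_3 L_+^2([0,1]), T_3 L_+^2([0,1]))$ and $(K_4,G_4)$ defined analogously. Since $G_3 \cup G_4 = K_3 \cup K_4 = (H_3 \cap K) \cup (H_4 \cap K) = (H_3 \cup H_4) \cap K$, the generating system of $K$ produced by Theorem \ref{thm-direct-sum} is exactly the index set over which (\ref{main-1}), (\ref{main-3}) and (\ref{main-4}) are assumed; hence all hypotheses of Theorem \ref{thm-direct-sum} are met and the conclusion follows.
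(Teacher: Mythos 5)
Your proof is correct and follows essentially the same route as the paper: decompose $K$ as the orthogonal direct sum $(H_1^1([0,1]) \cap K) \oplus (H_{w,1}^{(0,1)}(\bbr_+ \times [0,1]) \cap K)$, exhibit isometric isomorphisms from the positive cones of $L^2([0,1])$ and $L^2(\bbr_+ \times [0,1])$, and apply Theorem \ref{thm-direct-sum} as in Proposition \ref{prop-variance-curve}. You additionally supply the verification of the cone decomposition (which the paper only asserts) via the boundary condition $h_\eta(0,\cdot)=0$, and your orientation of the integrals $\int_y^1$ is the sign-correct version of the maps the paper writes down.
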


\begin{proof}
Note that the cone $K$ admits the direct sum decomposition
\begin{align*}
K = ( H_1^1([0,1]) \cap K ) \oplus ( H_{w,1}^{(0,1)}(\bbr_+ \times [0,1]) \cap K ).
\end{align*}
Considering the isometric isomorphisms
\begin{align*}
T_1 \in L \big( L^2([0,1]),H_1^1([0,1]) \big) \quad \text{and} \quad T_2 \in L \big( L^2(\bbr_+ \times [0,1]),H_{w,1}^{(0,1)}(\bbr_+ \times [0,1]) \big)
\end{align*}
given by
\begin{align*}
Tf(y) &= \int_1^y f(\eta) d\eta,
\\ Tg(x,y) &= \int_0^x \int_1^y g(\xi,\eta) w^{-\frac{1}{2}} d\eta d\xi,
\end{align*}
we can proceed as in the proof of Proposition \ref{prop-variance-curve}.
\end{proof}

In our last example, we consider the FX-like (foreign exchange like) model from \cite{Krabichler} (see also the recent preprints \cite{Krabichler-Teichmann-1, Krabichler-Teichmann-2}), which includes credit and liquidity risk. In this framework we have a non-defaultable zero coupon bond $P(t,T)$ with $P(T,T) = 1$ and a defaultable zero coupon bond $\widetilde{P}(t,T)$ with $0 < \widetilde{P}(T,T) \leq 1$. The defaultable bond is given by $\widetilde{P}(t,T) := S_t Q(t,T)$, where $Q(t,T)$ denotes a synthetic non-defaultable zero coupon bond with $Q(T,T) = 1$, and where $S$ is a $(0,1]$-valued process such that $S_0 = 1$. The process $S$ is also called the recovery rate or spot FX rate, and for any $t \in \bbr_+$ the event $\{ S_t < 1 \}$ is called a liquidity squeeze at time $t$. We can express the recovery rate as $S = \exp(-X)$, where $X$ is an $\bbr_+$-valued process such that $X_0 = 0$, and then the event $\{ X_t > 0 \}$ is a liquidity squeeze at time $t$. According to the Jarrow \& Turnbull 1991 paradigm, the bonds $P(t,T)$ and $Q(t,T)$ are considered as non-defaultable zero coupon bonds in different currencies. In this setting, the monotonicity property $\widetilde{P}(t,T) \leq P(t,T)$ is a desirable feature. Note that this property is satisfied if 
\begin{align}\label{FX-cond-1}
Q(t,T) &\leq P(t,T),
\\ \label{FX-cond-2} X &\geq 0.
\end{align}
As in \cite[Sec. 6]{Krabichler}, we assume that the bonds are given by
\begin{align*}
P(t,T) &= \exp \bigg( -\int_t^T f^{\rm dom}(t,u) du \bigg),
\\ Q(t,T) &= \exp \bigg( -\int_t^T f^{\rm for}(t,u) du \bigg),
\end{align*}
where $f^{\rm dom}(t,T)$ denote the domestic forward rates, and where $f^{\rm for}(t,T)$ denote the foreign forward rates. Performing the Musiela type transformations $r_t^{\rm dom}(\xi) := f^{\rm dom}(t,t+\xi)$ and $r_t^{\rm for}(\xi) := f^{\rm for}(t,t+\xi)$, we obtain 
\begin{align}\label{FX-bond-1}
P(t,T) &= \exp \bigg( -\int_0^{T-t} r_t^{\rm dom}(u) du \bigg),
\\ \label{FX-bond-2} Q(t,T) &= \exp \bigg( -\int_0^{T-t} r_t^{\rm for}(u) du \bigg),
\end{align}
and we arrive at an SPDE of the type
\begin{align}\label{SPDE-FX}
\left\{
\begin{array}{rcl}
dR_t & = & \big( A R_t + \alpha(R_t) \big) dt + \sigma(R_t) dW_t + \int_E \gamma(R_{t-},x) (N(dt,dx) - F(dx)dt) \medskip
\\ R_0 & = & h_0.
\end{array}
\right.
\end{align}
for the process $R := (r^{\rm dom},r^{\rm for},X)$ with generator $A := (d/d\xi,d/d\xi,0)$. We choose the state space is $\calh := H \times H \times \bbr$, where $H := L_w^2(\bbr_+)$ with an admissible weight function $w : \bbr_+ \to (0,\infty)$. Let us consider the closed convex cone
\begin{align*}
\calk := \{ h \in \calh : 0 \leq h_1 \leq h_2 \text{ and } h_3 \geq 0 \}.
\end{align*}
If $\calk$ is invariant for the FX-like equation (\ref{SPDE-FX}), then the forward rates are nonnegative and condition (\ref{FX-cond-2}) is fulfilled. Furthermore, taking into account (\ref{FX-bond-1}) and (\ref{FX-bond-2}), we see that condition (\ref{FX-cond-1}) is also satisfied.

\begin{proposition}\label{prop-FX}
Suppose we have (\ref{main-1-prod}), that for all $(h^*,h) \in K \times \calk$ with $\la h^*,h_1 \ra_H = 0$ we have
\begin{align*}
&\la h^*,\alpha_1(h) \ra_H - \int_E \la h^*,\gamma_1(h,x) \ra_H F(dx) \geq 0,
\\ &\la h^*,\sigma_1^j(h) \ra_H = 0, \quad j \in \bbn,
\end{align*}
for all $(h^*,h) \in K \times \calk$ with $\la h^*,h_2-h_1 \ra_H = 0$ we have
\begin{align*}
&\la h^*,\alpha_2(h)-\alpha_1(h) \ra_H - \int_E \la h^*,\gamma_2(h,x)-\gamma_1(h,x) \ra_H F(dx) \geq 0,
\\ &\la h^*,\sigma_2^j(h) - \sigma_1^j(h) \ra_H = 0, \quad j \in \bbn,
\end{align*}
and for all $(h^*,h) \in \bbr_+ \times \calk$ with $h_3 = 0$ we have
\begin{align}\label{FX-cond-X-1}
&\alpha_3(h) - \int_E \gamma_3(h,x) F(dx) \geq 0,
\\ \label{FX-cond-X-2} &\sigma_3^j(h) = 0, \quad j \in \bbn.
\end{align}
Then the closed convex cone $\calk$ is invariant for the FX-like equation (\ref{SPDE-FX}).
\end{proposition}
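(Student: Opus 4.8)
The plan is to recognize $\calk$ as the image of an ordinary product cone under a linear isomorphism and then to invoke Theorem \ref{thm-transformed-cone}, exactly as in the matrix framework of Section \ref{sec-trans-prod}, the only new feature being the heterogeneous product $H \times H \times \bbr$. Concretely, I would work on $\bbh := H \times H \times \bbr = \calh$ and consider the product cone
\begin{align*}
(\bbk,\bbg) := \bigg( H_+ \times H_+ \times \bbr_+, \bigcup_{i=1}^3 \delta_i(\bbg_i) \bigg),
\end{align*}
where $\bbg_1 = \bbg_2 := K = H_+$ and $\bbg_3 := \bbr_+$. Since $H = L_w^2(\bbr_+)$ is a separable abstract $L^2$-space, Proposition \ref{prop-abstract-Schauder} shows that $(H_+,H_+)$ is approximately generated by an unconditional Schauder basis, while $(\bbr_+,\bbr_+)$ is trivially generated by the one-dimensional unconditional Schauder basis of $\bbr$ and hence approximately generated as well. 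Therefore Proposition \ref{prop-prod-Schauder} yields that $(\bbk,\bbg)$ is approximately generated by an unconditional Schauder basis.

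Next I would introduce the linear isomorphism $R \in L(\calh)$ given by $Rh := (h_1, h_2 - h_1, h_3)$, with bounded inverse $R^{-1}h = (h_1, h_1 + h_2, h_3)$, and set $T := R^{-1}$ as well as $S := (T^{-1})^* = R^*$. A direct computation gives $R^* g = (g_1 - g_2, g_2, g_3)$. One then checks that $h \in \calk$ if and only if $Rh \in \bbk$, that is $\calk = T\bbk$, and that the transformed generating system $\calg := S\bbg$ consists of the three families $\delta_1 g^* = (g^*,0,0)$ and $S\delta_2 g^* = (-g^*, g^*, 0)$ with $g^* \in K$, together with $\delta_3 a = (0,0,a)$ with $a \in \bbr_+$. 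The corresponding inner-product identities, valid for $h \in \calk$, are
\begin{align*}
\la \delta_1 g^*, h \ra_{\calh} = \la g^*, h_1 \ra_H, \quad \la S\delta_2 g^*, h \ra_{\calh} = \la g^*, h_2 - h_1 \ra_H, \quad \la \delta_3 a, h \ra_{\calh} = a\, h_3.
\end{align*}
Substituting $\alpha,\sigma^j,\gamma$ into these identities, the abstract conditions (\ref{main-3}) and (\ref{main-4}) for $(\calk,\calg)$ split precisely into the three blocks stated in the proposition (the block $h_3 = 0$ arising from $a > 0$, for which $a\,h_3 = 0$ is equivalent to $h_3 = 0$).

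It remains to collect the standing hypotheses of Theorem \ref{thm-transformed-cone}. The coefficients are locally Lipschitz with linear growth by the blanket assumption of Section \ref{sec-applications}; the semigroup $(S_t)_{t\geq 0}$, which acts by translation on each $H$-factor and by the identity on $\bbr$, is pseudo-contractive by Lemma \ref{lemma-L2-admissible-w} together with the contractivity of $\Id$ on $\bbr$; and $\calk$ is invariant for $(S_t)_{t \geq 0}$ because translation preserves both nonnegativity and the order $h_1 \leq h_2$, while $\Id$ preserves $\bbr_+$. Condition (\ref{main-1}) is assumed as (\ref{main-1-prod}). With $\bbh$, $T$, $S$ and $(\bbk,\bbg)$ as above, all four hypotheses of Theorem \ref{thm-transformed-cone} hold, and the asserted invariance of $\calk$ follows. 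The only genuinely delicate point is the bookkeeping of the middle paragraph: one must confirm that the heterogeneous product $H \times H \times \bbr$ still falls under Proposition \ref{prop-prod-Schauder} (the $\bbr$-factor being the harmless trivial case) and that the adjoint $S = R^*$ carries the product generators exactly to the three families whose inner products reproduce conditions (\ref{FX-cond-X-1}) and (\ref{FX-cond-X-2}) and their two companions.
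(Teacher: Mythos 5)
Your proposal is correct and takes essentially the same route as the paper: both recognize $\calk$ as the image of the product cone $H_+ \times H_+ \times \bbr_+$ under the inverse of the difference map $h \mapsto (h_1, h_2-h_1, h_3)$, compute the adjoint-transformed generating system, and match the resulting inner products $\la g^*,h_1\ra$, $\la g^*,h_2-h_1\ra$, $a h_3$ to the three stated blocks before invoking the general invariance theorem. The only cosmetic difference is that the paper first treats the pair $(h_1,h_2)$ with the matrix machinery of Section \ref{sec-trans-prod} and then forms the product with $\bbr_+$ and applies Theorem \ref{thm-general}, whereas you perform a single three-factor transformation and apply Theorem \ref{thm-transformed-cone}; the underlying computations and cited results are the same.
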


\begin{proof}
Note that the cone $\calk$ has the product structure $\calk = \calk_1 \times \calk_2$, where
\begin{align*}
\calk_1 = \{ h \in H \times H : 0 \leq h_1 \leq h_2 \} \quad \text{and} \quad \calk_2 = \bbr_+.
\end{align*}
Let $R \in L(H \times H)$ be the linear isomorphism given by
\begin{align*}
Rh := ( h_1, h_2 - h_1 ), \quad h \in H \times H.
\end{align*}
According to Lemmas \ref{lemma-inv-matrix-1} and \ref{lemma-inv-matrix-2}, the cone $\calk_1$ has the generating system
\begin{align*}
\calg_1 = R^* ( K \times \{ 0 \} \cup \{ 0 \} \times K ),
\end{align*}
and, taking into account Proposition \ref{prop-seq-trans}, the cone $(\calk_1,\calg_1)$ is approximately generated by an unconditional Schauder basis. Furthermore, the cone $\calk_2$ has the generating system $\calg_2 = \bbr_+$. According to Proposition \ref{prop-prod-Schauder} the cone $\calk$ has the generating system
\begin{align*}
\calg &= \calg_1 \times \{ 0 \} \cup \{ 0 \} \times \calg_2
\\ &= \big( R^* ( K \times \{ 0 \} \cup \{ 0 \} \times K ) \times \{ 0 \} \big) \cup \big( \{ 0 \} \times \bbr_+ \big),
\end{align*}
and the cone $(\calk,\calg)$ is approximately generated by an unconditional Schauder basis. Consequently, taking into account Lemmas \ref{lemma-prod-inner-prod} and \ref{lemma-inv-matrix-3}, the result is a consequence of Theorem \ref{thm-general}.
\end{proof}

\begin{remark}
Suppose that the conditions from Proposition \ref{prop-FX} are fulfilled. As we have seen above, the recovery rate $S$ starts at one, which means that $X_0 = 0$. Therefore, we should start the FX-like equation (\ref{SPDE-FX}) with an initial condition $h \in \calk$ such that $h_3(0) = 0$. Moreover, as pointed out in \cite{Krabichler}, the recovery rate $S$ generally even starts with a constant trajectory at level one, which means that $X^{\tau} = 0$ for some strictly positive stopping time $\tau$. This can be assured by replacing conditions (\ref{FX-cond-X-1}) and (\ref{FX-cond-X-2}) by the stronger condition that for all $h \in \calk$ we have
\begin{align*}
&\int_E |\gamma_3(h,x)| F(dx) < \infty,
\\ &\alpha_3(h) - \int_E \gamma_3(h,x) F(dx) = 0,
\\ &\sigma_3^j(h) = 0, \quad j \in \bbn.
\end{align*}
\end{remark}

\begin{appendix}

\section{Generalized Filipovi\'{c} spaces}\label{app-Filipovic-space}

In this appendix we provide generalizations of the Filipovi\'{c} space from \cite{fillnm}. We are in particular interested in such function spaces of two variables. In order to prepare the required background, we start with a review of the one-dimensional situation.

Let $I \subset \bbr$ be an interval. We call a continuous function $w : I \to [1,\infty)$ an \emph{admissible weight function} if it is continuous, increasing and satisfies $w^{-1} \in L^1(I)$. We fix an arbitrary $x_0 \in I$ and define the \emph{Filipovi\'{c} space} $H_w(I)$ as the space of all absolutely continuous functions $h : I \to \bbr$ such that
\begin{align}\label{norm-Fil-1}
\| h \|_{w,x_0} := \bigg( |h(x_0)|^2 + \int_I |h'(x)|^2 w(x) dx \bigg)^{1/2} < \infty.
\end{align}
As we will see later on, the definition of the Filipovi\'{c} space $H_w(I)$ does not depend on the choice of $x_0 \in I$.

\begin{remark}
In \cite{fillnm} the situation $I = \bbr_+$ and $x_0 = 0$ was considered, and it was assumed that the weight function $w : I \to [1,\infty)$ is an increasing $C^1$-function such that $w^{-\frac{1}{3}} \in L^1(\bbr_+)$. For our purposes, it is enough if $w : I \to [1,\infty)$ is an admissible weight function as defined above.
\end{remark}

The proof of the following result is straightforward; cf. \cite[p. 76]{fillnm}.

\begin{proposition}
The linear mapping $T : \bbr \times L^2(I) \to H_w(I)$ given by
\begin{align*}
T(a,f) := a + \int_{x_0}^{\bullet} f(x) w^{-\frac{1}{2}}(x) dx
\end{align*}
is an isometric isomorphism with inverse
\begin{align*}
T^{-1}h = \big( h(x_0), h' w^{\frac{1}{2}} \big).
\end{align*}
\end{proposition}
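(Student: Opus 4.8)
The plan is to verify directly that $T$ is a well-defined linear isometry and that the map $S : H_w(I) \to \bbr \times L^2(I)$ given by $S(h) := (h(x_0), h' w^{1/2})$ is a two-sided inverse of $T$; since an isometry admitting a two-sided inverse is automatically an isometric isomorphism, this will suffice. Linearity of $T$ is immediate from its definition, so the real content lies in the well-definedness, the isometry identity, and the inverse relations.

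First I would establish that $T$ maps into $H_w(I)$ and is isometric. The decisive observation is that $w^{-1} \in L^1(I)$ entails $w^{-1/2} \in L^2(I)$, so that for $f \in L^2(I)$ the Cauchy--Schwarz inequality yields $f w^{-1/2} \in L^1(I)$. Consequently the primitive $h := a + \int_{x_0}^{\bullet} f(x) w^{-1/2}(x) dx$ is a well-defined absolutely continuous function with $h(x_0) = a$ and $h'(x) = f(x) w^{-1/2}(x)$ for almost every $x \in I$. Substituting this into the norm (\ref{norm-Fil-1}) gives
\begin{align*}
\| T(a,f) \|_{w,x_0}^2 = |a|^2 + \int_I |f(x)|^2 w^{-1}(x) w(x) \, dx = |a|^2 + \| f \|_{L^2(I)}^2,
\end{align*}
which is exactly $\| (a,f) \|_{\bbr \times L^2(I)}^2$; hence $T$ is a linear isometry.

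Next I would check that $S$ is well-defined and inverse to $T$. For $h \in H_w(I)$ we have $h(x_0) \in \bbr$ and, by the very definition of the norm, $\int_I |h'(x) w^{1/2}(x)|^2 dx = \int_I |h'(x)|^2 w(x) \, dx < \infty$, so $h' w^{1/2} \in L^2(I)$ and thus $S(h) \in \bbr \times L^2(I)$. The identity $S \circ T = \Id$ is immediate from $h(x_0) = a$ together with $h' w^{1/2} = f w^{-1/2} w^{1/2} = f$. For $T \circ S = \Id$ I would invoke the fundamental theorem of calculus for absolutely continuous functions, which gives $h(\bullet) = h(x_0) + \int_{x_0}^{\bullet} h'(x) \, dx$; since $T(S(h)) = h(x_0) + \int_{x_0}^{\bullet} h'(x) w^{1/2}(x) w^{-1/2}(x) \, dx$, the two expressions coincide.

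The computations are routine, and I expect no genuine obstacle; the only points requiring care are the integrability of $f w^{-1/2}$ (for which the inclusion $w^{-1/2} \in L^2(I)$ is decisive) and the appeal to the fundamental theorem of calculus, which is legitimate precisely because the elements of $H_w(I)$ are absolutely continuous by definition. Completeness of $H_w(I)$, and hence the fact that $T$ is an isomorphism of Hilbert spaces, then follows automatically from the completeness of $\bbr \times L^2(I)$ together with the isometry property.
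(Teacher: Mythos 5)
Your proof is correct and is precisely the straightforward verification the paper alludes to (the paper omits the argument, citing Filipovi\'{c}'s book): the key points --- that $w^{-1}\in L^1(I)$ gives $fw^{-1/2}\in L^1(I)$ via Cauchy--Schwarz, the telescoping of the weights in the norm identity, and the fundamental theorem of calculus for absolutely continuous functions to get $T\circ S=\Id$ --- are exactly what is needed. No gaps.
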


Consequently, the Filipovi\'{c} space $H_w(I)$ is a separable Hilbert space. For the proof of the following result we can follow the arguments from \cite[p. 77]{fillnm}.

\begin{proposition}\label{prop-C1-C2-1}
The following statements are true:
\begin{enumerate}
\item For each $h \in H_w(I)$ we have the estimate
\begin{align*}
\| h' \|_{L^1(I)} \leq \| h \|_{w,x_0} \| w^{-1} \|_{L^1(I)}^{\frac{1}{2}}. 
\end{align*}

\item There is a constant $C > 0$ such that
\begin{align*}
\| h \|_{L^{\infty}(I)} \leq C \| h \|_{w,x_0}, \quad h \in H_w(I).
\end{align*}
\end{enumerate}
\end{proposition}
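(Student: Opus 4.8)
The plan is to prove both estimates by elementary means: part (1) is a direct application of the Cauchy--Schwarz inequality, and part (2) then follows from the fundamental theorem of calculus for absolutely continuous functions combined with part (1). No approximation or functional-analytic machinery is needed.

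For part (1), I would split the weight symmetrically in the integrand defining $\| h' \|_{L^1(I)}$. Writing
\begin{align*}
\| h' \|_{L^1(I)} = \int_I |h'(x)| \, w(x)^{1/2} w(x)^{-1/2} \, dx,
\end{align*}
the Cauchy--Schwarz inequality gives
\begin{align*}
\| h' \|_{L^1(I)} \leq \bigg( \int_I |h'(x)|^2 w(x) \, dx \bigg)^{1/2} \bigg( \int_I w(x)^{-1} \, dx \bigg)^{1/2}.
\end{align*}
The first factor is bounded by $\| h \|_{w,x_0}$, since by the definition (\ref{norm-Fil-1}) we have $\int_I |h'(x)|^2 w(x) \, dx \leq \| h \|_{w,x_0}^2$, and the second factor equals $\| w^{-1} \|_{L^1(I)}^{1/2}$. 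This immediately yields the claimed estimate.

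For part (2), I would use that every $h \in H_w(I)$ is absolutely continuous, so that for any $x \in I$,
\begin{align*}
h(x) = h(x_0) + \int_{x_0}^x h'(t) \, dt.
\end{align*}
Bounding the integral by $\| h' \|_{L^1(I)}$ and inserting part (1) gives
\begin{align*}
|h(x)| \leq |h(x_0)| + \| h' \|_{L^1(I)} \leq |h(x_0)| + \| h \|_{w,x_0} \, \| w^{-1} \|_{L^1(I)}^{1/2}.
\end{align*}
Since $|h(x_0)| \leq \| h \|_{w,x_0}$, again by (\ref{norm-Fil-1}), taking the supremum over $x \in I$ delivers the assertion with the explicit constant $C := 1 + \| w^{-1} \|_{L^1(I)}^{1/2}$.

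I do not expect any substantial obstacle. The only points to keep in mind are that the finiteness of $C$ relies precisely on the admissibility hypothesis $w^{-1} \in L^1(I)$, and that both factor bounds used above, namely $\int_I |h'|^2 w \, dx \leq \| h \|_{w,x_0}^2$ and $|h(x_0)| \leq \| h \|_{w,x_0}$, are obtained simply by discarding a nonnegative term in the definition of the norm.
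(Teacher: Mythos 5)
Your proof is correct and is exactly the standard argument the paper invokes by referring to \cite[p.~77]{fillnm}: Cauchy--Schwarz with the symmetric splitting $w^{1/2}w^{-1/2}$ for part (1), then the fundamental theorem of calculus for absolutely continuous functions plus $|h(x_0)| \leq \| h \|_{w,x_0}$ for part (2), yielding $C = 1 + \| w^{-1} \|_{L^1(I)}^{1/2}$. Nothing to add.
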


Now, we define $\overline{I} \subset [-\infty,\infty]$ as 
\begin{align}\label{I-closure}
\overline{I} := I \cup \{ \inf I \} \cup \{ \sup I \}. 
\end{align}
In the following result, the space $\bbr$ corresponds to all constant functions, and may thus be considered as a subspace of $H_w(I)$. The proof is an immediate consequence of Proposition \ref{prop-C1-C2-1}.

\begin{proposition}\label{prop-direct-sum-1}
For each $x \in \overline{I}$ the following statements are true:
\begin{enumerate}
\item The limit $h(x) := \lim_{\genfrac{}{}{0pt}{}{\xi \to x}{\xi \in I}}h(\xi)$ exists for each $h \in H_w(I)$.

\item There exists $\delta_{x} \in H_w(I)$ such that
\begin{align*}
h(x) = \la \delta_{x},h \ra_{w,x_0}, \quad h \in H_w(I).
\end{align*}
\item The norms $\| h \|_{w,x_0}$ and $\| h \|_{w,x}$ defined according to (\ref{norm-Fil-1}) are equivalent.

\item The subspaces $\bbr$ and
\begin{align}\label{H-w-x}
H_w^{x}(I) := \{ h \in H_w(I) : h(x) = 0 \}
\end{align}
are closed subspaces which are orthogonal with respect to the norm $\| \cdot \|_{w,x}$.

\item We have the direct sum decomposition $H_w(I) = \bbr \oplus H_w^{x}(I)$.
\end{enumerate}
\end{proposition}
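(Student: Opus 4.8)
The plan is to deduce all five assertions from the two estimates collected in Proposition \ref{prop-C1-C2-1}, which is why the proposition qualifies as ``immediate'': the $L^1$-control of the derivative and the $L^\infty$-control of the function carry essentially all the weight. Throughout I denote by $\la g,h \ra_{w,x} = g(x)h(x) + \int_I g'(s)h'(s)w(s)\,ds$ the inner product associated with the norm $\| \cdot \|_{w,x}$.

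For (1), the case $x \in I$ is trivial since $h$ is continuous, so the work is at a boundary point $x \in \{ \inf I, \sup I \}$ possibly lying outside $I$ (and possibly equal to $\pm\infty$, recalling (\ref{I-closure})). Here I would use that $h' \in L^1(I)$, which is exactly part (1) of Proposition \ref{prop-C1-C2-1}: for $\xi_1,\xi_2 \in I$ approaching $x$ one has $|h(\xi_1) - h(\xi_2)| \le \int_{\xi_1 \wedge \xi_2}^{\xi_1 \vee \xi_2} |h'(s)|\,ds$, and the right-hand side tends to $0$ by absolute continuity of the integral, so the Cauchy criterion yields the limit. This is the only genuinely delicate point, and I expect it to be the main obstacle, precisely because it is where the admissibility hypothesis $w^{-1} \in L^1(I)$ (through $\|h'\|_{L^1(I)} < \infty$) is indispensable.

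Assertion (2) is then a direct application of the Riesz representation theorem: the functional $h \mapsto h(x)$ is linear and, by part (2) of Proposition \ref{prop-C1-C2-1}, bounded, since $|h(x)| \le \|h\|_{L^\infty(I)} \le C\|h\|_{w,x_0}$; hence there is a unique representing element $\delta_x \in H_w(I)$. For (3) I would observe that the norms $\| \cdot \|_{w,x_0}$ and $\| \cdot \|_{w,x}$ share the identical integral term $\int_I |h'|^2 w$ and differ only in the point-evaluation. One inequality follows at once from $|h(x)| \le C\|h\|_{w,x_0}$. For the reverse, writing $h(x_0) = h(x) - \int_{x_0}^x h'(s)\,ds$ and bounding by Cauchy--Schwarz $\|h'\|_{L^1(I)} \le \big( \int_I |h'|^2 w \big)^{1/2}\|w^{-1}\|_{L^1(I)}^{1/2} \le \|h\|_{w,x}\|w^{-1}\|_{L^1(I)}^{1/2}$ (the same computation as in Proposition \ref{prop-C1-C2-1}, but now with base point $x$) controls $|h(x_0)|$ by a multiple of $\|h\|_{w,x}$; the two bounds give the equivalence and, as a byproduct, confirm that $H_w(I)$ is independent of the chosen base point.

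Finally, (4) and (5) are routine. The subspace $H_w^x(I)$ is the kernel of the bounded functional $h \mapsto h(x)$ from (2), hence closed, while $\bbr$ is one-dimensional, hence closed. Orthogonality with respect to $\la \cdot,\cdot \ra_{w,x}$ is the one-line computation $\la c,h \ra_{w,x} = c\,h(x) + 0 = 0$ for a constant $c$ and $h \in H_w^x(I)$, using $c' = 0$ and $h(x) = 0$. For the decomposition I would split $h = h(x) + (h - h(x))$, with the constant $h(x)$ lying in $\bbr$ and the remainder vanishing at $x$, hence in $H_w^x(I)$; directness is clear because the only constant function vanishing at $x$ is $0$, so the sum $\bbr \oplus H_w^x(I)$ is both direct and, by the orthogonality just shown, orthogonal.
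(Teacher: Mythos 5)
Your proposal is correct and follows exactly the route the paper intends: the paper gives no written proof beyond declaring the proposition an immediate consequence of Proposition \ref{prop-C1-C2-1}, and your argument is precisely the standard unpacking of that claim (Cauchy criterion via $h' \in L^1(I)$ for the boundary limit, Riesz representation via the $L^\infty$-bound, and the base-point-shift estimate for norm equivalence, from which the orthogonal decomposition is routine).
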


Now, we assume that $\infty \in \overline{I}$. Then we can introduce the translation semigroup $(S_t)_{t \geq 0}$ given by
\begin{align*}
S_t h := h(t + \bullet), \quad t \geq 0
\end{align*}
for each $h \in H_w(I)$. Adopting the arguments from \cite[p. 78,79]{fillnm} and \cite[Lemma 3.5]{Benth-Kruehner}, we obtain the following result.

\begin{proposition}\label{prop-Fil-1-dim-semi}
$(S_t)_{t \geq 0}$ is a pseudo-contractive $C_0$-semigroup on $H_w(I)$.
\end{proposition}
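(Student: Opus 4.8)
The plan is to verify the four defining properties in turn: that $S_t$ maps $H_w(I)$ into itself, the algebraic semigroup law $S_sS_t=S_{s+t}$ with $S_0=\mathrm{Id}$, the pseudo-contractive growth bound $\|S_t\|\le e^{\beta t}$, and strong continuity $\|S_th-h\|_{w,x_0}\to0$ as $t\downarrow0$. The semigroup law is immediate from $(S_s S_t h)(x)=(S_th)(x+s)=h(x+s+t)=(S_{s+t}h)(x)$ and $S_0=\mathrm{Id}$, so the substance lies in the growth bound and strong continuity; well-definedness falls out of the growth estimate together with the fact that a translate of an absolutely continuous function is again absolutely continuous.

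For the growth bound I would exploit that the norm $\|h\|_{w,x_0}^2=|h(x_0)|^2+\int_I|h'(x)|^2w(x)\,dx$ splits into a point-evaluation part and a weighted-derivative part, and treat the two separately for $S_th$, whose value is $h(x_0+t)$ and whose derivative is $h'(\cdot+t)$. For the derivative part, the substitution $y=x+t$ together with $I+t\subseteq I$ and the monotonicity of $w$, which gives $w(y-t)\le w(y)$, yields $\int_I|h'(x+t)|^2w(x)\,dx=\int_{I+t}|h'(y)|^2w(y-t)\,dy\le\int_I|h'(y)|^2w(y)\,dy$; so this part is already contractive, and in fact it loses the mass $\int_{\inf I}^{\inf I+t}|h'|^2w$. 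For the point part I would write $h(x_0+t)=h(x_0)+\int_{x_0}^{x_0+t}h'(s)\,ds$ and estimate $\bigl|\int_{x_0}^{x_0+t}h'\bigr|^2\le t\int_{x_0}^{x_0+t}|h'|^2w$ by Cauchy--Schwarz and $w^{-1}\le1$. Choosing the base point $x_0=\inf I$ (the canonical choice, e.g. $x_0=0$ for $I=\bbr_+$) makes the mass lost by the derivative part cancel the contribution of the point part, and a short Young inequality then gives $\|S_th\|_{w,x_0}^2\le(1+t)\|h\|_{w,x_0}^2\le e^{t}\|h\|_{w,x_0}^2$, i.e. $\|S_t\|\le e^{t/2}$. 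Alternatively, invoking Proposition \ref{prop-direct-sum-1} one may place the base point at $+\infty\in\overline I$: since $(S_th)(\infty)=h(\infty)$ the point term is translation-invariant, and $S_t$ becomes an outright contraction in the equivalent norm $\|\cdot\|_{w,\infty}$.

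For strong continuity I would again split the norm. The point term is harmless, since $h$ is (absolutely) continuous and hence $h(x_0+t)\to h(x_0)$. The derivative term $\int_I|h'(x+t)-h'(x)|^2w(x)\,dx$ is the continuity of translation in a weighted $L^2$-space; because the weights attached to $h'(\cdot+t)$ and to $h'$ do not match, I would not argue by a change of variables but by density: for $h$ lying in the image under the isometric isomorphism $T\colon\bbr\times L^2(I)\to H_w(I)$ of a compactly supported continuous $f$ (so that $h'=fw^{-1/2}$ is continuous with compact support), dominated convergence gives the claim, and the general case follows from the uniform bound $\sup_{t\in[0,1]}\|S_t\|<\infty$ provided by the growth estimate, via a standard three-epsilon approximation. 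Together with the semigroup law this upgrades strong continuity at $0$ to a $C_0$-semigroup.

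I expect the principal difficulty to be the growth bound, and specifically securing the multiplicative constant $1$ rather than merely a bound of the form $\|S_t\|\le Me^{\omega t}$: the weighted-derivative part is effortlessly contractive by monotonicity of $w$, but the point-evaluation part is delicate and its clean control depends on aligning the base point with $\inf I$ (or pushing it to $+\infty$ as above). The only other point requiring care is the strong continuity of the weighted translation, where the mismatch between $w(x)$ and $w(x+t)$ rules out a direct isometry argument and forces the passage through a dense class of smooth functions.
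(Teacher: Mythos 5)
Your proof is correct and follows essentially the same route as the paper, which simply delegates to the arguments of \cite[pp.~78--79]{fillnm} and \cite[Lemma 3.5]{Benth-Kruehner}: a change of variables combined with the monotonicity of $w$ for the weighted derivative term, the fundamental theorem of calculus with Cauchy--Schwarz for the point evaluation, and density of compactly supported data (via the isometry $T$) plus a uniform bound on $\|S_t\|$ for strong continuity. Your remark that the clean estimate $\|S_t h\|^2 \le (1+t)\|h\|^2$ genuinely requires aligning the base point with $\inf I$ (or passing to the equivalent norm $\|\cdot\|_{w,\infty}$, in which $S_t$ is an outright contraction) is accurate --- for an interior base point the cross term $2|h(x_0)|\bigl|\int_{x_0}^{x_0+t}h'\bigr|$ is only $O(\sqrt{t})$ and is not cancelled by the mass lost at the left edge --- and this is implicitly the convention of the cited sources, where $x_0=0$ for $I=\bbr_+$.
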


In addition, the following result is straightforward to prove.

\begin{proposition}\label{prop-Fil-1-dim-semi-2}
Let $x \in \overline{I}$ and $H_0 \subset H_w(I)$ be a closed subspace which is invariant for the semigroup $(S_t)_{t \geq 0}$ such that $h(x) = 0$ for each $h \in H_0$. Then $(S_t)_{t \geq 0}$ is a $C_0$-semigroup of contractions on $H_0$ with respect to the norm $\| \cdot \|_{w,x}$.
\end{proposition}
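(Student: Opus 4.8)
The plan is to verify separately the two assertions contained in the statement: first, that the restriction of $(S_t)_{t \geq 0}$ to $H_0$ is again a $C_0$-semigroup, and second, that this restriction is contractive with respect to the norm $\| \cdot \|_{w,x}$.

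For the first assertion I would simply invoke Proposition \ref{prop-Fil-1-dim-semi}, which guarantees that $(S_t)_{t \geq 0}$ is a $C_0$-semigroup on $H_w(I)$. Since $H_0$ is a closed subspace that is invariant for $(S_t)_{t \geq 0}$ by hypothesis, the restricted family $(S_t|_{H_0})_{t \geq 0}$ inherits the semigroup identities and the strong continuity, and therefore is a $C_0$-semigroup on $H_0$. By Proposition \ref{prop-direct-sum-1} the norm $\| \cdot \|_{w,x}$ is equivalent to $\| \cdot \|_{w,x_0}$, hence induces the same topology on $H_0$; consequently both the strong continuity and the semigroup structure are unaffected by passing from $\| \cdot \|_{w,x_0}$ to $\| \cdot \|_{w,x}$.

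The crux of the proof is the contraction estimate. Fixing $h \in H_0$, I would first note that every element of $H_0$ vanishes at $x$, so that $h(x) = 0$ and, by invariance of $H_0$, also $(S_t h)(x) = 0$. The point-evaluation terms in the definition of $\| \cdot \|_{w,x}$ therefore drop out, leaving
\[
\| S_t h \|_{w,x}^2 = \int_I |(S_t h)'(\xi)|^2 w(\xi)\, d\xi = \int_I |h'(t+\xi)|^2 w(\xi)\, d\xi.
\]
Substituting $\eta = t + \xi$ and using $t + I \subseteq I$ (valid since $\sup I = \infty$, as assumed when the translation semigroup is introduced), together with the monotonicity of $w$, which yields $w(\eta - t) \leq w(\eta)$ for $t \geq 0$, I obtain
\[
\| S_t h \|_{w,x}^2 = \int_{t+I} |h'(\eta)|^2 w(\eta - t)\, d\eta \leq \int_{t+I} |h'(\eta)|^2 w(\eta)\, d\eta \leq \int_I |h'(\eta)|^2 w(\eta)\, d\eta = \| h \|_{w,x}^2,
\]
where the final inequality uses $t + I \subseteq I$ and nonnegativity of the integrand. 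This gives $\| S_t h \|_{w,x} \leq \| h \|_{w,x}$, which is the desired contractivity.

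I do not anticipate a genuine obstacle here; the proof is essentially a change of variables. The two points that require care are the vanishing of the boundary term, which relies precisely on the hypothesis $h(x) = 0$ combined with the invariance of $H_0$, and the direction of the monotonicity estimate, which is exactly where the assumption that $w$ is increasing enters. Without the subspace condition $h(x)=0$ the surviving point-evaluation term would in general spoil the contraction, so it is essential to carry it through the argument.
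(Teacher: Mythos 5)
Your proof is correct and is exactly the "straightforward" argument the paper has in mind (the paper omits the proof): the boundary term vanishes because $S_t h \in H_0$ forces $(S_t h)(x)=0$, and the change of variables combined with $t+I\subseteq I$ and the monotonicity of $w$ gives the contraction bound, while restriction to a closed invariant subspace and the norm equivalence from Proposition \ref{prop-direct-sum-1} handle the $C_0$-semigroup property. No gaps.
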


Now, we will introduce Filipovi\'{c} spaces of two variables. For this purpose, let $I,J \subset \bbr$ be two intervals, and let $w : I \to [1,\infty)$ and $v : J \to [1,\infty)$ be two admissible weight functions. We fix an arbitrary $(x_0,y_0) \in I \times J$ and define the \emph{Filipovi\'{c} space} $H_{w,v}(I \times J)$ as the space of all functions $h : I \times J \to \bbr$ such that the following conditions are fulfilled:
\begin{itemize}
\item For each $y \in J$ the function $h(\cdot,y) : I \to \bbr$ is absolutely continuous. Consequently, for each $y \in J$ the function $h(\cdot,y) : I \to \bbr$ is almost everywhere differentiable, and hence there is a function $h_{\xi}(\cdot,y) : I \to \bbr$ such that
\begin{align*}
\partial_x h(x,y) = h_{\xi}(x,y) \quad \text{for almost every $x \in I$.}
\end{align*}
This gives us a function $h_{\xi} : I \times J \to \bbr$.

\item For each $x \in I$ the function $h_{\xi}(x,\cdot) : J \to \bbr$ is absolutely continuous. Consequently, for each $x \in I$ the function $h_{\xi}(x,\cdot) : J \to \bbr$ is almost everywhere differentiable, and hence there is a function $h_{\eta \xi}(x,\cdot) : J \to \bbr$ such that
\begin{align*}
\partial_{y} h_{\xi}(x,y) = h_{\eta \xi}(x,y) \quad \text{for almost every $y \in J$.}
\end{align*}
This gives us a function $h_{\eta \xi} : I \times J \to \bbr$.

\item For each $x \in I$ the function $h(x,\cdot) : J \to \bbr$ is absolutely continuous. Consequently, for each $x \in I$ the function $h(x,\cdot) : J \to \bbr$ is almost everywhere differentiable, and hence there is a function $h_{\eta}(x,\cdot) : J \to \bbr$ such that
\begin{align*}
\partial_y h(x,y) = h_{\eta}(x,y) \quad \text{for almost every $y \in J$.}
\end{align*}
This gives us a function $h_{\eta} : I \times J \to \bbr$.

\item For each $y \in J$ the function $h_{\eta}(\cdot,y) : I \to \bbr$ is absolutely continuous. Consequently, for each $y \in J$ the function $h_{\eta}(\cdot,y) : I \to \bbr$ is almost everywhere differentiable, and hence there is a function $h_{\xi \eta}(\cdot,y) : I \to \bbr$ such that
\begin{align*}
\partial_x h_{\eta}(x,y) = h_{\xi \eta}(x,y) \quad \text{for almost every $x \in I$.}
\end{align*}
This gives us a function $h_{\xi \eta} : I \times J \to \bbr$.

\item We have $h_{\xi \eta} = h_{\eta \xi}$.

\item We have
\begin{equation}\label{norm-Fil-2}
\begin{aligned}
\| h \|_{(w,v),(x_0,y_0)} &:= \bigg( |h(x_0,y_0)|^2 + \int_I |h_{\xi}(x,y_0)|^2 w(x) dx + \int_J |h_{\eta}(x_0,y)|^2 v(y) dy
\\ &\qquad + \int_{I \times J} |h_{\xi \eta}(x,y)|^2 w(x) v(y) dx dy \bigg)^{1/2} < \infty.
\end{aligned}
\end{equation}
\end{itemize}

\begin{remark}
For each $h \in C^2(I \times J)$ with $\| h \|_{(w,v),(x_0,y_0)} < \infty$ we have $h \in H_{w,v}(I \times J)$ with $h_{\xi} = \partial_x h$, $h_{\eta} = \partial_y h$ and $h_{\xi \eta} = \partial_{xy} h$.
\end{remark}

\begin{proposition}
The linear mapping
\begin{align*}
T : \bbr \times L^2(I) \times L^2(J) \times L^2(I \times J) \to H_{w,v}(I \times J)
\end{align*}
given by
\begin{align*}
T(a,f_1,f_2,g)(x,y) &:= a + \int_{x_0}^{x} f_1(\xi) w^{-\frac{1}{2}}(\xi) d\xi + \int_{y_0}^{y} f_2(\eta) v^{-\frac{1}{2}}(\eta) d\eta
\\ &\qquad + \int_{x_0}^{x} \int_{y_0}^{y} g(\xi,\eta) w^{-\frac{1}{2}}(\xi) v^{-\frac{1}{2}}(\eta) d\eta d\xi
\end{align*}
is an isometric isomorphism with inverse
\begin{align}\label{inverse-Fil-2}
T^{-1} h = \big( h(x_0,y_0), h_{\xi}(\cdot,y_0) w^{\frac{1}{2}}, h_{\eta}(x_0,\cdot) v^{\frac{1}{2}}, h_{\xi \eta} w^{\frac{1}{2}} v^{\frac{1}{2}} \big).
\end{align}
\end{proposition}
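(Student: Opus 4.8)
The plan is to verify three things in turn: that $T$ maps into $H_{w,v}(I \times J)$ and that the displayed $T^{-1}$ is a left inverse; that $T$ is an isometry, hence injective; and that $T$ is surjective with $T^{-1}$ as a right inverse. First I would fix $(a,f_1,f_2,g)$ and set $h := T(a,f_1,f_2,g)$. Since $w^{-1} \in L^1(I)$ and $v^{-1} \in L^1(J)$, the Cauchy--Schwarz inequality (as in Proposition \ref{prop-C1-C2-1}) shows $f_1 w^{-1/2} \in L^1(I)$, $f_2 v^{-1/2} \in L^1(J)$ and $g\, w^{-1/2} v^{-1/2} \in L^1(I \times J)$, so all the integrals defining $h$ exist and the resulting functions are absolutely continuous in each variable. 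Applying the fundamental theorem of calculus in each variable then yields
\begin{align*}
h_\xi(x,y) &= f_1(x) w^{-1/2}(x) + \int_{y_0}^y g(x,\eta) w^{-1/2}(x) v^{-1/2}(\eta) d\eta, \\
h_\eta(x,y) &= f_2(y) v^{-1/2}(y) + \int_{x_0}^x g(\xi,y) w^{-1/2}(\xi) v^{-1/2}(y) d\xi, \\
h_{\xi\eta}(x,y) &= h_{\eta\xi}(x,y) = g(x,y) w^{-1/2}(x) v^{-1/2}(y),
\end{align*}
which confirms that $h \in H_{w,v}(I \times J)$ (the mixed partials agree) and, evaluating at $x = x_0$ and $y = y_0$, gives $h(x_0,y_0) = a$, $h_\xi(\cdot,y_0) w^{1/2} = f_1$, $h_\eta(x_0,\cdot) v^{1/2} = f_2$ and $h_{\xi\eta} w^{1/2} v^{1/2} = g$; that is $T^{-1} \circ T = \Id$.

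Second, substituting these identities into the norm (\ref{norm-Fil-2}), each weight cancels against the corresponding factor $w^{-1/2}$ or $v^{-1/2}$: for instance $\int_I |h_\xi(x,y_0)|^2 w(x) dx = \int_I |f_1(x)|^2 dx$, and analogously for the remaining three terms. Summing shows $\| h \|_{(w,v),(x_0,y_0)}^2 = |a|^2 + \| f_1 \|_{L^2(I)}^2 + \| f_2 \|_{L^2(J)}^2 + \| g \|_{L^2(I \times J)}^2$, so $T$ is an isometry, and in particular injective.

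Finally, for surjectivity I would fix $h \in H_{w,v}(I \times J)$ and note that its preimage candidate $(a,f_1,f_2,g) := T^{-1}h$ lies in the domain, since each component is square-integrable precisely because the corresponding term of (\ref{norm-Fil-2}) is finite. It then remains to verify $T(T^{-1}h) = h$, which reduces to the two-dimensional fundamental theorem of calculus
\begin{align*}
h(x,y) = h(x_0,y_0) + \int_{x_0}^x h_\xi(\xi,y_0) d\xi + \int_{y_0}^y h_\eta(x_0,\eta) d\eta + \int_{x_0}^x \int_{y_0}^y h_{\xi\eta}(\xi,\eta) d\eta d\xi.
\end{align*}
I would establish this by iterating the one-variable statements built into the definition of the space: from absolute continuity of $h(\cdot,y)$ one gets $h(x,y) = h(x_0,y) + \int_{x_0}^x h_\xi(\xi,y) d\xi$, and then the expansions $h(x_0,y) = h(x_0,y_0) + \int_{y_0}^y h_\eta(x_0,\eta) d\eta$ and $h_\xi(\xi,y) = h_\xi(\xi,y_0) + \int_{y_0}^y h_{\eta\xi}(\xi,\eta) d\eta$, invoking the symmetry $h_{\eta\xi} = h_{\xi\eta}$ imposed in the definition. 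The main obstacle is this last step: justifying the interchange of integration (Fubini) in the resulting double integral and confirming that the symmetry of the mixed partials is legitimately available, both of which rest on the integrability afforded by $w^{-1} \in L^1(I)$ and $v^{-1} \in L^1(J)$. Once the representation is in hand it coincides termwise with $T(T^{-1}h)$, so $T$ is a bijective isometry, completing the proof.
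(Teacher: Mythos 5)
Your proposal is correct and follows essentially the same route as the paper: compute $h_\xi$, $h_\eta$, $h_{\xi\eta}$ explicitly for $h = T(a,f_1,f_2,g)$, read off membership in $H_{w,v}(I\times J)$ and the isometry from the norm (\ref{norm-Fil-2}), and identify the inverse. The only difference is that you spell out the surjectivity step (the two-dimensional fundamental theorem of calculus obtained by iterating the one-variable absolute-continuity conditions and using $h_{\xi\eta}=h_{\eta\xi}$), which the paper dismisses as ``straightforward to check''; your elaboration is sound.
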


\begin{proof}
Let $(a,f_1,f_2,g) \in \bbr \times L^2(I) \times L^2(J) \times L^2(I \times J)$ be arbitrary and define the function $h := T(a,f_1,f_2,g)$. Then the conditions above are satisfied with $h_{\xi}$, $h_{\eta}$ and $h_{\xi \eta}$ given by
\begin{align*}
h_{\xi}(x,y) &= f_1(x) w^{-\frac{1}{2}}(x) + w^{-\frac{1}{2}}(x) \int_{y_0}^{y} g(x,\eta) v^{-\frac{1}{2}}(\eta) d\eta,
\\ h_{\eta}(x,y) &= f_2(y) v^{-\frac{1}{2}}(y) + v^{-\frac{1}{2}}(y) \int_{x_0}^x g(\xi,y) w^{-\frac{1}{2}}(\xi) d\xi,
\\ h_{\xi \eta}(x,y) &= g(x,y) w^{-\frac{1}{2}}(x) v^{-\frac{1}{2}}(y).
\end{align*}
In particular, we have
\begin{align*}
h_{\xi}(x,y_0) &= f_1(x) w^{-\frac{1}{2}}(x),
\\  h_{\eta}(x_0,y) &= f_2(y) v^{-\frac{1}{2}}(y).
\end{align*}
Hence, we have $h \in H_{w,v}(I \times J)$, and $T$ is an isometry. Moreover, it is straightforward to check that its inverse $T^{-1}$ is given by (\ref{inverse-Fil-2}).
\end{proof}

Consequently, the Filipovi\'{c} space $H_{w,v}(I \times J)$ is a separable Hilbert space.

\begin{proposition}\label{prop-Fil-C1-C2}
The following statements are true:
\begin{enumerate}
\item For each $h \in H_{w,v}(I \times J)$ we have the estimates
\begin{align*}
\| h_{\xi}(\cdot,y_0) \|_{L^1(I)} &\leq \| h \|_{(w,v),(x_0,y_0)} \| w^{-1} \|_{L^1(I)}^{\frac{1}{2}},
\\ \| h_{\eta}(x_0,\cdot) \|_{L^1(J)} &\leq \| h \|_{(w,v),(x_0,y_0)} \| v^{-1} \|_{L^1(J)}^{\frac{1}{2}},
\\ \| h_{\xi \eta} \|_{L^1(I \times J)} &\leq \| h \|_{(w,v),(x_0,y_0)} \| w^{-1} \|_{L^1(I)}^{\frac{1}{2}} \| v^{-1} \|_{L^1(J)}^{\frac{1}{2}}.
\end{align*}
\item There is a constant $C > 0$ such that
\begin{align*}
\| h \|_{L^{\infty}(I \times J)} \leq C \| h \|_{(w,v),(x_0,y_0)}, \quad h \in H_{w,v}(I \times J).
\end{align*}
\end{enumerate}
\end{proposition}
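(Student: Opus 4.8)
The plan is to prove both parts by combining Cauchy--Schwarz inequalities with the representation of $h$ provided by the isometric isomorphism $T$ from the preceding proposition, mirroring the one-dimensional argument behind Proposition \ref{prop-C1-C2-1}.

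For part (1), I would treat each of the three estimates by inserting the appropriate weight and splitting via Cauchy--Schwarz. For the first, I write $|h_{\xi}(x,y_0)| = \big( |h_{\xi}(x,y_0)| w(x)^{1/2} \big) \cdot w(x)^{-1/2}$ and apply the Cauchy--Schwarz inequality in $L^2(I)$ to obtain
\begin{align*}
\| h_{\xi}(\cdot,y_0) \|_{L^1(I)} \leq \bigg( \int_I |h_{\xi}(x,y_0)|^2 w(x)\, dx \bigg)^{1/2} \bigg( \int_I w(x)^{-1}\, dx \bigg)^{1/2},
\end{align*}
and the first factor is bounded by $\| h \|_{(w,v),(x_0,y_0)}$ directly from the definition (\ref{norm-Fil-2}) of the norm, which gives the claimed bound. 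The second estimate is entirely symmetric, exchanging the roles of $(I,w,x_0)$ and $(J,v,y_0)$. For the third, I split $|h_{\xi\eta}(x,y)| = \big( |h_{\xi\eta}(x,y)| w(x)^{1/2} v(y)^{1/2} \big) \cdot w(x)^{-1/2} v(y)^{-1/2}$ and apply Cauchy--Schwarz in $L^2(I \times J)$; again the first factor is controlled by $\| h \|_{(w,v),(x_0,y_0)}$, and the second factor factorizes by Fubini's theorem as $\| w^{-1} \|_{L^1(I)}^{1/2} \| v^{-1} \|_{L^1(J)}^{1/2}$.

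For part (2), the key ingredient is the pointwise representation
\begin{align*}
h(x,y) = h(x_0,y_0) + \int_{x_0}^{x} h_{\xi}(\xi,y_0)\, d\xi + \int_{y_0}^{y} h_{\eta}(x_0,\eta)\, d\eta + \int_{x_0}^{x} \int_{y_0}^{y} h_{\xi\eta}(\xi,\eta)\, d\eta\, d\xi,
\end{align*}
which I would obtain at no extra cost by applying $h = T(T^{-1}h)$ and reading off the inverse formula (\ref{inverse-Fil-2}) together with the definition of $T$. From this I bound $|h(x,y)|$ by the triangle inequality by $|h(x_0,y_0)| + \| h_{\xi}(\cdot,y_0) \|_{L^1(I)} + \| h_{\eta}(x_0,\cdot) \|_{L^1(J)} + \| h_{\xi\eta} \|_{L^1(I \times J)}$, estimate $|h(x_0,y_0)| \leq \| h \|_{(w,v),(x_0,y_0)}$, invoke the three bounds from part (1), and take the supremum over $(x,y) \in I \times J$. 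This yields the claim with the explicit constant $C = 1 + \| w^{-1} \|_{L^1(I)}^{1/2} + \| v^{-1} \|_{L^1(J)}^{1/2} + \| w^{-1} \|_{L^1(I)}^{1/2} \| v^{-1} \|_{L^1(J)}^{1/2}$, which is finite precisely because $w$ and $v$ are admissible weight functions.

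I do not expect a genuine obstacle here, since the estimates are routine consequences of Cauchy--Schwarz. The only point requiring minor care is the justification of the two-dimensional representation formula, but this is sidestepped entirely by deriving it from the already established isometric isomorphism $T$ rather than from a separate application of the fundamental theorem of calculus, so the absolute continuity conditions in the definition of $H_{w,v}(I \times J)$ need not be revisited.
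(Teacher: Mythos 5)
Your proof is correct. Part (1) is exactly the paper's argument: Cauchy--Schwarz after inserting $w^{1/2}$, $v^{1/2}$, with the first factor dominated by $\| h \|_{(w,v),(x_0,y_0)}$ from the definition (\ref{norm-Fil-2}). For part (2) you and the paper both reduce to the same pointwise representation formula and then apply the triangle inequality together with part (1); the only difference is how that formula is obtained. The paper derives it from scratch by telescoping, $h(x,y)-h(x_0,y_0) = (h(x,y)-h(x_0,y)) + (h(x_0,y)-h(x_0,y_0))$, and then using the absolute continuity of $h$, $h_\xi(\cdot,y)$ in the appropriate variables plus the hypothesis $h_{\xi\eta}=h_{\eta\xi}$ to convert each increment into an integral. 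You instead read the formula off from $h = T(T^{-1}h)$ using the isometric isomorphism of the preceding proposition and the explicit inverse (\ref{inverse-Fil-2}). That is legitimate, since you are entitled to cite the stated proposition; but note that the surjectivity of $T$ (equivalently, $T\circ T^{-1}=\Id$) is precisely the representation formula in disguise, and the paper only asserts that this is ``straightforward to check.'' So your route trades the paper's explicit two-line computation for a dependence on a step the paper left implicit; the paper's version is the more self-contained of the two, while yours makes the structural role of the isomorphism more visible and also produces an explicit admissible constant $C$.
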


\begin{proof}
The first statement is a consequence of the Cauchy Schwarz inequality (cf. the arguments from \cite[p. 77]{fillnm}). For the proof of the second statement, let $(x,y) \in I \times J$ be arbitrary. Then we have
\begin{align*}
h(x,y) - h(x_0,y_0) &= h(x,y) - h(x_0,y) + h(x_0,y) - h(x_0,y_0)
\\ &= \int_{x_0}^{x} h_{\xi}(\xi,y) d\xi + \int_{y_0}^{y} h_{\eta}(x_0,\eta) d\eta
\\ &= \int_{x_0}^{x} \big( h_{\xi}(\xi,y) - h_{\xi}(\xi,y_0) + h_{\xi}(\xi,y_0) \big) d\xi + \int_{y_0}^{y} h_{\eta}(x_0,\eta) d\eta
\\ &= \int_{x_0}^{x} \int_{y_0}^{y} h_{\xi \eta}(\xi,\eta) d\eta d\xi + \int_{x_0}^{x} h_{\xi}(\xi,y_0) d\xi + \int_{y_0}^{y} h_{\eta}(x_0,\eta) d\eta.
\end{align*}
By virtue of the first statement, this completes the proof.
\end{proof}

Now, we define $\overline{I}, \overline{J} \subset [-\infty,\infty]$ according to (\ref{I-closure}). In the following result, the spaces $H_w^x(I)$ and $H_v^y(J)$ are the subspaces of the one-dimensional Filipovi\'{c} spaces defined according to (\ref{H-w-x}), and may thus be considered as subspaces of the two-dimensional Filipovi\'{c} space $H_{w,v}(I \times J)$. The proof is an immediate consequence of Proposition \ref{prop-Fil-C1-C2}.

\begin{proposition}\label{prop-Fil-2-dim-decomp}
For each $(x,y) \in \overline{I} \times \overline{J}$ the following statements are true:
\begin{enumerate}
\item The limit $h(x,y) := \lim_{\genfrac{}{}{0pt}{}{(\xi,\eta) \to (x,y)}{(\xi,\eta) \in I \times J}}h(x,y)$ exists for each $h \in H_{w,v}(I \times J)$.

\item There exists $\delta_{(x,y)} \in H_{w,v}(I \times J)$ such that
\begin{align*}
h(x,y) = \la \delta_{(x,y)},h \ra_{(w,v),(x,y)}, \quad h \in H_{w,v}(I \times J).
\end{align*}
\item The norms $\| h \|_{(w,v),(x_0,y_0)}$ and $\| h \|_{(w,v),(x,y)}$ defined according to (\ref{norm-Fil-2}) are equivalent.

\item The subspaces $\bbr$, $H_w^x(I)$, $H_v^y(J)$ and
\begin{align*}
H_{w,v}^{(x,y)}(I \times J) := \{ h \in H_{w,v} : h(x,y) = 0, h_{\xi}(\cdot,y) = 0, h_{\eta}(x,\cdot) = 0 \}
\end{align*}
are closed subspaces which are mutual orthogonal with respect to the norm $\| \cdot \|_{(w,v),(x,y)}$.

\item We have the direct sum decomposition
\begin{align*}
H_{w,v}(I \times J) = \bbr \oplus H_w^x(I) \oplus H_v^y(J) \oplus H_{w,v}^{(x,y)}(I \times J).
\end{align*}
\end{enumerate}
\end{proposition}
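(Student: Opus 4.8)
The plan is to follow the one-dimensional argument behind Proposition~\ref{prop-direct-sum-1} essentially verbatim, feeding in the Sobolev-type estimates of Proposition~\ref{prop-Fil-C1-C2} as the only analytic ingredient. The workhorse throughout is the identity
\[
h(x,y) - h(x_0,y_0) = \int_{x_0}^{x} \int_{y_0}^{y} h_{\xi\eta}(\xi,\eta)\,d\eta\,d\xi + \int_{x_0}^{x} h_{\xi}(\xi,y_0)\,d\xi + \int_{y_0}^{y} h_{\eta}(x_0,\eta)\,d\eta
\]
derived in the proof of Proposition~\ref{prop-Fil-C1-C2}, combined with the fact, recorded in part~(1) of that proposition, that $h_{\xi\eta} \in L^1(I \times J)$, $h_{\xi}(\cdot,y_0) \in L^1(I)$ and $h_{\eta}(x_0,\cdot) \in L^1(J)$.

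First I would prove statement (1). Since the three integrands above are integrable, each integral converges when its upper limits tend to $x \in \overline{I}$ and $y \in \overline{J}$ — including the cases $x=\pm\infty$ or $y = \pm\infty$ — by absolute continuity of the Lebesgue integral; hence the limit defining $h(x,y)$ exists. The same reasoning applied to the one-variable slices shows that $h_{\xi}(\xi,\cdot) \in L^1(J)$ and $h_{\eta}(\cdot,\eta) \in L^1(I)$ for almost every fixed variable, so that $h_{\xi}(\cdot,y)$ and $h_{\eta}(x,\cdot)$ are well defined almost everywhere and the norm $\|\cdot\|_{(w,v),(x,y)}$ from (\ref{norm-Fil-2}) is meaningful.

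The main work is statement (3), which I expect to be the crux. To bound $\|h\|_{(w,v),(x,y)}$ by $\|h\|_{(w,v),(x_0,y_0)}$, the point-value term is handled by Proposition~\ref{prop-Fil-C1-C2}(2); for the first-derivative terms I would insert $h_{\xi}(\xi,y) = h_{\xi}(\xi,y_0) + \int_{y_0}^{y} h_{\xi\eta}(\xi,\zeta)\,d\zeta$ and estimate, via Cauchy--Schwarz together with $v^{-1}\in L^1(J)$,
\[
\int_I |h_{\xi}(\xi,y)|^2 w(\xi)\,d\xi \leq 2\int_I |h_{\xi}(\xi,y_0)|^2 w(\xi)\,d\xi + 2\|v^{-1}\|_{L^1(J)} \int_{I\times J} |h_{\xi\eta}|^2\, w v,
\]
and symmetrically for the $h_{\eta}$ term, while the mixed term $\int_{I\times J}|h_{\xi\eta}|^2 wv$ is shared by both norms. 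Interchanging the roles of $(x,y)$ and $(x_0,y_0)$ in the identical computation (which is licit by statement (1)) gives the reverse inequality, proving equivalence. Statement (2) then follows at once from the Riesz representation theorem, since (1), (3) and Proposition~\ref{prop-Fil-C1-C2}(2) together show that $h \mapsto h(x,y)$ is a bounded linear functional on $(H_{w,v}(I\times J), \|\cdot\|_{(w,v),(x,y)})$.

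Finally, statements (4) and (5) are a bookkeeping consequence of the structure of $\|\cdot\|_{(w,v),(x,y)}^2$, whose four summands are governed by four mutually orthogonal coordinates. Embedding $\bbr$ as the constants, $H_w^x(I)$ as functions of $\xi$ alone and $H_v^y(J)$ as functions of $\eta$ alone, each subspace activates exactly one of the four terms in (\ref{norm-Fil-2}): constants the point value, $H_w^x(I)$ the term $h_{\xi}(\cdot,y)$, $H_v^y(J)$ the term $h_{\eta}(x,\cdot)$, and $H_{w,v}^{(x,y)}(I\times J)$ the term $h_{\xi\eta}$. Mutual orthogonality with respect to $\langle\cdot,\cdot\rangle_{(w,v),(x,y)}$ is then immediate, and closedness follows since each subspace is the kernel of a subfamily of the bounded functionals occurring in the norm. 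For the decomposition I would use the explicit splitting
\[
h(\xi,\eta) = h(x,y) + \big( h(\xi,y) - h(x,y) \big) + \big( h(x,\eta) - h(x,y) \big) + \big( h(\xi,\eta) - h(\xi,y) - h(x,\eta) + h(x,y) \big),
\]
verifying that the second summand lies in $H_w^x(I)$, the third in $H_v^y(J)$, and that the last summand vanishes at $(x,y)$ and has vanishing $\xi$-derivative along $\eta=y$ and vanishing $\eta$-derivative along $\xi=x$, hence lies in $H_{w,v}^{(x,y)}(I\times J)$; combined with orthogonality this yields the claimed orthogonal direct sum.
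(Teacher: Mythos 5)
Your proposal is correct and follows exactly the route the paper intends: the paper dismisses this proposition as ``an immediate consequence of Proposition \ref{prop-Fil-C1-C2}'', and your argument simply fleshes out that reduction, using the integral identity and $L^1$-bounds from Proposition \ref{prop-Fil-C1-C2} for (1)--(3) and the natural four-fold splitting of $h$ for (4)--(5). The details you supply (the Cauchy--Schwarz estimate for the norm equivalence, the verification that the fourth summand lies in $H_{w,v}^{(x,y)}(I\times J)$) are all sound.
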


Now, we assume that $\infty \in \overline{I}$. Then we can introduce the translation semigroup $(S_t)_{t \geq 0}$ given by
\begin{align*}
S_t h(x,y) := h(x+t,y), \quad t \geq 0
\end{align*}
for each $h \in H_{w,v}(I \times J)$. With similar techniques as in the one-dimensional case, we establish the following two results.

\begin{proposition}\label{prop-Fil-2-dim-semi}
$(S_t)_{t \geq 0}$ is a pseudo-contractive $C_0$-semigroup on $H_{w,v}(I \times J)$.
\end{proposition}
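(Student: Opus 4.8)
The plan is to verify the three defining properties separately, following the one-dimensional template of Proposition \ref{prop-Fil-1-dim-semi} (which itself adapts the arguments of \cite{fillnm} and \cite{Benth-Kruehner}). The guiding observation is that $(S_t)_{t \geq 0}$ translates only the first variable, so that the second variable $y$ and the weight $v$ play a purely passive role. Formally, translation in $x$ commutes with both $\partial_x$ and $\partial_y$, whence for $h \in H_{w,v}(I \times J)$ the translated function $S_t h$ again satisfies all the defining conditions (absolute continuity of the relevant one-variable sections and the identity $h_{\xi \eta} = h_{\eta \xi}$), with $(S_t h)_{\xi}(x,y) = h_{\xi}(x+t,y)$, $(S_t h)_{\eta}(x,y) = h_{\eta}(x+t,y)$ and $(S_t h)_{\xi \eta}(x,y) = h_{\xi \eta}(x+t,y)$. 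The semigroup law $S_{s+t} = S_s S_t$ and $S_0 = \Id$ are immediate, so the substance lies in the growth bound and the strong continuity.

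For strong continuity I would transport the problem to the product space $\bbr \times L^2(I) \times L^2(J) \times L^2(I \times J)$ via the isometric isomorphism $T$ of the preceding proposition. A short computation shows that the conjugated semigroup $T^{-1} S_t T$ acts on the four coordinates through the weighted shift $(\tau_t f)(x) := f(x+t) \sqrt{w(x)/w(x+t)}$ in the $x$-variable, together with two coupling integral terms of the form $\int_{x_0}^{x_0+t} (\cdot)\, w^{-1/2}\, d\xi$. Since $w$ is increasing one has $w(x)/w(x+t) \leq 1$, so each $\tau_t$ is a contraction on the respective $L^2$-space; the convergence $\tau_t \to \Id$ then follows from the continuity of translations in $L^2$ on a dense set of smooth functions, and the coupling terms vanish as $t \downarrow 0$ because $\int_{x_0}^{x_0+t} w^{-1}\, dx \to 0$. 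This yields $S_t h \to h$ for every $h$, and in passing the uniform bound $\sup_{t \geq 0} \| S_t \| < \infty$.

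The genuinely delicate point, and the \emph{main obstacle}, is pseudo-contractivity in the sense of Definition \ref{def-pseudo-contractive}, i.e. the estimate $\| S_t \| \leq e^{\beta t}$ with constant one. A naive norm estimate is not enough here: bounding $\| S_t h \|_{(w,v),(x_0,y_0)}$ directly only produces a factor of the form $1 + O(\sqrt{t})$ near $t = 0$, which exceeds every $e^{\beta t}$. I would therefore argue at the level of the generator $A = \partial_x$ and show that $A - \beta$ is dissipative for a suitable $\beta \geq 0$. Computing $\la Ah, h \ra_{(w,v),(x_0,y_0)}$ from the explicit inner product and integrating by parts in $x$ in the two terms carrying a $w$-weight, the monotonicity of $w$ makes the interior contributions $-\tfrac12 \int_I |h_{\xi}(x,y_0)|^2 w'\, dx$ and $-\tfrac12 \int_{I \times J} |h_{\xi \eta}|^2 w' v$ nonpositive, while the remaining boundary terms at the lower end combine with the point- and section-evaluation terms $h_{\xi}(x_0,y_0) h(x_0,y_0)$ and $\int_J h_{\xi \eta}(x_0,y) h_{\eta}(x_0,y) v\, dy$, which are controlled by Young's inequality using $w, v \geq 1$ exactly as in the one-dimensional computation. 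This gives $\la Ah, h \ra \leq \beta \| h \|^2$, and Grönwall's inequality (equivalently, the Lumer--Phillips estimate) upgrades it to $\| S_t h \| \leq e^{\beta t} \| h \|$.

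The technical heart of this last step is the boundary analysis at infinity together with the bookkeeping of the two-variable integration by parts: one must justify that $|h_{\xi}(x,y_0)|^2 w(x)$ and $|h_{\xi \eta}(x,y)|^2 w(x) v(y)$ vanish as $x \to \sup I = \infty$ for $h$ in the domain of $A$, arguing as in one dimension that a section with finite weighted derivative norm and $w^{-1} \in L^1$ must have vanishing limit. One also reduces the general base point $(x_0,y_0)$ to the lower endpoint of $I$ by the norm equivalence of Proposition \ref{prop-Fil-2-dim-decomp} (in the relevant case $I = \bbr_+$; the two-sided case $I = \bbr$ is analogous, with boundary terms vanishing at both ends). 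Once these points are settled, the estimate is a routine variant of the one-dimensional argument, and pseudo-contractivity follows.
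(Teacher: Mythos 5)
Your overall architecture is sound: the semigroup law is immediate, and transporting strong continuity through the isometry $T$ (under which $S_t$ becomes weighted shifts on the second and fourth coordinates plus two coupling integrals feeding into the first and third) is exactly the intended adaptation of the one-dimensional argument from \cite[p.~78--79]{fillnm} and \cite[Lemma 3.5]{Benth-Kruehner} that the paper invokes for Proposition \ref{prop-Fil-1-dim-semi}. The problem is the pivot you make for pseudo-contractivity. Your claim that the direct norm estimate ``only produces a factor of the form $1+O(\sqrt{t})$'' is false, and the direct estimate is precisely what the cited one-dimensional proofs do. The point you are missing is that the shifted weighted integrals do not merely stay below the original ones; they lose exactly the mass over $[x_0,x_0+t]$, e.g.
\begin{align*}
\int_I |h_{\xi}(x+t,y_0)|^2 w(x)\,dx \;\leq\; \int_I |h_{\xi}(x,y_0)|^2 w(x)\,dx \;-\; \int_{x_0}^{x_0+t} |h_{\xi}(x,y_0)|^2 w(x)\,dx,
\end{align*}
and this deficit $D^2 := \int_{x_0}^{x_0+t}|h_\xi(\cdot,y_0)|^2 w$ absorbs the offending cross term: writing $c(t)^2 := \int_{x_0}^{x_0+t}w^{-1} \leq t$, Cauchy--Schwarz gives $|h(x_0+t,y_0)|^2 \leq |h(x_0,y_0)|^2 + 2|h(x_0,y_0)|\,c(t)D + c(t)^2D^2$, and maximizing $2|h(x_0,y_0)|c(t)D - (1-c(t)^2)D^2$ over $D$ yields the bound $\tfrac{c(t)^2}{1-c(t)^2}|h(x_0,y_0)|^2$, so that $\|S_t h\|^2 \leq (1-t)^{-1}\|h\|^2 \leq e^{2t}\|h\|^2$ for small $t$, and $\|S_t\|\leq e^{\beta t}$ for all $t$ by the semigroup property. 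The identical bookkeeping applied to the coordinate pairs (first, second) and (third, fourth) of $T^{-1}S_tT$ settles the two-dimensional case; the $O(\sqrt t)$ you feared never survives.

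Your substitute argument via dissipativity of the generator could in principle be completed, but as sketched it has genuine gaps under the paper's hypotheses. The admissible weights $w,v$ are only assumed continuous and increasing, so ``$w'$'' need not exist as an integrable density and your interior terms $-\tfrac12\int |h_\xi|^2 w'\,dx$ must be re-derived against the Lebesgue--Stieltjes measure $dw$ (an increasing continuous function may be singular, so the integration-by-parts identity you use is not automatic). Moreover, the Gr\"onwall step requires identifying $\cald(A)$, or at least exhibiting a semigroup-invariant core on which $h_{\xi\xi}$ and $h_{\xi\xi\eta}$ exist and on which the boundary terms at $\sup I=\infty$ vanish; you flag this as ``the technical heart'' but do not carry it out. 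Since the direct estimate already delivers the constant-one bound of Definition \ref{def-pseudo-contractive}, this machinery is an unnecessary detour resting on an incorrect premise, and I would replace it by the absorption argument above.
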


\begin{proposition}\label{prop-Fil-2-dim-semi-2}
Let $(x,y) \in \overline{I} \times \overline{J}$ and $H_0 \subset H_{w,v}(I \times J)$ be a closed subspace which is invariant for the semigroup $(S_t)_{t \geq 0}$ such that
$h(x,y) = 0$ for each $h \in H_0$. Then $(S_t)_{t \geq 0}$ is a $C_0$-semigroup of contractions on $H_0$ with respect to the norm $\| \cdot \|_{(w,v),(x,y)}$.
\end{proposition}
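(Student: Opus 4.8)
The plan is to follow the proof of the one-dimensional Proposition~\ref{prop-Fil-1-dim-semi-2}, isolating the single new term that the two-dimensional norm~(\ref{norm-Fil-2}) contributes. First I would record the structural facts that transfer for free. By Proposition~\ref{prop-Fil-2-dim-semi} the family $(S_t)_{t \geq 0}$ is already a pseudo-contractive $C_0$-semigroup on $H_{w,v}(I \times J)$; since $H_0$ is closed and invariant, the restriction is again a $C_0$-semigroup on $H_0$, and since the norms $\| \cdot \|_{(w,v),(x_0,y_0)}$ and $\| \cdot \|_{(w,v),(x,y)}$ are equivalent by Proposition~\ref{prop-Fil-2-dim-decomp}(3), both the semigroup law and strong continuity are unaffected by passing to the base point $(x,y)$. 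Thus the entire content of the statement is the contraction bound $\| S_t h \|_{(w,v),(x,y)} \leq \| h \|_{(w,v),(x,y)}$ for $h \in H_0$ and $t \geq 0$, which I would attack term by term in the four summands of~(\ref{norm-Fil-2}) taken at the base point $(x,y)$.

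Using $(S_t h)(\xi,\eta) = h(\xi+t,\eta)$, hence $(S_t h)_{\xi} = h_{\xi}(\cdot+t,\cdot)$, $(S_t h)_{\eta} = h_{\eta}(\cdot+t,\cdot)$ and $(S_t h)_{\xi\eta} = h_{\xi\eta}(\cdot+t,\cdot)$, I would write
\begin{align*}
\| S_t h \|_{(w,v),(x,y)}^2 &= |h(x+t,y)|^2 + \int_I |h_{\xi}(\xi+t,y)|^2 w(\xi)\,d\xi
\\ &\quad + \int_J |h_{\eta}(x+t,\eta)|^2 v(\eta)\,d\eta + \int_{I \times J} |h_{\xi\eta}(\xi+t,\eta)|^2 w(\xi) v(\eta)\,d\xi\,d\eta.
\end{align*}
The point term vanishes: invariance gives $S_t h \in H_0$, so $(S_t h)(x,y) = h(x+t,y) = 0$, exactly as in one dimension. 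The first integral term and the mixed term both contract by the translation argument already available from $w$ increasing with $w^{-1} \in L^1(I)$: substituting $u = \xi+t$, using $w(u-t) \leq w(u)$ and $I + t \subseteq I$ yields $\int_I |h_{\xi}(\xi+t,y)|^2 w(\xi)\,d\xi \leq \int_I |h_{\xi}(\xi,y)|^2 w(\xi)\,d\xi$, and the same estimate applied fibrewise in $\eta$ and then integrated against $v(\eta)\,d\eta$ controls the mixed term. So three of the four summands behave precisely as in the one-dimensional case.

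The main obstacle is the remaining middle term $\int_J |h_{\eta}(x+t,\eta)|^2 v(\eta)\,d\eta$: here the shift acts only in the first variable while the integration is in the second, so the monotone-weight substitution does not apply, and one must instead show directly that it does not exceed $\int_J |h_{\eta}(x,\eta)|^2 v(\eta)\,d\eta$. This is the step that must genuinely use the invariance of $H_0$, not merely the vanishing at $(x,y)$. The cleanest route I can see is to pull everything back through the isometric isomorphism $T$ (built at the base point $(x,y)$, with $T^{-1}h = (h(x,y), h_{\xi}(\cdot,y) w^{1/2}, h_{\eta}(x,\cdot) v^{1/2}, h_{\xi\eta} w^{1/2} v^{1/2})$), compute the induced action of $S_t$ in these coordinates, and observe that the $L^2(I)$- and $L^2(I \times J)$-blocks are norm-nonincreasing while the $L^2(J)$-block is sent to $h_{\eta}(x,\cdot) v^{1/2} + \int_x^{x+t} h_{\xi\eta}(s,\cdot)\,ds\, v^{1/2}$, i.e.\ it couples to the mixed block through the correction $\int_x^{x+t} h_{\xi\eta}(s,\cdot)\,ds$. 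I would then try to recognize the combined effect on these two blocks, fibre by fibre in $\eta$, as the one-dimensional contraction of Proposition~\ref{prop-Fil-1-dim-semi-2} applied to $\xi \mapsto h_{\eta}(\xi,\eta)$, with invariance supplying the value of this fibre at the shifted point $x+t$. Verifying that this coupling really is norm-nonincreasing — in particular pinning down the sign of the cross term it generates — is the part I expect to require the most care, and where any hidden hypothesis on $(x,y)$ (such as the first coordinate lying at the forward endpoint of the translation, so that $h_{\eta}(x,\cdot)$ is shift-stable) would have to be made explicit.
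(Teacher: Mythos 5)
Your reduction is the right one, and for three of the four summands your argument is exactly what the paper intends: the paper gives no written proof of this proposition (it only asserts that it follows ``with similar techniques as in the one-dimensional case''), so the one-dimensional argument behind Proposition \ref{prop-Fil-1-dim-semi-2} is the only template available, and it does dispose of the point term (by invariance), the $\int_I |h_{\xi}(\cdot+t,y)|^2 w$ term and the mixed term (by the substitution $u=\xi+t$ and $w(u-t)\leq w(u)$). But the gap you flag at the middle term $\int_J |h_{\eta}(x+t,\eta)|^2 v(\eta)\,d\eta$ is genuine, and the route you sketch cannot close it: fibrewise the coupled $L^2(J)$-- and $L^2(I\times J)$--blocks reproduce the one-dimensional quantity $|f(x+t)|^2+\int_I|f'(\xi+t)|^2 w(\xi)\,d\xi=\|S_t f\|_{w,x}^2$ for the fibre $f=h_{\eta}(\cdot,\eta)$, but the hypothesis $h(x,y)=0$ gives no control whatsoever on $h_{\eta}(x,\cdot)$, so these fibres do not lie in an invariant subspace of $H_w(I)$ on which Proposition \ref{prop-Fil-1-dim-semi-2} applies; on all of $H_w(I)$ the translation is only pseudo-contractive, and the cross term you worry about can indeed have the wrong sign.

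In fact the missing estimate fails under the stated hypotheses. Take $I=\bbr_+$, $J=[0,1]$, $v\equiv 1$, $(x,y)=(0,1)$, an admissible weight with $\int_0^2 w(s)\,ds<4$ (for instance $w(s)=e^{\alpha s}$ with $\alpha>0$ small), and $h(\xi,\eta):=(\xi\wedge 2)(1-\eta)$. Then $h(0,1)=0$, $h_{\xi}(\cdot,1)=0$ and $h_{\eta}(0,\cdot)=0$, so $\|h\|_{(w,1),(0,1)}^2=\int_0^2 w(s)\,ds<4$, whereas $S_2h=2(1-\eta)$ gives $\|S_2h\|_{(w,1),(0,1)}^2=\int_0^1|{-2}|^2\,d\eta=4$. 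The closed linear span of $\{S_th:t\geq 0\}$ is a closed invariant subspace all of whose elements vanish at $(0,1)$ (point evaluation is continuous by Proposition \ref{prop-Fil-C1-C2}), and this $h$ even lies in the space $H_0=H_1^1([0,1])\oplus H_{w,1}^{(0,1)}(\bbr_+\times[0,1])$ used for the SPA model. So the proposition cannot be proved from the stated hypotheses; an additional assumption is needed. Your closing remark points at the correct repairs: the argument does go through if $x=\sup I$ (then $h_{\eta}(x+t,\cdot)=h_{\eta}(x,\cdot)$ and the middle term is shift-stable), or if one assumes in addition that $h_{\eta}(x,\cdot)=0$ for every $h\in H_0$ (then invariance forces $h_{\eta}(x+t,\cdot)=(S_th)_{\eta}(x,\cdot)=0$, the middle term vanishes identically on $H_0$, and your treatment of the other three terms completes the proof). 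One of these must be made explicit; as written, both your proof and the statement have a hole at exactly the point you identified.
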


\section{Orthogonal projections in $L^2$-spaces}\label{app-proj-L2}

In this appendix we provide the required results about orthogonal projections in $L^2$-spaces. Let $(X,\calx,\mu)$ be a measure space, and define the Hilbert space $H := L^2(X,\calx,\mu)$.

\begin{lemma}\label{lemma-MT-prob-space}
If $(X,\calx,\mu)$ is a probability space, then for each sub-$\sigma$-algebra $\calg \subset \calx$ the orthogonal projection on $L^2(\calg)$ is given by $\pi_{L^2(\calg)} = \bbe[ \cdot | \calg ]$.
\end{lemma}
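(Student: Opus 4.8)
The plan is to check that the conditional expectation operator $\bbe[\cdot \mid \calg]$ satisfies the two defining properties of the orthogonal projection onto the closed subspace $L^2(\calg) \subset H$; then the conclusion follows from the uniqueness of the orthogonal projection. First I would observe that since $(X,\calx,\mu)$ is a probability space, we have $L^2(X,\calx,\mu) \subset L^1(X,\calx,\mu)$, so that for every $h \in H$ the conditional expectation $\bbe[h \mid \calg]$ is well defined as the $\mu$-almost surely unique $\calg$-measurable integrable function characterized by $\int_A \bbe[h \mid \calg] \, d\mu = \int_A h \, d\mu$ for all $A \in \calg$. Recall that $\pi_{L^2(\calg)} h$ is characterized as the unique element $u \in L^2(\calg)$ with $\la h - u, g \ra = 0$ for all $g \in L^2(\calg)$.

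Next I would verify that $\bbe[h \mid \calg] \in L^2(\calg)$ for each $h \in H$. It is $\calg$-measurable by construction, and by the conditional Jensen inequality applied to the convex function $t \mapsto t^2$ we have $|\bbe[h \mid \calg]|^2 \leq \bbe[|h|^2 \mid \calg]$ $\mu$-almost surely, whence
\begin{align*}
\int_X |\bbe[h \mid \calg]|^2 \, d\mu \leq \int_X \bbe[|h|^2 \mid \calg] \, d\mu = \int_X |h|^2 \, d\mu < \infty,
\end{align*}
using the tower property (total mass preservation) in the last equality. Thus $\bbe[h \mid \calg] \in L^2(\calg)$, and in particular products against elements of $L^2(\calg)$ are integrable by Cauchy--Schwarz.

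It then remains to establish the orthogonality relation $\la h - \bbe[h \mid \calg], g \ra = 0$ for all $g \in L^2(\calg)$. For an indicator $g = \bbI_A$ with $A \in \calg$ this is exactly the defining identity of the conditional expectation, rewritten as $\int_X (h - \bbe[h \mid \calg]) \bbI_A \, d\mu = 0$. By linearity the relation extends to all $\calg$-measurable simple functions, and since such simple functions are dense in $L^2(\calg)$ while the map $g \mapsto \la h - \bbe[h \mid \calg], g \ra$ is continuous on $L^2(\calg)$, it extends to arbitrary $g \in L^2(\calg)$. Combining this with the previous step and the characterization of $\pi_{L^2(\calg)}$ gives $\pi_{L^2(\calg)} h = \bbe[h \mid \calg]$ for every $h \in H$. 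The main point requiring care—rather than genuine difficulty—is the density/continuity extension in the final step together with the integrability needed to make the inner products meaningful; both rest on the inclusion $L^2 \subset L^1$ and the square-integrability established via Jensen, which is precisely where the finiteness of $\mu$ (the probability space hypothesis) is used.
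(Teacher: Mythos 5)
Your proof is correct and complete. The paper itself gives no argument here—it simply cites \cite[Cor. 8.17]{Klenke}—and your verification (conditional Jensen to get $\bbe[h\mid\calg]\in L^2(\calg)$, orthogonality against indicators extended by linearity and density, then uniqueness of the orthogonal projection) is precisely the standard proof of that cited result.
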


\begin{proof}
This follows from \cite[Cor. 8.17]{Klenke}.
\end{proof}

\begin{proposition}\label{prop-MT-prob-space}
Suppose that $(X,\calx,\mu)$ is a probability space. Let $(\calg_n)_{n \in \bbn}$ be a filtration such that $\calx = \sigma(\bigcup_{n \in \bbn} \calg_n)$. Then for each $h \in H$ we have
\begin{align*}
\pi_{L^2(\calg_n)} h \to h \quad \text{in $H$.}
\end{align*}
\end{proposition}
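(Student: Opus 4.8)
The plan is to reduce the statement to the $L^2$-convergence of a suitable martingale and then to identify its limit. By Lemma \ref{lemma-MT-prob-space} the orthogonal projection onto $L^2(\calg_n)$ coincides with the conditional expectation, so that $\pi_{L^2(\calg_n)} h = \bbe[h \mid \calg_n]$ for each $n \in \bbn$. Since $(\calg_n)_{n \in \bbn}$ is a filtration, the tower property shows that the sequence $(M_n)_{n \in \bbn}$ given by $M_n := \bbe[h \mid \calg_n]$ is a martingale relative to $(\calg_n)_{n \in \bbn}$. Moreover, as conditional expectation is a contraction on $L^2$, we have $\| M_n \|_H = \| \bbe[h \mid \calg_n] \|_{L^2} \leq \| h \|_{L^2}$ for each $n \in \bbn$; in particular, the martingale $(M_n)_{n \in \bbn}$ is bounded in $H = L^2$.

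First I would invoke the martingale convergence theorem (see \cite[Thm. 11.10]{Klenke}), exactly as in the proof of Proposition \ref{prop-prob-space-separable}, to obtain an element $g \in H$ with $M_n \to g$ in $H$. It then remains to show that $g = h$ $\mu$-almost everywhere, which is the heart of the argument and the only place where the generating hypothesis $\calx = \sigma(\bigcup_{n \in \bbn} \calg_n)$ is used.

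To identify the limit, I would test against the algebra $\cala := \bigcup_{n \in \bbn} \calg_n$, which is indeed an algebra because the $\calg_n$ are increasing sub-$\sigma$-algebras. For fixed $m \in \bbn$ and $A \in \calg_m$, the defining property of conditional expectation gives $\int_A M_n \, d\mu = \int_A h \, d\mu$ for all $n \geq m$. Since on the probability space $(X,\calx,\mu)$ convergence in $L^2$ implies convergence in $L^1$, letting $n \to \infty$ yields $\int_A g \, d\mu = \int_A h \, d\mu$ for every $A \in \cala$. As $\cala$ is an algebra generating $\calx$ and both $g,h$ are $\mu$-integrable, a standard uniqueness argument—for instance applying Dynkin's lemma to the finite signed measure $B \mapsto \int_B (g-h) \, d\mu$—extends this equality to all $B \in \calx$, whence $g = h$ $\mu$-almost everywhere. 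Combining this with the first step gives $\pi_{L^2(\calg_n)} h = M_n \to h$ in $H$, as claimed.

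The main obstacle is precisely this identification of $g$ with $h$: the martingale convergence theorem only guarantees convergence to \emph{some} $\calx$-measurable limit, and it is the generating property of the filtration that forces this limit to equal $h$. (Alternatively, one could argue entirely within Hilbert space, noting that the $L^2(\calg_n)$ form an increasing sequence of closed subspaces whose union is dense in $H$—again by the generating property, via approximation of $\calx$-sets by $\cala$-sets in $L^2$-norm—so that the associated orthogonal projections converge strongly to the identity; the density step would then be the analogous crux.) All remaining estimates are routine.
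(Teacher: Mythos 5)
Your argument is correct and follows exactly the route the paper takes: identify $\pi_{L^2(\calg_n)} h$ with $\bbe[h \mid \calg_n]$ via Lemma \ref{lemma-MT-prob-space} and apply the martingale convergence theorem, with your Dynkin-class identification of the limit merely spelling out what the cited version of that theorem (convergence to $\bbe[h \mid \sigma(\bigcup_n \calg_n)] = \bbe[h \mid \calx] = h$) already provides. No gaps.
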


\begin{proof}
This is a consequence of the martingale convergence theorem (see \cite[Thm. 11.10]{Klenke}) and Lemma \ref{lemma-MT-prob-space}.
\end{proof}

\begin{lemma}\label{lemma-proj-fin-measure-space}
Suppose that $(X,\calx,\mu)$ is a finite measure space with $\mu \neq 0$. Let $\bbp$ be the probability measure on $(X,\calx)$ given by
\begin{align*}
\bbp(B) = \frac{\mu(B)}{\mu(X)}, \quad B \in \calx.
\end{align*}
Then the following statements are true:
\begin{enumerate}
\item We have $L^2(\mu) = L^2(\bbp)$ as sets, and 
\begin{align*}
\| h \|_{L^2(\mu)} = \sqrt{\mu(X)} \| h \|_{L^2(\bbp)}, \quad h \in H.
\end{align*}
\item In particular, $\| \cdot \|_{L^2(\mu)}$ and $\| \cdot \|_{L^2(\bbp)}$ are equivalent norm on $H$.

\item Let $\calg \subset \calx$ be a sub-$\sigma$-algebra. We set $L^2(\calg) := L^2(X,\calg,\mu) = L^2(X,\calg,\bbp)$. Let $\pi_{L^2(\calg)}^{\mu}$ be the orthogonal projection on $L^2(\calg)$ with respect to $\| \cdot \|_{L^2(\mu)}$, and let $\pi_{L^2(\calg)}^{\bbp}$ be the orthogonal projection on $L^2(\calg)$ with respect to $\| \cdot \|_{L^2(\bbp)}$. Then we have $\pi_{L^2(\calg)}^{\mu} = \pi_{L^2(\calg)}^{\bbp}$.
\end{enumerate}
\end{lemma}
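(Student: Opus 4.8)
The plan is to reduce everything to the identity $\mu = \mu(X)\,\bbp$ between the two measures, which turns all three assertions into elementary scaling statements.

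First I would record that, since $\bbp(B) = \mu(B)/\mu(X)$ for every $B \in \calx$, we have $\mu = \mu(X)\,\bbp$ as measures on $(X,\calx)$. By the standard machinery (indicator functions, then linearity and monotone convergence) this yields
\begin{align*}
\int_X f \, d\mu = \mu(X) \int_X f \, d\bbp \quad \text{for every measurable } f : X \to [0,\infty].
\end{align*}
Applying this to $f = |h|^2$ gives $\| h \|_{L^2(\mu)}^2 = \mu(X)\,\| h \|_{L^2(\bbp)}^2$, which is precisely statement (1); in particular $|h|^2$ is $\mu$-integrable if and only if it is $\bbp$-integrable, so $L^2(\mu) = L^2(\bbp)$ as sets. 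Statement (2) is then immediate, since $\mu(X) \in (0,\infty)$ by the assumptions that $\mu$ is finite and $\mu \neq 0$, so $\sqrt{\mu(X)}$ is a strictly positive finite constant relating the two norms.

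For statement (3) the key observation is that the scaling passes to the inner products: for $h,g \in H$ we have $hg \in L^1(\mu)$ by the Cauchy--Schwarz inequality, and hence
\begin{align*}
\la h,g \ra_{L^2(\mu)} = \int_X hg \, d\mu = \mu(X) \int_X hg \, d\bbp = \mu(X) \, \la h,g \ra_{L^2(\bbp)}.
\end{align*}
Since $\mu(X) > 0$, this shows that $\mu$-orthogonality and $\bbp$-orthogonality coincide: $\la h,g \ra_{L^2(\mu)} = 0$ if and only if $\la h,g \ra_{L^2(\bbp)} = 0$. I would then invoke the variational characterization of the orthogonal projection onto the closed subspace $L^2(\calg)$: the element $u = \pi_{L^2(\calg)}^{\mu} h$ is the unique element of $L^2(\calg)$ such that $h - u$ is orthogonal to $L^2(\calg)$ with respect to $\la \cdot,\cdot \ra_{L^2(\mu)}$, and analogously for $\bbp$. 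Because the two orthogonality relations agree, the element $\pi_{L^2(\calg)}^{\bbp} h$ satisfies the defining conditions of $\pi_{L^2(\calg)}^{\mu} h$, and by uniqueness of the orthogonal projection the two coincide.

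There is no genuine obstacle here; the only point requiring a little care is statement (3), where one must argue that multiplying an inner product by a positive constant leaves the orthogonal projection unchanged. This follows cleanly once the scaling relation for the inner products is in place, via the uniqueness in the variational characterization, so no further computation is needed.
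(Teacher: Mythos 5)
Your proof is correct and follows essentially the same route as the paper: both reduce all three statements to the scaling identity $\mu = \mu(X)\,\bbp$ and the resulting proportionality of the norms and inner products. The only cosmetic difference is in part (3), where the paper uses the best-approximation (distance-minimizing) characterization of the orthogonal projection while you use the equivalent orthogonality characterization; both arguments are equally valid.
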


\begin{proof}
The first two statements are evident. For each $h \in L^2(\mu)$ we have
\begin{align*}
\inf_{g \in L^2(\calg)} \| g - h \|_{L^2(\mu)} &= \sqrt{\mu(X)} \inf_{g \in L^2(\calg)} \| g - h \|_{L^2(\bbp)}
\\ &= \sqrt{\mu(X)} \| \pi_{L^2(\calg)}^{\bbp} h - h \|_{L^2(\bbp)}
\\ &= \| \pi_{L^2(\calg)}^{\bbp} h - h \|_{L^2(\mu)},
\end{align*}
showing that $\pi_{L^2(\calg)}^{\mu} = \pi_{L^2(\calg)}^{\bbp}$.
\end{proof}

\begin{proposition}\label{prop-MT-finite-space}
Suppose that $(X,\calx,\mu)$ is a finite measure space. Let $(\calg_n)_{n \in \bbn}$ be a filtration such that $\calx = \sigma(\bigcup_{n \in \bbn} \calg_n)$. Then for each $h \in H$ we have
\begin{align*}
\pi_{L^2(\calg_n)} h \to h \quad \text{in $H$.}
\end{align*}
\end{proposition}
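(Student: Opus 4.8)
The plan is to reduce the finite measure case to the probability case already settled in Proposition \ref{prop-MT-prob-space}, using the normalization lemma (Lemma \ref{lemma-proj-fin-measure-space}) as the bridge. The two earlier results dovetail precisely: Proposition \ref{prop-MT-prob-space} gives the desired convergence of conditional-expectation projections for a probability measure, while Lemma \ref{lemma-proj-fin-measure-space} tells us that replacing $\mu$ by its normalization changes neither the underlying space nor the orthogonal projections, and only rescales the norm by a constant factor.

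First I would dispose of the degenerate case $\mu = 0$: then $H = L^2(X,\calx,\mu) = \{ 0 \}$, and the claimed convergence is trivial. So for the remainder I may assume $\mu(X) > 0$ and define the probability measure $\bbp$ on $(X,\calx)$ by $\bbp(B) := \mu(B)/\mu(X)$ for $B \in \calx$, exactly as in Lemma \ref{lemma-proj-fin-measure-space}.

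Next I would invoke Lemma \ref{lemma-proj-fin-measure-space}. Its first two assertions give $L^2(\mu) = L^2(\bbp)$ as sets, with the norm identity $\| g \|_{L^2(\mu)} = \sqrt{\mu(X)} \, \| g \|_{L^2(\bbp)}$ for all $g \in H$, so that the two norms are equivalent; in particular a sequence converges in $L^2(\mu)$ if and only if it converges in $L^2(\bbp)$, and to the same limit. Its third assertion, applied with $\calg = \calg_n$ for each $n \in \bbn$, yields $\pi_{L^2(\calg_n)}^{\mu} = \pi_{L^2(\calg_n)}^{\bbp}$, so the projections appearing in the statement are literally the same operators whether computed with respect to $\mu$ or $\bbp$. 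I would write $\pi_{L^2(\calg_n)}$ for this common operator.

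Finally I would apply Proposition \ref{prop-MT-prob-space} to the probability space $(X,\calx,\bbp)$, noting that the hypothesis $\calx = \sigma(\bigcup_{n \in \bbn} \calg_n)$ carries over unchanged. This gives
\begin{align*}
\pi_{L^2(\calg_n)} h = \pi_{L^2(\calg_n)}^{\bbp} h \to h \quad \text{in $L^2(\bbp)$}
\end{align*}
for each $h \in H$, and by the norm equivalence established above the same convergence holds in $H = L^2(\mu)$, completing the proof. There is no genuine obstacle here, since every ingredient has been prepared; the only point requiring a moment of care is to transport the convergence between the two equivalent norms and to handle the trivial case $\mu = 0$ separately.
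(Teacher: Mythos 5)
Your proposal is correct and follows exactly the route the paper takes: the paper's proof is the one-line reduction "consequence of Proposition \ref{prop-MT-prob-space} and Lemma \ref{lemma-proj-fin-measure-space}", which you have simply spelled out in full, including the harmless extra care for the case $\mu = 0$.
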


\begin{proof}
This is a consequence of Proposition \ref{prop-MT-prob-space} and Lemma \ref{lemma-proj-fin-measure-space}.
\end{proof}

The proof of the following auxiliary result is straightforward, and therefore omitted.

\begin{lemma}\label{lemma-isom-Leb}
Let $B \in \calx$ be arbitrary, and define the Hilbert space $H_B := L^2(B) := L^2(B,\calx \cap B,\mu)$. We define the mapping $T_B : H_B \to H$ as $T_B h = h \bbI_B$. More precisely, for any representative $h \in \call^2(B)$ a repräsentative $g \in \call^2(X)$ of $T_B h$ is given by
\begin{align*}
g(x) = 
\begin{cases}
h(x), & x \in B,
\\ 0, & x \in B^c.
\end{cases}
\end{align*}
Then the following statements are true:
\begin{enumerate}
\item The mapping $T_B : H_B \to H$ is a linear isometry.

\item We have $\ran(T_B) = \{ h \in H : h \bbI_{B^c} = 0 \}$.

\item For each $h \in \ran(T_B)$ we have $T_B^{-1} h = h|_B$.

\item For each $h \in H_B$ we have $h \geq 0$ if and only if $T_B h \geq 0$.

\item Let $\calg \subset \calx$ be a sub-$\sigma$-algebra such that $B \in \calg$. Then we have
\begin{align*}
T_B \big( L^2(B,\calg \cap B) \big) = \{ h \in L^2(X,\calg) : h \bbI_{B^c} = 0 \}.
\end{align*}
\end{enumerate}
\end{lemma}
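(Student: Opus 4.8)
The plan is to verify the five assertions in turn; all are elementary, and the only genuine bookkeeping concerns passing between equivalence classes and their representatives, and between the sub-$\sigma$-algebra $\calg$ and its trace on $B$. First I would establish (1). Linearity of $T_B$ is immediate from the pointwise definition of $g$. For the isometry property, I would compute, for $h \in H_B$ with a representative in $\call^2(B)$ and $g := T_B h$,
\begin{align*}
\| T_B h \|_H^2 = \int_X |g(x)|^2 \, \mu(dx) = \int_B |h(x)|^2 \, \mu(dx) = \| h \|_{H_B}^2,
\end{align*}
where the middle equality uses that $g$ agrees with $h$ on $B$ and vanishes on $B^c$. In particular $T_B$ is well-defined on equivalence classes (the value of the integral does not depend on the chosen representatives) and injective.

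Next I would treat (2), (3) and (4) together, since they all reduce to the defining relation $T_B h = h \bbI_B$. The inclusion $\ran(T_B) \subset \{ h \in H : h \bbI_{B^c} = 0 \}$ is clear, as every $T_B h$ vanishes on $B^c$ by construction. For the converse, given $g \in H$ with $g \bbI_{B^c} = 0$, the restriction $g|_B$ is measurable on $(B, \calx \cap B)$ and satisfies $\int_B |g|^2 \, d\mu \leq \int_X |g|^2 \, d\mu < \infty$, so $g|_B \in H_B$; moreover $T_B(g|_B) = g$ in $H$, because the two functions agree on $B$ and both vanish on $B^c$. This proves (2) and, at the same time, shows that $T_B^{-1} h = h|_B$ for each $h \in \ran(T_B)$, which is (3). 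For (4), note that $T_B h$ equals $h$ on $B$ and is identically zero on $B^c$, so $T_B h \geq 0$ holds $\mu$-a.e. on $X$ if and only if $h \geq 0$ holds $\mu$-a.e. on $B$, i.e. if and only if $h \geq 0$ in $H_B$.

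The main point, and the only step requiring care, is (5), where the hypothesis $B \in \calg$ is essential. For the inclusion $\subset$ I would use the fact that if $h : B \to \bbr$ is $(\calg \cap B)$-measurable, then its extension by zero to $X$ is $\calg$-measurable: for a Borel set $C \subset \bbr$ the preimage under $g := T_B h$ equals $h^{-1}(C)$ when $0 \notin C$ and $h^{-1}(C) \cup B^c$ when $0 \in C$, and in either case it lies in $\calg$, since $h^{-1}(C) \in \calg \cap B \subset \calg$ (here $B \in \calg$ is used, giving $\calg \cap B \subset \calg$) and $B^c \in \calg$. Hence $T_B h \in L^2(X,\calg)$ and it vanishes on $B^c$. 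For the reverse inclusion $\supset$, given $g \in L^2(X,\calg)$ with $g \bbI_{B^c} = 0$, the restriction $g|_B$ is $(\calg \cap B)$-measurable and thus lies in $L^2(B,\calg \cap B)$, while $T_B(g|_B) = g$ by (2). I expect this measurability bookkeeping for the trace $\sigma$-algebra to be the only place where anything beyond a direct computation is needed; everything else reduces to the relation $T_B h = h \bbI_B$ and the choice of representatives.
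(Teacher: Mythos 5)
Your proof is correct. The paper itself omits the proof of this lemma, declaring it straightforward, and your verification is exactly the routine argument the author had in mind: everything reduces to the relation $T_B h = h \bbI_B$, with the only point of substance being the measurability of the zero extension with respect to $\calg$ in part (5), which you handle correctly by splitting on whether $0$ lies in the Borel set and using $B \in \calg$ to get $\calg \cap B \subset \calg$.
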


\begin{lemma}\label{lemma-proj-indicator}
Let $\calg$ be a sub-$\sigma$-algebra, and let $B \in \calg$ be arbitrary. Then for each $h \in H$ we have
\begin{align}\label{pi-on-set-B}
\pi_{L^2(X,\calg)}(h \bbI_B) = T_B \, \pi_{L^2(B,\calg \cap B)}(h|_B).
\end{align}
\end{lemma}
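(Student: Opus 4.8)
The plan is to argue directly from the variational characterization of the orthogonal projection: for a closed subspace $V \subset H$, the element $\pi_V(f)$ is the unique $g \in V$ with the property that $f - g \perp V$, that is $\la f-g,v \ra = 0$ for all $v \in V$. Accordingly, I would set
\begin{align*}
g := T_B \, \pi_{L^2(B,\calg \cap B)}(h|_B),
\end{align*}
and show that $g$ satisfies the two defining properties of $\pi_{L^2(X,\calg)}(h \bbI_B)$; uniqueness of the projection then yields the claimed identity (\ref{pi-on-set-B}).

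First I would verify that $g \in L^2(X,\calg)$. Since $\pi_{L^2(B,\calg \cap B)}(h|_B) \in L^2(B,\calg \cap B)$, Lemma \ref{lemma-isom-Leb}(5) gives
\begin{align*}
g \in T_B\big( L^2(B,\calg \cap B) \big) = \{ u \in L^2(X,\calg) : u \bbI_{B^c} = 0 \} \subset L^2(X,\calg),
\end{align*}
so the membership holds and, as a byproduct, $g$ vanishes on $B^c$.

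The heart of the argument is the orthogonality $h \bbI_B - g \perp L^2(X,\calg)$. Let $v \in L^2(X,\calg)$ be arbitrary. Both $h \bbI_B$ and $g$ vanish $\mu$-almost everywhere on $B^c$, so the inner product reduces to an integral over $B$. Using $g \in \ran(T_B)$ together with Lemma \ref{lemma-isom-Leb}(3), which identifies $g|_B = \pi_{L^2(B,\calg \cap B)}(h|_B)$, I would rewrite
\begin{align*}
\la h \bbI_B - g, v \ra_H = \int_B \big( h|_B - \pi_{L^2(B,\calg \cap B)}(h|_B) \big) \, v|_B \, d\mu = \big\la h|_B - \pi_{L^2(B,\calg \cap B)}(h|_B), v|_B \big\ra_{H_B}.
\end{align*}
Here the key observation is that $v|_B \in L^2(B,\calg \cap B)$: since $v$ is $\calg$-measurable and $B \in \calg$, its restriction is $(\calg \cap B)$-measurable, and it is square-integrable because $\int_B |v|^2 \, d\mu \leq \| v \|_H^2 < \infty$. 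By the projection property of $\pi_{L^2(B,\calg \cap B)}$ in $H_B$, the difference $h|_B - \pi_{L^2(B,\calg \cap B)}(h|_B)$ is orthogonal to $L^2(B,\calg \cap B)$, so the right-hand side vanishes.

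I do not expect a serious obstacle here; the only points requiring care are the reduction of the $H$-inner product to an integral over $B$ (which rests on both functions vanishing off $B$, hence on the range description in Lemma \ref{lemma-isom-Leb}(2),(3)) and the verification that the test function $v|_B$ genuinely lands in the subspace $L^2(B,\calg \cap B)$, so that the projection property in $H_B$ is applicable. Once these are in place, uniqueness of the orthogonal projection in $L^2(X,\calg)$ gives $g = \pi_{L^2(X,\calg)}(h \bbI_B)$, which is exactly (\ref{pi-on-set-B}).
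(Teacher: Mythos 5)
Your proof is correct, but it runs along a different (dual) characterization of the orthogonal projection than the paper's. The paper works with the best-approximation property: it first observes that for any competitor $f \in L^2(X,\calg)$ one has $\| f - h \bbI_B \|_{L^2(X)} \geq \| f \bbI_B - h \bbI_B \|_{L^2(X)}$, so the minimizer may be sought among $\calg$-measurable functions supported on $B$, and then uses the isometry $T_B$ to transport the remaining minimization problem to $L^2(B,\calg \cap B)$, where the minimizer is $\pi_{L^2(B,\calg\cap B)}(h|_B)$ by definition. You instead verify the orthogonality characterization directly: you check membership $T_B \pi_{L^2(B,\calg\cap B)}(h|_B) \in L^2(X,\calg)$ via Lemma \ref{lemma-isom-Leb}(5) and reduce the inner product against an arbitrary $v \in L^2(X,\calg)$ to an inner product in $H_B$ against $v|_B$, the key point being that $v|_B \in L^2(B,\calg\cap B)$. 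The two small facts each proof hinges on are mirror images of one another (the paper: truncating a competitor to $B$ keeps it admissible and does not increase the distance; you: restricting a test function to $B$ keeps it in the smaller subspace), and both arguments are equally elementary; yours has the mild advantage of avoiding the infimum bookkeeping and making the use of Lemma \ref{lemma-isom-Leb} fully explicit, while the paper's is slightly shorter on the page.
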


\begin{proof}
Let $h \in H$ be arbitrary. Using Lemma \ref{lemma-isom-Leb}, for all $g \in L^2(B,\calg \cap B)$ we have
\begin{align*}
\| g - h|_B \|_{L^2(B)} = \| T_B g - T_B(h|_B) \|_{L^2(X)} = \| T_B g - h \bbI_B \|_{L^2(X)},
\end{align*}
showing that
\begin{align*}
\inf_{g \in  L^2(B,\calg \cap B)} \| g \bbI_B - h \bbI_B \|_{L^2(X)} = \inf_{g \in  L^2(B,\calg \cap B)} \| g -  h|_B \|_{L^2(B)}.
\end{align*}
Furthermore, for all $f \in L^2(X,\calg)$ we have
\begin{align*}
\| f - h \bbI_B \|_{L^2(X)} \geq \| f \bbI_B - h \bbI_B \|_{L^2(X)},
\end{align*}
which proves (\ref{pi-on-set-B}).
\end{proof}

The following result applies to certain $\sigma$-finite measure spaces.

\begin{proposition}\label{prop-conv-proj-sigma-fin}
Let $(\calg_n)_{n \in \bbn}$ be a filtration such that $\calx = \sigma(\bigcup_{n \in \bbn} \calg_n)$. We assume there exists a sequence $(B_m)_{m \in \bbn} \subset \bigcup_{n \in \bbn} \calg_n$ such that $B_m \uparrow X$ and $\mu(B_m) < \infty$ for all $m \in \bbn$. Then for each $h \in H$ we have
\begin{align*}
\pi_{L^2(\calg_n)} h \to h \quad \text{in $H$.}
\end{align*}
\end{proposition}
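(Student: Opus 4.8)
The plan is to reduce everything to the finite-measure case already settled in Proposition \ref{prop-MT-finite-space} by truncating $X$ to the exhausting sets $B_m$ and transporting the projections through the isometries $T_{B_m}$ of Lemma \ref{lemma-isom-Leb}. Fix $h \in H$ and $\epsilon > 0$. First I would approximate $h$ by its truncation to $B_m$: since $B_m \uparrow X$, we have $|h - h \bbI_{B_m}|^2 = |h|^2 \bbI_{B_m^c} \to 0$ pointwise with domination by $|h|^2 \in L^1$, so dominated convergence gives $h \bbI_{B_m} \to h$ in $H$. I would then choose $m$ with $\| h - h \bbI_{B_m} \| < \epsilon/3$ and fix this $m$ for the rest of the argument.

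Next, because $B_m \in \bigcup_{n \in \bbn} \calg_n$ and the filtration is increasing, there is $n_0 \in \bbn$ with $B_m \in \calg_n$ for all $n \geq n_0$. For such $n$, Lemma \ref{lemma-proj-indicator} yields $\pi_{L^2(\calg_n)}(h \bbI_{B_m}) = T_{B_m}\, \pi_{L^2(B_m,\calg_n \cap B_m)}(h|_{B_m})$. The point is now that $(B_m,\calx \cap B_m,\mu)$ is a finite measure space and that the trace filtration $(\calg_n \cap B_m)_{n \geq n_0}$ generates the trace $\sigma$-algebra: since the $\calg_n$ increase we have $\bigcup_{n \geq n_0} \calg_n = \bigcup_{n \in \bbn} \calg_n$, and the trace of a generated $\sigma$-algebra is the $\sigma$-algebra generated by the traces, whence $\sigma(\bigcup_{n \geq n_0}(\calg_n \cap B_m)) = \calx \cap B_m$. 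Applying Proposition \ref{prop-MT-finite-space} on $B_m$ therefore gives $\pi_{L^2(B_m,\calg_n \cap B_m)}(h|_{B_m}) \to h|_{B_m}$ in $L^2(B_m)$. Since $T_{B_m}$ is an isometry with $T_{B_m}(h|_{B_m}) = h \bbI_{B_m}$, it follows that $\pi_{L^2(\calg_n)}(h \bbI_{B_m}) \to h \bbI_{B_m}$ in $H$, so there is $n_1 \geq n_0$ with $\| \pi_{L^2(\calg_n)}(h \bbI_{B_m}) - h \bbI_{B_m} \| < \epsilon/3$ for all $n \geq n_1$.

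Finally I would combine the two approximations. For $n \geq n_1$ the triangle inequality gives
\begin{align*}
\| \pi_{L^2(\calg_n)} h - h \| &\leq \| \pi_{L^2(\calg_n)}(h - h \bbI_{B_m}) \| + \| \pi_{L^2(\calg_n)}(h \bbI_{B_m}) - h \bbI_{B_m} \| + \| h \bbI_{B_m} - h \|,
\end{align*}
and, using that the orthogonal projection $\pi_{L^2(\calg_n)}$ is a contraction for the first term, each of the three summands is $< \epsilon/3$, so $\| \pi_{L^2(\calg_n)} h - h \| < \epsilon$. As $\epsilon > 0$ was arbitrary, the convergence follows.

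The hard part will be the harmless-looking bookkeeping step in the middle paragraph, namely verifying that the restricted filtration $(\calg_n \cap B_m)_n$ indeed generates the trace $\sigma$-algebra $\calx \cap B_m$, since this is precisely what licenses invoking Proposition \ref{prop-MT-finite-space} on the finite space $B_m$; the remainder is routine manipulation with the isometry $T_{B_m}$ and the contractivity of orthogonal projections.
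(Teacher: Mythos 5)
Your proof is correct and follows essentially the same route as the paper's: truncate $h$ to $B_m$, transport the projection to the finite measure space $(B_m,\calx\cap B_m,\mu)$ via Lemma \ref{lemma-proj-indicator} and the isometry $T_{B_m}$, invoke Proposition \ref{prop-MT-finite-space} there, and assemble with a three-term triangle inequality using contractivity of the projection. You are in fact slightly more careful than the paper in verifying that the trace filtration generates $\calx\cap B_m$ and in securing the bound for all large $n$ rather than a single $n$.
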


\begin{proof}
Let $h \in H$ be arbitrary, and let $\epsilon > 0$ be arbitrary. Since $B_m \uparrow X$, by Lebesgue's dominated convergence theorem there exists $m \in \bbn$ such that
\begin{align}\label{eps-3-1}
\| h - h \bbI_{B_m} \|_H \leq \frac{\epsilon}{3}.
\end{align}
By Proposition \ref{prop-MT-finite-space} there exists $n \in \bbn$ with $B_m \in \calg_n$ such that
\begin{align*}
\| h|_{B_m} - \pi_{L^2(B_m,\calg_n \cap B_m)}(h|_{B_m}) \|_{L^2(B_m)} \leq \frac{\epsilon}{3}.
\end{align*}
By Lemmas \ref{lemma-isom-Leb} and \ref{lemma-proj-indicator} we obtain
\begin{align}\label{eps-3-2}
\| h \bbI_{B_m} - \pi_{L^2(X,\calg_n)}(h \bbI_{B_m}) \|_{H} \leq \frac{\epsilon}{3}.
\end{align}
Furthermore, by (\ref{eps-3-1}) we have
\begin{align}\label{eps-3-3}
\| \pi_{L^2(X,\calg_n)}(h \bbI_{B_m}) - \pi_{L^2(X,\calg_n)}(h) \|_H \leq \frac{\epsilon}{3}.
\end{align}
Combining (\ref{eps-3-1})--(\ref{eps-3-3}) we arrive at
\begin{align*}
\| h - \pi_{L^2(\calg_n)}(h) \|_H \leq \epsilon,
\end{align*}
completing the proof.
\end{proof}

\end{appendix}

\end{document}